\newtheorem{theorem}{Theorem}[section]
\newtheorem*{theorem*}{Theorem}
\newtheorem{corollary}[theorem]{Corollary}
\newtheorem{lemma}[theorem]{Lemma}
\newtheorem{rem}[theorem]{Remark}
\newtheorem{proposition}[theorem]{Proposition}
\newtheorem{fact}[theorem]{Fact}
\theoremstyle{definition}
\newtheorem{definition}[theorem]{Definition}
\newcommand{\ee}{\varepsilon}
\newcommand{\nn}{\mathbb{N}}
\newcommand{\rr}{\mathbb{R}}
\begin{document}
\title{$\xi$-completely continuous operators and $\xi$-Schur Banach spaces}

\author{R.M. Causey}
\address{Department of Mathematics, Miami University, Oxford, OH 45056, USA}
\email{causeyrm@miamioh.edu}

\author{K.V. Navoyan}
\address{Department of Mathematics, University of Mississippi, Oxford, MS 38655, USA}
\email{KNavoyan@go.olemiss.edu}

\thanks{2010 \textit{Mathematics Subject Classification}. Primary: 46B03, 47L20; Secondary: 46B28.}
\thanks{\textit{Key words}: Completely continuous operators, Schur property,  operator ideals, ordinal ranks.}

\maketitle

\begin{abstract} For each ordinal $0\leqslant \xi\leqslant \omega_1$, we introduce the notion of a $\xi$-completely continuous operator and prove that for each ordinal $0< \xi< \omega_1$, the class $\mathfrak{V}_\xi$ of $\xi$-completely continuous operators is a closed, injective operator ideal which is not surjective, symmetric, or idempotent.  We prove that for distinct $0\leqslant \xi, \zeta\leqslant \omega_1$, the classes of $\xi$-completely continuous operators and $\zeta$-completely continuous operators are distinct.    We also introduce an ordinal rank $\textsf{v}$ for operators such that $\textsf{v}(A)=\omega_1$ if and only if $A$ is completely continuous, and otherwise $\textsf{v}(A)$ is the minimum countable ordinal such that $A$ fails to be $\xi$-completely continuous.  We show that there exists an operator $A$ such that $\textsf{v}(A)=\xi$ if and only if $1\leqslant \xi\leqslant \omega_1$, and there exists a Banach space $X$ such that $\textsf{v}(I_X)=\xi$ if and only if there exists an ordinal $\gamma\leqslant \omega_1$ such that $\xi=\omega^\gamma$.   Finally, prove that for every $0<\xi<\omega_1$, the class $\{A\in \mathcal{L}: \textsf{v}(A) \geqslant \xi\}$ is $\Pi_1^1$-complete in $\mathcal{L}$, the coding of all operators between separable Banach spaces.  This is in contrast to the class $\mathfrak{V}\cap \mathcal{L}$, which is $\Pi_2^1$-complete in $\mathcal{L}$.

\end{abstract}

\section{Introduction}

The stratification of classes of operators using ordinal indices has been a very useful strategy in questions of universality, factorization, and descriptive set theoretic complexity (see \cite{ADST}, \cite{Brooker}, \cite{BC}, \cite{BC2}, \cite{BCFW}, \cite{BF}, \cite{Cweakly}). Such stratification has been undertaken for the classes of strictly singular \cite{ADST}, Asplund \cite{Brooker}, Rosenthal and  unconditionally converging operators \cite{BCFW}, and weakly compact \cite{BF}, \cite{Cweakly} operators.    In this work, we define for each $\xi\leqslant \omega_1$ the notion of a $\xi$-completely continuous operator and study the classes $\mathfrak{V}_\xi$, $0\leqslant \xi\leqslant \omega_1$, where $\mathfrak{V}_\xi$ denotes the class of operators which are $\xi$-completely continuous.  For each $0<\xi<\omega_1$, we also recall the definition of the $\xi$-Banach-Saks operators and study the relationship between the classes $\mathfrak{W}_\xi$ of $\xi$-Banach-Saks operators and $\mathfrak{V}_\xi$.  We recall that $\mathfrak{W}_0$ is defined to be the class of compact operators and $\mathfrak{W}_{\omega_1}$ denotes the class of weakly compact operators.

Our first theorem extends many of the classical results about completely continuous operators.

\begin{theorem} For every $0\leqslant \xi\leqslant \omega_1$, $\mathfrak{V}_\xi$ is a closed, injective operator ideal such that $\mathfrak{V}_\xi=\mathfrak{K}\circ \mathfrak{W}_\xi^{-1}$ and $\mathfrak{V}^\text{\emph{dual}}=(\mathfrak{W}^\text{\emph{dual}}_\xi)^{-1}\circ \mathfrak{K}$.       Furthermore, if $0<\xi$, $\mathfrak{V}_\xi$ fails to be surjective, symmetric, or idempotent.

\end{theorem}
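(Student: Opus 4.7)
My plan is to verify the ideal properties directly from the sequential definition of $\xi$-complete continuity, then prove the factorization $\mathfrak{V}_\xi = \mathfrak{K} \circ \mathfrak{W}_\xi^{-1}$ as the structural heart of the argument, deduce the dual identity by taking adjoints, and finally witness each of the three failures by an explicit construction based on Schreier-type spaces.

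First I would check that $\mathfrak{V}_\xi$ is a closed, injective operator ideal. The ideal axioms --- stability under pre- and post-composition with bounded operators, closure under sums, and containment of finite-rank operators --- are routine verifications against the sequential criterion for being $\xi$-cc, since precomposition does not enlarge the class of witness sequences and post-composition is norm-continuous. Norm-closedness follows from a standard $\ee/3$ argument applied to $\|A x_k\| \leqslant \|A_n x_k\| + \|A - A_n\| \cdot \|x_k\|$. Injectivity is immediate, since post-composition with an isometric embedding both preserves and reflects the relevant norm estimates on the image.

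The heart of the argument is the characterization that $A \in \mathfrak{V}_\xi$ iff $AS$ is compact for every $\xi$-Banach-Saks operator $S$. The forward direction uses that a $\xi$-Banach-Saks $S$ maps bounded sequences to sequences on which the ordinal-indexed averaging scheme defining $\mathfrak{W}_\xi$ produces norm-Cauchy behavior, which the $\xi$-cc condition is designed precisely to flatten into norm convergence, forcing $AS$ to be compact. The converse is more delicate: given $A\notin \mathfrak{V}_\xi$ witnessed by some offending sequence $(x_n)$, one must construct a concrete $\xi$-Banach-Saks operator $S$ --- typically a basis-to-sequence map out of a Schreier-type space built on the family $\mathcal{S}_\xi$ --- such that $(x_n)$ lies in the range of $S$ and $AS$ fails to be compact. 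The dual identity $\mathfrak{V}^{\text{dual}} = (\mathfrak{W}_\xi^{\text{dual}})^{-1}\circ \mathfrak{K}$ then follows by passing to adjoints, using Schauder's theorem $\mathfrak{K}^{\text{dual}} = \mathfrak{K}$ and the reversal of composition under adjoints.

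For the three failures when $\xi > 0$: non-surjectivity would be witnessed by a quotient $q : X \to Y$ from a $\xi$-Schur space $X$ onto a space $Y$ with $I_Y \notin \mathfrak{V}_\xi$, so that $I_Y q = q \in \mathfrak{V}_\xi$ by the ideal property while $I_Y$ itself is not, in direct analogy with $q : \ell_1 \to \ell_2$ at level $\xi = \omega_1$; non-symmetry is witnessed by an identity on a $\xi$-Schur space whose dual is not $\xi$-Schur; and non-idempotency is witnessed by an identity in $\mathfrak{V}_\xi$ admitting no factorization through two $\mathfrak{V}_\xi$-operators, again realized on a tailor-made Schreier-type space. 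The single main obstacle is the reverse direction of the factorization, namely the on-demand construction of a $\xi$-Banach-Saks operator carrying a prescribed bad sequence: this is a transfinite construction that forces the Schreier hierarchy and careful ordinal-rank bookkeeping into the argument, and is the one place where the fine interaction between the $\mathfrak{V}_\xi$ and $\mathfrak{W}_\xi$ scales must genuinely be extracted. Once the factorization is established, duality and the three failures are comparatively mechanical, provided one has the relevant library of Schreier-type spaces available.
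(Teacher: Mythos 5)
Your overall skeleton matches the paper --- ideal axioms and closedness/injectivity by direct sequential arguments, the factorization $\mathfrak{V}_\xi=\mathfrak{K}\circ\mathfrak{W}_\xi^{-1}$ as the core, the dual identity via Schauder, and explicit counterexamples for the three failures. The closedness/injectivity portion and the $\subset$ direction of the factorization are essentially what the paper does. But two parts of the proposal have real problems.

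First, the converse of the factorization. You propose to build, from an offending $\xi$-weakly null sequence $(x_n)$ with $\inf_n\|Ax_n\|>0$, a $\xi$-Banach-Saks operator $S$ out of a ``Schreier-type space built on $\mathcal{S}_\xi$'' whose range contains $(x_n)$. This is exactly where the work is, and a Schreier space $X_\xi$ will not serve as the domain: the canonical basis of $X_\xi$ is an $\ell_1^\xi$-spreading model, so $I_{X_\xi}\notin\mathfrak{W}_\xi$, and there is no reason for a basis-to-sequence map out of $X_\xi$ to be $\xi$-Banach-Saks (nor for a general offending sequence to satisfy the upper estimates needed to make such a map bounded). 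The paper instead takes the domain to be $\ell_1$ --- the map $\Phi:\ell_1\to X$, $\Phi e_n=x_n$, is automatically bounded --- and proves it lies in $\mathfrak{W}_\xi$ by a Cantor--Bendixson index computation: after passing to a subsequence one may assume $CB(\mathfrak{F}^a_\ee((x_i)))\leqslant\omega^\xi$ for every $\ee>0$, and then a combinatorial lemma (Lemma \ref{phi}, via \cite{BCFW}) shows $\Phi\in\mathfrak{W}_\xi(\ell_1,X)$. In fact the paper proves more, namely $\mathfrak{W}_\infty\circ\mathfrak{W}_\xi^{-1}\subset\mathfrak{V}_\xi$, by factoring $\Phi$ through a reflexive $Z$ with $I_Z\in\mathfrak{W}_\xi$ (using the quantitative factorization from \cite{BC}) and then running the Johnson--Lillemets--Oja argument. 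Your dual-identity step also glosses over one direction: $\mathfrak{V}_\xi^{\text{dual}}\subset(\mathfrak{W}_\xi^{\text{dual}})^{-1}\circ\mathfrak{K}$ is indeed just Schauder plus the primal factorization, but the reverse containment requires building an explicit pre-adjoint $\Phi_*:Y\to c_0$, $\Phi_*(y)=(y_n^*(y))_n$ with $\Phi_*^*=\Phi$, from the offending weakly null sequence in $Y^*$; one cannot simply ``reverse composition under adjoints'' because a $\xi$-Banach-Saks operator into $Y^*$ need not be an adjoint.

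Second, the non-idempotency witness. You propose ``an identity in $\mathfrak{V}_\xi$ admitting no factorization through two $\mathfrak{V}_\xi$-operators.'' That is impossible: if $I_X\in\mathfrak{V}_\xi$ then $I_X=I_X\circ I_X\in\mathfrak{V}_\xi\circ\mathfrak{V}_\xi$ trivially, so identity operators can never witness failure of idempotency. The paper instead uses the canonical inclusion $j:C[0,1]\to L_2$, which is completely continuous (hence in $\mathfrak{V}_\xi$) but not compact. Theorem \ref{gds} gives $\mathfrak{V}_\xi(C[0,1],Z)\subset\mathfrak{V}_1(C[0,1],Z)=\mathfrak{W}_1(C[0,1],Z)$, so if $j=LR$ with $R\in\mathfrak{V}_\xi(C[0,1],Z)$ and $L\in\mathfrak{V}_\xi(Z,L_2)$, the composition would be compact by the $(ii)$ factorization --- contradicting non-compactness of $j$. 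Your non-surjectivity and non-symmetry witnesses (quotient $\ell_1\to c_0$; $\ell_1$ Schur with $\ell_\infty$ not $1$-Schur) are correct and match the paper up to cosmetic differences.
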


Given an operator $A:X\to Y$, we will show that $A$ is completely continuous if and only if it is $\xi$-completely continuous for every countable $\xi$.  We then let $\textsf{v}(A)=\omega_1$ if $A$ is completely continuous, and otherwise $\textsf{v}(A)$ denote the minimum countable ordinal $\xi$ such that $A$ fails to be $\xi$-completely continuous. Given a Banach space $X$, we let $\textsf{v}(X)=\textsf{v}(I_X)$.   We prove the following regarding the distinctness of the classes $\mathfrak{V}_\xi$.

\begin{theorem} \begin{enumerate}[(i)]\item For an ordinal $\xi$, there exists an operator $A:X\to Y$ such that $\textsf{\emph{v}}(A)=\xi$ if and only if $1\leqslant \xi\leqslant \omega_1$. \item For an ordinal $\xi$, there exists a Banach space $X$ with $\textsf{\emph{v}}(X)=\xi$ if and only if there exists an ordinal $\gamma\leqslant \omega_1$ such that $\xi=\omega^\gamma$.   \end{enumerate}

\label{big2}

\end{theorem}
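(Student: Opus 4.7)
The plan is to handle part (ii) first and then derive the operator-level statement in (i) by combining (ii) with constructions of non-identity operators. The ``only if'' direction of (i) is immediate from the definitions: every operator lies in $\mathfrak{V}_0$, so $\textsf{v}(A)\geq 1$, and $\textsf{v}(A)\leq \omega_1$ holds trivially. The top value $\omega_1$ is realized by the identity on any Schur space, for example $\textsf{v}(I_{\ell_1})=\omega_1$.

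For the ``if'' direction of (ii), I would build, for each $0\leq \gamma<\omega_1$, a Banach space $X_\gamma$ with a $1$-unconditional basis whose norm is defined using the Schreier family $\mathcal{S}_\gamma$, in the spirit of the Tsirelson/Schreier-type constructions used elsewhere in this literature. The norm is calibrated so that $X_\gamma$ contains a normalized $\mathcal{S}_\gamma$-block weakly null sequence with no norm-null subsequence, witnessing $I_{X_\gamma}\notin\mathfrak{V}_{\omega^\gamma}$, while the combinatorics of $\mathcal{S}_\xi$ for $\xi<\omega^\gamma$ ensure $I_{X_\gamma}\in \mathfrak{V}_\xi$; this yields $\textsf{v}(X_\gamma)=\omega^\gamma$. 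For $\gamma=\omega_1$ one takes $\ell_1$. For the ``only if'' direction of (ii), I would show $\textsf{v}(X)\in\{\omega^\gamma:\gamma\leq \omega_1\}$ by exploiting the decomposition $X\oplus X$. Since $\mathfrak{V}_\xi$ is an injective ideal (by the previous theorem) and $X$ is $1$-complemented in $X\oplus X$, a direct-sum argument yields an additive closure property on the failure set: if $\xi,\zeta<\textsf{v}(X)$, then $\xi+\zeta<\textsf{v}(X)$. The only positive countable ordinals closed under ordinal addition are of the form $\omega^\gamma$, so $\textsf{v}(X)$ must have this form.

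For the ``if'' direction of (i) at countable $\xi\geq 1$, I would write the Cantor normal form $\xi=\omega^{\gamma_1}+\cdots+\omega^{\gamma_k}$ with $\gamma_1\geq\cdots\geq\gamma_k$ and realize it by a block operator $A:\bigoplus_i X_{\gamma_i}\to \bigoplus_i X_{\gamma_i}$ that is \emph{not} the identity (the identity being ruled out by (ii) as soon as $k\geq 2$), whose restriction to each block encodes an independent failure of complete continuity at scale $\omega^{\gamma_i}$, so that the failures accumulate additively to give $\textsf{v}(A)=\xi$. The chief obstacle is the ``only if'' direction of (ii): establishing the additive closure of $\textsf{v}$ on identity operators demands a careful interplay between the ideal structure of $\mathfrak{V}_\xi$ and ordinal arithmetic, and it must be tight enough to rule out every non-power countable ordinal as a possible value of $\textsf{v}(X)$.
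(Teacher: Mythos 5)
There is a genuine gap, and it sits exactly where you flag the main obstacle: the ``only if'' direction of (ii). You propose to derive the additive closure of the failure set (``if $\xi,\zeta<\textsf{v}(X)$ then $\xi+\zeta<\textsf{v}(X)$'') from a direct-sum argument using the fact that $X$ is complemented in $X\oplus X$ and that $\mathfrak{V}_\xi$ is an injective ideal. This will not work: injectivity plus complementation only gives $\textsf{v}(X\oplus X)=\textsf{v}(X)$ (one inequality from the ideal property applied to the inclusion $X\hookrightarrow X\oplus X$, the other from writing $I_{X\oplus X}$ as a sum of two operators factoring through $I_X$). That yields no additivity whatever. The correct mechanism is \emph{composition}, not direct sums: the paper proves (Corollary \ref{block of block}) that if $\textsf{v}(B)>\zeta$ and $\textsf{v}(A)>\xi$ then $\textsf{v}(AB)>\xi+\zeta$, so applying this to $I_X=I_X\circ I_X$ gives $\textsf{v}(X)>\xi\Rightarrow \textsf{v}(X)>\xi+\xi$, which does force $\textsf{v}(X)=\omega^\gamma$. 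But that composition estimate is the heart of Section 4 and rests on the machinery of probability blocks, the convolution $\mathfrak{Q}*\mathfrak{P}$, and the fact that $\xi$-sufficient plus $\zeta$-sufficient yields $\xi+\zeta$-sufficient for the convolution. This is precisely the non-trivial combinatorial input your sketch is missing.

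Your plan for the ``if'' direction of (i) also diverges from a workable route. You propose decomposing $\xi$ in Cantor normal form and encoding the summands as ``independent failures'' in a block operator on a direct sum; but as noted, $\textsf{v}$ of a direct-sum block operator controlled by the ideal property is a minimum, not a sum, so additive accumulation cannot occur there. The paper instead proves Proposition \ref{easy prop}(iv) directly and more economically: for $\xi=\gamma+1$ it takes the formal inclusion of the Schreier space $X_\gamma$ into $c_0$ (the inclusion is $\gamma$-completely continuous since block sequences of $X_\gamma$ automatically support $\ell_1^\gamma$ lower estimates, but fails at $\gamma+1$ because the Schreier basis has no $\ell_1^{\gamma+1}$ spreading model subsequence); for limit $\xi$ it takes the inclusion into $c_0$ of a weighted $\ell_1$-type sum $\|\cdot\|=\sum_n 2^{-n}\|\cdot\|_{X_{\gamma_n}}$ with $\gamma_n\uparrow\xi$, and shows the index is exactly $\xi$ by computing a Cantor--Bendixson bound against $\mathbb{S}^\xi_{N,1}$-averages. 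Finally, your ``Tsirelson/Schreier-type'' construction of a space $X_\gamma$ with $\textsf{v}(X_\gamma)=\omega^\gamma$ is too vague to assess: the Schreier space itself has $\textsf{v}=1$, and the paper instead uses the spaces $Z_\xi(c_0)$ of Section 5, a transfinite modification of the Odell--Schlumprecht/Argyros--Beanland construction where an $\ell_2$-combination of $\mathcal{S}_{\xi_n}$-indexed quantities is what pins $\textsf{v}$ to exactly $\omega^\xi$ (Corollary \ref{beans}). That construction cannot be replaced by a bare Tsirelson-type lower-estimate space without re-doing the analysis.
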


For the proof of Theorem \ref{big2}, we discuss the notion of a probability block, generalizing the important repeated averages hierarchy introduced in \cite{AMT} and further studied in \cite{AG}. All unfamiliar terminology will be defined later.     For a  countable ordinal $\xi$, we say the probability block $(\mathfrak{P}, \mathcal{P})$, where $\mathfrak{P}$ is a collection of probability measures on $\nn$ and $\mathcal{P}$ is a collection of finite subsets of $\nn$ containing the supports of the members of $\mathfrak{P}$, is  $\xi$-\emph{sufficient} provided that if $\mathcal{G}$ is any regular family with Cantor-Bendixson index not exceeding $\omega^\xi$ and if $\delta>0$, there exists an infinite subset $M$ of $\nn$ such that for all further, infinite subsets $N$ of $M$, the measures $\mathbb{P}\in\mathfrak{P}$ which are supported on $M$ satisfy $\mathbb{P}(E)<\delta$.    We say $(\mathfrak{P}, \mathcal{P})$ is $\xi$-\emph{regulatory} provided that if $f:\nn\times MAX(\mathcal{P})\to \rr$ is a bounded function such which has large averages for all $\mathbb{P}$ coming out of a ``full'' subset of $\mathfrak{P}$, then $f$ must be pointwise large on a set with Cantor-Bendixson index greater than $\omega^\xi$. 

Furthermore, if $(\mathfrak{P}, \mathcal{P})$, $(\mathfrak{Q}, \mathcal{Q})$ are probability blocks, we define a convolution probability block $(\mathfrak{Q}*\mathfrak{P}, \mathcal{Q}[\mathcal{P}])$.    Regarding these notions, we prove the following.

\begin{theorem} If $\xi$ is a countable ordinal and $(\mathfrak{P}, \mathcal{P})$ is a probability block, then if $(\mathfrak{P}, \mathcal{P})$ is $\xi$-sufficient, it is $\xi$-regulatory.

If $\xi, \zeta$ are countable ordinals, $(\mathfrak{P}, \mathcal{P})$ is $\xi$-sufficient, and $(\mathfrak{Q}, \mathcal{Q})$ is $\zeta$-sufficient, then $(\mathfrak{Q}*\mathfrak{P}, \mathcal{Q}[\mathcal{P}])$ is $\xi+\zeta$-sufficient.

\end{theorem}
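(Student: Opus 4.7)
I would prove the two implications separately.

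For the first implication I would argue by contrapositive. Assume $(\mathfrak{P},\mathcal{P})$ fails to be $\xi$-regulatory: there is a bounded $f:\nn\times MAX(\mathcal{P})\to \rr$ whose averages against every member of some ``full'' subset $\mathfrak{P}_0\subseteq \mathfrak{P}$ are large, yet the set $A$ on which $f$ is pointwise large has Cantor-Bendixson index at most $\omega^\xi$. After replacing $A$ by its hereditary spreading closure, which preserves regularity without increasing the Cantor-Bendixson index past $\omega^\xi$, I obtain a regular family $\mathcal{G}$ of index at most $\omega^\xi$ containing the pointwise-large set. Choosing $\delta>0$ small enough that $\delta\|f\|_\infty$ together with the small-value contribution of $f$ is dominated by the largeness threshold, I apply $\xi$-sufficiency to obtain an infinite $M\subseteq \nn$ so that $\mathbb{P}(E)<\delta$ for every $E\in\mathcal{G}$ and every $\mathbb{P}\in \mathfrak{P}$ whose support meets $\nn$ inside $M$. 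By fullness, $\mathfrak{P}_0$ contains such a $\mathbb{P}$; for this $\mathbb{P}$ the average $\int f\, d\mathbb{P}$ is forced to be small, contradicting the hypothesis.

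For the second implication I would work directly. Fix a regular family $\mathcal{H}$ with Cantor-Bendixson index at most $\omega^{\xi+\zeta}$ and $\delta>0$. A convolution measure $\mathbb{R}=\mathbb{Q}*\mathbb{P}$ decomposes along blocks $F_1<\cdots<F_k$ with $(\min F_j)_{j=1}^k\in\mathrm{supp}(\mathbb{Q})$, each block carrying a shifted copy $\mathbb{P}_j$ of $\mathbb{P}$ supported on $F_j$; for $E\in\mathcal{H}$ this writes $\mathbb{R}(E)$ as a $\mathbb{Q}$-average of the inner masses $\mathbb{P}_j(E\cap F_j)$. My plan is first to decompose $\mathcal{H}$, after passing to a suitable infinite $M_0$, as a trace inside a composition $\mathcal{H}_1[\mathcal{H}_2]$ with $\mathcal{H}_1,\mathcal{H}_2$ regular of Cantor-Bendixson indices at most $\omega^\zeta$ and $\omega^\xi$ respectively; this is a standard combinatorial fact about regular families in the spirit of the Alspach--Argyros and Judd--Odell analyses. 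I would then apply $\xi$-sufficiency of $(\mathfrak{P},\mathcal{P})$ to $\mathcal{H}_2$ at threshold $\delta/2$ to find $M_1\subseteq M_0$ making every inner $\mathbb{P}_j$-mass small, and finally invoke the $\zeta$-regulatory property of $(\mathfrak{Q},\mathcal{Q})$ provided by the first part of the theorem, applied to the bounded function $(n,\sigma)\mapsto \mathbb{P}_\sigma(E\cap F_n)$, to control the outer $\mathbb{Q}$-average.

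The main obstacle is this combinatorial decomposition and its interface with the convolution: one must not only produce $\mathcal{H}_1[\mathcal{H}_2]$ at the correct indices $\omega^\zeta,\omega^\xi$, but also arrange that the block structure read off from the composition actually matches the block structure on which $\mathbb{Q}*\mathbb{P}$ distributes mass, in particular that the convention $\min F_j\in\mathrm{supp}(\mathbb{Q})$ is respected. Only then do the $\xi$-sufficient bounds for $\mathbb{P}$ on $\mathcal{H}_2$ genuinely translate into bounds for $\mathbb{Q}*\mathbb{P}$ on $\mathcal{H}$. I expect to handle this by transfinite induction on $\zeta$ (the case $\zeta=0$ being essentially the definition of composition at the $\omega^\xi$ scale), combined with a diagonal stabilization over the outer index set to pass from finite-block control to infinite-subset control.
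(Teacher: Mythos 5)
Your first implication is essentially the paper's argument (Lemma \ref{schlumprecht}): split $\sum_i \mathbb{P}_{M,1}(i) f(i,\cdot)$ into the part where $f\geqslant\delta$, controlled by making $\mathbb{P}_{M,1}(E)$ small via $\xi$-sufficiency on a regular family of small index, and the part where $f<\delta$. However, your claim that the hereditary spreading closure of $\mathfrak{G}(f,\mathcal{P},L,\delta)$ has Cantor-Bendixson index still at most $\omega^\xi$ is not justified and is false as a general preservation statement: spreading closure can strictly increase $CB$ rank (e.g., $\{\varnothing\}\cup\{(n):n\}\cup\{(2k-1,2k):k\}$ has $CB=2$ but its spreading closure is $\mathcal{A}_2$ with $CB=3$). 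The paper avoids this entirely: since $\omega^\xi$ is a limit and $CB$ is never a limit, $CB(\mathfrak{G}(f,\mathcal{P},M,\delta))<\omega^\xi$, so one can choose a regular $\mathcal{G}$ with index strictly between and use Gasparis's dichotomy (Theorem \ref{gasp}) to shrink $M$ so that $\mathfrak{G}(f,\mathcal{P},M,\delta)\subset\mathcal{G}$. That is the step your sketch should take instead.

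Your second implication has the right outline — decompose a regular $\mathcal{G}$ of index $\leqslant\omega^{\xi+\zeta}$ as (a subfamily of) $\mathcal{H}[\mathcal{F}]$ with $CB(\mathcal{F})=\omega^\xi+1$, $CB(\mathcal{H})<\omega^\zeta$ (the paper's Fact \ref{easy}), use $\xi$-sufficiency of $(\mathfrak{P},\mathcal{P})$ on $\mathcal{F}$ for the inner masses and $\zeta$-sufficiency of $(\mathfrak{Q},\mathcal{Q})$ for the outer masses — but the obstacle you name at the end is precisely the content of the proof, and you do not resolve it. A block $F_j\in\mathcal{F}$ of a member of $\mathcal{H}[\mathcal{F}]$ need not sit inside a single $\text{supp}(\mathbb{P}_{N,i})$; when it straddles several, the naive outer/inner split breaks. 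The paper's resolution is twofold: (a) it does not decompose $E\in\mathcal{H}[\mathcal{F}]$ along the $\mathcal{F}$-blocks directly, but rather splits $E$ into the union of those $\mathcal{F}$-blocks that meet exactly one $\text{supp}(\mathbb{P}_{N,i})$ and those that meet at least two, bounding these two unions separately by $\ee/3$ and $2\ee/3$; and (b) to control the multi-support part it builds $M$ so that for any $E\in\mathcal{F}$ and any $r$, the tail masses $\mathbb{P}_{N,r+n}(E)$ decay geometrically like $\ee/(3\cdot 2^{r+n})$ once $\text{supp}(\mathbb{P}_{N,r})$ meets $E$ — a quantitative refinement of Lemma \ref{mkay}(ii). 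Your plan to "invoke the $\zeta$-regulatory property... applied to the function $(n,\sigma)\mapsto\mathbb{P}_\sigma(E\cap F_n)$" is also off: that expression depends on $E$ and so is not a fixed function of $(n,\sigma)$, and regulatory gives fullness of a pointwise-large set rather than the uniform smallness over $[M]$ that $\zeta$-sufficiency delivers directly. Finally, the paper's proof is not a transfinite induction on $\zeta$; the case $\zeta=0$ is done separately and $\zeta>0$ is a single Ramsey/contradiction argument on the closed set of ``good'' $N$'s. As written, your sketch identifies the hard point but leaves it open.
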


In the final section of the paper, we recall the codings $\textbf{SB}$ and $\mathcal{L}$ of all separable Banach spaces and all operators between separable Banach spaces, respectively.  These are Polish spaces, and as such, it is often of interest to compute the descriptive set theoretic complexity of given subsets of these classes. Along these lines, we prove the following.

\begin{theorem} For every $0<\xi<\omega_1$, the class $\mathfrak{V}_\xi\cap \mathcal{L}$ is coanalytic complete and therefore non-Borel in $\mathcal{L}$.   Furthermore, $\textsf{\emph{V}}_\xi\cap \textbf{\emph{SB}}$ is coanalytic complete and therefore non-Borel in $\textbf{\emph{SB}}$.    

\end{theorem}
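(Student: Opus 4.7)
The proof splits naturally into showing the upper bound $\mathfrak{V}_\xi\cap\mathcal{L}\in\Pi_1^1$ and the matching $\Pi_1^1$-hardness; the analogous bounds for $\textsf{V}_\xi\cap\textbf{SB}$ will follow from the Borel map $X\mapsto I_X:\textbf{SB}\to\mathcal{L}$ together with a reduction that lands in the image of this map.  For the upper bound, I would rewrite ``$A$ is $\xi$-completely continuous'' in the concrete quantifier form ``for every weakly null sequence $(x_n)$ in the unit ball of the domain and every regular family $\mathcal{G}$ of Cantor--Bendixson index at most $\omega^\xi$, the corresponding $\mathcal{G}$-averages of $(Ax_n)$ tend to zero in norm.''  In the coding $\mathcal{L}$, weak nullity of a sequence is a Borel condition, the action of $A$ on rational combinations of a dense set is Borel-computable from the code, and quantification over regular families of bounded CB-index runs over a Borel subset of the Polish space of such families.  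The outer universal quantifier over weakly null sequences then produces a single $\Pi_1^1$ statement.

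For $\Pi_1^1$-hardness, I would construct a Borel reduction from a $\Pi_1^1$-complete tree set, for example the set of well-founded trees on $\nn$.  To each tree $T\subset \nn^{<\nn}$ I would associate a Schreier-type Banach space $X_T\in\textbf{SB}$ whose norm is defined by combining $T$ with a fixed reference regular family of Cantor--Bendixson index just beyond $\omega^\xi$, and take the identity $I_{X_T}\in\mathcal{L}$ as the associated operator.  Both maps $T\mapsto X_T$ and $T\mapsto I_{X_T}$ should be Borel, so that the equivalence ``$I_{X_T}\in \mathfrak{V}_\xi$ if and only if $T$ is well-founded'' simultaneously yields $\Pi_1^1$-hardness of $\mathfrak{V}_\xi\cap\mathcal{L}$ in $\mathcal{L}$ and of $\textsf{V}_\xi\cap\textbf{SB}$ in $\textbf{SB}$.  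Using the sufficient-implies-regulatory theorem together with the convolution formula ($\xi$-sufficient composed with $\zeta$-sufficient is $\xi+\zeta$-sufficient), I expect that well-foundedness of $T$ forces every probability block built from $T$ to be $\xi$-sufficient and hence $\xi$-regulatory, producing $\xi$-complete continuity; an infinite branch of $T$, combined with the reference family, manufactures a regular subfamily of CB-index strictly exceeding $\omega^\xi$ together with a witnessing weakly null sequence in $B_{X_T}$ violating $\xi$-complete continuity.

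The main obstacle is this sharp calibration at the threshold $\omega^\xi$: one needs $I_{X_T}$ to leave $\mathfrak{V}_\xi$ exactly when $T$ is ill-founded, with no slack in either direction.  The forward direction depends on applying the sufficient-implies-regulatory theorem along every derivative of the family built from a well-founded $T$, while the reverse direction requires an explicit construction of a weakly null sequence from an infinite branch, using the reference family to produce excess CB-rank on the nose.  Once the calibrated reduction is in place, non-Borelness of both classes is automatic from $\Pi_1^1$-completeness, completing the proof.
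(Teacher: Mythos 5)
Your upper-bound argument has a genuine gap.  Weak nullity of a sequence in a separable Banach space is \emph{not} a Borel condition on the code; by Mazur's characterization it takes the form ``for every $M\in[\nn]$, zero lies in the norm-closed convex hull of $(x_n)_{n\in M}$,'' and the universal quantifier over $[\nn]$ makes it $\Pi^1_1$ (and in fact $\Pi^1_1$-complete in natural codings).  Consequently, writing $\xi$-complete continuity as ``for every weakly null sequence \ldots'' yields a formula of the shape $\forall(\Sigma^1_1\vee\text{Borel})$, which only gives a $\Pi^1_2$ bound, not $\Pi^1_1$.  The paper avoids this precisely by means of Proposition~\ref{and another thing}: the complement of $\mathfrak{V}_\xi$ is characterized as the existence of a bounded $(x_i)$ and $M\in[\nn]$ satisfying the \emph{closed} condition ``$\|\sum_i \mathbb{S}^\xi_{N,n}(i)x_i\|\leqslant\delta_n$ for all $N\in[M]$, $n\in\nn$'' together with $\inf_n\|Ax_i\|>0$.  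This closed condition already forces the witnessing subsequence to be $\xi$-weakly null, so no separate weak-nullity quantifier is needed, and the complement becomes the projection of a Borel (indeed closed) set.  Without such a lemma replacing the $\Pi^1_1$ hypothesis ``weakly null'' by a Borel surrogate, your complexity count does not close.  Your parenthetical about quantifying over all regular families of $CB$-index at most $\omega^\xi$ is also unnecessary and adds complexity: the paper fixes the single block $\mathcal{S}_\xi$ and the repeated averages $\mathbb{S}^\xi$.

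Your hardness argument is both overcomplicated and conceptually off.  You propose a reduction calibrated sharply at $\omega^\xi$, where ill-foundedness manufactures $CB$-rank ``just beyond $\omega^\xi$.''  No such calibration is needed, and attempting it makes the construction far harder than necessary.  The paper uses Bossard's James-tree reduction $T\mapsto J^T$, a single Borel map with the dichotomy: $T$ well-founded $\Rightarrow$ $J^T$ is Schur (so $\textsf{v}(J^T)=\omega_1\geqslant\xi$ for every $\xi$); $T$ ill-founded $\Rightarrow$ $J^T$ contains $c_0$ (so $\textsf{v}(J^T)=1<\xi$ for every $\xi>0$).  Because $\textsf{V}\subset\textsf{V}_\xi\subset\textsf{V}_1$ holds uniformly in $\xi$, this \emph{one} reduction simultaneously witnesses $\Pi^1_1$-hardness of every $\textsf{V}_\xi\cap\textbf{SB}$ and, composing with the Borel map $X\mapsto(X,X,(d_n(X))_n)$, of every $\mathfrak{V}_\xi\cap\mathcal{L}$.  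The probability-block/convolution machinery and the sufficient-implies-regulatory theorem play no role in the hardness direction.
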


The previous theorem is in contrast to the class $\mathfrak{V}\cap \mathcal{L}$ of completely continuous operators between separable Banach spaces and the class $\textsf{V}\cap \textbf{SB}$ of separable Schur spaces, which are $\Pi_2^1$, and therefore not coanalytic, in the spaces $\mathcal{L}$, $\textbf{SB}$, respectively.

\section{Regular families}

Througout, we let $2^\nn$ denote the power set of $\nn$ and topologize this set with the Cantor topology. Given a subset $M$ of $\nn$, we  let $[M]$ (resp. $[M]^{<\nn}$) denote set of infinite (resp. finite) subsets of $M$.  For convenience, we often write subsets of $\nn$ as sequences, where a set $E$ is identified with the (possibly empty) sequence obtained by listing the members of $E$ in strictly increasing order.  Henceforth, if we assume $(m_i)_{i=1}^r\in [\nn]^{<\nn}$ (resp. $(m_i)_{i=1}^\infty\in [\nn]$), it will be assumed that $m_1<\ldots <m_r$ (resp. $m_1<m_2<\ldots$).  Given $M=(m_n)_{n=1}^\infty\in [\nn]$ and $\mathcal{F}\subset [\nn]^{<\nn}$, we define $$\mathcal{F}(M)=\{(m_n)_{n\in E}: E\in \mathcal{F}\}$$ and $$\mathcal{F}(M^{-1})=\{E: (m_n)_{n\in E}\in \mathcal{F}\}.$$  

Given $(m_i)_{i=1}^r, (n_i)_{i=1}^r\in [\nn]^{<\nn}$, we say $(n_i)_{i=1}^r$ is a \emph{spread} of $(m_i)_{i=1}^r$ if $m_i\leqslant n_i$ for each $1\leqslant i\leqslant r$.   We agree that $\varnothing$ is a spread of $\varnothing$.    We write $E\preceq F$ if either $E=\varnothing$ or $E=(m_i)_{i=1}^r$ and $F=(m_i)_{i=1}^s$ for some $r\leqslant s$.   In this case, we say $E$ is an \emph{initial segment} of $F$.  For $E,F\subset \nn$, we write $E<F$ to mean that either $E=\varnothing$, $F=\varnothing$, or $\max E<\min F$. Given $n\in \nn$ and $E\subset \nn$, we write $n\leqslant E$ (resp. $n<E$) to mean that $n\leqslant \min E$ (resp. $n<\min E$).

We say $\mathcal{G}\subset [\nn]^{<\nn}$ is \begin{enumerate}[(i)]\item \emph{compact} if it is compact in the Cantor topology, \item \emph{hereditary} if $E\subset F\in \mathcal{G}$ implies $E\in \mathcal{G}$,  \item \emph{spreading} if whenever $E\in \mathcal{G}$ and $F$ is a spread of $E$, $F\in \mathcal{G}$,  \item \emph{regular} if it is compact, hereditary, and spreading. \end{enumerate}

Let us also say that $\mathcal{G}$ is \emph{nice} if \begin{enumerate}[(i)]\item $\mathcal{G}$ is regular, \item $(1)\in \mathcal{G}$,  \item for any $\varnothing\neq E\in \mathcal{G}$, either $E\in MAX(\mathcal{G})$ or $E\cup (1+\max E)\in \mathcal{G}$.  \end{enumerate}

If $M\in[\nn]$ and if $\mathcal{F}$ is nice, then there exists a unique, finite, non-empty initial segment of $M$ which lies in $MAX(\mathcal{F})$.  We let $M_\mathcal{F}$ denote this initial segment.    We now define recursively $M_{\mathcal{F},1}=M_\mathcal{F}$ and $M_{\mathcal{F}, n+1}= (M\setminus \cup_{i=1}^n M_{\mathcal{F},i})_{\mathcal{F}}$.    An alternate description of $M_{\mathcal{F},1}, M_{\mathcal{F}, 2}, \ldots$ is that the sequence $M_{\mathcal{F},1}, M_{\mathcal{F},2}, \ldots$ is the unique partition of $M$ into successive sets which are maximal members of $\mathcal{F}$.

If $\mathcal{F}$ is nice and $M\in [\nn]$, then there exists a partition $E_1<E_2<\ldots$ of $\nn$ such that $M_{\mathcal{F},n}=(m_i)_{i\in E_n}$ for all $n\in\nn$.  We define $M^{-1}_{\mathcal{F},n}=E_n$.

Given a topological space $K$ and a subset $L$ of $K$, $L'$ denotes the \emph{Cantor Bendixson derivative} of $L$ consists of those members of $L$ which are not relatively isolated in $L$.    We define by transfinite induction the higher order transfinite derivatives of $L$ by $$L^0=L,$$ $$L^{\xi+1}=(L^\xi)',$$ and if $\xi$ is a limit ordinal, $$L^\xi=\bigcap_{\zeta<\xi}L^\zeta.$$ We recall that $K$ is said to be \emph{scattered} if there exists an ordinal $\xi$ such that $K^\xi=\varnothing$. In this case, we define the \emph{Cantor Bendixson index} of $K$ by $CB(K)=\min\{\xi: K^\xi=\varnothing\}$.    If $K^\xi\neq \varnothing$ for all ordinals $\xi$, we write $CB(K)=\infty$.   We agree to the convention that $\xi<\infty$ for all ordinals $\xi$, and therefore $CB(K)<\infty$ simply means that $CB(K)$ is an ordinal, and $K$ is scattered. 

Of course, if $\xi$ is a limit ordinal, $K$ is a compact  topological space, and $K^\zeta\neq \varnothing$ for all $\zeta<\xi$, then $(K^\zeta)_{\zeta<\xi}$ is a collection of compact subsets of $K$ with the finite intersection property, so $K^\xi=\cap_{\zeta<\xi}K^\zeta\neq \varnothing$.   From this it follows that for a compact topological space, $CB(K)$ cannot be a limit ordinal.

We recall the following, which is well known.  The proof is standard, so we omit it.

\begin{fact} Let $\mathcal{G}\subset [\nn]^{<\nn}$ be hereditary.   The following are equivalent.  \begin{enumerate}[(i)]\item There does not exist $M\in [\nn]$ such that $[M]^{<\nn}\subset \mathcal{G}$.\item $\mathcal{G}$ is compact.  \item $CB(\mathcal{G})<\infty$. \item $CB(\mathcal{G})<\omega_1$. \end{enumerate}

\end{fact}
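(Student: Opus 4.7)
The plan is to prove the cycle (i) $\Rightarrow$ (ii) $\Rightarrow$ (iv) $\Rightarrow$ (iii) $\Rightarrow$ (i); the implication (iv) $\Rightarrow$ (iii) is immediate from the convention $\xi < \infty$ for all ordinals $\xi$. For (i) $\Rightarrow$ (ii), I would show $\mathcal{G}$ is closed in $2^\nn$ (and so compact). Suppose $F_j \in \mathcal{G}$ and $F_j \to F$ in the Cantor topology. If $F$ is finite with $\max F = m$, then eventually $F_j \cap \{1, \ldots, m\} = F$, so $F \subset F_j$ and heredity gives $F \in \mathcal{G}$. If $F$ is infinite, then every finite $E \subset F$ is eventually contained in $F_j$, so by heredity $[F]^{<\nn} \subset \mathcal{G}$, contradicting (i).

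For (iii) $\Rightarrow$ (i), I argue the contrapositive: assume $[M]^{<\nn} \subset \mathcal{G}$ for some $M = (m_i) \in [\nn]$. For any finite $E \subset M$, the sets $E \cup \{m_i\}$ with $m_i > \max E$ all lie in $[M]^{<\nn}$ and converge to $E$, so no point of $[M]^{<\nn}$ is isolated in $\mathcal{G}$; thus $[M]^{<\nn} \subset \mathcal{G}'$. The same argument applied within each derivative, together with closure under intersections at limit stages, yields $[M]^{<\nn} \subset \mathcal{G}^\zeta$ for every ordinal $\zeta$, forcing $CB(\mathcal{G}) = \infty$.

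The main step is (ii) $\Rightarrow$ (iv). Since $\mathcal{G}$ is a subspace of the second countable space $2^\nn$, the derivative process must stabilize at some countable ordinal $\zeta_0$: each strict drop $\mathcal{G}^\zeta \supsetneq \mathcal{G}^{\zeta+1}$ exhibits a point isolated in $\mathcal{G}^\zeta$ and hence a basic open set meeting $\mathcal{G}^\zeta$ in a singleton and missing every later derivative, and only countably many basic opens are available. If $P := \mathcal{G}^{\zeta_0} = \varnothing$, we are done. Otherwise $P$ is a nonempty compact perfect subset of $[\nn]^{<\nn}$, and I plan to derive a contradiction with (i) (hence with (ii), already shown equivalent to (i)) by recursively building $m_1 < m_2 < \cdots$ together with a decreasing sequence of uncountable perfect sets $P = P_0 \supset P_1 \supset \cdots$ such that every $E \in P_i$ satisfies $E \cap \{1, \ldots, m_i\} = \{m_1, \ldots, m_i\}$. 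At stage $i$, since every $E \in P_i$ is finite while $P_i$ itself is uncountable, the partition $P_i = \{\{m_1, \ldots, m_i\}\} \sqcup \bigsqcup_{n > m_i} Q_n$ with $Q_n := \{E \in P_i : n = \min(E \setminus \{1, \ldots, m_i\})\}$ forces some $Q_n$ to be uncountable; setting $m_{i+1} := n$ and $P_{i+1} := Q_n$ preserves perfectness since $Q_n$ is the intersection of $P_i$ with a clopen subset of $2^\nn$. Then $M := \{m_i : i \in \nn\}$ is infinite and, by heredity, $[M]^{<\nn} \subset P \subset \mathcal{G}$, contradicting (i).

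The main obstacle is precisely this extraction of $M$ from the perfect kernel: one must verify that uncountability survives the diagonalization at each stage and that the resulting $M$ has every finite subset contained in some member of $P$. Everything else is a fairly routine interplay between heredity and the Cantor topology on $2^\nn$.
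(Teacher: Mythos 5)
The paper states this Fact without proof, remarking only that ``the proof is standard, so we omit it,'' so there is no paper argument to compare against; I will simply assess your proposal on its own terms.

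Your argument is essentially correct and follows the standard route. The implications (i) $\Rightarrow$ (ii), (iii) $\Rightarrow$ (i), and (iv) $\Rightarrow$ (iii) are all fine, and the perfect-kernel extraction in (ii) $\Rightarrow$ (iv) works: $\mathcal{G}^{\zeta_0}$ is a nonempty closed set with no isolated points, hence uncountable; your diagonalization correctly preserves uncountability at each stage (a countable partition of an uncountable set has an uncountable piece), and every finite $F\subset M$ is a subset of some $E\in P_k\subset\mathcal{G}$, so $[M]^{<\nn}\subset\mathcal{G}$ by heredity of $\mathcal{G}$.

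The one genuine wrinkle is the closing sentence of (ii) $\Rightarrow$ (iv), where you ``derive a contradiction with (i) (hence with (ii), already shown equivalent to (i)).'' At that point you have only established (i) $\Rightarrow$ (ii); the reverse implication (ii) $\Rightarrow$ (i) is precisely the composite (ii) $\Rightarrow$ (iv) $\Rightarrow$ (iii) $\Rightarrow$ (i) that is under construction, so invoking the equivalence there is circular as stated. The fix is immediate and should replace that parenthetical: having produced $[M]^{<\nn}\subset\mathcal{G}$, note that the initial segments of $M$ converge in $2^\nn$ to $M$ itself, so $M\in\overline{\mathcal{G}}$; since $\mathcal{G}$ is compact it is closed in $2^\nn$, forcing the infinite set $M$ into $\mathcal{G}\subset[\nn]^{<\nn}$, an outright contradiction of (ii). (Equivalently, you could prove (ii) $\Rightarrow$ (i) as a separate one-line step before tackling (ii) $\Rightarrow$ (iv).) A second, harmless overstatement: you assert $[M]^{<\nn}\subset P$, which would require $P=\mathcal{G}^{\zeta_0}$ to be hereditary --- true, and provable by transfinite induction, but not something you argued; the weaker and sufficient conclusion $[M]^{<\nn}\subset\mathcal{G}$ is what heredity of $\mathcal{G}$ directly gives you.
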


For each $n\in\nn\cup \{0\}$, we let $\mathcal{A}_n=\{E\in [\nn]^{<\nn}: |E|\leqslant n\}$.    It is clear that $\mathcal{A}_n$ is regular.    Also of importance are the Schreier families, $(\mathcal{S}_\xi)_{\xi<\omega_1}$.  We recall these families.  We let $$\mathcal{S}_0=\mathcal{A}_1,$$ $$\mathcal{S}_{\xi+1}=\{\varnothing\} \cup \Bigl\{\bigcup_{i=1}^n E_i: \varnothing\neq E_i \in \mathcal{S}_\xi, n\leqslant E_1, E_1<\ldots <E_n\Bigr\},$$ and if $\xi<\omega_1$ is a limit ordinal, there exists a sequence $\xi_n\uparrow \xi$ such that $$\mathcal{S}_\xi=\{E\in [\nn]^{<\nn}: \exists n\leqslant E\in \mathcal{S}_{\xi_n+1}\}.$$  We note that the sequence $(\xi_n)_{n=1}^\infty$ has the property that for any $n\in\nn$, $\mathcal{S}_{\xi_n+1}\subset \mathcal{S}_{\xi_{n+1}}$.    The existence of such families with the last indicated property is discussed, for example, in \cite{Concerning}.  

Given two non-empty regular families $\mathcal{F}, \mathcal{G}$, we let $$\mathcal{F}[\mathcal{G}]=\{\varnothing\}\cup \Bigl\{\bigcup_{i=1}^n E_i: \varnothing \neq E_i\in \mathcal{G}, E_1<\ldots <E_n, (\min E_i)_{i=1}^n\in \mathcal{F}\Bigr\}.$$  We let $\mathcal{F}[\mathcal{G}]=\varnothing$ if either $\mathcal{F}=\varnothing$ or $\mathcal{G}=\varnothing$. 

The following facts are collected in \cite{Concerning}.

\begin{proposition} \begin{enumerate}[(i)]\item For any non-empty regular families $\mathcal{F}, \mathcal{G}$, $\mathcal{F}[\mathcal{G}]$ is regular. Furthermore, if $CB(\mathcal{F})=\beta+1$ and $CB(\mathcal{G})=\alpha+1$, then $CB(\mathcal{F}[\mathcal{G}])=\alpha\beta+1$. \item For any $n\in\nn$, $CB(\mathcal{A}_n)=n+1$. \item For any $\xi<\omega_1$, $CB(\mathcal{S}_\xi)=\omega^\xi+1$. \item If $\mathcal{F}$ is regular and $M\in [\nn]$, then $\mathcal{F}(M^{-1})$ is regular and $CB(\mathcal{F})=CB(\mathcal{F}(M^{-1}))$. \item For regular families $\mathcal{F}, \mathcal{G}$, there exists $M\in [\nn]$ such that $\mathcal{F}(M)\subset \mathcal{G}$ if and only if there exists $M\in [\nn]$ such that $\mathcal{F}\subset \mathcal{G}(M^{-1})$ if and only if $CB(\mathcal{F})\leqslant CB(\mathcal{G})$. \end{enumerate} 

\label{deep facts}

\end{proposition}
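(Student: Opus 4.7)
The plan is to prove the parts in the order (ii), (i), (iii), (iv), (v): part (ii) provides an easy base, (i) is the workhorse from which (iii) follows by induction, and (iv) and (v) are handled by direct arguments. For (ii), a routine induction on $n$ using $(\mathcal{A}_n)' = \mathcal{A}_{n-1}$ (modulo the hereditary structure) gives $CB(\mathcal{A}_n) = n+1$.

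For (i), regularity of $\mathcal{F}[\mathcal{G}]$ reduces to routine checks. Spreading: if $E = \bigcup_{i=1}^n E_i$ is witnessed by $E_1 < \ldots < E_n$ with $E_i \in \mathcal{G}$ and $(\min E_i) \in \mathcal{F}$, and $F$ is a spread of $E$, partition $F$ into successive blocks $F_1<\ldots<F_n$ with $|F_i|=|E_i|$; each $F_i$ is a spread of $E_i$, and $(\min F_i)$ is a spread of $(\min E_i)$. Hereditary: given $F \subseteq E$, take $F_i = F \cap E_i$ and discard empties, noting that the minimums of the surviving pieces form a subset of $(\min E_i)$, hence still lie in $\mathcal{F}$. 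Compactness will follow a posteriori from the CB computation. For the CB formula, I would show by transfinite induction on $\gamma\leqslant\beta$ that the $\alpha\cdot\gamma$-th derivative of $\mathcal{F}[\mathcal{G}]$ is naturally identified with $\mathcal{F}^\gamma[\mathcal{G}]$, so that every $\alpha$ derivatives of $\mathcal{F}[\mathcal{G}]$ consume exactly one derivative of the outer structure $\mathcal{F}$ applied to the sequence of block-minimums; at $\gamma=\beta$ this gives $(\mathcal{F}[\mathcal{G}])^{\alpha\beta} = \{\varnothing\}[\mathcal{G}]=\{\varnothing\}$, whence $CB(\mathcal{F}[\mathcal{G}])=\alpha\beta+1$. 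I expect this derivative identification, particularly at limit stages of $\gamma$ where one must ensure the nested intersection of compact derivatives does not collapse prematurely (invoking the finite-intersection remark preceding the fact), to be the main technical step.

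For (iii), induct on $\xi$: the base $\xi=0$ is (ii); for the successor step, observe $\mathcal{S}_{\xi+1}=\mathcal{S}_1[\mathcal{S}_\xi]$ from the definitions and recall $CB(\mathcal{S}_1)=\omega+1$ (direct computation), then apply (i) to get $CB(\mathcal{S}_{\xi+1})=\omega^\xi\cdot\omega+1=\omega^{\xi+1}+1$; at limit $\xi$ with $\xi_n\uparrow\xi$, the nested structure $\mathcal{S}_{\xi_n+1}\subseteq \mathcal{S}_{\xi_{n+1}}$ combined with the inductive hypothesis and the defining representation of $\mathcal{S}_\xi$ gives $CB(\mathcal{S}_\xi)=\sup_n\omega^{\xi_n+1}+1=\omega^\xi+1$. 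For (iv), the inclusion $\mathcal{F}\subseteq \mathcal{F}(M^{-1})$ (holding because $(m_n)_{n\in E}$ is a spread of $E$ and $\mathcal{F}$ is spreading) yields $CB(\mathcal{F})\leqslant CB(\mathcal{F}(M^{-1}))$, while the map $E\mapsto (m_n)_{n\in E}$ is a homeomorphism from $\mathcal{F}(M^{-1})$ onto $\mathcal{F}\cap [M]^{<\nn}$, giving the reverse bound.

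For (v), the first equivalence unpacks directly from the definitions. The direction $\mathcal{F}(M)\subseteq \mathcal{G}\Rightarrow CB(\mathcal{F})\leqslant CB(\mathcal{G})$ follows because $\mathcal{F}(M)$ is homeomorphic to $\mathcal{F}$ via $E\mapsto (m_n)_{n\in E}$, giving $CB(\mathcal{F})=CB(\mathcal{F}(M))\leqslant CB(\mathcal{G})$. The reverse direction, $CB(\mathcal{F})\leqslant CB(\mathcal{G})\Rightarrow \exists M,\ \mathcal{F}(M)\subseteq\mathcal{G}$, is the other principal obstacle: one must extract an infinite $M$ making the containment hold. I would proceed by a Nash-Williams/Ramsey extraction, iterating a dichotomy: either on some infinite subset of the current candidate the containment already holds, or one produces a cofinal collection of witnesses $E\in \mathcal{F}$ whose spreads avoid $\mathcal{G}$, which can be assembled via a diagonal construction into a sub-derivation forcing $CB(\mathcal{F})>CB(\mathcal{G})$, contradicting the hypothesis. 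This is the Gasparis-type content identifying the CB index as the unique invariant of regular families under spread-containment, and I view it together with the derivative computation in (i) as the two main hurdles of the proposition.
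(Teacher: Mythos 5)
The paper does not supply a proof of this proposition; it simply records these facts with a citation to \cite{Concerning} (``The following facts are collected in \cite{Concerning}''). So there is no in-paper argument to compare against, and what you have written is a from-scratch reconstruction. Your overall plan --- (ii) as the base, (i) as the engine, (iii) by induction via $\mathcal{S}_{\xi+1}=\mathcal{S}_1[\mathcal{S}_\xi]$, (iv) via the inclusion $\mathcal{F}\subseteq\mathcal{F}(M^{-1})$ together with the homeomorphism $E\mapsto (m_n)_{n\in E}$, and (v) via a Gasparis/Nash--Williams dichotomy --- is the standard route and is essentially sound.

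Two remarks on precision. First, in the hereditary check for (i) you write that the minimums of the surviving pieces $F\cap E_i$ ``form a subset of $(\min E_i)$''; in general they form a \emph{spread} of a subset of $(\min E_i)_{i=1}^n$, since $\min(F\cap E_i)\geqslant \min E_i$ but need not equal it. The conclusion still stands because $\mathcal{F}$ is regular (both hereditary and spreading), but you should say so explicitly rather than assert a false containment. Second, the derivative identification $(\mathcal{F}[\mathcal{G}])^{\alpha\gamma}\cong\mathcal{F}^\gamma[\mathcal{G}]$ is the correct heuristic, but the precise statement needs care: one must explain which decompositions survive differentiation (roughly, the last $\mathcal{G}$-block gets depleted and then one prunes a leaf of $\mathcal{F}$), and at limit $\gamma$ one uses that an element of $[\nn]^{<\nn}$ admits only finitely many admissible block decompositions to commute the intersection with the bracket. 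You correctly flag this and the Gasparis-type extraction in (v) as the two real hurdles; both are nontrivial to execute but your plan for each is the right one. For (v), note that the ``in particular'' clause of Theorem \ref{gasp} already disposes of the case $CB(\mathcal{F})<CB(\mathcal{G})$, and in the case of equality one still must rule out the alternative branch of the dichotomy; the homeomorphism $\mathcal{F}\cong\mathcal{F}(M)\subseteq\mathcal{F}\cap[M]^{<\nn}$ (valid since $\mathcal{F}$ is spreading) is what converts a Gasparis containment $\mathcal{F}\cap[N]^{<\nn}\subseteq\mathcal{G}$ into the desired $\mathcal{F}(N)\subseteq\mathcal{G}$.
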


It is also easy to see that if $\mathcal{F}, \mathcal{G}$ are nice, $\mathcal{F}[\mathcal{G}]$ is nice.

We next recall some results from Ramsey theory.

\begin{theorem}\cite{PR} If $\mathcal{G}\subset [\nn]^{<\nn}$ is regular and if $P_1, \ldots, P_n\subset MAX(\mathcal{G})$ are such that $\cup_{i=1}^n P_i=MAX(\mathcal{G})$, then for any $M\in[\nn]$, there exist $1\leqslant i\leqslant n$ and $N\in [M]$ such that $MAX(\mathcal{G})\cap [N]^{<\nn}\subset P_i$. 

\label{PR}

\end{theorem}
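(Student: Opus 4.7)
The plan is to reduce the statement to the classical Nash-Williams partition theorem for fronts, after first verifying that the restriction of $MAX(\mathcal{G})$ to $M$ is a front on $M$. To do so I would show that for any regular $\mathcal{G}$ and any $N\in [\nn]$ there is a unique finite initial segment of $N$ lying in $MAX(\mathcal{G})$. Uniqueness is immediate since two distinct comparable initial segments $E\preceq F$ of $N$ cannot both be maximal in $\mathcal{G}$. Existence follows from compactness: otherwise one could iteratively extend initial segments of $N$ within $\mathcal{G}$ to produce a strictly increasing chain $E_1\subsetneq E_2\subsetneq \ldots$ in $\mathcal{G}$ whose union is an infinite subset of $N$; but then $E_n\to \bigcup_k E_k$ in the Cantor topology, and since $\mathcal{G}$ is closed in $2^\nn$ this forces $\bigcup_k E_k\in\mathcal{G}$, contradicting $\mathcal{G}\subset [\nn]^{<\nn}$. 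Together these observations show that $\mathcal{F}_M:=MAX(\mathcal{G})\cap [M]^{<\nn}$ has the property that every $N\in[M]$ has exactly one initial segment in $\mathcal{F}_M$, and no member of $\mathcal{F}_M$ properly extends another --- that is, $\mathcal{F}_M$ is a Nash-Williams front on $M$.

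For the case $n=2$ I would invoke the Nash-Williams partition theorem directly: if $\mathcal{F}$ is a front on $M$ and $\mathcal{F}=\mathcal{F}_1\sqcup \mathcal{F}_2$, there exists $N\in[M]$ with $\mathcal{F}_i\cap [N]^{<\nn}=\varnothing$ for some $i$. Applying this to $\mathcal{F}_M$ with the partition induced by $P_1\cap[M]^{<\nn}$ and $\mathcal{F}_M\setminus P_1$ yields $N\in [M]$ with $MAX(\mathcal{G})\cap [N]^{<\nn}$ entirely inside one of $P_1, P_2$. The general case then follows by induction on $n$: regroup $P_1,\ldots,P_n$ as $P_1$ versus $P_2\cup\cdots\cup P_n$ to obtain $N'\in[M]$ in which either $MAX(\mathcal{G})\cap [N']^{<\nn}\subset P_1$ (done) or $MAX(\mathcal{G})\cap [N']^{<\nn}\subset P_2\cup\cdots\cup P_n$, in which case the inductive hypothesis applied inside $N'$ concludes the argument. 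Only cosmetic adjustments are needed to accommodate the hypothesis that the $P_i$ need only cover $MAX(\mathcal{G})$ and may overlap: replacing $P_i$ by $P_i\setminus \bigcup_{j<i}P_j$ makes them disjoint while preserving the cover.

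The main obstacle is really just invoking Nash-Williams cleanly; everything else is bookkeeping. If one prefers to avoid that citation, an equivalent route uses the Galvin-Prikry Ramsey theorem: define $c:[M]\to\{1,\ldots,n\}$ by sending $N$ to the least $i$ such that the unique initial segment of $N$ in $MAX(\mathcal{G})$ lies in $P_i$. The first paragraph ensures that $c$ is well defined, and the map sending $N$ to its (finite) initial segment in $MAX(\mathcal{G})$ is continuous for the Cantor topology on $[M]$, so $c$ is continuous --- in particular Borel. Galvin-Prikry then produces an infinite monochromatic $N\in[M]$, which immediately delivers the conclusion. Either route reduces the combinatorial content of the theorem to a well-established partition principle.
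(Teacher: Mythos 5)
The paper does not reprove this statement; it is cited directly from Pudl\'{a}k--R\"{o}dl, so there is no in-text argument to compare against. Judged on its own terms, your reduction has a genuine gap at the very first step: for a general regular family $\mathcal{G}$, the set $MAX(\mathcal{G})\cap [M]^{<\nn}$ need \emph{not} be a front on $M$. Your compactness argument correctly shows that the chain of initial segments of $N$ \emph{lying in} $\mathcal{G}$ must terminate, so $N$ has a longest initial segment $E\in \mathcal{G}$; but it does not follow that this $E$ is $\subset$-maximal in $\mathcal{G}$. It may be that $E$ admits proper extensions inside $\mathcal{G}$, just none of them along $N$ (spreading only rules this out when the extra element can be dominated coordinate-wise by the next element of $N$). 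For a concrete counterexample, take $\mathcal{G}=\{\varnothing\}\cup\{(n):n\in\nn\}\cup\{(a,b):a<b,\ b\geqslant 5\}$, which is regular with $MAX(\mathcal{G})=\{(a,b):a<b,\ b\geqslant 5\}$. Then $N=\nn$ has longest initial segment $(1)$ in $\mathcal{G}$, but $(1)\notin MAX(\mathcal{G})$ and $(1,2)\notin\mathcal{G}$, so $N$ has \emph{no} initial segment in $MAX(\mathcal{G})$. Consequently, the coloring $c$ in your Galvin--Prikry variant is undefined on $N$, and the invocation of the front form of Nash--Williams is unjustified.

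The underlying idea is salvageable because $MAX(\mathcal{G})$ is always \emph{thin} (a $\preceq$-antichain), which is weaker than being a front but still suffices. After replacing the $P_i$ by $P_i\setminus\bigcup_{j<i}P_j$ so that they are disjoint, set $\mathcal{A}_i=\{N\in[\nn]: N \text{ has an initial segment in } P_i\}$; each $\mathcal{A}_i$ is open in $[\nn]$, hence Ramsey. Iterating Galvin--Prikry through $i=1,\dots,n$ starting from $M$ yields $N\in[M]$ such that for every $i$, either $[N]\subset \mathcal{A}_i$ or $[N]\cap \mathcal{A}_i=\varnothing$. If $[N]\subset \mathcal{A}_i$ for some $i$, then for any $E\in MAX(\mathcal{G})\cap [N]^{<\nn}$, extending $E$ to $N':=E\cup(N\cap(\max E,\infty))\in[N]$ and using thinness forces $E\in P_i$; hence $MAX(\mathcal{G})\cap[N]^{<\nn}\subset P_i$. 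If instead $[N]\cap\mathcal{A}_i=\varnothing$ for every $i$, the same extension argument gives $MAX(\mathcal{G})\cap[N]^{<\nn}=\varnothing\subset P_1$. This avoids the unsupported front claim and closes the gap; the rest of your write-up (reduction of $n$ colors to $2$, disjointification) is fine.
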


\begin{theorem}\cite{Ga} If $\mathcal{F}, \mathcal{G}\subset [\nn]^{<\nn}$ are hereditary, then for any $M\in [\nn]$, there exists $N\in [M]$ such that either $$\mathcal{F}\cap [N]^{<\nn}\subset \mathcal{G}\text{\ \ \ \ \ or\ \ \ \ \ }\mathcal{G}\cap [N]^{<\nn}\subset \mathcal{F}.$$  

In particular, if $\mathcal{G}$ is regular and $CB(\mathcal{F})<CB(\mathcal{G})$, then for any $M\in [\nn]$, there exists $N\in [M]$ such that $\mathcal{F}\cap [N]^{<\nn}\subset \mathcal{G}$. 
\label{gasp}
\end{theorem}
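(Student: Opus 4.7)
The plan is to dispatch the ``In particular'' clause using the main dichotomy, then to prove the main dichotomy itself via two applications of the Galvin theorem.

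For the ``In particular'' clause, $\mathcal{F}$ must be compact since $\mathcal{F}$ is hereditary with $CB(\mathcal{F})<CB(\mathcal{G})<\omega_1$. Apply the main dichotomy to obtain $N\in [M]$ satisfying one of the two inclusions. If instead $\mathcal{G}\cap [N]^{<\nn}\subset \mathcal{F}$ were to hold, enumerate $N=(n_i)_{i=1}^\infty$; the bijection $E\mapsto (n_i)_{i\in E}$ is a homeomorphism from $\mathcal{G}(N^{-1})$ onto $\mathcal{G}\cap [N]^{<\nn}$, so by Proposition \ref{deep facts}(iv) we have $CB(\mathcal{G}\cap [N]^{<\nn})=CB(\mathcal{G}(N^{-1}))=CB(\mathcal{G})$. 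But $\mathcal{G}\cap [N]^{<\nn}$ is closed in $\mathcal{F}$, and $CB$ is monotone on closed subsets, so $CB(\mathcal{G})\leqslant CB(\mathcal{F})$, contradicting the hypothesis. Hence $\mathcal{F}\cap [N]^{<\nn}\subset \mathcal{G}$, as required.

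For the main dichotomy, the tool is the Galvin theorem: for any $\mathcal{H}\subset [\nn]^{<\nn}$ and any $M\in [\nn]$, there exists $N\in [M]$ such that either $\mathcal{H}\cap [N]^{<\nn}\subset \{\varnothing\}$, or every $L\in [N]$ has a non-empty initial segment in $\mathcal{H}$. This is a direct consequence of the Galvin--Prikry theorem applied to the open set $\{L\in [\nn]: \text{some non-empty initial segment of } L \text{ lies in } \mathcal{H}\}$. Apply this first to $\mathcal{H}_1=\mathcal{F}\setminus \mathcal{G}$ and $M$, producing $N_1\in [M]$; if the first alternative holds then $\mathcal{F}\cap [N_1]^{<\nn}\subset \mathcal{G}$, and we are done. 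Otherwise, apply the theorem again to $\mathcal{H}_2=\mathcal{G}\setminus \mathcal{F}$ and $N_1$, producing $N_2\in [N_1]$; if its first alternative holds, then $\mathcal{G}\cap [N_2]^{<\nn}\subset \mathcal{F}$, again done.

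The only remaining case is that both applications yield the second alternative: every $L\in [N_2]$ has a non-empty initial segment $E_1(L)\in \mathcal{F}\setminus \mathcal{G}$ and another non-empty initial segment $E_2(L)\in \mathcal{G}\setminus \mathcal{F}$. The key observation is that two initial segments of the same $L$ are $\preceq$-comparable; without loss of generality $E_1(L)\preceq E_2(L)$, so $E_1(L)\subset E_2(L)\in \mathcal{G}$, and hereditariness of $\mathcal{G}$ forces $E_1(L)\in \mathcal{G}$, contradicting $E_1(L)\in \mathcal{F}\setminus \mathcal{G}$; the opposite comparison produces a symmetric contradiction from hereditariness of $\mathcal{F}$. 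Thus this case cannot occur, and one of the two inclusions must hold on $N_2$. The main ``obstacle'' is really only the recognition that Galvin's theorem delivers \emph{initial segments} (which are linearly ordered among themselves), so that hereditariness together with this linear order immediately closes the argument; no finer Ramsey-theoretic machinery is needed.
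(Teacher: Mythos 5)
The paper does not prove this theorem; it is cited directly from Gasparis \cite{Ga}, so there is no internal proof to compare against. Your self-contained argument is correct, and it is in the spirit of the Nash-Williams-style proof of Gasparis's dichotomy. The ``Galvin theorem'' you isolate is indeed an immediate consequence of the Galvin--Prikry statement recorded in the paper (applied to the closed complement of the set of $L$ admitting a nonempty initial segment in $\mathcal{H}$); if $[N]$ misses that open set, any nonempty $E\in\mathcal{H}\cap[N]^{<\nn}$ would extend to an $L\in[N]$ with $E\preceq L$, a contradiction, so $\mathcal{H}\cap[N]^{<\nn}\subset\{\varnothing\}$. Your two applications, to $\mathcal{F}\setminus\mathcal{G}$ and then to $\mathcal{G}\setminus\mathcal{F}$, followed by the observation that any two initial segments of a fixed infinite set are $\preceq$-comparable (so hereditariness rules out the bad case), is clean and complete. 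Two small remarks: (1) the inference that the first alternative for $\mathcal{H}_1=\mathcal{F}\setminus\mathcal{G}$ yields $\mathcal{F}\cap[N_1]^{<\nn}\subset\mathcal{G}$ silently uses $\varnothing\in\mathcal{G}$, which requires $\mathcal{G}\neq\varnothing$; but if $\mathcal{G}=\varnothing$ the other inclusion $\mathcal{G}\cap[N]^{<\nn}\subset\mathcal{F}$ holds trivially for every $N$, so nothing is lost. (2) In the ``In particular'' clause your chain $CB(\mathcal{G})=CB(\mathcal{G}(N^{-1}))=CB(\mathcal{G}\cap[N]^{<\nn})\leqslant CB(\mathcal{F})$ is right (the second equality because $E\mapsto(n_i)_{i\in E}$ is a homeomorphism, the inequality because $\mathcal{G}\cap[N]^{<\nn}$, being compact, is closed in $\mathcal{F}$, and $A^\xi\subset B^\xi$ for $A$ closed in $B$); the preliminary remark that $\mathcal{F}$ is compact is not actually used in the argument, though it is true.
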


We now prove an easy consequence of Theorem \ref{gasp} which will be needed later. 

\begin{proposition} If $\mathcal{F}, \mathcal{G}$ are regular families such that $CB(\mathcal{F})\leqslant CB(\mathcal{G})$, then for any $M\in[\nn]$, there exists $N\in[M]$ such that for any $\varnothing \neq E\in \mathcal{F}\cap [N]^{<\nn}$, $E\setminus (\min E)\in \mathcal{G}(M)$.

\label{ironside}
\end{proposition}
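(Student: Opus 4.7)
The plan is to introduce an auxiliary regular family $\mathcal{G}^\ast$ with $CB(\mathcal{G}^\ast)>CB(\mathcal{G})$, apply the strict form of Theorem~\ref{gasp} to $\mathcal{F}(M^{-1})$ and $\mathcal{G}^\ast$, and finally translate the resulting subset back through the coordinate map $i\mapsto m_i$ to produce $N\in[M]$.

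Set $\mathcal{G}^\ast=\{\varnothing\}\cup\{A\in[\nn]^{<\nn}:A\neq\varnothing,\ A\setminus\{\min A\}\in\mathcal{G}\}$. First I would verify that $\mathcal{G}^\ast$ is regular. It is hereditary: for $A'\subset A\in\mathcal{G}^\ast$, either $\min A'=\min A$ (in which case $A'\setminus\min A'\subset A\setminus\min A\in\mathcal{G}$) or $\min A'>\min A$ (in which case $A'\subset A\setminus\min A\in\mathcal{G}$, so $A'\setminus\min A'\in\mathcal{G}$ by hereditariness of $\mathcal{G}$). It is spreading because a spread $B$ of $A\in\mathcal{G}^\ast$ yields $B\setminus\min B$ a spread of $A\setminus\min A\in\mathcal{G}$. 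It is compact: if some $K=(k_i)\in[\nn]$ had $[K]^{<\nn}\subset\mathcal{G}^\ast$, then $\{k_2,\ldots,k_n\}\in\mathcal{G}$ for every $n$, forcing $[\{k_2,k_3,\ldots\}]^{<\nn}\subset\mathcal{G}$ by hereditariness of $\mathcal{G}$, contradicting compactness of $\mathcal{G}$.

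Next I would show $CB(\mathcal{G}^\ast)=CB(\mathcal{G})+1$. Writing $CB(\mathcal{G})=\alpha+1$, decompose $\mathcal{G}^\ast\setminus\{\varnothing\}=\bigsqcup_{a\in\nn}\mathcal{G}^\ast_a$, where $\mathcal{G}^\ast_a=\{A\in\mathcal{G}^\ast:\min A=a\}$. Each fiber $\mathcal{G}^\ast_a$ is clopen in $\mathcal{G}^\ast$ and, via $A\mapsto A\setminus\{a\}$, is homeomorphic to $\{E\in\mathcal{G}:E=\varnothing\text{ or }\min E>a\}$, which by spreading of $\mathcal{G}$ has CB-index $\alpha+1$; hence $(\mathcal{G}^\ast_a)^{(\alpha)}=\{\{a\}\}$. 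Since $\{a\}\to\varnothing$ as $a\to\infty$ while no sequence from a single fiber converges to $\varnothing$, one gets $(\mathcal{G}^\ast)^{(\alpha)}=\{\varnothing\}\cup\{\{a\}:a\in\nn\}=\mathcal{A}_1$, whose CB-index is $2$, so $CB(\mathcal{G}^\ast)=\alpha+2$. Together with Proposition~\ref{deep facts}(iv), this gives $CB(\mathcal{F}(M^{-1}))=CB(\mathcal{F})\leq CB(\mathcal{G})<CB(\mathcal{G}^\ast)$.

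Finally, Theorem~\ref{gasp} (strict form) applied to the hereditary family $\mathcal{F}(M^{-1})$ and the regular $\mathcal{G}^\ast$ produces $Q\in[\nn]$ with $\mathcal{F}(M^{-1})\cap[Q]^{<\nn}\subset\mathcal{G}^\ast$. Set $N=\{m_q:q\in Q\}\in[M]$. Given a nonempty $E\in\mathcal{F}\cap[N]^{<\nn}$, write $E=\{m_i:i\in A\}$ with $A\subset Q$; since $E\in\mathcal{F}$, $A\in\mathcal{F}(M^{-1})\cap[Q]^{<\nn}\subset\mathcal{G}^\ast$, so $A\setminus\min A\in\mathcal{G}$, and therefore $E\setminus\min E=\{m_i:i\in A\setminus\min A\}\in\mathcal{G}(M)$. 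The main obstacle is the Cantor--Bendixson computation in the second paragraph: one must carefully track both the clopen decomposition $\mathcal{G}^\ast\setminus\{\varnothing\}=\bigsqcup_a\mathcal{G}^\ast_a$ and the way the fibers accumulate at $\varnothing$ in order to land exactly at $\mathcal{A}_1$ after $\alpha$ derivatives.
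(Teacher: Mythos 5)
Your proof is correct, but it takes a genuinely different route from the paper's. The paper keeps $\mathcal{G}$ fixed and, for each candidate first element $m$ of a set in $\mathcal{F}$, passes to the tail family $\mathcal{F}(m)=\{E: m<E,\ (m)\cup E\in\mathcal{F}\}$, which has strictly smaller Cantor--Bendixson index than $\mathcal{F}$; Theorem~\ref{gasp} is then invoked once per $m$, and a diagonal recursion assembles $N$. You instead transport $\mathcal{F}$ once to $\mathcal{F}(M^{-1})$, keep it fixed, and enlarge $\mathcal{G}$ to the auxiliary family $\mathcal{G}^\ast$ obtained by allowing an arbitrary initial coordinate, so that a single application of Theorem~\ref{gasp} suffices. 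The trade-off is that you must pay for the Cantor--Bendixson index of $\mathcal{G}^\ast$, which is where more care is hidden than your write-up suggests. Note, however, that the full identification $(\mathcal{G}^\ast)^{(\alpha)}=\mathcal{A}_1$ is more than the proposition requires: each clopen fiber $\mathcal{G}^\ast_a$ has $CB(\mathcal{G}^\ast_a)=\alpha+1$, so $(\mathcal{G}^\ast_a)^{(\alpha)}\neq\varnothing$; picking one set from each such derivative gives a sequence with $\min$ tending to infinity, hence converging to $\varnothing$, so $\varnothing\in(\mathcal{G}^\ast)^{(\alpha+1)}$, and $CB(\mathcal{G}^\ast)\geqslant\alpha+2>CB(\mathcal{G})\geqslant CB(\mathcal{F}(M^{-1}))$, which is all Theorem~\ref{gasp} needs. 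The sharper equality $(\mathcal{G}^\ast_a)^{(\alpha)}=\{(a)\}$ that you assert is true, but its justification uses the spreading property in a way you leave implicit: for a regular family $\mathcal{H}$ with $CB(\mathcal{H})=\alpha+1$, the shift $E\mapsto E+1$ is a homeomorphism of $\mathcal{H}$ onto a closed subset of itself, so it maps the finite set $\mathcal{H}^{(\alpha)}$ into itself; since no nonempty finite set can contain all its forward shifts, $\mathcal{H}^{(\alpha)}=\{\varnothing\}$. With that point spelled out (or replaced by the weaker lower bound just described), your argument is complete.
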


\begin{proof} If $\mathcal{F}=\varnothing$, then this is vacuous, so assume $\mathcal{F}\neq \varnothing$. For each $n\in\nn$, let $\mathcal{F}(n)=\{E\in[\nn]^{<\nn}: m<E, (n)\cup E\in \mathcal{F}\}$ and note that $$CB(\mathcal{F}(m_1))<CB(\mathcal{F})\leqslant CB(\mathcal{G}(M))=\min\{CB(\mathcal{G}(M)\cap [N]^{<\nn}): N\in[M]\}.$$   Now let $m_1=\min M$ and choose by Theorem \ref{gasp} some $N_1\in [M]$ such that either $\mathcal{F}(m_1)\cap [N_1]^{<\nn}\subset \mathcal{G}(M)$ or $\mathcal{G}(M)\cap [N_1]^{<\nn}\subset \mathcal{F}(m_1)$.  However, since $CB(\mathcal{F}(m_1))<CB(\mathcal{G}(M)\cap [N_1]^{<\nn})$, the first inclusion must hold.  

Now assuming that $m_1<\ldots <m_n$ and $ N_1\supset \ldots \supset N_n\in[M]$ have been chosen, fix $m_{n+1}\in N_n$ with $m_n<m_{n+1}$.  Arguing as in the previous paragraph, there exists $N_{n+1}\in [N_n]$ such that $\mathcal{F}(m_{n+1})\cap [N_{n+1}]^{<\nn}\subset \mathcal{G}(M)$.    This completes the recursive construction.  

Let $N=(m_n)_{n=1}^\infty$. Then if $\varnothing\neq E\in \mathcal{F}\cap [N]^{<\nn}$, there exists $n\in\nn$ such that $\min E=m_n$, and $E\setminus (m_n)\in \mathcal{F}(m_n)\cap [N_n]^{<\nn}\subset \mathcal{G}(M)$.

\end{proof}

We next recall a special case of the infinite Ramsey theorem, the proof of which was achieved in steps by Nash-Williams \cite{NW}, Galvin and Prikry \cite{GP}, Silver \cite{Silver}, and Ellentuck \cite{Ellentuck}.

\begin{theorem} If $\mathcal{V}\subset [\nn]$ is closed, then for any $M\in [\nn]$, there exists $N\in [M]$ such that either $$[N]\subset \mathcal{V}\text{\ \ \ \ \ or\ \ \ \ \ }[N]\cap \mathcal{V}=\varnothing.$$

\end{theorem}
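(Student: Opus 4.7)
The plan is to reduce the statement to Nash-Williams' theorem for thin families, and to establish the latter by combinatorial (Galvin) forcing. Since the conclusion is symmetric under replacing $\mathcal{V}$ by its complement, I may assume $\mathcal{V}$ is open and seek $N \in [M]$ with $[N] \subseteq \mathcal{V}$ or $[N] \cap \mathcal{V} = \varnothing$. The basic opens of the Cantor topology on $[\nn]$ have the form $[s] = \{N \in [\nn] : s \preceq N\}$ for $s \in [\nn]^{<\nn}$ (any Cantor-basic neighborhood of an infinite set $N$ contains some $[s]$ about $N$), so writing $\mathcal{V}$ as a union of such basic opens and keeping only the $\preceq$-minimal initial segments yields a thin family $\mathcal{S} \subseteq [\nn]^{<\nn}$ with the property that $P \in \mathcal{V}$ if and only if some initial segment of $P$ lies in $\mathcal{S}$.

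Nash-Williams' theorem applied to $\mathcal{S}$ then produces $N \in [M]$ such that either $\mathcal{S} \cap [N]^{<\nn} = \varnothing$, whence $[N] \cap \mathcal{V} = \varnothing$, or $\mathcal{S}$ is a front on $N$, meaning every $P \in [N]$ has a unique initial segment in $\mathcal{S}$, whence $[N] \subseteq \mathcal{V}$. To prove Nash-Williams itself, I would run a Galvin-style combinatorial forcing. For $s \in [\nn]^{<\nn}$ and $L \in [\nn]$ with $s < L$, declare that $L$ \emph{accepts} $s$ if every $P \in [\nn]$ with $s \preceq P$ and $P \setminus s \subseteq L$ has an initial segment in $\mathcal{S}$, and $L$ \emph{rejects} $s$ if no $L' \in [L]$ accepts $s$. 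The Galvin dichotomy---that for every pair $(s, L)$ either some $L' \in [L]$ accepts $s$ or $L$ itself rejects $s$---holds by the very definition of rejection.

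Diagonalizing this forcing, handling at stage $k$ each of the finitely many subsets of the putative first $k$ elements of $N$, produces $N \in [M]$ such that for every finite $s \subseteq N$, the tail of $N$ beyond $\max s$ decides $s$. The main obstacle, and what forces the use of thinness of $\mathcal{S}$, is extracting a coherent global verdict from these local decisions. I would close with a Ramsey thinning on the finite subsets of $N$: color each $s \subseteq N$ by its decision type and pass to an infinite subset on which the decision depends only on whether $s$ itself lies in $\mathcal{S}$. Using thinness---if $s \in \mathcal{S}$ then every admissible tail automatically accepts $s$---the rejection alternative on the relevant coinitial class of subsets forces $\mathcal{S} \cap [N]^{<\nn} = \varnothing$, while in the complementary case acceptance forces $\mathcal{S}$ to form a front on a further refinement of $N$, completing the proof.
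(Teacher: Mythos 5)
The paper itself does not prove this statement; it is recalled as the Nash-Williams--Galvin--Prikry--Silver--Ellentuck theorem, with citations and no argument. Your outline follows the standard Galvin--Prikry combinatorial forcing, which is a legitimate route, but as written there is a genuine gap at the close. After constructing $N$ so that the tail of $N$ beyond $\max s$ decides every finite $s\subseteq N$, what you need is a propagation step, not a Ramsey thinning: the diagonalization must also arrange that rejection is inherited downward, i.e. if $N$ rejects $\varnothing$ then every finite $s\subseteq N$ is rejected by the corresponding tail. This is secured by Galvin's key lemma (if $L$ rejects $s$, then there is $L'\in[L]$ with $\max s<\min L'$ such that $L'$ rejects $s\cup\{n\}$ for every $n\in L'$), which your sketch omits and which is in fact the heart of the proof. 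Once you have it, the argument closes immediately in either branch: if $N$ accepts $\varnothing$ then $[N]\subseteq\mathcal{V}$, and if $N$ rejects $\varnothing$ then no $s\subseteq N$ can lie in $\mathcal{S}$ (any such $s$ is automatically accepted by every admissible tail, as you yourself observe), whence $[N]\cap\mathcal{V}=\varnothing$. No Ramsey-type coloring is needed, and the one you propose is not well-formed: "color each $s\subseteq N$ by its decision type and pass to an infinite subset on which the decision depends only on whether $s\in\mathcal{S}$" is not an instance of any partition theorem available to you at this point, and to the extent it resembles the Nash-Williams partition theorem you would be invoking, circularly, something essentially equivalent to the result being proved.

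Relatedly, your stated role for thinness is misplaced. Thinness of $\mathcal{S}$ is a convenient feature of the Nash-Williams reformulation, but it is not what makes the combinatorial forcing go through; the forcing argument proves the open-set statement directly, with no mention of thin families, and the detour through $\preceq$-minimal initial segments is a harmless but optional repackaging. The load-bearing ingredient is the rejection-propagation lemma embedded in the diagonalization, and that is exactly what is missing from the proposal.
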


\section{Quantified weak convergence and weak compactness} 

In what follows, $\mathbb{K}$ denotes the scalar field, either $\mathbb{R}$ or $\mathbb{C}$, and $S_\mathbb{K}=\{t\in \mathbb{K}: |t|=1\}$.   Let $X$ be a Banach space and let $(x_i)_{i=1}^\infty$ be a sequence in $X$. For $\ee>0$, we define three subsets of $[\nn]^{<\nn}$ associated with $(x_i)_{i=1}^\infty$.  We let $$\mathfrak{F}^a_\ee((x_i)_{i=1}^\infty)=\{\varnothing\}\cup \{F\in [\nn]^{<\nn}: (\exists x^*\in B_{X^*})(\forall n\in F)(|x^*(x_n)|\geqslant \ee)\},$$ $$\mathfrak{F}_\ee((x_i)_{i=1}^\infty) = \{\varnothing\}\cup \{F\in [\nn]^{<\nn}: (\exists x^*\in B_{X^*})(\forall n\in F)(\text{Re\ }x^*(x_n)\geqslant \ee)\},$$ $$\mathfrak{F}_\ee^\sigma((x_i)_{i=1}^\infty)= \{\varnothing\}\cup \{F\in [\nn]^{<\nn}:(\forall (\ee_n)_{n\in F}\in S_\mathbb{K}^F)(\exists x^*\in B_{X^*})(\forall n\in F)(\text{Re\ }x^*(\ee_n x_n)\geqslant \ee)\}.$$   

It is evident that $$\mathfrak{F}^\sigma_\ee((x_i)_{i=1}^\infty)\subset \mathfrak{F}_\ee((x_i)_{i=1}^\infty)\subset \mathfrak{F}^a_\ee((x_i)_{i=1}^\infty).$$ It follows from the geometric Hahn-Banach theorem that for $\varnothing\neq F\in [\nn]^{<\nn}$, \begin{enumerate}[(i)]\item $F\in \mathfrak{F}^a_\ee((x_i)_{i=1}^\infty)$ if and only if there exists $(\ee_n)_{n\in F}\in S_\mathbb{K}^F$ such that $$\min \{\|x\|: x\in \text{co}(\ee_n x_n: n\in F)\}\geqslant \ee,$$  \item $F\in \mathfrak{F}_\ee((x_i)_{i=1}^\infty)$ if and only if $$\min \{\|x\|: x\in \text{co}(x_n: n\in F)\}\geqslant\ee,$$ and \item $F\in \mathfrak{F}^\sigma_\ee((x_i)_{i=1}^\infty)$ if and only if for every $(\ee_n)_{n\in F}\in S_\mathbb{K}^F$, $$\min \{\|x\|: x\in \text{co}(\ee_n x_n: n\in F)\}\geqslant \ee.$$  \end{enumerate}

 We next give a partial converse to the inclusions above. For the real case, this result is found in \cite{AMT}.   We  discuss how to make a minor modification of their proof to deduce the complex case. The methods of proof closely follow those in \cite{AMT}.

\begin{theorem} If $(x_i)_{i=1}^\infty$ is a weakly null sequence and $\xi<\omega_1$ is an ordinal, the following are equivalent. \begin{enumerate}[(i)]\item There exist $\ee>0$ and $N\in [\nn]$ such that $\mathcal{S}_\xi(N)\subset \mathfrak{F}^a_\ee((x_i)_{i=1}^\infty)$. \item There exist $\ee>0$ and $N\in [\nn]$ such that $\mathcal{S}_\xi(N)\subset \mathfrak{F}_\ee((x_i)_{i=1}^\infty)$. \item There exist $\ee>0$ and $N\in [\nn]$ such that $\mathcal{S}_\xi(N)\subset \mathfrak{F}_\ee^\sigma((x_i)_{i=1}^\infty)$. \end{enumerate}

\label{convex unc}
\end{theorem}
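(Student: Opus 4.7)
The implications (iii) $\Rightarrow$ (ii) $\Rightarrow$ (i) are immediate from the inclusions $\mathfrak{F}^\sigma_\ee \subset \mathfrak{F}_\ee \subset \mathfrak{F}^a_\ee$ noted just before the statement, so the work is in the reverse chain, which reduces to proving (i) $\Rightarrow$ (iii). Since \cite{AMT} establishes (i) $\Rightarrow$ (iii) when $\mathbb{K}=\mathbb{R}$, my task is to describe the structure of their argument and then explain the minor modification required to absorb the fact that $S_\mathbb{K}$ is now the continuum $\{t\in\mathbb{C}:|t|=1\}$ rather than the two-element set $\{-1,+1\}$.

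I would argue by transfinite induction on $\xi<\omega_1$, following \cite{AMT}. The base case $\xi=0$ is immediate, since $\mathcal{S}_0(N)\subset \mathfrak{F}^a_\ee((x_i))$ amounts to $\|x_n\|\geqslant \ee$ for $n\in N$, which places the relevant singletons in $\mathfrak{F}^\sigma_\ee$ trivially. The limit step $\xi=\sup \xi_n$ is handled by a diagonal extraction against the structural description $\mathcal{S}_\xi=\{E:\exists n\leqslant E\in \mathcal{S}_{\xi_n+1}\}$, applying the inductive hypothesis to each $\xi_n+1$ in turn and extracting a single $N$ valid for all levels. The successor step $\xi\mapsto \xi+1$ is the heart of the matter: each $F\in MAX(\mathcal{S}_{\xi+1}(N))$ decomposes canonically as $F=\bigcup_{i=1}^n E_i$ with $E_i\in\mathcal{S}_\xi$ and $n\leqslant E_1<\ldots<E_n$. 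Applying the inductive hypothesis to each block $E_i$ yields blockwise sign witnesses, and Theorem \ref{PR} is then used to stabilize these blockwise witnesses into a single coherent sign choice on a further $N'\in[N]$.

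The only point at which the real case deviates from the complex case is the stabilization via Theorem \ref{PR}. For $\mathbb{K}=\mathbb{R}$, each block head is colored by an element of $\{-1,+1\}$, producing a finite coloring of $MAX(\mathcal{S}_{\xi+1}(N))$ to which Theorem \ref{PR} applies directly. For $\mathbb{K}=\mathbb{C}$, I would fix a small $\delta>0$, cover $S_\mathbb{K}$ by finitely many arcs $A_1,\ldots,A_K$ of diameter less than $\delta$, and color each $F$ by the tuple indicating which arc contains the witnessing sign at each block head. Rounding each witnessing sign to a fixed representative of its arc degrades the relevant Hahn--Banach estimate by a constant multiple of $\delta$, so the discretized witness still lies in $\mathfrak{F}^a_{\ee-C\delta}$ for a bounded constant $C$ depending only on $\sup_n \|x_n\|$. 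This closes the induction for $\mathbb{K}=\mathbb{C}$ at the mild cost of an arbitrarily small $\delta$-loss in $\ee$. The main obstacle I anticipate is coordinating this bookkeeping across successor steps so that the cumulative discretization loss remains bounded away from $\ee$; this is tractable because for any fixed $F$ the construction of its witness invokes only finitely many successor steps, allowing $\delta$ at each stage to be taken geometrically small.
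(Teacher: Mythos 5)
Your proposed transfinite induction does not close at the successor step, and this is a genuine gap. Concretely: knowing that every $E\in\mathcal{S}_\xi(N')$ lies in $\mathfrak{F}^\sigma_{\ee'}$ gives nothing about a set $F=\bigcup_{i=1}^n E_i\in\mathcal{S}_{\xi+1}(N')$, because a convex combination $\sum_{n\in F}a_n x_n$ decomposes as $\sum_i \alpha_i v_i$ where $v_i$ is a convex combination supported on $E_i$ and $\sum_i\alpha_i=1$; the inductive hypothesis bounds $\|\sum_{n\in E_i}\ee_n a_n x_n\|$ below for each $i$, but those blockwise vectors can cancel, so the total norm has no useful lower bound. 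In addition, the stabilization via Theorem \ref{PR} requires a \emph{finite} coloring, yet the natural coloring of $MAX(\mathcal{S}_{\xi+1}(N))$ by witness sign patterns (even discretized to $K$ arcs, and even restricted to block heads) uses $K^n$ colors where $n=\min F$ is unbounded, so the Pudl\'{a}k--R\"{o}dl theorem does not apply as you describe.

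The paper's route is structurally different and makes no use of the Schreier family structure in the argument. It first proves a $\xi$-uniform convex unconditionality statement: for any weakly null $(x_j)\subset B_X$ and $\ee>0$, some subsequence $(y_j)$ has the property that whenever $\sum_j|\lambda_j|\leq 1$ and $\|\sum_j\lambda_j y_j\|\geq\ee$, then $\|\sum_j\ee_j\lambda_j y_j\|\geq\ee^2/16000$ for all unimodular $(\ee_j)$. This is obtained by a diagonalization (Lemma \ref{cu} and Corollary \ref{bfdi}) that exploits weak nullity to relocalize a norming functional onto an arbitrary $G\subset\nn$ while driving it small off $G$; the complex case is absorbed by splitting $S_\mathbb{K}$ into ten sectors and applying the pigeonhole principle to the coefficients, functional values, and signs of a \emph{single} convex combination, not across maximal Schreier sets. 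Theorem \ref{convex unc} then follows immediately: given $\mathcal{S}_\xi(N)\subset\mathfrak{F}^a_\ee$, pass to the convex-unconditionality subsequence and conclude $\mathcal{S}_\xi(N')\subset\mathfrak{F}^\sigma_{\ee^2/16000}$. Your attribution to \cite{AMT} of a transfinite induction is also a mischaracterization; that paper's technique is precisely this convex unconditionality.
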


The first lemma and the succeeding corollary are close modifications of the analogous results from \cite{AG}, so we give only a sketch for completeness.  

\begin{lemma} Suppose $k\in \nn\cup \{0\}$  and $S, T_1, \ldots, T_k\subset \mathbb{C}$ are sets.  If $X$ is a Banach space, $v_1, \ldots, v_k\in X$,  $(u_j)_{j=1}^\infty \subset X$ is a weakly null sequence, and $\delta>0$, then there exists $M\in[\nn]$ such that for any $(n_0, n_1, n_2, \ldots)\in [M]$, if there exist $E,F\subset \{1, \ldots, k\}$, $x^*\in B_{X^*}$, and $s\in \nn\cup \{0\}$ such that $x^*(v_j)\in S$ for all $j\in E$, $x^*(v_j)\in T_j$ for all $j\in F$, and $x^*(u_{n_j})\in S$ for all $1\leqslant j\leqslant s$, then there exists $y^*\in B_{X^*}$ such that $y^*(v_j)\in S$ for all $j\in E$, $y^*(v_j)\in T_j$ for all $j\in F$, $y^*(u_{n_j})\in S$ for all $1\leqslant j\leqslant s$, and $|y^*(u_{n_0})|\leqslant \delta$. 

\label{cu}
\end{lemma}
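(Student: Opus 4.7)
My plan is to combine the weak nullity of $(u_j)$ with the finiteness of $(E,F)$-choices via a Ramsey-theoretic diagonalization, in the spirit of analogous arguments in \cite{AMT} and \cite{AG}.

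First, by restricting $X$ to the closed separable subspace generated by $v_1, \ldots, v_k, u_1, u_2, \ldots$ and extending functionals back to $X$ via Hahn-Banach, I may assume $X$ is separable, so $B_{X^*}$ is $w^*$-compact metrizable. For each of the $4^k$ pairs $(E,F) \in 2^{\{1,\ldots,k\}} \times 2^{\{1,\ldots,k\}}$, set
\[
C_{E,F} = \{x^* \in B_{X^*} : x^*(v_j) \in S\ (j \in E),\ x^*(v_j) \in T_j\ (j \in F)\},
\]
and for finite $n_1 < \cdots < n_s$ put $C_{E,F}^s(n_1,\ldots,n_s) = C_{E,F} \cap \{x^* : x^*(u_{n_j}) \in S,\ 1 \le j \le s\}$. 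The lemma asserts precisely that for some $M$, whenever $C_{E,F}^s(n_1,\ldots,n_s) \neq \varnothing$ it contains a point with $|y^*(u_{n_0})| \le \delta$.

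For each $(E,F)$, I would apply the Galvin--Prikry theorem to the set
\[
\mathcal{V}_{E,F} = \{(n_0,n_1,\ldots) \in [\nn] : \forall s,\ C_{E,F}^s(n_1,\ldots,n_s) \neq \varnothing \Rightarrow \exists y^* \in C_{E,F}^s(n_1,\ldots,n_s) \text{ with } |y^*(u_{n_0})| \le \delta\},
\]
obtaining $M_{E,F} \in [\nn]$ with either $[M_{E,F}] \subseteq \mathcal{V}_{E,F}$ or $[M_{E,F}] \cap \mathcal{V}_{E,F} = \varnothing$. Taking the diagonal intersection of the $M_{E,F}$ over the finitely many pairs $(E,F)$ then yields the desired $M$.

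The crux is ruling out the second alternative. If $[M_{E,F}] \cap \mathcal{V}_{E,F} = \varnothing$, then every sequence in $[M_{E,F}]$ carries a witness $x^*$ satisfying the $(E,F)$- and $s$-conditions for which $|y^*(u_{n_0})| > \delta$ for \emph{every} admissible $y^*$; in particular $|x^*(u_{n_0})| > \delta$. A further Ramsey extraction stabilizes $s$, and a $w^*$-compactness / diagonal subsequence argument then extracts a cluster point $z^* \in B_{X^*}$ which still satisfies the $(E,F)$-conditions (replacing $S$ and $T_j$ by their closures if necessary, which is harmless since the contradiction we seek only uses an inequality on $u_j$) and which satisfies $|z^*(u_j)| \ge \delta$ along an infinite set of $j$'s. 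This directly contradicts the weak nullity of $(u_j)$. The main obstacle is precisely that the witness $x^*$ depends on the entire sequence $(n_1, \ldots, n_s)$, so weak nullity of a single functional cannot be invoked pointwise; the Galvin--Prikry step combined with $w^*$-compact extraction of a limit functional is exactly what bridges this gap.
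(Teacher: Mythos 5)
Your setup mirrors the paper's exactly: fix a pair $(E,F)$, form the closed set $\mathcal{V}_{E,F}$, apply the Nash-Williams/Galvin--Prikry dichotomy to get $M_{E,F}$ with $[M_{E,F}]\subseteq\mathcal{V}_{E,F}$ or $[M_{E,F}]\cap\mathcal{V}_{E,F}=\varnothing$, intersect over the finitely many pairs, and try to rule out the second alternative via weak nullity. That much is right.

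The gap is in the step where you rule out the second alternative. You say ``a $w^*$-compactness / diagonal subsequence argument then extracts a cluster point $z^*\in B_{X^*}$ which \ldots satisfies $|z^*(u_j)|\ge\delta$ along an infinite set of $j$'s,'' but as sketched this does not follow. From $[M]\cap\mathcal{V}_{E,F}=\varnothing$ you only get, for \emph{each} sequence $N=(n_0,n_1,\ldots)\in[M]$, a witness $x^*_N\in B_{X^*}$ with $|x^*_N(u_{n_0})|>\delta$ --- that is, each witness gives a $\delta$-lower bound at the \emph{single} index $n_0(N)$. Taking a $w^*$-cluster point of $(x^*_{N_k})_k$ over a sequence of sequences $N_k$ tells you nothing about where the limit functional is bounded below, because $x^*_{N_l}(u_{n_0(N_k)})$ is uncontrolled for $l\neq k$. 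Also, ``a further Ramsey extraction stabilizes $s$'' cannot be made literal: $s$ ranges over an infinite set, and even if you could stabilize it, that doesn't resolve the index-dependence just described.

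What is missing is the specific construction the paper uses. Writing $M=(m_i)_{i=1}^\infty$, for $1\le i\le j$ one considers $M_{ij}=\{m_i,m_{j+1},m_{j+2},\ldots\}$, for which $n_0=m_i$ while $(n_1,n_2,\ldots)=(m_{j+1},m_{j+2},\ldots)$. Crucially, conditions (1)--(3) for $M_{ij}$ and a given $s$ do not involve $n_0=m_i$ at all, so they are the \emph{same} constraint for every $i\le j$. Since $M_{ij}\notin\mathcal{V}_{E,F}$ there are witnesses $x^*_{ij}$ and $s_{ij}$; choosing $i_j$ with $s_{i_j j}=\max_{i\le j}s_{ij}$ and setting $x^*_j:=x^*_{i_j j}$, the functional $x^*_j$ satisfies the stronger constraint with $s=s_j$ and hence the weaker one with $s=s_{ij}$ for every $i\le j$, which forces $|x^*_j(u_{m_i})|>\delta$ for all $1\le i\le j$. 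A single functional is thus bounded below by $\delta$ at the first $j$ indices, and now a $w^*$-cluster point of $(x^*_j)$ really does contradict weak nullity. Without this ``freeze the tail, vary only $n_0$, take the max of $s$'' device, the cluster-point argument you invoke does not close the gap you yourself identify in the final sentence.
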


\begin{proof} For each pair $E,F$ of subsets of $\{1, \ldots, k\}$, let $\mathcal{V}_{E,F}$ denote the set of $M=(m_i)_{i=0}^\infty\in[\nn]$ such that if there exist $x^*\in B_{X^*}$ and $s\in \nn\cup \{0\}$ such that  $x^*(v_i)\in S$ for each $i\in E$,  $x^*(v_i)\in T_i$ for each $i\in F$,  $x^*(u_{m_i})\in S$ for all $1\leqslant i\leqslant s$,  then there exists $y^*\in B_{X^*}$ satisfying these three conditions as well as the condition \item $|y^*(u_{m_0})|\leqslant \delta$.  It is clear that $\mathcal{V}_{E,F}$ is closed, whence for any $L\in[\nn]$, there exists $M\in [L]$ such that either $[M]\cap \mathcal{V}_{E,F}=\varnothing$ or $[M]\subset \mathcal{V}_{E,F}$.   We claim that the former cannot hold. We show this by contradiction, so assume $[M]\cap \mathcal{V}_{E,F}=\varnothing$ and write $M=(m_i)_{i=1}^\infty$.  For each $1\leqslant i\leqslant j$, let $M_{ij}=\{m_i, m_{j+1}, m_{j+2}, \ldots\}$.   Then since $M_{ij}\notin \mathcal{V}_{E,F}$, there exist $x^*_{ij}\in B_{X^*}$ and $s_{ij}\in \nn\cup \{0\}$ such that \begin{enumerate}\item $x^*_{ij}(v_l)\in S$ for each $l\in E$, \item $x^*_{ij}(v_l)\in T_l$ for each $l\in F$, \item $x^*_{ij}(u_{m_{j+l}})\in S$ for each $1\leqslant l\leqslant s_{ij}$,

but there does not exist any functional $x^*\in B_{X^*}$ satisfying (1)-(3) as well as 

\item $|x^*(u_{m_i})|\leqslant \delta$. \end{enumerate}   Now fix $1\leqslant i_j\leqslant j$ such that $s_j:=s_{i_j j}=\max_{1\leqslant i\leqslant j} s_{ij}$ and note that $x^*_j:=x^*_{i_j j}$ satsifies 1-3 for each $1\leqslant i\leqslant j$.    Therefore $x^*_j$ cannot satisfy $4$ for any $i$, which means $|x^*_j(u_{m_i})|>\delta$ for all $1\leqslant i\leqslant j$.   Then if $x^*$ is any weak$^*$-cluster point of $(x^*_j)_{j=1}^\infty$,  $|x^*(u_{m_i})|\geqslant \delta$ for all $i\in\nn$, contradicting the weak nullity of $(u_i)_{i=1}^\infty$.

What the above argument shows is that for each $E,F\subset \{1, \ldots, k\}$ and any $L\in [\nn]$, there exists $M\in [L]$ such that $[M]\subset \mathcal{V}_{E,F}$.   Now if $(E_i, F_i)_{i=1}^t$ is an enumeration of all pairs of subsets of $\{1, \ldots, k\}$, we may recursively select $$\nn\supset L_1\supset \ldots \supset L_t=:M\in [\nn]$$ such that for each $1\leqslant i\leqslant t$, $[L_i]\subset \mathcal{V}_{E_i, F_i}$.    Then this set $M$ is the one we seek.

\end{proof}

In what follows, for $j\in \{0, \ldots, 9\}$ and $\mu>0$, we let $$S_j=\Bigl\{re^{i\theta}: r\geqslant 0, \frac{j\pi}{5}-\frac{\pi}{10}<\theta\leqslant \frac{j \pi}{5}+\frac{\pi}{10}\Bigr\}$$ and $$S_j^\mu= \{z\in S_j: |z|\geqslant \mu\}.$$   

\begin{corollary} Suppose $(x_j)_{j=1}^\infty \subset X$ is a weakly null sequence.    Fix $0<\ee$ and $(\delta_n)_{n=1}^\infty \subset (0, \infty)$.  Then there exists a subsequence $(y_j)_{j=1}^\infty $ of $(x_j)_{j=1}^\infty$ such that for any non-empty set $G\subset \nn$ and $x^*_0\in B_{X^*}$ such that $x^*_0( y_j)\in S^{\frac{\ee}{2}}_0$ for all $j\in G$,  there exists $x^*\in B_{X^*}$ such that $x^*(y_j)\in S^{\frac{\ee}{2}}_0$ for all $j\in G$ and $|x^*(y_j)|\leqslant \delta_j$ for all $j\in \nn\setminus G$.  

\label{bfdi}

\end{corollary}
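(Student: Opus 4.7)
The plan is to construct the subsequence $(y_n)$ recursively via Lemma \ref{cu}, and then to verify the stated extension property by a finite iteration combined with a weak-$^*$ compactness argument. I would set $M_0=\nn$; having chosen $m_n=\min M_{n-1}$, I let $y_n=x_{m_n}$ and apply Lemma \ref{cu} with $v_j=y_j$ for $1\leqslant j\leqslant n$, $(u_j)_{j=1}^\infty=(x_j)_{j=1}^\infty$, $S=S_0^{\ee/2}$, $T_j=\{z\in\mathbb{C}:|z|\leqslant \delta_j\}$, and $\delta=\delta_{n+1}$, to obtain some $M^{(n+1)}$. Then I set $M_n=M_{n-1}\cap M^{(n+1)}\cap (m_n,\infty)$. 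By construction, for every $n$ the tail $\{m_n,m_{n+1},\ldots\}$ is contained in $M^{(n)}$, so the conclusion of Lemma \ref{cu} at step $n$ can be invoked with $n_0=m_n$ and any choice of $n_1<\cdots<n_s$ drawn from $\{m_\ell:\ell>n\}$.

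For the verification I would fix a nonempty $G\subset \nn$ and $x_0^*\in B_{X^*}$ with $x_0^*(y_j)\in S_0^{\ee/2}$ for all $j\in G$, and prove the following intermediate claim: for each $N\in\nn$ there exists $z_N^*\in B_{X^*}$ satisfying $z_N^*(y_j)\in S_0^{\ee/2}$ for every $j\in G\cap \{1,\ldots,N\}$ and $|z_N^*(y_j)|\leqslant \delta_j$ for every $j\in \{1,\ldots,N\}\setminus G$. To prove this I would enumerate $\{1,\ldots,N\}\setminus G$ as $f_1<\cdots<f_r$, set $z_0^*=x_0^*$, and at step $i$ apply the conclusion of Lemma \ref{cu} at $f_i$ with $n_0=m_{f_i}$ and $n_1<\cdots<n_s$ being the list of $m_\ell$ with $\ell\in G\cap \{f_i+1,\ldots,N\}$. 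The inductive hypothesis on $z_{i-1}^*$ supplies exactly the $E$-, $F$-, and future $S$-conditions the lemma requires, so it outputs $z_i^*$ preserving them and additionally satisfying $|z_i^*(y_{f_i})|\leqslant \delta_{f_i}$. After $r$ steps I set $z_N^*=z_r^*$.

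I would then extract $x^*$ as a weak-$^*$ cluster point of $(z_N^*)_{N=1}^\infty$ in $B_{X^*}$. The conditions $|x^*(y_j)|\leqslant \delta_j$ for $j\notin G$ are weak-$^*$ closed and therefore pass to the limit. For $j\in G$ the values $z_N^*(y_j)$ lie in $S_0^{\ee/2}$ for all $N\geqslant j$, hence $x^*(y_j)\in \overline{S_0^{\ee/2}}$; this closure differs from $S_0^{\ee/2}$ only on the boundary ray $\theta=-\pi/10$, and the open-sector conclusion can be recovered by running the construction with $\ee/2$ replaced by $(1-\eta)\ee/2$ for a small $\eta>0$ and then undoing the shrinkage.

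The main obstacle I expect is the bookkeeping in the finite iteration: at step $i$ every index $m_\ell$ with $\ell\in G\cap \{f_i+1,\ldots,N\}$, used as an $n_j$, must lie in $M^{(f_i)}$ so that the lemma's conclusion is genuinely available, and simultaneously the $E$- and $F$-conditions inherited from the previous $z_{i-1}^*$ must continue to match the lemma's hypotheses. Both of these are arranged exactly by the nested choice $M_n\subset M^{(f_i)}$ for all $n\geqslant f_i$ built into the recursive construction and by processing the elements of $\{1,\ldots,N\}\setminus G$ in increasing order.
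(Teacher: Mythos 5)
Your proof follows the paper's argument essentially step for step: the subsequence is extracted by nested applications of Lemma \ref{cu} with $v_j$ the already-chosen $y_j$, $T_j=\{z:|z|\leqslant\delta_j\}$, and $\delta=\delta_{n+1}$; the extension property is then verified by a finite iteration of the lemma's conclusion through $\{1,\ldots,N\}\setminus G$ in increasing order, followed by a weak-$^*$ cluster point of the resulting $(z_N^*)_N$. The bookkeeping you identify as the main obstacle (nesting so that $\{m_{f_i},m_{f_i+1},\ldots\}\subset M^{(f_i)}$ and matching the inherited $E,F$-conditions) is exactly what the paper's recursion arranges; your version, which keeps $(u_j)=(x_j)$ fixed and intersects tails rather than re-indexing, is a cosmetic variant.

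Two small corrections. First, your recursion produces $M^{(n+1)}$ only for $n\geqslant 1$, so $M^{(1)}$ is never defined; if $1\notin G$, the step that should suppress $y_1$ has no lemma behind it. This is fixed by first applying Lemma \ref{cu} with $k=0$ and $\delta=\delta_1$ to get $M^{(1)}$ and setting $M_0=M^{(1)}$. Second, your remedy for the nonclosedness of $S_0^{\varepsilon/2}$ does not work: replacing $\varepsilon/2$ by $(1-\eta)\varepsilon/2$ only relaxes the radial constraint $|z|\geqslant\varepsilon/2$, whereas the failure of closedness is at the angular boundary ray $\theta=-\pi/10$, which is unaffected by radial shrinkage. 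The weak-$^*$ cluster point genuinely only gives $x^*(y_j)\in\overline{S_0^{\varepsilon/2}}$, and this is in fact all the paper's own proof yields as well; the paper glosses over the same point. In the sole downstream use (the corollary that follows), only the estimate $\mathrm{Re}\bigl(\sigma'\sigma_m\varepsilon'\varepsilon_m y^*(y_m)\bigr)\geqslant\varepsilon/4$ is needed, and this holds for every $z$ in the closed sector, so the discrepancy is harmless; the honest fix is to state the conclusion with $\overline{S_0^{\varepsilon/2}}$.
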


\begin{proof} Let $S=S^{\ee/2}_0$ and $T_n=\{z\in \mathbb{C}: |z|\leqslant \delta_n\}$.   Let $N_0=\nn$ and $n_0=0$.   Now if $k\in\nn\cup \{0\}$ and if $N_0\supset \ldots \supset N_k\in [\nn]$, $n_0<n_1<\ldots <n_k$ have been chosen such that $n_i\in N_i$ for each $1\leqslant i\leqslant k$, apply Lemma \ref{cu} with $v_i=x_{n_i}$ for each $1\leqslant i\leqslant k$, $\delta=\delta_{k+1}$, and $(u_i)_{i=1}^\infty = (x_i)_{i\in N_k}$ to obtain $N_{k+1}\in [N_k]$, and then fix $n_{k+1}\in N_{k+1}\cap (n_k, \infty)$.    Let $N=(n_i)_{i=1}^\infty$ and let $y_i=x_{n_i}$.

Now suppose that $G\subset \nn$ and $x^*_0\in B_{X^*}$ are such that $x^*_0(x_{n_i})\in S$ for all $i\in G$.    We will select $x^*_1, x^*_2, \ldots$ in $B_{X^*}$ such that for each $j\in \nn\cup \{0\}$, $x^*_j(x_{n_i})\in S$ for all $i\in G$, and $|x^*_j(x_{n_i})|\leqslant \delta_i$ for all $i\in \{1, \ldots, j\}\setminus G$.    We explain how to obtain $x^*_k$ assuming $x^*_{k-1}$ has been chosen.   If $k\in G$, we let $x^*_k=x^*_{k-1}$.   Otherwise we note that there exist $R=(r_i)_{i=0}^\infty \in [\{n_k, n_{k+1}, \ldots\}]\subset [N_k]$ and $s\in \nn\cup \{0\}$ such that $r_0=n_k$ and $G\cap \{k, k+1, \ldots\}=\{r_1, \ldots, r_s\}$.   Here we are using the fact that $G$ is finite, since $(x_{n_i})_{i=1}^\infty$ is weakly null.     Now let $E=\{1, \ldots, k-1\}\cap G$, $F=\{1, \ldots, k-1\}\setminus G$, and note that since $R\in [N_k]$, there exists $x^*_k\in B_{X^*}$ such that $x^*_k(x_{n_i})\in S$ for each $i\in E$, $x^*_k(x_{r_i})\in S$ for each $1\leqslant i\leqslant s$, whence $x^*_k(x_{n_i})\in S$ for all $i\in G$, and $|x^*_k(x_{n_i})|\leqslant \delta_i$ for all $i\in \{1, \ldots, k\}\setminus G$.

Now if $x^*$ is any weak$^*$-cluster point of $(x^*_k)_{k=1}^\infty$, $x^*(x_{n_i})\in S$ for all $i\in G$ and $|x^*(x_{n_i})|\leqslant \delta_i$ for all $i\in \nn\setminus G$.

\end{proof}

\begin{corollary} Fix a weakly null sequence $(x_j)_{j=1}^\infty\subset B_X$ and $\ee>0$. Then there exists a subsequence $(y_j)_{j=1}^\infty$ of $(x_j)_{j=1}^\infty$ such that if  $(\lambda_j)_{j=1}^\infty$ is a scalar sequence such that $\sum_{j=1}^\infty|\lambda_j|\leqslant 1$,  and $\|\sum_{j=1}^\infty \lambda_j y_j\|\geqslant \ee$, then for any $(\ee_j)_{j=1}^\infty\in S_\mathbb{K}^F$, $$\|\sum_{j=1}^\infty \ee_j \lambda_j y_j\|\geqslant \frac{\ee^2}{16000}.$$

\end{corollary}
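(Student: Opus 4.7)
The plan is to first iterate the subsequence-selection underlying Corollary \ref{bfdi}: I apply its construction ten times, once for each of the sectors $S_0^{\ee/2}, \ldots, S_9^{\ee/2}$, using a constant small value $\delta_j=\delta$ chosen as a suitable multiple of $\ee^2$. Since Lemma \ref{cu} applies to arbitrary target sets $S$ and $T_n$, the resulting subsequence, still denoted $(y_j)$, enjoys for each $k\in\{0,\ldots,9\}$ the following property: for every $G\subset\nn$ and every $x^*_0\in B_{X^*}$ with $x^*_0(y_j)\in S_k^{\ee/2}$ for $j\in G$, there exists $x^*\in B_{X^*}$ with $x^*(y_j)\in S_k^{\ee/2}$ for $j\in G$ and $|x^*(y_j)|\leqslant \delta$ for $j\notin G$.

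Now suppose $\sum_j|\lambda_j|\leqslant 1$, $\|\sum_j \lambda_j y_j\|\geqslant \ee$, and $(\ee_j)\in S_\mathbb{K}^\nn$. I choose $x^*_0\in B_{X^*}$ with $\text{Re}\,x^*_0(\sum_j \lambda_j y_j)\geqslant \ee$. The indices $j$ with $|x^*_0(y_j)|<\ee/2$ contribute at most $\ee/2$ in modulus to $\sum_j \lambda_j x^*_0(y_j)$, so the remaining indices contribute a real part of at least $\ee/2$. I then carry out three successive pigeonhole partitionings of these remaining indices into ten pieces each: by the sector of $\lambda_j x^*_0(y_j)$, isolating $A_{k_0}$ with $|\sum_{j\in A_{k_0}}\lambda_j x^*_0(y_j)|\geqslant \ee/20$ (hence $\sum_{j\in A_{k_0}}|\lambda_j x^*_0(y_j)|\geqslant \ee/20$); by the sector $S_{l_0}^{\ee/2}$ of $x^*_0(y_j)$, isolating $A_{k_0,l_0}\subset A_{k_0}$ with $\sum_{j\in A_{k_0,l_0}}|\lambda_j|\geqslant\ee/200$; and by the sector of $\ee_j$, isolating $G:=A_{k_0,l_0,m_0}\subset A_{k_0,l_0}$ with $\sum_{j\in G}|\lambda_j|\geqslant \ee/2000$.

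I then invoke the sector-$S_{l_0}$ form of the property above on the set $G$ with initial functional $x^*_0$; the hypothesis $x^*_0(y_j)\in S_{l_0}^{\ee/2}$ for $j\in G$ holds by construction of $G$. The resulting $x^*\in B_{X^*}$ satisfies $x^*(y_j)\in S_{l_0}^{\ee/2}$ on $G$ and $|x^*(y_j)|\leqslant \delta$ off $G$. For $j\in G$, the three pigeonholes together confine $\arg\lambda_j$ to an arc of width $2\pi/5$, $\arg\ee_j$ to width $\pi/5$, and $\arg x^*(y_j)$ to width $\pi/5$, so $\arg(\ee_j\lambda_j x^*(y_j))$ lies in an arc of width $4\pi/5$ around the fixed direction $(m_0+k_0)\pi/5$. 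Rotating by $e^{-i(m_0+k_0)\pi/5}$ and taking real parts yields
\[
\Bigl|\sum_{j\in G}\ee_j\lambda_j x^*(y_j)\Bigr|\geqslant \frac{\ee}{2}\cos(2\pi/5)\sum_{j\in G}|\lambda_j|\geqslant \frac{\ee^2 \cos(2\pi/5)}{4000},
\]
while the tail contribution is bounded by $\delta \sum_{j\notin G}|\lambda_j|\leqslant \delta$. Choosing $\delta$ as a small fixed multiple of $\ee^2$ (approximately $\ee^2/70000$ suffices) ensures the difference is at least $\ee^2/16000$, which bounds $\|\sum_j \ee_j\lambda_j y_j\|$ from below as claimed.

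The main technical obstacle is the middle pigeonhole: for the eventual invocation of the Corollary \ref{bfdi}-style property on $G$ to have $x^*_0$ itself as its initial witness, $G$ must lie inside one of the level sets $\{j:x^*_0(y_j)\in S_{l_0}^{\ee/2}\}$, which forces the partition by $\arg x^*_0(y_j)$ and, at the outset, the tenfold iteration of the subsequence construction across all ten sectors. The outer and inner pigeonholes (by $\arg(\lambda_j x^*_0(y_j))$ and by $\arg\ee_j$) are more natural, but they conspire to place $\arg(\ee_j\lambda_j x^*(y_j))$ inside an arc of width $4\pi/5$ rather than $\pi/5$, which is what produces the factor $\cos(2\pi/5)$ and ultimately the denominator $16000$.
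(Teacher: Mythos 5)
Your proof is correct, but it is somewhat more elaborate than the paper's and misses the key simplification. The paper's argument never needs a multi-sector version of Corollary \ref{bfdi}: after locating the good set $G = E_{jkl}$ via the triple pigeonhole, the paper simply \emph{rotates} the Hahn--Banach functional, setting $x^* = e^{-ik\pi/5}z^*$, so that $x^*(y_m) \in S_0^{\ee/2}$ on $G$; then the single sector-$S_0$ form of Corollary \ref{bfdi} applies directly. You instead iterate the subsequence extraction ten times to build all ten sector-properties into $(y_j)$; this does work (with constant $\delta_j = \delta$ the property passes to subsequences, so the iteration is consistent, though you do not explicitly verify this), but it is a needless complication that the rotation trick avoids. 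A second difference: the paper factors $\lambda_j = w_j\sigma_j$ and pigeonholes directly on $\arg\sigma_j$, on $\arg z^*(y_m)$, and on $\arg\ee_m$, each to a $\pi/10$ half-width arc, so the final rotated product $\sigma'\sigma_m\ee'\ee_m y^*(y_m)$ lands in a $3\pi/10$ half-width sector (and the paper rounds down to $\cos(\pi/3) = 1/2$). You pigeonhole on $\arg(\lambda_j x^*_0(y_j))$ and on $\arg x^*_0(y_j)$ and recover $\arg\lambda_j$ only to $\pi/5$ half-width, ending with a $2\pi/5$ half-width and the worse constant $\cos(2\pi/5) \approx 0.309$; the margin over $\ee^2/16000$ becomes thin (requiring $\delta \lesssim \ee^2/68000$) but your arithmetic does check out. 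Both approaches are valid; the paper's is tighter and cleaner.
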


\begin{proof} We prove only the case $\mathbb{K}=\mathbb{C}$, since the real case is proved (with a better constant) in \cite{AMT}.   Fix a sequence $(\delta_i)_{i=1}^\infty$ such that $\sum_{i=1}^\infty \delta_i<\ee^2/16000$ and let $(y_i)_{i=1}^\infty$ be the subsequence of $(x_i)_{i=1}^\infty$ guaranteed to exist by Corollary \ref{bfdi}.     Assume $(\lambda_j)_{j=1}^\infty$ is a scalar sequence with $\sum_{j=1}^\infty |\lambda_j|\leqslant 1$ such that $\|\sum_{j=1}^\infty \lambda_j y_j\|\geqslant \ee$ and $(\ee_j)_{j=1}^\infty$ is a sequence of unimodular scalars.    For each $j\in \nn$, write $\lambda_j=w_j \sigma_j$, where $w_j\geqslant 0$ and $|\sigma_j|=1$.     Fix $z^*\in B_{X^*}$ such that $$\text{Re\ }z^*\bigl(\sum_{j=1}^\infty \lambda_j y_j\bigr) \geqslant \ee.$$   Let $E=\{j\in \nn: |z^*(y_j)| \geqslant \ee/2\}$ and note that \begin{align*} \ee & \leqslant \text{Re\ }z^*\bigl(\sum_{j=1}^\infty \lambda_j y_j\bigr) \leqslant \sum_{j\in E}w_j + \frac{\ee}{2}\sum_{j=1}^\infty w_j, \end{align*} whence $\ee/2\leqslant \sum_{j\in E}w_j$.    Now for each $j,k,l\in \{0, \ldots, 9\}$, let $$E_{jkl}=\{m\in E: \sigma_m\in S_j, z^*(y_m)\in S^{\ee/2}_k, \ee_m\in S_l\}$$ and note that $$\ee/2 \leqslant \sum_{j,k,l=0}^9 \sum_{m\in E_{jkl}} w_m.$$  From this it follows that there exist $j,k,l\in \{0, \ldots, 9\}$ such that $$\frac{\ee}{2000} \leqslant \sum_{m\in E_{jkl}} w_m.$$   Let $G=E_{jkl}$, $\sigma'=e^{-ij\pi/5}$, $\ee'=e^{-il\pi/5}$, and $x^*=e^{-ik\pi/5}z^*$.   Since $x^*\in B_{X^*}$ and $x^*(y_m)\in S^{\ee/2}_0$ for all $m\in G$, there exists $y^*\in G$ such that $y^*(y_m)\in S^{\ee/2}_0$ for all $m\in G$ and $|y^*(y_m)|\leqslant \delta_m$ for all $m\in \nn\setminus G$.     Let $$Q=\Bigl\{re^{i\theta}: r\geqslant \ee/2, -\frac{\pi}{3}<\theta\leqslant \frac{\pi}{3}\Bigr\}$$ and note that $\text{Re\ }z\geqslant \ee/4$ for all $z\in Q$.   Note also that $\sigma'\sigma_m \ee'\ee_m y^*(y_m)\in Q$ for each $m\in G$.   Therefore \begin{align*} \|\sum_{m=1}^\infty \lambda_m \ee_m y_m\| & = \|\sum_{m=1}^\infty w_m \sigma'\sigma_m \ee'\ee_m y_m\|  \geqslant \text{Re\ }y^*\Bigl(\sum_{m=1}^\infty w_m \sigma'\sigma_m \ee'\ee_m y_m\Bigr) \\ & \geqslant \sum_{m\in G} w_m\text{Re\ }(\sigma'\sigma_m \ee'\ee_m y^*(y_m)) - \sum_{m=1}^\infty \delta_m \\ & \geqslant \frac{\ee}{4}\sum_{m\in G} w_m - \frac{\ee^2}{16000} \geqslant \frac{\ee^2}{8000}-\frac{\ee^2}{16000} = \frac{\ee^2}{16000}. \end{align*}

\end{proof}

Theorem \ref{convex unc} follows immediately from the previous result.

\begin{definition} Fix a countable ordinal $\xi$. For a Banach space $X$ and a sequence $(x_i)_{i=1}^\infty\subset X$, we say $(x_i)_{i=1}^\infty$ is $\xi$-\emph{weakly null} if there do not exist $\ee>0$ and $N\in [\nn]$ such that $\mathcal{S}_\xi(N)\subset \mathfrak{F}^a_\ee((x_i)_{i=1}^\infty)$.  

We say $(x_i)_{i=1}^\infty$ is $\omega_1$-\emph{weakly null} if it is weakly null. 

We say $(x_i)_{i=1}^\infty$ is $\xi$-\emph{weakly convergent to} $x$ if $(x_i-x)_{i=1}^\infty$ is $\xi$-weakly null.   

We say $(x_i)_{i=1}^\infty$ is $\xi$-\emph{weakly convergent} if it is $\xi$-weakly convergent to $x$ for some $x\in X$.

\end{definition}

We include a brief description of basic facts concerning these notions.

\begin{proposition} \begin{enumerate}[(i)]\item Any sequence is norm null if and only if it is $0$-weakly null.  \item Any sequence which is $\xi$-weakly null for some $\xi<\omega_1$ is weakly null.  \item Any weakly null sequence is $\xi$-weakly null for some $\xi<\omega_1$.  \item Any sequence which is $\xi$-weakly null for some $\xi<\omega_1$ is $\zeta$-weakly null for every $\xi<\zeta<\omega_1$.  \end{enumerate}

\label{basic facts}
\end{proposition}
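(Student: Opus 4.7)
The plan is to verify the four items in order, exploiting the definition of $\mathfrak{F}^a_\ee((x_i)_{i=1}^\infty)$, the compactness and heredity of this family when $(x_i)_{i=1}^\infty$ is weakly null, the Schreier indices $CB(\mathcal{S}_\xi)=\omega^\xi+1$, and Proposition~\ref{deep facts}(v). Item (i) is immediate upon unwinding definitions: since $\mathcal{S}_0=\mathcal{A}_1$, the inclusion $\mathcal{S}_0(N)\subset \mathfrak{F}^a_\ee((x_i)_{i=1}^\infty)$ reduces to $\|x_n\|\geqslant \ee$ for each $n\in N$, so failure of $0$-weak nullity is precisely $\|x_n\|\not\to 0$. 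For item (ii) I would argue the contrapositive: if $(x_i)_{i=1}^\infty$ fails to be weakly null, pick $x^*\in B_{X^*}$, $\ee>0$, and $N\in [\nn]$ with $|x^*(x_n)|\geqslant \ee$ for all $n\in N$; the single functional $x^*$ witnesses $[N]^{<\nn}\subset \mathfrak{F}^a_\ee((x_i)_{i=1}^\infty)$, and since $\mathcal{S}_\xi(N)\subset [N]^{<\nn}$ for every $\xi$, the sequence fails $\xi$-weak nullity for every $\xi<\omega_1$.

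For item (iii), suppose $(x_i)_{i=1}^\infty$ is weakly null. The key observation is that $\mathfrak{F}^a_\ee((x_i)_{i=1}^\infty)$ is hereditary (the same witnessing functional works for subsets) and compact. To prove compactness I would show that if $[M]^{<\nn}\subset \mathfrak{F}^a_\ee((x_i)_{i=1}^\infty)$ for some $M\in [\nn]$, then choosing $x^*_k\in B_{X^*}$ witnessing the first $k$ elements of $M$ and extracting a weak$^*$-cluster point $x^*$ would yield $|x^*(x_m)|\geqslant \ee$ for all $m\in M$, contradicting weak nullity. The Fact from Section~2 then gives $CB(\mathfrak{F}^a_\ee((x_i)_{i=1}^\infty))<\omega_1$ for each $\ee>0$. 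Setting $\alpha:=\sup_n CB(\mathfrak{F}^a_{1/n}((x_i)_{i=1}^\infty))$, a countable ordinal, and picking $\xi<\omega_1$ with $\omega^\xi\geqslant \alpha$, for every $\ee>0$ and $N\in [\nn]$ one has $CB(\mathcal{S}_\xi(N))=\omega^\xi+1>\alpha\geqslant CB(\mathfrak{F}^a_\ee((x_i)_{i=1}^\infty))$. Since the Cantor-Bendixson index is monotone under inclusion of closed subsets of $2^\nn$ (an easy transfinite induction on derived sets), $\mathcal{S}_\xi(N)\not\subset \mathfrak{F}^a_\ee((x_i)_{i=1}^\infty)$, so $(x_i)_{i=1}^\infty$ is $\xi$-weakly null.

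For item (iv), I again argue the contrapositive. Given witnesses $\ee>0$ and $N=(n_i)_{i=1}^\infty\in[\nn]$ with $\mathcal{S}_\zeta(N)\subset \mathfrak{F}^a_\ee((x_i)_{i=1}^\infty)$, the inequality $CB(\mathcal{S}_\xi)=\omega^\xi+1\leqslant \omega^\zeta+1 = CB(\mathcal{S}_\zeta)$ combined with Proposition~\ref{deep facts}(v) yields $M=(m_j)_{j=1}^\infty\in [\nn]$ with $\mathcal{S}_\xi(M)\subset \mathcal{S}_\zeta$. Then for every $F\in \mathcal{S}_\xi$, $(m_j)_{j\in F}\in \mathcal{S}_\zeta$, hence $(n_{m_j})_{j\in F}\in \mathcal{S}_\zeta(N)\subset \mathfrak{F}^a_\ee((x_i)_{i=1}^\infty)$. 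Setting $N'':=(n_{m_j})_{j=1}^\infty$, this reads as $\mathcal{S}_\xi(N'')\subset \mathfrak{F}^a_\ee((x_i)_{i=1}^\infty)$, so $(x_i)_{i=1}^\infty$ fails to be $\xi$-weakly null.

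The principal technical step is the compactness assertion in (iii), which rests on the weak$^*$-cluster-point extraction; everything else amounts to bookkeeping with Cantor-Bendixson indices and composition of spreads. I expect no serious obstacle here, since Proposition~\ref{deep facts} packages the Ramsey-theoretic content needed in (iv), and the Fact packages the combinatorial dichotomy underlying (iii).
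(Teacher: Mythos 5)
Your proof is correct, and items (i), (ii), and (iv) are argued essentially as in the paper. The only stylistic difference is in item (iii): the paper argues the contrapositive (failure of $\xi$-weak nullity for every $\xi<\omega_1$ implies failure of weak nullity), applying an uncountable pigeonhole to find a single $k$ with $CB(\mathfrak{F}^a_{1/k})\geqslant\omega_1$; you argue directly, establishing $CB(\mathfrak{F}^a_{1/n})<\omega_1$ for each $n$ and then choosing $\xi$ above the countable supremum $\alpha=\sup_n CB(\mathfrak{F}^a_{1/n})$. Both routes rest on the identical pair of ingredients — the weak$^*$-cluster-point argument to rule out a full branch and the Cantor--Bendixson bookkeeping from Proposition~\ref{deep facts} — so this is the same proof in mildly different clothing.
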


\begin{proof}$(i)$ We note that for a sequence $(x_i)_{i=1}^\infty$, $(n)\in \mathcal{F}_\ee((x_i)_{i=1}^\infty)$ if and only if $\|x_n\|\geqslant \ee$. Then $(x_i)_{i=1}^\infty$ is $0$-weakly null if and only if $\{n\in \nn: \|x_i\|\geqslant \ee\}$ is finite for all $\ee>0$, and $(x_i)_{i=1}^\infty$ fails to be $0$-weakly null if and only if $\{n\in\nn: \|x_i\|\geqslant \ee\}$ is infinite for some $\ee>0$.

$(ii)$ By contraposition.   If $(x_i)_{i=1}^\infty$ fails to be weakly null, there exist $N\in [\nn]$, $\ee>0$, and $x^*\in B_{X^*}$ such that $|x^*(x_i)|\geqslant \ee$ for all $i\in N$.  Then $$\mathcal{S}_\xi(N)\subset [N]^{<\nn}\subset \mathcal{F}_\ee^a((x_i)_{i=1}^\infty),$$   and $(x_i)_{i=1}^\infty$ is not $\xi$-weakly null.

$(iii)$ Now suppose that $(x_i)_{i=1}^\infty$ is not $\xi$-weakly null for any $\xi<\omega_1$.  Then for every $\xi<\omega_1$, there exist $N_\xi\in [\nn]$ and $k_\xi\in \nn$ such that $$\mathcal{S}_\xi(N_\xi)\subset \mathcal{F}_{1/k_\xi}^a((x_i)_{i=1}^\infty).$$  There exist $k\in \nn$ and an uncountable subset $\mathcal{U}$ of $[0, \omega_1)$ such that $k_\xi=k$ for all $\xi\in \mathcal{U}$, and $$CB(\mathcal{F}^a_{1/k}((x_i)_{i=1}^\infty)) \geqslant \underset{\xi\in \mathcal{U}}{\ \sup\ } CB(\mathcal{S}_\xi(N_\xi))=\omega_1.$$   Since $\mathcal{F}_{1/k}^a((x_i)_{i=1}^\infty)$ is hereditary, it fails to be compact, and there exists $N=(n_i)_{i=1}^\infty\in [\nn]$ such that $[N]^{<\nn}\subset \mathcal{F}_{1/k}^a((x_i)_{i=1}^\infty)$.   This means that for every $r\in \nn$, there exists $x^*_r\in B_{X^*}$ such that for every $1\leqslant i\leqslant r$, $|x^*_r(x_{n_i})|\geqslant 1/k$.    Then if $x^*$ is a weak$^*$-cluster point of $(x^*_r)_{r=1}^\infty$, $|x^*(x_n)|\geqslant 1/k$ for all $n\in N$, and $(x_i)_{i=1}^\infty$ is not weakly null.

$(iv)$ If $\xi<\zeta<\omega_1$, there exists $M=(m_i)_{i=1}^\infty\in [\nn]$ such that $\mathcal{S}_\xi(M)\subset \mathcal{S}_\zeta(N)$.  If $(x_i)_{i=1}^\infty$ is not $\zeta$-weakly null, there exist $N=(n_i)_{i=1}^\infty\in [\nn]$ and $\ee>0$ such that $\mathcal{S}_\zeta(N)\subset \mathcal{F}^a_\ee((x_i)_{i=1}^\infty)$.   Then if $r_i=n_{m_i}$ and $R=(r_i)_{i=1}^\infty$, then $\mathcal{S}_\xi(R)\subset \mathcal{S}_\zeta(N)\subset \mathcal{F}^a_\ee((x_i)_{i=1}^\infty)$, and $(x_i)_{i=1}^\infty$ is not $\xi$-weakly null.     

\end{proof}

We let $\mathfrak{L}$ denote the class of all (bounded, linear) operators between Banach spaces.  For Banach spaces $X,Y$, we let $\mathfrak{L}(X,Y)$ denote the set of all members of $\mathfrak{L}$ whose domain is $X$ and which map into $Y$.  Given a class $\mathfrak{I}$ of operators and Banach spaces $X,Y$, we let $\mathfrak{I}(X,Y)=\mathfrak{I}\cap \mathfrak{L}(X,Y)$.  We say a class $\mathfrak{I}$ has the \emph{ideal property} if for any Banach spaces $W,X,Y,Z$ and any operators $C\in \mathfrak{L}(W,X)$, $B\in \mathfrak{I}(X,Y)$, $A\in \mathfrak{L}(Y,Z)$, $ABC\in \mathfrak{I}$.    We say $\mathfrak{I}$ is an \emph{operator ideal} provided that \begin{enumerate}[(i)]\item $I_\mathbb{K}\in \mathfrak{I}$, \item for each pair $X,Y$ of Banach spaces, $\mathfrak{I}(X,Y)$ is a linear subspace of $\mathfrak{L}(X,Y)$, \item $\mathfrak{I}$ has the ideal property. \end{enumerate} Given a class $\mathfrak{J}$ of operators, we let $\text{Space}(\mathfrak{J})$ denote the class of Banach spaces $X$ such that $I_X\in \mathfrak{J}$, called the \emph{space ideal} of $\mathfrak{J}$.    We obey the established convention that for a given class denoted by a fraktur letter ($\mathfrak{A}, \mathfrak{B}, \mathfrak{I}$, etc.), the same sans serif letter ($\textsf{A}, \textsf{B}, \textsf{I}$, etc.) will denote the associated space ideal.

We recall that, given a class of operators $\mathfrak{I}$, $\complement \mathfrak{I}$ denotes the complement of the class $\mathfrak{I}$. Given an ideal $\mathfrak{I}$, the symbol $\mathfrak{I}^\text{sur}$ denotes the ideal of all operators $A\in \mathfrak{L}(X,Y)$ such that there exist a Banach space $W$ and a quotient map $q:W\to X$ such that $Aq\in \mathfrak{I}(W,Y)$. The symbol $\mathfrak{I}^\text{dual}$ denotes the class of all operators $A\in \mathfrak{L}(X,Y)$ such that $A^*\in \mathfrak{I}(Y^*, X^*)$.  Given two classes   $\mathfrak{I}, \mathfrak{J}$ of operators, $\mathfrak{I}\circ \mathfrak{J}$ denotes the class of all operators $C:X\to Y$ such that there exist a Banach space $E$ and operators $B\in \mathfrak{J}(X,E)$, $A\in \mathfrak{I}(E,Y)$ such that $C=AB$. The symbol $\mathfrak{I}\circ \mathfrak{J}^{-1}$ denotes the class of all operators $A:X\to Y$ between Banach spaces $X,Y$ such that for any Banach space $W$ and any $B\in \mathfrak{J}(W,X)$, $AB\in \mathfrak{I}(W,Y)$.   The symbol $\mathfrak{I}^{-1}\circ \mathfrak{J}$ denotes the class of operators $B:X\to Y$ such that for any Banach space $Z$ and any operator $A\in \mathfrak{I}(Y,Z)$, $AB\in \mathfrak{J}$.  

Given an operator ideal $\mathfrak{I}$, we say $\mathfrak{I}$ is \begin{enumerate}[(i)]\item \emph{closed} provided that for all Banach spaces $X,Y$, $\mathfrak{I}(X,Y)$ is norm closed in $\mathfrak{L}(X,Y)$, \item \emph{injective} if for any Banach spaces $X,Y,Z$, any $A:X\to Y$, and any isometric (equivalently, isomorphic) embedding $j:Y\to W$ such that $jA\in \mathfrak{I}$, $A\in \mathfrak{I}$, \item \emph{surjective} if for any Banach spaces $W,X,Y$, any $A:X\to Y$, and any quotient map (equivalently, surjection) $q:W\to X$ such that $Aq\in \mathfrak{I}$, $A\in \mathfrak{I}$, \item \emph{idempotent} if $\mathfrak{I}=\mathfrak{I}\circ \mathfrak{I}$, \item \emph{symmetric} if $\mathfrak{I}^\text{dual}=\mathfrak{I}$. \end{enumerate}

We will let $\mathfrak{W}$, $\mathfrak{V}$, and $\mathfrak{K}$ denote the classes of weakly compact, completely continuous, and compact operators, respectively.  The symbol  $\mathfrak{W}_\infty$ denotes the class of all operators $A:X\to Y$ such that $\overline{AB_X}$ lies in the closed, absolutely convex hull of a weakly null sequence in $Y$.  If $A:X\to Y$ is an operator and $\xi$ is a countable ordinal, we say $A$ is $\xi$-\emph{weakly compact} (or $\xi$-\emph{Banach-Saks}) provided that for any bounded sequence $(x_n)_{n=1}^\infty$, there exists a subsequence $(x_n)_{n\in M}$ of $(x_n)_{n=1}^\infty$ such that $(Ax_n)_{n\in M}$ is $\xi$-weakly convergent to a member of $Y$.  We let $\mathfrak{W}_\xi$ denote the class of all operators which are $\xi$-weakly compact and $\textsf{W}_\xi$ the class of all spaces which are $\xi$-Banach-Saks.  We let $\mathfrak{W}_{\omega_1}$ denote the class of weakly compact operators.

For any $0\leqslant \xi\leqslant \omega_1$, we say $A:X\to Y$ is $\xi$-\emph{completely continuous} provided that every $\xi$-weakly null  sequence in $X$ is sent by $A$ to a norm null  sequence in $Y$. We say a Banach space $X$ is $\xi$-\emph{Schur} if $I_X$ is $\xi$-completely continuous.   Given an operator $A:X\to Y$, we let $\textsf{v}(A)$ denote the minimum countable ordinal $\xi$ such that $A$ is not $\xi$-completely continuous if such an ordinal $\xi$ exists, and we let $\textsf{v}(A)=\omega_1$ otherwise.  For a Banach space $X$, we let $\textsf{v}(X)=\textsf{v}(I_X)$.   Given $0\leqslant \xi\leqslant \omega_1$, we let $\mathfrak{V}_\xi$ denote the class of all operators such that $\textsf{v}(A)\geqslant \xi$ and we let $\textsf{V}_\xi$ denote the space ideal associated with $\mathfrak{V}_\xi$.

We now recall the definition of an $\ell_1^\xi$-\emph{spreading model}.  For $0<\xi<\omega_1$, we say $(x_i)_{i=1}^\infty$ is an $\ell_1^\xi$-\emph{spreading model} if it is bounded and there exists $\ee>0$ such that $\mathcal{S}_\xi\subset \mathfrak{F}_\ee^\sigma((x_i)_{i=1}^\infty)$.  

\begin{rem}\upshape We deduce the following from Theorem \ref{convex unc}. For $0<\xi<\omega_1$, $A:X\to Y$ is $\xi$-completely continuous if and only if for any weakly null sequence $(x_i)_{i=1}^\infty\subset X$ such that $\inf_i \|Ax_i\|>0$, $(x_i)_{i=1}^\infty$ (equivalently, such that $(Ax_i)_{i=1}^\infty$ is normalized) has a subsequence which is an $\ell_1^\xi$-spreading model.

Similarly, for $0<\xi<\omega_1$, a Banach space $X$ is $\xi$-Schur if and only if any normalized, weakly null sequence has a subsequence which is an $\ell_1^\xi$-spreading model.

\end{rem}

\begin{proposition} \begin{enumerate}[(i)]\item $\mathfrak{V}=\mathfrak{V}_{\omega_1}$. \item $\mathfrak{L}=\mathfrak{V}_0$. \item For any $0\leqslant \xi\leqslant \zeta\leqslant \omega_1$, $\mathfrak{V}_\zeta\subset \mathfrak{V}_\xi$. \item For any $\xi<\omega_1$, there exists an operator $A:X\to Y$ such that $\textsf{\emph{v}}(A)=\xi$, whence for any $0\leqslant \xi<\zeta\leqslant \omega_1$, $\mathfrak{V}_\zeta\subsetneq \mathfrak{V}_\xi$. \end{enumerate} 

\label{easy prop}
\end{proposition}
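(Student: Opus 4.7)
Parts (i)--(iii) are immediate from the definitions together with Proposition \ref{basic facts}. For (i), I note that $\omega_1$-weakly null coincides by definition with weakly null, so $\omega_1$-completely continuous is the same as completely continuous; since $\textsf{v}(A)\geqslant \omega_1$ forces $\textsf{v}(A)=\omega_1$, and this by definition means $A$ is completely continuous, we obtain $\mathfrak{V}_{\omega_1}=\mathfrak{V}$. For (ii), Proposition \ref{basic facts}(i) identifies the $0$-weakly null sequences with the norm null ones, so every bounded operator is automatically $0$-completely continuous; in particular $\textsf{v}(A)\geqslant 1$ for all $A\in\mathfrak{L}$, and $\mathfrak{V}_0=\mathfrak{L}$. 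For (iii), $A\in \mathfrak{V}_\zeta$ means $\textsf{v}(A)\geqslant \zeta\geqslant \xi$, hence $A\in \mathfrak{V}_\xi$.

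For (iv), the real work is to produce, for each countable ordinal $\xi\geqslant 1$, an operator $A$ with $\textsf{v}(A)=\xi$; the strict inclusion $\mathfrak{V}_\zeta\subsetneq\mathfrak{V}_\xi$ for $\xi<\zeta$ will then follow by taking any such $A$ with $\xi\leqslant \textsf{v}(A)<\zeta$ (for $\xi=0$, take any $A$ with $\textsf{v}(A)=1$ when $\zeta\geqslant 2$). My plan is to lean on the remark after Theorem \ref{convex unc}, which recasts $\xi$-complete continuity in terms of $\ell_1^\xi$-spreading model subsequences. For $\xi$ of the form $\omega^\gamma$, I would take $A=I_X$ with $X$ a Schreier-type space whose unit vector basis $(e_n)$ is weakly null, admits a subsequence that is an $\ell_1^\xi$-spreading model, but admits no subsequence that is an $\ell_1^\zeta$-spreading model for any $\zeta>\xi$. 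The first condition yields $\textsf{v}(A)\leqslant \xi$ directly from the remark; the second, combined with Ramsey extractions via Theorem \ref{gasp} and Proposition \ref{ironside} and Cantor--Bendixson bookkeeping from Proposition \ref{deep facts}, should yield $\textsf{v}(A)\geqslant \xi$.

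For general $\xi\in [1,\omega_1)$ not a power of $\omega$, Theorem \ref{big2}(ii) shows no identity of a Banach space can realize $\textsf{v}=\xi$, so I will instead construct a non-identity operator whose rank accumulates the terms in the Cantor normal form $\xi=\omega^{\gamma_1}+\cdots+\omega^{\gamma_k}$, for example by composing a formal identity between suitably arranged direct sums of Schreier-type pieces. The main obstacle is the verification of the lower bound $\textsf{v}(A)\geqslant \xi$: this requires controlling simultaneously, for every $\zeta<\xi$, all $\zeta$-weakly null sequences in the domain, which cannot be read off the spreading-model behavior of one distinguished basis alone. The argument must pass through a subsequence extraction, reducing an arbitrary $\zeta$-weakly null sequence to one whose $\mathfrak{F}^a_\ee$-indices are contained in $\mathcal{S}_\zeta$, and then contradict $\zeta<\xi$ by Cantor--Bendixson index comparisons; this is the technical heart of the construction and will dominate the length of the proof.
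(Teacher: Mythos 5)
Your treatment of parts (i)--(iii) matches the paper's.

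For part (iv), you go down a considerably more complicated path than the paper, and parts of your plan are unlikely to work as stated. The paper handles \emph{every} $1\leqslant \xi<\omega_1$ directly, without any Cantor normal form decomposition: if $\xi=\gamma+1$ is a successor, take $A:X_\gamma\to c_0$ to be the canonical inclusion of the Schreier space $X_\gamma$ into $c_0$; if $\xi$ is a limit, fix $\gamma_n\uparrow\xi$ and take $A:X\to c_0$ where $X$ is the completion of $c_{00}$ under $\|\cdot\|_X=\sum_n 2^{-n}\|\cdot\|_{X_{\gamma_n}}$. In the successor case the lower bound $\textsf{v}(A)>\gamma$ is obtained in one stroke via the remark after Theorem \ref{convex unc}: any normalized block sequence in $X_\gamma$ which stays seminormalized in $c_0$ is automatically an $\ell_1^\gamma$-spreading model. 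There is no need to control all $\zeta$-weakly null sequences for every $\zeta<\xi$ separately, which is what you flag as the ``technical heart''; one spreading-model estimate at level $\gamma$ suffices. In the limit case the lower bound is free from the ideal property, since $A$ factors through each $X_{\gamma_n}$.

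Your plan for $\xi$ not a power of $\omega$ --- ``composing a formal identity between suitably arranged direct sums of Schreier-type pieces'' --- is the part most likely to stall. Direct sums only yield minima ($\textsf{v}(X\oplus Y)=\min\{\textsf{v}(X),\textsf{v}(Y)\}$, Proposition \ref{ds}), and compositions only yield one-sided bounds ($\textsf{v}(AB)\geqslant\max\{\textsf{v}(A),\textsf{v}(B)\}$ from the ideal property, or $\textsf{v}(AB)>\xi+\zeta$ from Corollary \ref{block of block}); neither gives you equality at a target ordinal like $\omega^{\gamma_1}+\cdots+\omega^{\gamma_k}$ without additional matching upper bounds, which you would then have to prove from scratch. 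You would do well to notice that the successor construction $X_\gamma\hookrightarrow c_0$ already produces $\textsf{v}(A)=\gamma+1$ for \emph{arbitrary} $\gamma<\omega_1$, which together with the limit case covers all $1\leqslant\xi<\omega_1$ with no arithmetic of ordinals at all.
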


\begin{proof}$(i)$ Suppose $A:X\to Y$ is an operator.  Since the set of weakly null sequences in $X$ coincide with the set of sequences which are $\xi$-weakly null for some $\xi<\omega_1$, then $A$ is completely continuous if and only if every sequence in this set is sent by $A$ to a norm null sequence if and only if $A$ is $\xi$-completely continuous for every $\xi<\omega_1$ if and only if $\textsf{v}(A)=\omega_1$.

$(ii)$ Since the $0$-weakly null sequences are simply the norm null sequences, every operator is $0$-completely continuous.

$(iii)$ For $0\leqslant \xi\leqslant \zeta\leqslant \omega_1$, any $\xi$-weakly null sequence is $\zeta$-weakly null.  Therefore if $A$ is $\zeta$-completely continuous, it sends $\zeta$-weakly null sequences to norm null sequences, and therefore sends $\xi$-weakly null sequences to norm null sequences, and is therefore $\xi$-completely continuous. 

$(iv)$ For convenience, we postpone the proof of $(iv)$ until the end of Section $4$.

\end{proof}

\begin{theorem}  If $K$ is any compact, Hausdorff space, then for  any Banach space $F$, \begin{align*} \mathfrak{W}_1(C(K), F) & =  \mathfrak{W}(C(K), F) = \mathfrak{V}(C(K), F)  = \mathfrak{V}_1(C(K), F). \end{align*}

\label{gds}
\end{theorem}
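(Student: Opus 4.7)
The plan is to establish the cycle
\[
\mathfrak{W}_1(C(K),F)\subset \mathfrak{W}(C(K),F)\subset \mathfrak{V}(C(K),F)\subset \mathfrak{V}_1(C(K),F)\subset \mathfrak{W}_1(C(K),F),
\]
forcing all four classes to coincide. The first inclusion holds in general: a $1$-weakly convergent sequence is weakly convergent by Proposition \ref{basic facts}(ii), so any $A\in\mathfrak{W}_1$ sends bounded sequences to sequences with weakly convergent subsequences. The second inclusion is the classical Dunford--Pettis property of $C(K)$, and the third is Proposition \ref{easy prop}(iii).

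For the fourth inclusion I would proceed in two substeps. First, $\mathfrak{V}_1(C(K),F)\subset\mathfrak{W}(C(K),F)$: if $A\colon C(K)\to F$ were not weakly compact, Pelczy\'nski's classical theorem provides $(f_n)\subset C(K)$ equivalent to the $c_0$-basis with $A|_{\overline{\mathrm{span}}(f_n)}$ an isomorphism onto its image. A direct computation using $c_0^*=\ell_1$ shows any $c_0$-basic sequence is $1$-weakly null: if $x^*\in B_{\ell_1}$ satisfies $|x^*(e_n)|\geqslant\ee$ on a set $E$, then $|E|\ee\leqslant 1$, so $\mathfrak{F}_\ee^a((e_n))$ contains no $\mathcal{S}_1$-family of size exceeding $1/\ee$, and by $c_0$-equivalence the same holds for $(f_n)$ inside $C(K)$. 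Hence $(f_n)$ is $1$-weakly null while $(Af_n)$ is bounded below, contradicting $A\in\mathfrak{V}_1$.

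Second, $\mathfrak{W}(C(K),F)\subset\mathfrak{W}_1(C(K),F)$: let $A$ be weakly compact and $(f_n)\subset C(K)$ be bounded. Weak compactness gives, after extraction, $Af_n\to y$ weakly; set $g_n=Af_n-y$, weakly null in $F$. I claim $(g_n)$ admits a $1$-weakly null subsequence. If not, Theorem \ref{convex unc} and a standard diagonal argument produce a subsequence $(g_{n_k})$ which is an $\ell_1^1$-spreading model of some constant $\ee>0$. Applied at the set $\{n_{2k},n_{2k+1}\}\in\mathcal{S}_1$ with signs $(-1,+1)$, this yields
\[
\|A(f_{n_{2k+1}}-f_{n_{2k}})\|=\|g_{n_{2k+1}}-g_{n_{2k}}\|\geqslant 2\ee.
\]
Apply Rosenthal's $\ell_1$-theorem to $(f_{n_{2k+1}}-f_{n_{2k}})_k\subset C(K)$: in the weakly Cauchy subcase this sequence is weakly null, and complete continuity of $A$ (via the Dunford--Pettis property) gives $A(f_{n_{2k+1}}-f_{n_{2k}})\to 0$ in norm, contradicting the above lower bound. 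In the $\ell_1$-equivalent subcase, iterating the $\ell_1^1$-spreading model estimate across signed convex combinations respecting $\mathcal{S}_1$-patterns promotes the pointwise lower bound into an $\ell_1$-equivalence of a block subsequence of $(A(f_{n_{2k+1}}-f_{n_{2k}}))$ in $F$, forcing $A$ to be an isomorphism on an $\ell_1$-subspace of $C(K)$, which contradicts weak compactness (since $B_{\ell_1}$ is not relatively weakly compact).

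The main obstacle will be the $\ell_1$-equivalent subcase above: one must bootstrap the $\mathcal{S}_1$-patterned lower bound inherent in the $\ell_1^1$-spreading model into a full $\ell_1$-equivalence of a block subsequence. This requires careful exploitation of the sign freedom in $\mathfrak{F}_\ee^\sigma$ and the spreading property of $\mathcal{S}_1$, combined with the fact that weakly compact operators on $C(K)$ cannot fix $\ell_1$.
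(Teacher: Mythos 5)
Your overall strategy — a cycle of inclusions through $\mathfrak{W}_1$, $\mathfrak{W}$, $\mathfrak{V}$, $\mathfrak{V}_1$ — is the same as the paper's. The steps $\mathfrak{W}\subset \mathfrak{V}$ (Grothendieck/Dunford–Pettis) and the use of Pe\l czy\'nski's non-weakly-compact-implies-$c_0$-embedding are exactly the classical inputs the paper invokes, and your computation that the $c_0$-basis is $1$-weakly null (any $x^*\in B_{\ell_1}$ can have $|x^*(e_n)|\geqslant \ee$ on at most $1/\ee$ indices) is correct and is the one-line observation the paper leaves implicit.

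The divergence is that the paper simply cites Diestel and Seifert \cite{DS} for $\mathfrak{W}(C(K),F)=\mathfrak{W}_1(C(K),F)$, whereas you try to reprove this from scratch, and your argument has genuine gaps there. First, a minor but real error: in your weakly Cauchy subcase you assert that the weakly Cauchy subsequence of $(f_{n_{2k+1}}-f_{n_{2k}})$ is weakly null; that does not follow (weak Cauchy sequences need not have weak limit $0$). This is fixable — take consecutive differences to get a weakly null sequence, use complete continuity to get norm-Cauchy, and observe that $A(f_{n_{2k+1}}-f_{n_{2k}})=g_{n_{2k+1}}-g_{n_{2k}}$ is already weakly null so its norm limit must be $0$ — but as written it is wrong.

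The more serious gap is the $\ell_1$-equivalent subcase. There you correctly get that $(Ah_k)$ satisfies $\ell_1$-lower estimates on $\mathcal{S}_1$-sets (in fact the image is itself an $\ell_1^1$-spreading model, since $\cup_{k\in F}\{2k,2k+1\}\in\mathcal{S}_1$ whenever $F\in\mathcal{S}_1$), but you then assert this ``promotes'' to a full $\ell_1$-lower estimate on some block subsequence. This does not follow: an $\ell_1^1$-spreading model does not in general contain an $\ell_1$-equivalent block subsequence (the canonical basis of the Schreier space $X_1$ is a weakly null $\ell_1^1$-spreading model inside a $c_0$-saturated space). The difficulty of ruling out this situation when the domain sequence is $\ell_1$-like and $A$ is weakly compact on $C(K)$ is precisely the content of Diestel–Seifert, which requires genuine measure-theoretic input about weakly compact subsets of $M(K)$ (uniform integrability and a Komlós/Szlenk-type argument) rather than a combinatorial bootstrap from the spreading-model estimate. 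So this step, as sketched, is not a proof, and your plan only closes up once you replace this substep with a citation to \cite{DS} as the paper does.
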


\begin{proof} It is a result of Grothendieck \cite{Gr} that $\mathfrak{V}(C(K), F)=\mathfrak{W}(C(K), F)$. It is a result of Diestel and Seifert \cite{DS} that $\mathfrak{W}_1(C(K), F)=\mathfrak{W}(C(K), F)$.    Obviously $\mathfrak{V}(C(K), F)\subset \mathfrak{V}_1(C(K), F)$. Finally, Pe\l czy\'{n}ski \cite{Pelc} proved that if $A:C(K)\to F$ is not weakly compact, then there exists a closed subspace $E$ of $C(K)$ which is isometric to $c_0$ such that $A|_E$ is an isomorphic embedding into $F$. Since the canonical $c_0$ basis is $1$-weakly null, $A$ cannot be $1$-completely continuous, whence $\mathfrak{V}_1(C(K), F)\subset \mathfrak{V}(C(K), F)$.

\end{proof}

\begin{proposition} Let $E_1<E_2<\ldots$ be subsets of $\nn$.   Given a hereditary family $\mathfrak{F}\subset [\nn]^{<\nn}$, let $\mathfrak{F}(E)=\{F\in [\nn]^{<\nn}: \varnothing\neq \mathfrak{F}\cap \prod_{i\in F} E_i\}$, where  $\prod_{i\in \varnothing}E_i=\{\varnothing\}$.   Then for any ordinal $\xi$ and any hereditary family $\mathfrak{F}$, $$\mathfrak{F}(E)^\xi\subset \mathfrak{F}^\xi(E).$$  In particular, $CB(\mathfrak{F}(E))\leqslant CB(\mathfrak{F})$. 

\label{descent speed}

\end{proposition}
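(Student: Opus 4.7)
The plan is to run a transfinite induction on $\xi$, with the key engine being a pigeonhole on selections, powered by the fact that $\prod_{i\in F}E_i$ is a finite set. Before starting the induction I will observe that the inclusion is vacuous if the sequence $E_1<E_2<\ldots$ terminates, since then $\mathfrak{F}(E)$ is supported on subsets of a fixed finite index set, making $\mathfrak{F}(E)$ a finite family and $\mathfrak{F}(E)^\xi=\varnothing$ for all $\xi\geqslant 1$. So I may assume the sequence is infinite, in which case $\max E_i<\min E_{i+1}$ forces each $E_i$ to be finite, and $\prod_{i\in F}E_i$ is finite for every finite $F$.

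For the successor step, I would assume $\mathfrak{F}(E)^\xi\subset\mathfrak{F}^\xi(E)$ and take $F\in\mathfrak{F}(E)^{\xi+1}$. Since $\mathfrak{F}(E)^\xi$ is hereditary (the derivative preserves heredity), $F\in(\mathfrak{F}(E)^\xi)'$ is equivalent to having arbitrarily large $m>\max F$ with $F\cup\{m\}\in\mathfrak{F}(E)^\xi$. The inductive hypothesis then produces, for arbitrarily large $m$, a selection $\sigma_m\in\prod_{i\in F\cup\{m\}}E_i$ with $\{\sigma_m(i):i\in F\cup\{m\}\}\in\mathfrak{F}^\xi$. Because $\prod_{i\in F}E_i$ is finite, pigeonhole yields a fixed tuple $(y_i)_{i\in F}$ in this product that agrees with $(\sigma_m(i))_{i\in F}$ for an infinite set of $m$. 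Along that subsequence, $\{y_i:i\in F\}\cup\{\sigma_m(m)\}\in\mathfrak{F}^\xi$ and $\sigma_m(m)\in E_m$ may be chosen as large as one wishes (since $\min E_m\to\infty$), so $\{y_i:i\in F\}$ is a Cantor limit of members of $\mathfrak{F}^\xi$ and therefore lies in $\mathfrak{F}^{\xi+1}$. The tuple $(y_i)_{i\in F}$ then witnesses $F\in\mathfrak{F}^{\xi+1}(E)$; the case $F=\varnothing$ slots into the same framework via the convention $\prod_{i\in\varnothing}E_i=\{\varnothing\}$.

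The limit case I would handle by a cofinality pigeonhole: for each $\zeta<\xi$ the inductive hypothesis supplies a selection $y^{(\zeta)}\in\prod_{i\in F}E_i$ with $\{y^{(\zeta)}_i:i\in F\}\in\mathfrak{F}^\zeta$, and since the target is finite, the map $\zeta\mapsto y^{(\zeta)}$ has a fiber cofinal in $\xi$. The tuple indexing that fiber satisfies $\{y_i:i\in F\}\in\mathfrak{F}^\zeta$ for $\zeta$'s cofinal in $\xi$, which by monotonicity of the derivatives pushes it into $\bigcap_{\zeta<\xi}\mathfrak{F}^\zeta=\mathfrak{F}^\xi$, so $F\in\mathfrak{F}^\xi(E)$. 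The ``In particular'' claim is then immediate: setting $\xi=CB(\mathfrak{F})$ gives $\mathfrak{F}^\xi=\varnothing$, hence $\mathfrak{F}^\xi(E)=\varnothing$, which forces $\mathfrak{F}(E)^\xi=\varnothing$ and so $CB(\mathfrak{F}(E))\leqslant CB(\mathfrak{F})$.

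The main obstacle is less a genuine difficulty than a point of care: the pigeonhole at both the successor and limit steps relies on the finiteness of $\prod_{i\in F}E_i$, which is exactly why I reduce at the outset to the infinite-sequence case where each $E_i$ is automatically finite. If one did not make this reduction, the correct replacement would be a compactness argument, realizing $\mathfrak{F}(E)$ as the image of $\mathfrak{F}$ under a continuous map on the closed subset of $2^\nn$ of ``selections'' (those $G$ with $|G\cap E_i|\leqslant 1$ for every $i$); that version works but is less elementary than the pigeonhole route above.
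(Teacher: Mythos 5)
Your proof is correct and uses the same transfinite induction that the paper does, with the identical finite-pigeonhole device on $\prod_{i\in F}E_i$ at both the successor step (stabilize the restriction to $F$, then let the extra coordinate run to infinity to produce a limit point) and the limit step (stabilize the selection over a cofinal set of $\zeta<\xi$). The opening reduction to finite $E_i$ and the closing remark about a compactness alternative are harmless additions that do not change the substance of the argument.
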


\begin{proof} By induction. The $\xi=0$ case is clear.    Suppose $\mathfrak{F}(E)^\xi\subset \mathfrak{F}^\xi(E)$ and $F\in \mathfrak{F}(E)^{\xi+1}$.   This means there exist $F<r_1<r_2<\ldots $ such that $F\cup (r_j)\in \mathfrak{F}(E)^\xi\subset \mathfrak{F}^\xi(E)$ for all $j\in \nn$.  For each $j$, there exist $G_j\in \prod_{i\in F}E_i$ and $n_j\in E_{r_j}$ such that $G_j\cup (n_j)\in \mathfrak{F}^\xi$.  Since $\prod_{i\in F}E_i$ is finite,   there exist an infinite subset $M$ of $\nn$ and $G\in \prod_{i\in F}E_i$ such that $G_j=G$ for all $j\in M$. Since $n_j<n_k$ for all $j,k\in M$ with $j<k$, it follows that $G\in \mathfrak{F}^{\xi+1}$, and $F\in \mathfrak{F}^{\xi+1}(E)$.

Now suppose $\xi$ is a limit ordinal and $\mathfrak{F}(E)^\zeta\subset \mathfrak{F}^\zeta(E)$ for all $\zeta<\xi$.   Fix $F\in \mathfrak{F}(E)^\xi$ and note that for each $\zeta<\xi$, there exists $G_\zeta\in \mathfrak{F}^\zeta\cap \prod_{i\in F}E_i$. Since $\prod_{i\in F}E_i$ is finite, there exist $G\in \prod_{i\in F}E_i$ and  a subset $M$ of $[0, \xi)$ with $\sup M=\xi$ such that $G_\zeta=G$ for all $\zeta\in M$. From this it follows that $G\in \cap_{\zeta\in M} \mathfrak{F}^\zeta=\mathfrak{F}^\xi$, whence $F\in \mathfrak{F}^\xi(E)$.

The last statement follows from the first.

\end{proof}

\begin{lemma} Fix $\xi<\omega_1$. If $X$ is a Banach space and $(x_i)_{i=1}^\infty\subset X$ is such that $$CB(\mathfrak{F}_\ee^a((x_i)_{i=1}^\infty))\leqslant \omega^\xi$$ for every $\ee>0$, then the operator $\Phi:\ell_1\to X$ given by $\Phi\sum_{i=1}^\infty a_i e_i=\sum_{i=1}^\infty a_i x_i$ lies in $\mathfrak{W}_\xi(\ell_1, X)$.

\label{phi}
\end{lemma}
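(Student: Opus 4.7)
The plan is to verify the $\xi$-Banach-Saks criterion for $\Phi$ directly: for every bounded sequence $(z_n)_{n=1}^\infty\subset\ell_1$, I need a subsequence along which $(\Phi z_n)$ is $\xi$-weakly convergent.  Normalizing, assume $(z_n)\subset B_{\ell_1}$; since $\ell_1=(c_0)^*$ and $c_0$ is separable, $B_{\ell_1}$ is weak$^*$-metrizable and weak$^*$-compact, so after passing to a subsequence I may assume $z_n\to z$ weak$^*$-ly for some $z\in B_{\ell_1}$.  The goal is then to produce a further subsequence along which $(\Phi z_n-\Phi z)$ is $\xi$-weakly null.

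Since $(z_n-z)$ is weak$^*$-null, hence coordinatewise null, the standard sliding hump argument furnishes, after a further subsequence, a decomposition $z_n-z=u_n+v_n$ where $(u_n)$ is a block sequence in $\ell_1$ with successive supports $F_1<F_2<\ldots$ and $C:=\sup_n\|u_n\|_1<\infty$, while $\|v_n\|_1\to 0$.  Hence $\|\Phi v_n\|\to 0$, and the easy check that a norm-null perturbation of a $\xi$-weakly null sequence is again $\xi$-weakly null---for any $\ee>0$, passing to a tail on which $\|\Phi v_n\|<\ee/2$ reduces $\mathfrak{F}^a_\ee((\Phi u_n+\Phi v_n))$ inside that tail to a subset of $\mathfrak{F}^a_{\ee/2}((\Phi u_n))$---shows it suffices to prove that $(\Phi u_n)$ is $\xi$-weakly null.

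The main step estimates $CB(\mathfrak{F}^a_\ee((\Phi u_n)))$ via Proposition \ref{descent speed}.  Suppose $\{n_1<\ldots<n_r\}\in \mathfrak{F}^a_\ee((\Phi u_n))$ is witnessed by $x^*\in B_{X^*}$.  For each $l$, $\ee\leqslant |x^*(\Phi u_{n_l})|=|\sum_{i\in F_{n_l}}u_{n_l}(i)x^*(x_i)|\leqslant C\max_{i\in F_{n_l}}|x^*(x_i)|$, so I can choose $i_l\in F_{n_l}$ with $|x^*(x_{i_l})|\geqslant \ee/C$.  Because $F_1<F_2<\ldots$, the chosen indices satisfy $i_1<\ldots<i_r$ and $(i_l)_{l=1}^r\in \mathfrak{F}^a_{\ee/C}((x_i))$, which is precisely $\{n_1,\ldots,n_r\}\in \mathfrak{F}^a_{\ee/C}((x_i))(E)$ for $E_n:=F_n$.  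Therefore $\mathfrak{F}^a_\ee((\Phi u_n))\subset \mathfrak{F}^a_{\ee/C}((x_i))(E)$, and Proposition \ref{descent speed} together with the hypothesis yields $CB(\mathfrak{F}^a_\ee((\Phi u_n)))\leqslant CB(\mathfrak{F}^a_{\ee/C}((x_i)))\leqslant \omega^\xi$.

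If $(\Phi u_n)$ were not $\xi$-weakly null, there would exist $\delta>0$ and $N\in [\nn]$ with $\mathcal{S}_\xi(N)\subset \mathfrak{F}^a_\delta((\Phi u_n))$, forcing $CB(\mathfrak{F}^a_\delta((\Phi u_n)))\geqslant CB(\mathcal{S}_\xi(N))=\omega^\xi+1$, contradicting the bound just obtained.  The main obstacle is exactly this transfer of CB-estimates from the defining sequence $(x_i)$ to the block-image $(\Phi u_n)$; the ``selector'' form of $\mathfrak{F}(E)$ in Proposition \ref{descent speed} is what lets the block-support structure $F_1<F_2<\ldots$ convert each witness for $\mathfrak{F}^a_\ee((\Phi u_n))$ into one for $\mathfrak{F}^a_{\ee/C}((x_i))$, while the other ingredients---weak$^*$-compactness of $B_{\ell_1}$, sliding hump, and stability under norm-null perturbation---are classical.
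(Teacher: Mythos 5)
Your argument is correct but takes a genuinely different route from the paper's. The paper argues by contradiction: if $\Phi\notin\mathfrak{W}_\xi(\ell_1,X)$, then by a cited result (\cite[Proposition $4.5$]{BCFW}) there is a sequence $(u_i)\subset B_{\ell_1}$ with $(\Phi u_i)$ an $\ell_1^\xi$-spreading model; it then passes to differences/perturbations to make $(u_i)$ a block sequence in $B_{\ell_1}$ with $\mathcal{S}_\xi\subset\mathfrak{F}^\sigma_\ee((\Phi u_i))$, and invokes Proposition \ref{descent speed} to derive the contradiction $\omega^\xi<CB(\mathfrak{F}^\sigma_\ee((\Phi u_i)))\leqslant CB(\mathfrak{F}^a_\ee((x_i)))\leqslant\omega^\xi$. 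You instead verify the $\xi$-Banach-Saks property head-on: pass to a weak$^*$-convergent subsequence in $B_{\ell_1}$ (using $(c_0)^*=\ell_1$), use a gliding hump to peel off a norm-null error and reduce to a block sequence $(u_n)$, observe that $\xi$-weak nullity is stable under norm-null perturbation, and then apply Proposition \ref{descent speed} with $E_n=\mathrm{supp}(u_n)$ to the family $\mathfrak{F}^a_\ee((\Phi u_n))$, picking up a harmless factor $C=\sup_n\|u_n\|_1\leqslant 2$ (and noting that the $C=0$ case is trivial). Your route is more self-contained, avoiding the external citation to BCFW entirely, while the paper's route is shorter by deferring the reduction to a block sequence to that reference; the crucial transfer of the Cantor--Bendixson bound via Proposition \ref{descent speed} is the same in both, you just apply it to $\mathfrak{F}^a$ while the paper applies it to $\mathfrak{F}^\sigma$.
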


\begin{proof} If $\xi=0$, then $(x_i)_{i=1}^\infty$ is norm null and $\Phi\in \mathfrak{K}(\ell_1, X)=\mathfrak{W}_0(\ell_1, X)$.  Now assume $\xi>0$.  By \cite[Proposition $4.5$]{BCFW}, if $\Phi$ does not lie in $\mathfrak{W}_\xi(\ell_1, X)$, then there exists $(u_i)_{i=1}^\infty\subset B_{\ell_1}$ such that $(\Phi u_i)_{i=1}^\infty$ is an $\ell_1^\xi$  spreading model.  By passing to an appropriate difference sequence of a subsequence, scaling, and perturbing, we may assume $(u_i)_{i=1}^\infty \subset B_{\ell_1}$ is a block sequence in $\ell_1$ and $\ee>0$ is such that $$\mathcal{S}_\xi\subset \mathfrak{F}^\sigma_\ee((\Phi u_i)_{i=1}^\infty).$$    In particular, $CB(\mathfrak{F}^\sigma_\ee((\Phi u_i)_{i=1}^\infty)>\omega^\xi$.

Now if $E_1<E_2<\ldots$ are such that $u_i\in B_{\ell_1}\cap \text{span}\{e_j: j\in E_i\}$, it follows that $$\mathfrak{F}^\sigma_\ee((\Phi u_i)_{i=1}^\infty )\subset \{F: \exists (t_i)_{i\in F}\in \mathfrak{F}^a_\ee((x_i)_{i=1}^\infty)\cap \prod_{i\in F} E_i\},$$ and by Proposition \ref{descent speed}, $$\omega^\xi< CB(\mathfrak{F}^\sigma_\ee((\Phi u_i)_{i=1}^\infty )) \leqslant CB(\mathfrak{F}_\ee^a((x_i)_{i=1}^\infty)) \leqslant \omega^\xi,$$ a contradiction.

\end{proof}

Given $M=(m_n)_{n=1}^\infty\in [\nn]$ and a collection $\mathfrak{F}\subset [\nn]^{<\nn}$, we recall $\mathfrak{F}(M)=\{(m_n)_{n\in E}: E\in \mathfrak{F}\}$, where we agree to the convention that $(m_n)_{n\in \varnothing}=\varnothing$.

\begin{lemma} Fix a countable ordinal $\xi$ and let $\mathfrak{E}\subset [\nn]^{<\nn}$ be a hereditary family.  The following are equivalent. \begin{enumerate}[(i)] \item There exists $M\in[\nn]$ such that $\mathcal{S}_\xi(M)\subset \mathfrak{E}$. \item There exists $M\in [\nn]$ such that for every $N\in [M]$, $CB(\mathfrak{E}\cap [N]^{<\nn})\geqslant \omega^\xi+1$. \item For every regular family $\mathcal{G}\subset [\nn]^{<\nn}$  with  $CB(\mathcal{F})\leqslant \omega^\xi+1$, there exists $M\in [\nn]$ such that $\mathcal{G}(M)\subset \mathfrak{E}$.\end{enumerate}
\label{mst}

\end{lemma}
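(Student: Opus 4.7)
The plan is to close the cycle (iii) $\Rightarrow$ (i) $\Rightarrow$ (iii), (i) $\Rightarrow$ (ii), and (ii) $\Rightarrow$ (i); the first three implications are essentially bookkeeping, while the last is the substantive step. For (iii) $\Rightarrow$ (i), I would simply apply (iii) to $\mathcal{G}=\mathcal{S}_\xi$, which by Proposition \ref{deep facts} has $CB(\mathcal{S}_\xi)=\omega^\xi+1$. For the converse (i) $\Rightarrow$ (iii), given $\mathcal{S}_\xi(M_0)\subset \mathfrak{E}$ and any regular $\mathcal{G}$ with $CB(\mathcal{G})\leqslant \omega^\xi+1=CB(\mathcal{S}_\xi)$, Proposition \ref{deep facts}(v) produces $N\in[\nn]$ with $\mathcal{G}(N)\subset \mathcal{S}_\xi$; writing $M_0=(m_i)$ and $N=(n_k)$, the subsequence $M=(m_{n_k})$ then satisfies $\mathcal{G}(M)\subset \mathcal{S}_\xi(M_0)\subset \mathfrak{E}$. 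For (i) $\Rightarrow$ (ii), given $\mathcal{S}_\xi(M)\subset \mathfrak{E}$ and $N\in [M]$, the spreading property of $\mathcal{S}_\xi$ immediately yields $\mathcal{S}_\xi(N)\subset \mathcal{S}_\xi(M)\subset \mathfrak{E}$, and the order-preserving bijection $i\mapsto n_i$ between $\mathcal{S}_\xi$ and $\mathcal{S}_\xi(N)\subset [N]^{<\nn}$ preserves Cantor Bendixson derivations, forcing $CB(\mathfrak{E}\cap [N]^{<\nn})\geqslant CB(\mathcal{S}_\xi(N))=\omega^\xi+1$.

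For the main implication (ii) $\Rightarrow$ (i), I would apply Theorem \ref{PR} to the regular family $\mathcal{S}_\xi$ with the binary partition $MAX(\mathcal{S}_\xi)=P_1\cup P_2$, where $P_1=MAX(\mathcal{S}_\xi)\cap \mathfrak{E}$ and $P_2=MAX(\mathcal{S}_\xi)\setminus \mathfrak{E}$, to obtain $i\in\{1,2\}$ and $N\in[M_0]$ with $MAX(\mathcal{S}_\xi)\cap [N]^{<\nn}\subset P_i$. In case $i=1$, I first verify that for spreading $\mathcal{S}_\xi$ and infinite $N$ one has $MAX(\mathcal{S}_\xi\cap [N]^{<\nn})=MAX(\mathcal{S}_\xi)\cap [N]^{<\nn}$: if $F\in \mathcal{S}_\xi\cap [N]^{<\nn}$ were maximal within $N$ but had an extension $F\cup (m)\in \mathcal{S}_\xi$ for some $m>\max F$, spreading would give $F\cup (n)\in \mathcal{S}_\xi$ for every $n\geqslant m$, including some $n\in N$, contradicting the within-$N$ maximality. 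Compactness of $\mathcal{S}_\xi\cap [N]^{<\nn}$ then allows me to extend every element to a maximal one, which by case $i=1$ lies in $\mathfrak{E}$, so hereditariness of $\mathfrak{E}$ propagates membership downward to give $\mathcal{S}_\xi\cap [N]^{<\nn}\subset \mathfrak{E}$, hence $\mathcal{S}_\xi(N)\subset \mathfrak{E}$ and (i).

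In case $i=2$, I would apply Theorem \ref{gasp} to the hereditary pair $(\mathcal{S}_\xi,\mathfrak{E})$ to obtain $N'\in[N]$ with either $\mathcal{S}_\xi\cap [N']^{<\nn}\subset \mathfrak{E}$ (which directly yields (i)) or $\mathfrak{E}\cap [N']^{<\nn}\subset \mathcal{S}_\xi$. The second sub-case, combined with the case $i=2$ hypothesis (no maximal $\mathcal{S}_\xi$-set in $[N]^{<\nn}$ lies in $\mathfrak{E}$, hence none in $[N']^{<\nn}$ either), forces $\mathfrak{E}\cap [N']^{<\nn}\subset \mathcal{S}_\xi\setminus MAX(\mathcal{S}_\xi)$. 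The main obstacle, and the crux of the argument, is the CB identity $CB(\mathcal{S}_\xi\setminus MAX(\mathcal{S}_\xi))=\omega^\xi$: spreading of $\mathcal{S}_\xi$ makes every non-maximal element a limit point (it has infinitely many single-element extensions in $\mathcal{S}_\xi$), so the isolated points of $\mathcal{S}_\xi$ are precisely $MAX(\mathcal{S}_\xi)$, whence $\mathcal{S}_\xi\setminus MAX(\mathcal{S}_\xi)$ is the first Cantor Bendixson derivative of $\mathcal{S}_\xi$ with CB index $\omega^\xi$. This forces $CB(\mathfrak{E}\cap [N']^{<\nn})\leqslant \omega^\xi<\omega^\xi+1$, contradicting (ii) and completing the proof.
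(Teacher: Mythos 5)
Most of your argument is sound: the two implications linking (i) and (iii) via Proposition~\ref{deep facts}(v) are correct, (i)~$\Rightarrow$~(ii) is correct, and case $i=1$ of your Ramsey argument for (ii)~$\Rightarrow$~(i) (including the identification $MAX(\mathcal{S}_\xi\cap[N]^{<\nn})=MAX(\mathcal{S}_\xi)\cap[N]^{<\nn}$) also works. The error is in case $i=2$, in the Cantor--Bendixson identity you call the crux. You correctly observe that the isolated points of $\mathcal{S}_\xi$ are exactly $MAX(\mathcal{S}_\xi)$, so $\mathcal{S}_\xi\setminus MAX(\mathcal{S}_\xi)=\mathcal{S}_\xi^1$. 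But $CB(\mathcal{S}_\xi^1)$ is \emph{not} $\omega^\xi$ when $\xi\geqslant 1$: since $(\mathcal{S}_\xi^1)^\alpha=\mathcal{S}_\xi^{1+\alpha}$ and $1+\omega^\xi=\omega^\xi$, one has $(\mathcal{S}_\xi^1)^{\omega^\xi}=\mathcal{S}_\xi^{\omega^\xi}\neq\varnothing$, so $CB(\mathcal{S}_\xi^1)=\omega^\xi+1=CB(\mathcal{S}_\xi)$. The heuristic ``derived set drops the index by one'' is valid only for finite index, i.e.\ only for $\xi=0$. Consequently, $CB(\mathfrak{E}\cap[N']^{<\nn})\leqslant\omega^\xi+1$ does \emph{not} contradict (ii), and your case $i=2$ does not close.

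This is not a cosmetic slip that a sharper estimate would repair. Take $\xi=1$ and $\mathfrak{E}=\mathcal{S}_1(M)$ with $M=(2,3,4,\ldots)$. Then (i), hence (ii), holds. But for each $E\in\mathcal{S}_1$ the set $(m_i)_{i\in E}$ has $|E|\leqslant\min E<\min E+1=\min(m_i)_{i\in E}$, so no element of $\mathfrak{E}$ is maximal in $\mathcal{S}_1$; thus $P_1=\varnothing$ and Theorem~\ref{PR} must return $i=2$, and since $\mathfrak{E}\subset\mathcal{S}_1$ already, Theorem~\ref{gasp} returns the second alternative $\mathfrak{E}\cap[N']^{<\nn}\subset\mathcal{S}_1$. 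Your argument would then derive a contradiction, yet both (i) and (ii) hold; so the approach of comparing $\mathfrak{E}$ against $\mathcal{S}_\xi\setminus MAX(\mathcal{S}_\xi)$ cannot work. The paper instead proves (ii)~$\Rightarrow$~(iii) by a quite different mechanism: it fixes an increasing sequence of regular families $\mathcal{F}_n$ with $CB(\mathcal{F}_n)\uparrow\omega^\xi$ and applies Theorem~\ref{gasp} iteratively to the pairs $(\mathcal{F}_n,\mathfrak{E})$, where the alternative $\mathfrak{E}\cap[N_n]^{<\nn}\subset\mathcal{F}_n$ is ruled out by (ii) precisely because $CB(\mathcal{F}_n)<\omega^\xi$ is \emph{strictly} below the threshold; it then diagonalizes, setting $N=(n_i)$, $n_i\in N_i$, and $\mathcal{F}=\{\varnothing\}\cup\{E\neq\varnothing:E\in\mathcal{F}_{\min E}\}$, obtaining a regular family of index exactly $\omega^\xi+1$ with $\mathcal{F}(N)\subset\mathfrak{E}$, and finishes with Proposition~\ref{deep facts}(v). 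The lesson is that the Gasparis dichotomy must be applied against families of index strictly below $\omega^\xi$ for the CB comparison to decide the alternative; applying it against $\mathcal{S}_\xi$ itself is indecisive.
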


\begin{proof}$(i)\Rightarrow (ii)$ Suppose $\mathcal{S}_\xi(M)\subset \mathfrak{E}$. Then for any $N\in [M]$, $$CB(\mathfrak{E}\cap [N]^{<\nn})\geqslant CB(\mathcal{S}_\xi(M)\cap [N]^{<\nn})\geqslant CB(\mathcal{S}_\xi(N))=\omega^\xi+1.$$   Here we are using the facts that $\mathcal{S}_\xi(N)\subset \mathcal{S}_\xi(M)\cap [N]^{<\nn}$ and $\mathcal{S}_\xi(N)$ is homeomorphic to $\mathcal{S}_\xi$ and therefore has Cantor-Bendixson index $\omega^\xi+1$.

$(ii)\Rightarrow (iii)$ Suppose that $M\in [\nn]$ is such that $CB(\mathfrak{E}\cap [N]^{<\nn})\geqslant \omega^\xi+1$ for all $N\in [M]$.   If $\xi=0$, then there exists $N\in [\nn]$ such that $(n)\in \mathfrak{E}$ for all $n\in N$, otherwise $\mathfrak{E}$ is finite  and $CB(\mathfrak{E})\leqslant 1$. In this case, $\mathcal{S}_0(N)\subset \mathfrak{E}$. For any regular family $\mathcal{G}$ with $CB(\mathcal{G})\leqslant 1+1=2$, $\mathcal{G}\subset \mathcal{S}_0$, and $\mathcal{G}(N)\subset \mathcal{S}_0(N)\subset \mathfrak{E}$.   

Now suppose $0<\xi$.  Fix a sequence $\mathcal{F}_n$ of regular families with $CB(\mathcal{F}_n)\uparrow \omega^\xi$.   By replacing $\mathcal{F}_i$ with $\cup_{j=1}^i \mathcal{F}_j$, we may also assume that $\mathcal{F}_1\subset \mathcal{F}_2\subset \ldots$.  We now recursively choose $M\supset N_1\supset N_2\supset \ldots$ such that for each $i\in \nn$, either $\mathcal{F}_i\cap [N_i]^{<\nn}\subset \mathfrak{E}$ or $\mathfrak{E}\cap [N_i]^{<\nn}\subset \mathcal{F}_i$, which we may do by Theorem \ref{gasp}.    Now for each $i\in \nn$, $$\omega^\xi+1\leqslant CB(\mathfrak{E}\cap [N_i]^{<\nn})$$ and $CB(\mathcal{F}_i)<\omega^\xi$, so the second inclusion cannot hold. From this it follows that $\mathcal{F}_i\cap [N_i]^{<\nn}\subset \mathfrak{E}$ for all $i\in \nn$.  Now fix $n_1<n_2<\ldots$, $n_i\in N_i$, and let $N=(n_i)_{i=1}^\infty$ and $\mathcal{F}=\{\varnothing\}\cup \{E: \varnothing\neq E\in \mathcal{F}_{\min E}\}$.  Then $\mathcal{F}$ is regular with $CB(\mathcal{F})=\omega^\xi +1$.  We claim that $\mathcal{F}(N)\subset \mathfrak{E}$.  Indeed, fix $\varnothing \neq E\in \mathcal{F}$ and let $i=\min E$. Then $E\in \mathcal{F}_i$, and $N(E)\in \mathcal{F}_i\cap [N_i]^{<\nn}\subset \mathfrak{E}$.

Now for any regular $\mathcal{G}$ with $CB(\mathcal{G})\leqslant \omega^\xi+1$, there exists $L=(l_i)_{i=1}^\infty\in [\nn]$ such that $\mathcal{G}(L)\subset \mathcal{F}$.    Then if $j_i=n_{l_i}$ and $J=(j_i)_{i=1}^\infty$, $$\mathcal{G}(J)=\mathcal{G}(L)(N) \subset \mathcal{F}(N)\subset \mathfrak{E}.$$

$(iii)\Rightarrow (i)$ This follows from the fact that $CB(\mathcal{S}_\xi)=\omega^\xi+1$.

\end{proof}

\begin{corollary} For $0<\xi\leqslant \omega_1$, if $A:X\to Y$ fails to be $\xi$-completely continuous, then there exists a $\xi$-weakly null sequence $(x_i)_{i=1}^\infty\subset B_X$ such that the map $\Phi:\ell_1\to X$ given by $\Phi\sum_{i=1}^\infty a_ie_i=\sum_{i=1}^\infty a_ix_i$ lies in $\mathfrak{W}_\xi(\ell_1,X)$ and $(Bx_i)_{i=1}^\infty$ is bounded away from zero.

\label{divad}
\end{corollary}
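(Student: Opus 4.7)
The plan is to extract the desired $(x_i)$ in three stages: first witness the failure of $\xi$-complete continuity, then normalize, then diagonalize so that Lemma \ref{phi} applies.

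Since $A$ fails to be $\xi$-completely continuous, there is a $\xi$-weakly null sequence $(u_i)\subset X$ with $(Au_i)$ not norm null. Passing to a subsequence gives $\inf_i\|Au_i\|\geqslant \delta>0$, and since any $\xi$-weakly null sequence is weakly null (and hence bounded) by Proposition \ref{basic facts}, rescaling puts $(u_i)$ into $B_X$ while keeping $\|Au_i\|$ bounded below. For the case $\xi=\omega_1$, weak nullity of $(u_i)$ combined with Krein's theorem shows $\Phi(B_{\ell_1})$ lies in a weakly compact set, so $\Phi\in \mathfrak{W}_{\omega_1}$ and $(x_i)=(u_i)$ works.

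For $0<\xi<\omega_1$, the goal is a further subsequence $(x_i)$ of $(u_i)$ with $CB(\mathfrak{F}^a_\ee((x_i)))\leqslant \omega^\xi$ for every $\ee>0$, after which Lemma \ref{phi} delivers $\Phi\in\mathfrak{W}_\xi(\ell_1,X)$. Because $(u_i)$ is $\xi$-weakly null, condition (i) of Lemma \ref{mst} fails for $\mathfrak{E}=\mathfrak{F}^a_\ee((u_i))$, so (ii) fails too: for every $\ee>0$ and every $M\in[\nn]$ there exists $N\in[M]$ with $CB(\mathfrak{F}^a_\ee((u_i))\cap[N]^{<\nn})\leqslant \omega^\xi$. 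I apply this recursively with $\ee=1/k$ to produce nested $\nn\supset N_1\supset N_2\supset \ldots$, then diagonalize: pick $n_k\in N_k$ with $n_1<n_2<\ldots$, set $N=(n_k)_{k=1}^\infty$ and $x_i=u_{n_i}$. The resulting $(x_i)$ is $\xi$-weakly null as a subsequence of a $\xi$-weakly null sequence, lies in $B_X$, and has $\inf_i\|Ax_i\|\geqslant \delta$.

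The one technical point, and the main obstacle, is verifying the CB bound for all $\ee>0$ simultaneously, even though $N$ is only eventually contained in each $N_k$. Fix $\ee>0$, pick $k$ with $1/k\leqslant \ee$, and note that $\mathfrak{F}^a_\ee((x_i))$ is homeomorphic (via the bijection $F\leftrightarrow (n_i)_{i\in F}$) to $\mathfrak{F}^a_\ee((u_i))\cap [N]^{<\nn}$, which is contained in $\mathfrak{F}^a_{1/k}((u_i))\cap [N]^{<\nn}$. Writing $N=F\sqcup N'$ with $F=\{n_1,\ldots,n_{k-1}\}$ and $N'=\{n_j:j\geqslant k\}\subset N_k$, the splitting map $E\mapsto(E\cap F,E\setminus F)$ is a homeomorphic embedding of this family into the product $2^F\times(\mathfrak{F}^a_{1/k}((u_i))\cap [N_k]^{<\nn})$. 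The first factor is a finite discrete space, so the product has Cantor-Bendixson index equal to that of its second factor, which is $\leqslant \omega^\xi$ by construction; hence $CB(\mathfrak{F}^a_\ee((x_i)))\leqslant \omega^\xi$. Lemma \ref{phi} then concludes $\Phi\in\mathfrak{W}_\xi(\ell_1,X)$.
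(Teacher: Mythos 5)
Your proof is correct and follows the same route as the paper's: witness the failure of $\xi$-complete continuity, pass to a subsequence bounded away from zero, apply the negation of Lemma \ref{mst}(ii) to $\mathfrak{F}_{1/k}^a$ recursively to get nested $N_k$, diagonalize, and invoke Lemma \ref{phi}. Your splitting-map argument, embedding $\mathfrak{F}_{1/k}^a((u_i))\cap[N]^{<\nn}$ into $2^F\times(\mathfrak{F}_{1/k}^a((u_i))\cap[N_k]^{<\nn})$, supplies a detail the paper leaves implicit — that the Cantor–Bendixson bound survives the diagonalization even though the diagonal set $N$ only eventually lies in each $N_k$ — and it also correctly keeps $\mathfrak{F}^a_\ee$ throughout, which is the version Lemma \ref{phi} actually requires.
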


\begin{proof} If $\xi=\omega_1$, this is well-known.  Indeed, if $(x_i)_{i=1}^\infty\subset B_X$ is weakly null, the map $\Phi:\ell_1\to X$ given by $\Phi \sum_{i=1}^\infty a_i e_i=\sum_{i=1}^\infty a_i x_i$ is weakly compact.

Suppose $\xi<\omega_1$.     There exists a $\xi$-weakly null sequence $(u_i)_{i=1}^\infty$ such that $(Bu_i)_{i=1}^\infty$ is bounded away from zero. Now by Lemma \ref{mst}, for every $M\in[\nn]$ and $\ee>0$,  there exists $N\in [M]$ such that $CB(\mathfrak{F}_\ee((u_i)_{i=1}^\infty)\cap [N]^{<\nn})<\omega^\xi$.   We may then recursively select $N_1\supset N_2\supset \ldots$ such that for all $n\in\nn$, $$CB(\mathfrak{F}_{1/n}((x_i)_{i=1}^\infty) \cap [N_n]^{<\nn})<\omega^\xi.$$   Fix $n_1<n_2<\ldots$, $n_i\in \nn$, and let $x_i=u_{n_i}$.  Then for all $\ee>0$, $$CB(\mathfrak{F}_\ee((x_i)_{i=1}^\infty))<\omega^\xi.$$  We conclude by Lemma \ref{phi}.

\end{proof}

\begin{theorem} Fix $1\leqslant \xi\leqslant \omega_1$.  \begin{enumerate}[(i)]\item For any $0<\xi<\omega_1$, $\mathfrak{W}_\xi^{\text{\emph{dual dual}}}=\mathfrak{W}_\xi$ and $\mathfrak{W}_\xi^\text{\emph{dual}}\neq \mathfrak{W}_\xi$. \item $\mathfrak{V}_\xi= \mathfrak{K}\circ \mathfrak{W}_\xi^{-1}= \mathfrak{W}_\infty\circ \mathfrak{W}_\xi^{-1}$. \item  $\mathfrak{V}_\xi^\text{\emph{dual}}= (\mathfrak{W}_\xi^\text{\emph{dual}})^{-1}\circ \mathfrak{K}$. \item  $((\mathfrak{W}_\xi^\text{\emph{dual}})^{-1}\circ \mathfrak{K})^\text{\emph{dual}}\subsetneq\mathfrak{V}_\xi$.  

\end{enumerate}

\end{theorem}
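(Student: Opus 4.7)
My plan is to prove the four parts in an order that lets later ones lean on earlier ones: part~(ii) drives the proof of~(iii), which then feeds into~(iv), while~(i) supplies the non-equality used for strictness in~(iv).

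Part~(ii) splits into the chain $\mathfrak{V}_\xi\subset \mathfrak{K}\circ \mathfrak{W}_\xi^{-1}\subset \mathfrak{W}_\infty\circ \mathfrak{W}_\xi^{-1}\subset \mathfrak{V}_\xi$. The first inclusion is a diagonal-subsequence argument: given $A\in \mathfrak{V}_\xi$, $B\in \mathfrak{W}_\xi(W,X)$, and a bounded $(w_n)\subset W$, use $\xi$-weak compactness of $B$ to extract $(Bw_{n_k})$ that is $\xi$-weakly convergent to some $x\in X$; then $(Bw_{n_k}-x)$ is $\xi$-weakly null and $\xi$-complete continuity of $A$ yields $ABw_{n_k}\to Ax$ in norm, so $AB\in \mathfrak{K}$. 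The middle inclusion is immediate from $\mathfrak{K}\subset \mathfrak{W}_\infty$. For the third, I argue by contrapositive: if $A\notin \mathfrak{V}_\xi$, Corollary~\ref{divad} produces a $\xi$-weakly null $(x_i)\subset B_X$ with $\inf_i\|Ax_i\|>0$ and $\Phi:\ell_1\to X$, $\Phi e_i=x_i$, in $\mathfrak{W}_\xi$. Since $(x_i)$ is weakly null, so is $(Ax_i)$, and combined with $\inf\|Ax_i\|>0$ this forces $(A\Phi e_i)=(Ax_i)$ to have no norm convergent subsequence; hence $A\Phi\notin\mathfrak{K}$ and $A\notin \mathfrak{K}\circ \mathfrak{W}_\xi^{-1}$. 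Upgrading this to $A\Phi\notin \mathfrak{W}_\infty$ is the main technical step; the plan is to refine $(x_i)$ via Lemma~\ref{mst} and Proposition~\ref{descent speed} so that $CB(\mathfrak{F}^a_\varepsilon((Ax_i)))>\omega^\xi$ for some $\varepsilon>0$, which, via the argument underlying Lemma~\ref{phi}, prevents $(Ax_i)$ from being absorbed into the closed absolutely convex hull of any weakly null sequence in $Y$.

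For part~(i), the identity $\mathfrak{W}_\xi^{\text{dual dual}}=\mathfrak{W}_\xi$ is a $\xi$-analogue of Gantmacher's theorem: $\supset$ follows from $A=A^{**}|_X$ together with the fact that $\xi$-weak convergence in $Y^{**}$ restricts to $\xi$-weak convergence in $Y$ (since $Y^*\subset Y^{***}$); for $\subset$ one uses $A\in\mathfrak{W}_\xi\subset\mathfrak{W}$ to conclude $A^{**}(X^{**})\subset Y$ and then approximates via Goldstine, invoking the bidual machinery of \cite{Cweakly} or \cite{BF}. The strict inequality $\mathfrak{W}_\xi^\text{dual}\neq \mathfrak{W}_\xi$ is witnessed by an explicit Schreier-type operator whose $\xi$-weak compactness follows from $\ell_1^\xi$-spreading-model avoidance on the image but whose adjoint inherits an $\ell_1^\xi$-spreading model. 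For part~(iii), I dualize~(ii) via Schauder's theorem: $A\in \mathfrak{V}_\xi^\text{dual}$ iff $A^*\in \mathfrak{V}_\xi$, iff $A^*B\in \mathfrak{K}$ for every $B\in\mathfrak{W}_\xi$, iff (taking preadjoints and using $\mathfrak{W}_\xi^{\text{dual dual}}=\mathfrak{W}_\xi$ from~(i)) $TA\in\mathfrak{K}$ for every $T\in \mathfrak{W}_\xi^\text{dual}$, i.e., $A\in (\mathfrak{W}_\xi^\text{dual})^{-1}\circ \mathfrak{K}$.

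For part~(iv), another layer of dualization identifies $((\mathfrak{W}_\xi^\text{dual})^{-1}\circ \mathfrak{K})^\text{dual}$ with $\mathfrak{V}_\xi^{\text{dual dual}}$. The containment $\mathfrak{V}_\xi^{\text{dual dual}}\subset \mathfrak{V}_\xi$ is automatic, since any $\xi$-weakly null sequence $(x_n)\subset X$ is also $\xi$-weakly null in $X^{**}$ and satisfies $A^{**}x_n=Ax_n$. For strictness, I would produce an identity operator $I_X$ with $X$ being $\xi$-Schur but $X^{**}$ not, the existence of which follows by combining the ordinal-rank construction developed in the paper with the asymmetry witness from part~(i).
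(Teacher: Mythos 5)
The fatal gap is your argument for the inclusion $\mathfrak{W}_\infty\circ \mathfrak{W}_\xi^{-1}\subset \mathfrak{V}_\xi$ in part~(ii). You argue by contrapositive: from $A\notin \mathfrak{V}_\xi$, Corollary~\ref{divad} gives a $\xi$-weakly null $(x_i)_{i=1}^\infty\subset B_X$ with $\Phi\in \mathfrak{W}_\xi(\ell_1, X)$, $\Phi e_i=x_i$, and $\inf_i\|Ax_i\|>0$, and you then aim to show $A\Phi\notin\mathfrak{W}_\infty$. That last step cannot succeed. Since $(x_i)_{i=1}^\infty$ is $\xi$-weakly null it is weakly null (Proposition~\ref{basic facts}), hence $(Ax_i)_{i=1}^\infty$ is a weakly null sequence in $Y$, and $\overline{A\Phi B_{\ell_1}}$ is precisely the closed absolutely convex hull of $(Ax_i)_{i=1}^\infty$. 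Thus $A\Phi\in\mathfrak{W}_\infty$ automatically, no matter how you refine $(x_i)_{i=1}^\infty$; the claim that $(Ax_i)_{i=1}^\infty$ ``cannot be absorbed into the closed absolutely convex hull of any weakly null sequence in $Y$'' is self-contradictory, since $(Ax_i)_{i=1}^\infty$ is itself such a sequence. The paper proves this inclusion by a genuinely different (non-contrapositive) route: assuming both $A\in \mathfrak{W}_\infty\circ \mathfrak{W}_\xi^{-1}$ and $A\notin\mathfrak{V}_\xi$, it factors $\Phi=J\Phi_1$ through a reflexive $Z$ with $I_Z\in\mathfrak{W}_\xi$ (the factorization theorem of \cite{BC}), invokes the hypothesis on $A$ to place $AJ\in\mathfrak{W}_\infty(Z,Y)$, and, following \cite{JLO}, pushes $AJ$ forward into a Schur quotient $\ell_1/\ker(\Phi_2)\cong W^*$ with $W\subset c_0$ closed, from which $AJ$ is compact --- contradicting $\inf_i\|Ax_i\|>0$. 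The hypothesis $A\in\mathfrak{W}_\infty\circ\mathfrak{W}_\xi^{-1}$ is indispensable here and is simply lost in your contrapositive framing; you need the full strength of the assumption to produce the weakly null sequence $(y_n)$ in $Y$ whose hull contains $AJB_Z$, and the $\ell_1$-quotient argument replaces your appeal to Lemma~\ref{phi}.

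Two smaller points. In~(i), ``Gantmacher plus Goldstine'' does not by itself yield $\mathfrak{W}_\xi^{\text{dual dual}}=\mathfrak{W}_\xi$; the paper uses the fact that every $A\in\mathfrak{W}_\xi$ factors through a \emph{reflexive} $Z$ with $I_Z\in\mathfrak{W}_\xi$ (again \cite{BC}), so $A^{**}$ factors through the same $Z$ and conversely. The non-equality is witnessed by the higher-order Tsirelson space $T_\xi$: $I_{T_\xi}\notin\mathfrak{W}_\xi$ while $I_{T_\xi}^*\in\mathfrak{W}_1$, which is the opposite orientation from your sketch but proves the same thing. Your~(iii) is essentially correct and is a clean abstraction of the paper's explicit construction of $\Phi_*$, provided you spell out that ``taking preadjoints'' means: given $B\in\mathfrak{W}_\xi(W,Y^*)$, set $T=B^*j_Y:Y\to W^*$, check $T\in\mathfrak{W}_\xi^{\text{dual}}$ using $\mathfrak{W}_\xi^{\text{dual dual}}=\mathfrak{W}_\xi$, and use $A^*B=(TA)^*j_W$. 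For~(iv), the strictness witness is simply $\ell_1$ versus $\ell_1^{**}$, which the paper states explicitly; no new construction is needed.
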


\begin{proof} $(i)$ It was shown in \cite{BC} that if $A\in \mathfrak{W}_\xi$, then $A$ factors through a Banach space $Z$ such that $I_Z\in \mathfrak{W}_\xi$.   Since this space $Z$ is reflexive, $A^{**}$ also factors through $Z$, and $A^{**}\in \mathfrak{W}_\xi$.   Conversely, if $A^{**}$ factors through $Z$ with $I_Z\in \mathfrak{W}_\xi$, then so does $A$.  This yields that $\mathfrak{W}_\xi^\text{dual dual}=\mathfrak{W}_\xi$.   But for any ordinal $\xi$, there exists a reflexive Banach space $T_\xi$ such that $T^*_\xi$ has no $\ell_1^1$-spreading model and $T_\xi$ has an $\ell_1^\xi$ spreading model (the higher order Tsirelson spaces serve as such examples).    Therefore $I_{T_\xi}\notin \mathfrak{W}_\xi$ while $I^*_{T_\xi}\in \mathfrak{W}_1$.

$(ii)$ It is quite clear that $\mathfrak{V}_\xi\subset \mathfrak{K}\circ \mathfrak{W}_\xi^{-1}$.    Indeed, suppose $W,X,Y$ are Banach spaces,  $A\in \mathfrak{V}_\xi(X,Y)$, and $B\in \mathfrak{W}_\xi(W,X)$.  If $(w_n)_{n=1}^\infty$ is bounded, then there exists a subsequence $(u_n)_{n=1}^\infty$ of $(w_n)_{n=1}^\infty$ such that $(Bu_n)_{n=1}^\infty$ is $\xi$-convergent to some $x\in X$, and $(ABu_n)_{n=1}^\infty$ must be norm convergent to $Ax$.  Therefore $AB$ is compact.

Now since $\mathfrak{K}\subset \mathfrak{W}_\infty$, $\mathfrak{K}\circ \mathfrak{W}_\xi^{-1}\subset \mathfrak{W}_\infty\circ \mathfrak{W}_\xi^{-1}$.

To prove that $\mathfrak{W}_\infty\circ \mathfrak{W}_\xi^{-1}\subset \mathfrak{V}_\xi$, we modify a result of Johnson, Lillemets,  and Oja from \cite{JLO}.   Seeking a contradiction, suppose that there exist  Banach spaces $X,Y$ and $A\in \mathfrak{W}_\infty \circ \mathfrak{W}_\xi^{-1}(X,Y)$ such that $A\in \complement \mathfrak{V}_\xi$.   Then there exist $(x_n)_{n=1}^\infty$ and $\ee>0$ such that $\Phi:\ell_1\to X$ given by $\Phi\sum_{n=1}^\infty a_ne_n=\sum_{n=1}^\infty a_n x_n$ lies in $\mathfrak{W}_\xi$ and $\inf_n \|Ax_n\|>\ee$.    It follows from \cite{BC} that there exists a Banach space $Z$ with $I_Z \in \mathfrak{W}_\xi$ such that $\Phi$ factors through $Z$.  Suppose that $\Phi_1:\ell_1\to Z$, $J:Z\to X$ are such that $\Phi=J\Phi_1$. Then $AJ\in \mathfrak{W}_\infty(Z,Y)$.   Then there exists a weakly null sequence $(y_n)_{n=1}^\infty$ in $Y$ such that $AJB_Z\subset \Phi_2 B_{\ell_1}$, where $\Phi_2:\ell_1\to Y$ is given by $\Phi_2\sum_{n=1}^\infty a_ne_n=\sum_{n=1}^\infty a_n y_n$. Let $\overline{\Phi}_2:\ell_2/\ker(\Phi_2)\to Y$ be the injective associate of $\Phi_2$.  Then we consider the operator $\overline{\Phi}_2^{-1}AJ:Z\to \ell_1/\ker(\Phi_2)$, which is a well-defined, bounded operator. We argue as in \cite{JLO} to deduce that $\ell_1/\ker(\Phi_2)$ has the Schur property, since $\ell_1/\ker(\Phi_2)=W^*$ for a closed subspace $W$ of $c_0$. Now since $Z$ is reflexive and $\ell_1/\ker(\Phi_2)$ has the Schur property, every  operator from $Z$ into $\ell_1/\ker(\Phi_2)$ is compact.  Therefore $\overline{\Phi}_2^{-1}AJ$ is compact, and so must be $AJ=\overline{\Phi}_2\overline{\Phi}_2^{-1}AJ$.  But since $AJ$ is clearly not compact, we reach the necessary contradiction.

$(iii)$ First suppose that $X,Y$ are Banach spaces and  $A\in \mathfrak{V}_\xi^\text{dual}(X,Y)$.  We wish to show that $A\in ((\mathfrak{W}_\xi^\text{dual})^{-1}\circ \mathfrak{K})(X,Y)$.   To that end, fix a Banach space $Z$ and $B\in \mathfrak{W}_\xi^\text{dual}(Y,Z)$. Then by $(ii)$, $(BA)^*=A^*B^*\in \mathfrak{K}(Z^*, X^*)$.  Since by Schauder's theorem, $\mathfrak{K}=\mathfrak{K}^\text{dual}$, $BA\in \mathfrak{K}(X,Z)$.   Since $Z$ and $B\in \mathfrak{W}_\xi^\text{dual}(Y,Z)$ were arbitrary, $A\in (\mathfrak{W}_\xi^\text{dual})^{-1}\circ \mathfrak{K}$.    

Now suppose that $A\in \complement \mathfrak{V}_\xi^\text{dual}$.  Then by Corollary \ref{divad}, there exists a weakly null sequence $(y^*_n)_{n=1}^\infty\subset Y^*$ such that $\Phi:\ell_1\to Y^*$ given by $\Phi\sum_{i=1}^\infty a_ie_i=\sum_{i=1}^\infty a_i y^*_i$ lies in $\mathfrak{W}_\xi(\ell_1, Y^*)$ and $\inf_n \|y^*_n\|>0$.     Define $\Phi_*:Y\to c_0$ by $\Phi_*(y)=(y^*_n(y))_{n=1}^\infty$ and note that $\Phi_*^*=\Phi$.      However, $\Phi_* A:X\to c_0$ is not compact.   From this it follows that $A\in \complement ((\mathfrak{W}_\xi^\text{dual})^{-1}\circ \mathfrak{K})$.

$(iv)$ It is quite obvious that $\mathfrak{V}_\xi\supset \mathfrak{V}_\xi^\text{dual dual}$.    That is, if $A^{**}:X^{**}\to Y^{**}$ is $\xi$-completely continuous, so is $A:X\to Y$.    From this and $(iii)$, $$\mathfrak{V}_\xi\supset \mathfrak{V}_\xi^\text{dual dual} = (\mathfrak{W}_\xi^\text{dual}\circ \mathfrak{K})^\text{dual}.$$   Thus we must show that $\mathfrak{V}_\xi\not\subset \mathfrak{V}_\xi^\text{dual dual}$.  We will show the stronger statement that $$\mathfrak{V}\not\subset \mathfrak{V}_1^\text{dual dual}.$$   Indeed, this follows from the well known fact that $\ell_1$ is a Schur space, while $\ell_1^{**}$ contains a copy of $\ell_2$, and is therefore not $1$-Schur.

\end{proof}

\begin{corollary} For any $1\leqslant \xi\leqslant \omega_1$, $\mathfrak{V}_\xi$ is a closed, injective operator ideal. Furthermore, $\mathfrak{V}_\xi$ is not symmetric,  surjective, or idempotent.

\label{willits}

\end{corollary}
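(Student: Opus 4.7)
All four properties will be extracted from the characterization $\mathfrak{V}_\xi=\mathfrak{K}\circ \mathfrak{W}_\xi^{-1}$ established in the previous theorem, combined with the classical Schur/non-Schur dichotomy between $\ell_1$ and $c_0$.

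\textbf{Operator ideal, closed, injective.}  All three structural properties transfer from $\mathfrak{K}$ through the formula $\mathfrak{V}_\xi=\mathfrak{K}\circ \mathfrak{W}_\xi^{-1}$.  The identity $I_\mathbb{K}$ lies in $\mathfrak{V}_\xi$ trivially.  For linearity, given $A_1,A_2\in\mathfrak{V}_\xi(X,Y)$, $\lambda\in\mathbb{K}$, and $B\in \mathfrak{W}_\xi(W,X)$, the operator $(A_1+\lambda A_2)B=A_1B+\lambda A_2B$ lies in $\mathfrak{K}(W,Y)$ since $\mathfrak{K}(W,Y)$ is a linear subspace.  The ideal property uses that $\mathfrak{W}_\xi$ is itself an ideal (so $CB\in\mathfrak{W}_\xi$ when $B\in\mathfrak{W}_\xi$ and $C$ is bounded).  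Closedness: if $A_n\to A$ in norm with $A_n\in\mathfrak{V}_\xi$, then $A_nB\to AB$ in norm and each $A_nB\in\mathfrak{K}$, whence $AB\in\mathfrak{K}$ by closedness of $\mathfrak{K}$.  Injectivity: if $j:Y\to W$ is an isometric embedding and $jA\in\mathfrak{V}_\xi$, then $jAB\in\mathfrak{K}$ for every $B\in\mathfrak{W}_\xi$; by injectivity of $\mathfrak{K}$, $AB\in\mathfrak{K}$, so $A\in\mathfrak{V}_\xi$.

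\textbf{Failure of symmetry and surjectivity.}  I first observe that $I_{c_0}\notin\mathfrak{V}_1$, and hence $I_{c_0}\notin\mathfrak{V}_\xi$ for $\xi\geqslant 1$.  Indeed, if $(e_n)$ is the canonical basis of $c_0$, then for any $\varepsilon>0$ and $F\in \mathfrak{F}^a_\varepsilon((e_n))$ we have $|F|\leqslant 1/\varepsilon$, while $\mathcal{S}_1$ contains sets of arbitrary cardinality; therefore $(e_n)$ is $1$-weakly null in $c_0$ but not norm null.  For non-symmetry, $\ell_1$ is a Schur space, so $I_{\ell_1}\in \mathfrak{V}\subset \mathfrak{V}_\xi$; the inclusion $j:c_0\to \ell_\infty$ sends the $1$-weakly null sequence $(e_n)$ to a non-norm-null sequence, so $j\notin \mathfrak{V}_\xi$.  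By the ideal property, if $I_{\ell_\infty}=I_{\ell_1}^*$ were in $\mathfrak{V}_\xi$, then $j=I_{\ell_\infty}\circ j$ would lie in $\mathfrak{V}_\xi$, a contradiction.  For non-surjectivity, fix a quotient map $q:\ell_1\to c_0$ (available since every separable Banach space is a quotient of $\ell_1$); then $q\in\mathfrak{V}\subset\mathfrak{V}_\xi$ (Schur property of $\ell_1$), while $I_{c_0}\circ q=q\in\mathfrak{V}_\xi$ even though $I_{c_0}\notin\mathfrak{V}_\xi$.

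\textbf{Failure of idempotency.}  Here the plan is to exhibit an operator $A\in\mathfrak{V}_\xi$ which does not factor as a product of two $\xi$-completely continuous operators.  The idea is to use one of the operators produced in Section 4 (toward Theorem \ref{big2}) satisfying $\textsf{v}(A)=\zeta$ for some $\zeta$ slightly larger than $\xi$, so that $A\in\mathfrak{V}_\xi\setminus\mathfrak{V}_\zeta$.  If $A=BC$ with $C:X\to Z$ and $B:Z\to Y$ both in $\mathfrak{V}_\xi$, one argues that the intermediate space $Z$ can be arranged to have a $\textsf{v}$-rank strictly larger than $\xi$, forcing $A$ itself to be $\zeta'$-completely continuous for some $\zeta'>\xi$ and contradicting $\textsf{v}(A)=\zeta$.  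The main obstacle is showing that no such factorization can exist: this requires a rank/spreading-model analysis of $\ell_1^\xi$-structures in $Z$ using Theorem \ref{convex unc} and Corollary \ref{divad}, together with the quantitative preservation of $\xi$-weak nullity under bounded linear maps.  Alternatively, one may directly invoke a specific example (e.g.\ the inclusion $C(K)\to L_2$ for a countable compact $K$ of suitable CB-index, or a formal identity into a higher Tsirelson space) whose factorization properties have been studied elsewhere in the hierarchy literature.
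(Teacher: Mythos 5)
Your arguments for the closed injective ideal, non-symmetry, and non-surjectivity are correct. Routing everything through the factorization $\mathfrak{V}_\xi=\mathfrak{K}\circ\mathfrak{W}_\xi^{-1}$ is a reasonable alternative to the paper's choice (which simply adapts the sequential proof that $\mathfrak{V}$ is a closed injective ideal), and the $\ell_1/c_0/\ell_\infty$ examples are the same ones the paper uses; your non-surjectivity example with $I_{c_0}\circ q$ for $q:\ell_1\to c_0$ a quotient map is an equally clean variant of the paper's $\ell_1\twoheadrightarrow c_0$ factorization through $\ell_1/\ker$.

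The genuine gap is in the non-idempotency argument. Your primary plan --- take $A$ with $\textsf{v}(A)=\zeta$ just above $\xi$ and argue that no factorization $A=BC$ with $B,C\in\mathfrak{V}_\xi$ exists because of a rank obstruction --- cannot work uniformly. The only tool the paper provides for lower-bounding the rank of a composition is Corollary \ref{block of block}, which from $\textsf{v}(B),\textsf{v}(C)\geq\xi$ gives $\textsf{v}(BC)\geq\sup\{\eta+\eta'+1: \eta,\eta'<\xi\}$. For $\xi=1$ this supremum is $1$, and for $\xi$ a limit ordinal of the form $\omega^\gamma$ it equals $\xi$ again (as the paper explicitly notes in the Remark following Corollary \ref{block of block}), so no contradiction with $\textsf{v}(A)<\textsf{v}(BC)$ can be extracted there. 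Moreover, in the factorization $A=BC$ you do not get to "arrange" the intermediate space $Z$; you must argue for arbitrary $Z$ and arbitrary $B,C$. The $C[0,1]\to L_2$ inclusion you mention as an "alternative" is in fact the paper's actual route, and the crucial ingredient you have not invoked is Theorem \ref{gds}: for any $Z$, $\mathfrak{V}_\xi(C[0,1],Z)\subset\mathfrak{V}_1(C[0,1],Z)=\mathfrak{W}_1(C[0,1],Z)\subset\mathfrak{W}_\xi(C[0,1],Z)$, so a factorization $j=LR$ with $R\in\mathfrak{V}_\xi(C[0,1],Z)$ and $L\in\mathfrak{V}_\xi(Z,L_2)$ would put $R\in\mathfrak{W}_\xi$ and then $LR\in\mathfrak{K}$ by the factorization formula --- absurd since $j$ is an isomorphism on, e.g., the closed span of the Rademachers. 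Without Theorem \ref{gds}, merely citing $C(K)\to L_2$ does not close the argument.
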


In the following proof, each of our examples is a classical one. The content of the proof is not in the novelty of the examples, but in quantifying those classical examples. 

\begin{proof} The proof that $\mathfrak{V}_\xi$ is a closed, injective ideal is nearly identical to the proof that $\mathfrak{V}$ is an ideal, replacing weakly null sequences in the domain spaces with $\xi$-weakly null sequences.    

Neither $c_0$ nor $\ell_\infty$ lies in $\textsf{V}_1$, while $\ell_1\in \textsf{V}$, whence neither of the classes $\mathfrak{V}_\xi^\text{dual}$, $ \mathfrak{V}_\xi$ is contained in the other, and $\mathfrak{V}_\xi$ is not symmetric.

Let $A:\ell_1\to c_0$ be a surjection, $q:\ell_1\to \ell_1/\ker(Q)$  the quotient map, and $\overline{A}:\ell_1/\ker(Q)\to c_0$  the injective associate of $A$. That is, $A=\overline{A}q$.  Then  $A\in \mathfrak{V}$, while $\overline{A}\in \complement \mathfrak{V}_1\subset \complement \mathfrak{V}_\xi$, so that $\mathfrak{V}_\xi$ is not surjective.

Now let $j:C[0,1]\to L_2$ be the canonical inclusion.      Suppose that $Z$ is a Banach space, $R:C[0,1]\to Z$, and $L:Z\to L_2$ are operators such that $j=RL$, $L\in \mathfrak{V}_\xi(Z, L_2)\subset \mathfrak{V}_1(Z, L_2)$, and  $R\in \mathfrak{V}_\xi(C[0,1], Z)=\mathfrak{W}_1(C[0,1], Z)$.   Then $RL\in \mathfrak{K}$, which is obviously absurd. However, since $j: C[0,1]\to L_2$ is weakly compact, it is completely continuous.   Thus $j\in \mathfrak{V}$ and $j\in \complement (\mathfrak{V}_1\circ \mathfrak{V}_1)$.

\end{proof}

\begin{rem}\upshape Note that quantitatively, we have proved more than what  is stated in Corollary \ref{willits}, and indeed we have proved the strongest possible quantitative statements.  Indeed, we have shown that $$\mathfrak{V}^\text{surj}\not\subset \mathfrak{V}_1,$$ $$\mathfrak{V}^\text{dual}\not\subset \mathfrak{V}_1,$$ $$\mathfrak{V}\not\subset \mathfrak{V}^\text{dual}_1,$$  and $$\mathfrak{V}\not\subset \mathfrak{V}_1\circ \mathfrak{V}_1.$$

\end{rem}

\section{Special convex combinations}

Let $\mathscr{P}$ denote the set of all probability measures on $\nn$.    We treat each member $\mathbb{P}$ of $\mathscr{P}$ as a function from $\nn$ into $[0,1]$, where $\mathbb{P}(n)=\mathbb{P}(\{n\})$.  We let $\text{supp}(\mathbb{P})=\{n\in \nn: \mathbb{P}(n)>0\}$.   Given a nice family $\mathcal{P}$ and a subset $\mathfrak{P}=\{\mathbb{P}_{M,n}: M\in [\nn], n\in \nn\}$ of $ \mathscr{P}$, we say $(\mathfrak{P}, \mathcal{P})$ is a \emph{probability block} provided that \begin{enumerate}[(i)]\item for any $M\in [\nn]$ and $r\in \nn$, if $N=M\setminus \cup_{i=1}^{r-1}\text{supp}(\mathbb{P}_{M,i})$, then $\mathbb{P}_{N,1}=\mathbb{P}_{M,r}$, and \item for each $M\in [\nn]$, $\text{supp}(\mathbb{P}_{M,1})=M_{\mathcal{P}, 1}$.    \end{enumerate}

\begin{rem}\upshape It follows from the definition of probability block that for any $M\in [\nn]$, $(M_{\mathcal{P}, n})_{n=1}^\infty=(\text{supp}(\mathbb{P}_{M,n}))_{n=1}^\infty$ and for any $s\in \nn$ and $M,N\in \nn$, and $r_1<\ldots <r_s$ such that $\cup_{i=1}^s \text{supp}(\mathbb{P}_{M, r_i})$ is an initial segment of $N$, then $\mathbb{P}_{N, i}=\mathbb{P}_{M, r_i}$ for all $1\leqslant i\leqslant s$.   

To see the first fact, we argue by induction that any $r\in \nn$, $\text{supp}(\mathbb{P}_{M,r})=M_{\mathcal{P}, r}$. The $r=1$ case is item (ii) above.  Now assuming that for some $1<r\in \nn$ that $\text{supp}(\mathbb{P}_{M,i})=M_{\mathcal{P}, i}$ for all $1\leqslant i<r$, let $N=M\setminus \cup_{i=1}^{r-1}\text{supp}(\mathbb{P}_{M,i})=M\setminus\cup_{i=1}^{r-1} M_{\mathcal{P}, i}$.   Then $$\mathbb{P}_{M,r}=\mathbb{P}_{N,1}=N_\mathcal{P}= M_{\mathcal{P}, r}.$$

To see the second fact, we will use the obvious fact that if $M,N\in [\nn]$, $s\in \nn$, $r_1<\ldots <r_s$ are such that $\cup_{i=1}^s M_{\mathcal{P}, r_i}$ is an initial segment of $N$, then $M_{\mathcal{P}, r_i}=N_{\mathcal{P}, i}$.    For $1\leqslant i\leqslant s$, let $K=\cup_{j=1}^{r_{s-1}-1} M_{\mathcal{P}, j}$ and $L=\cup_{j=1}^{s-1} N_{\mathcal{P}, j}$, then by property (ii) above, $\mathbb{P}_{M, r_s}=\mathbb{P}_{K, 1}$ and $\mathbb{P}_{N, s}=\mathbb{P}_{L, 1}$. Since $$\text{supp}(\mathbb{P}_{K,1})=K_\mathcal{P}=M_{\mathcal{P}, r_s}= N_{\mathcal{P}, s}=L_\mathcal{P},$$ property (i) yields that $\mathbb{P}_{K,1}=\mathbb{P}_{L,1}$, whence $$\mathbb{P}_{M, r_s}=\mathbb{P}_{K,1}=\mathbb{P}_{L,1}=\mathbb{P}_{N,s}.$$

It follows from the previous paragraphs that if $M,N\in [\nn]$ and $r_1<r_2<\ldots$ are such that $N=\cup_{i=1}^\infty \text{supp}(\mathbb{P}_{M, r_i})$, then $\mathbb{P}_{N,i}=\mathbb{P}_{M, r_i}$ for all $i\in\nn$. Indeed, under these hypotheses, for any $i\in \nn$,  $\cup_{j=1}^i \text{supp}(\mathbb{P}_{M,r_j})$ is an initial segment of $N$, so $\mathbb{P}_{N, i}=\mathbb{P}_{M, r_i}$.

\end{rem}

We recall the repeated averages hierarchy defined in \cite{AMT}.   We will recall for each $\xi<\omega_1$ a collection $\mathfrak{S}_\xi$ such that $(\mathfrak{S}_\xi, \mathcal{S}_\xi)$ is a probability block.

For any $M=(m_i)_{i=1}^\infty\in[\nn]$, we let $\mathbb{S}^0_{M,n}= \delta_{m_n}$, the Dirac measure such that $\delta_{m_n}(m_n)=1$.   

Now assume that for every $M\in \nn$ and $n\in \nn$, $\mathbb{S}^\xi_{M,n}\in \mathscr{P}$ has been defined so that, with $\mathfrak{S}_\xi=\{\mathbb{S}^\xi_{M,n}: M\in [\nn], n\in \nn\}$, $(\mathfrak{S}_\xi, \mathcal{S}_\xi)$ is a probability block.  Now fix $M\in [\nn]$ and let $p_1=\min M$.  Then let $$\mathbb{S}^{\xi+1}_{M,1}=\frac{1}{p_1}\sum_{i=1}^{p_1} \mathbb{S}^\xi_{M,i}.$$  Now assuming that $\mathbb{S}^{\xi+1}_{M, 1}, \ldots, \mathbb{S}^{\xi+1}_{M,n}$ have been defined, let $$M_{n+1}= M\setminus \cup_{i=1}^n\text{supp}(\mathbb{S}^{\xi+1}_{M,i})=M\setminus \cup_{i=1}^n M_{\mathcal{S}_{\xi+1}, i},$$ $p_{n+1}=\min M_{n+1}$, and $$\mathbb{S}^{\xi+1}_{M, n+1}= \frac{1}{p_{n+1}}\sum_{i=1}^{p_{n+1}} \mathbb{S}^\xi_{M_n, i}.$$   

Now assume $\xi$ is a limit ordinal and for every $\zeta<\xi$, every $M\in [\nn]$, and every $n\in \nn$, $\mathbb{S}^\zeta_{M,n}$ has been defined.  Now let $\xi_n\uparrow \xi$ be the sequence such that $$\mathcal{S}_\xi=\{\varnothing\}\cup \{E:\varnothing\neq E\in \mathcal{S}_{\xi_{\min E}+1}\}.$$  Fix $M\in [\nn]$. For each $n\in \nn$, let $M_n=M\setminus \cup_{i=1}^{n-1} M_{\mathcal{S}_\xi, i}$, $p_n=\min M_n$, and $$\mathbb{S}^\xi_{M,n}= \mathbb{S}^{\xi_{p_n}+1}_{M_n, 1}.$$

Suppose we have probability blocks $(\mathfrak{P}, \mathcal{P})$, $(\mathfrak{Q}, \mathcal{Q})$.  We define a collection $\mathfrak{Q}*\mathfrak{P}$ such that $(\mathfrak{Q}*\mathfrak{P}, \mathcal{Q}[\mathcal{P}])$ is a probability block.     Fix $M\in \nn$ and for each $n\in\nn$,  let $l_n=\min \text{supp}(\mathbb{P}_{M,n})$ and $L=(l_n)_{n=1}^\infty$.    We then let $$\mathbb{O}_{M,n}= \sum_{i\in L_{\mathcal{Q},n}^{-1}} \mathbb{Q}_{L,n}(l_i) \mathbb{P}_{M,i}$$   and $\mathfrak{Q}*\mathfrak{P}=\{\mathbb{O}_{M,n}: M\in [\nn], n\in \nn\}$.   We have already defined $L_{\mathcal{Q}, n}^{-1}$, but for clarity, we give an alternative description.   Given $L=(l_n)_{n=1}^\infty=(\min \text{supp}(\mathbb{P}_{M,n}))_{n=1}^\infty$, there exists a unique sequence $0=p_0<p_1<\ldots$ such that $(l_i)_{i=p_{n-1}+1}^{p_n}\in MAX(\mathcal{Q})$ for all $n\in \nn$, and $L^{-1}_{\mathcal{Q}, n}=\nn\cap (p_{n-1}, p_n]$.

\begin{proposition} $(\mathfrak{Q}*\mathfrak{P}, \mathcal{Q}[\mathcal{P}])$ is a probability block. 

\end{proposition}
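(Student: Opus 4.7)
My plan is to verify, in order, the four items that a probability block requires: that $\mathcal{Q}[\mathcal{P}]$ is nice, that each $\mathbb{O}_{M,n}$ is an honest probability measure, the support identification $\text{supp}(\mathbb{O}_{M,1})=M_{\mathcal{Q}[\mathcal{P}],1}$, and the recursive consistency condition $\mathbb{P}_{N,1}=\mathbb{P}_{M,r}$ applied to the convolution.

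The first item is free: the paper has already recorded that if $\mathcal{F},\mathcal{G}$ are nice then $\mathcal{F}[\mathcal{G}]$ is nice. For the second item, fix $M\in[\nn]$ and $n\in\nn$. Since $\mathbb{Q}_{L,n}$ is a probability measure supported on $L_{\mathcal{Q},n}=(l_i)_{i\in L^{-1}_{\mathcal{Q},n}}$, we have $\sum_{i\in L^{-1}_{\mathcal{Q},n}}\mathbb{Q}_{L,n}(l_i)=1$, so $\mathbb{O}_{M,n}$ is a convex combination of the probability measures $\mathbb{P}_{M,i}$ and hence a probability measure. For the third item, I unpack: $\text{supp}(\mathbb{O}_{M,1})=\bigcup_{i\in L^{-1}_{\mathcal{Q},1}}\text{supp}(\mathbb{P}_{M,i})=\bigcup_{i\in L^{-1}_{\mathcal{Q},1}}M_{\mathcal{P},i}$. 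On the other hand, $M_{\mathcal{Q}[\mathcal{P}],1}$ is, by the definition of $\mathcal{Q}[\mathcal{P}]$ and niceness, the union of the first $p_1$ successive maximal $\mathcal{P}$-pieces of $M$, where $p_1=|L^{-1}_{\mathcal{Q},1}|$ is the first index at which $(l_1,\ldots,l_{p_1})\in MAX(\mathcal{Q})$. Thus the two sets coincide.

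The real content is item four. Fix $M\in[\nn]$ and $r\in\nn$, let $N=M\setminus\bigcup_{i=1}^{r-1}\text{supp}(\mathbb{O}_{M,i})$, and write $L=(l_n)=(\min\text{supp}(\mathbb{P}_{M,n}))$. From item three applied to each $i<r$, $\bigcup_{i=1}^{r-1}\text{supp}(\mathbb{O}_{M,i})=\bigcup_{j=1}^{p_{r-1}}M_{\mathcal{P},j}$ is a $\mathcal{P}$-successive initial segment of $M$, so the remark after the definition of probability block (applied to $(\mathfrak{P},\mathcal{P})$) gives $\mathbb{P}_{N,i}=\mathbb{P}_{M,p_{r-1}+i}$ and in particular $N_{\mathcal{P},i}=M_{\mathcal{P},p_{r-1}+i}$ for all $i\in\nn$. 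Consequently the auxiliary sequence $L'=(\min\text{supp}(\mathbb{P}_{N,n}))$ satisfies $l'_i=l_{p_{r-1}+i}$, i.e.\ $L'=L\setminus\bigcup_{i=1}^{r-1}L_{\mathcal{Q},i}$. Now applying the same remark to $(\mathfrak{Q},\mathcal{Q})$ yields $\mathbb{Q}_{L',1}=\mathbb{Q}_{L,r}$ and $L'_{\mathcal{Q},1}=L_{\mathcal{Q},r}$, so $L'^{-1}_{\mathcal{Q},1}=\{j-p_{r-1}:j\in L^{-1}_{\mathcal{Q},r}\}$. Substituting and reindexing by $j=p_{r-1}+i$:
\[
\mathbb{O}_{N,1}=\sum_{i\in L'^{-1}_{\mathcal{Q},1}}\mathbb{Q}_{L',1}(l'_i)\mathbb{P}_{N,i}=\sum_{j\in L^{-1}_{\mathcal{Q},r}}\mathbb{Q}_{L,r}(l_j)\mathbb{P}_{M,j}=\mathbb{O}_{M,r}.
\]

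The main obstacle is purely bookkeeping: making sure that the two invocations of the probability-block remark line up so that the reindexing between $L'$ and $L$ is correct. Once one records that removing the first $r-1$ blocks from $M$ in the convolution is the same as removing the first $r-1$ maximal $\mathcal{Q}$-pieces from $L$ and simultaneously the first $p_{r-1}$ maximal $\mathcal{P}$-pieces from $M$, the computation goes through mechanically.
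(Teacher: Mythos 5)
Your proof is correct and follows essentially the same route as the paper's: verify that each $\mathbb{O}_{M,n}$ is a probability measure, identify $\text{supp}(\mathbb{O}_{M,1})$ with $M_{\mathcal{Q}[\mathcal{P}],1}$, and establish the recursion $\mathbb{O}_{N,1}=\mathbb{O}_{M,r}$ by observing that removing the first $r-1$ convolution blocks from $M$ corresponds simultaneously to removing the first $p_{r-1}$ maximal $\mathcal{P}$-pieces from $M$ and the first $r-1$ maximal $\mathcal{Q}$-pieces from the auxiliary sequence $L$. Your $L'$ is the paper's $K$; the bookkeeping and the two invocations of the probability-block axioms (for $(\mathfrak{P},\mathcal{P})$ and for $(\mathfrak{Q},\mathcal{Q})$) match the paper's argument.
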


\begin{proof}  Throughout the proof, we will assume that $L=(l_n)_{n=1}^\infty$ is the sequence given by $$(l_n)_{n=1}^\infty = (\min \text{supp}(\mathbb{P}_{M,n}))_{n=1}^\infty$$ for some $M\in [\nn]$. We will use the fact that for any $n\in \nn$, $\cup_{i=1}^\infty L^{-1}_{\mathcal{Q}, i}=\nn$ and $(l_i)_{i\in L^{-1}_{\mathcal{Q}, n}}=L_{\mathcal{Q}, n}=\text{supp}(\mathbb{Q}_{L,n})$.

 Fix $M\in [\nn]$ and $n\in \nn$ and note that $$\mathbb{O}_{M,n}(\nn)=\sum_{i\in L^{-1}_{L,n}}\mathbb{Q}_{L,n}(l_i)\mathbb{P}_{M,i}(\nn)=\sum_{i\in L^{-1}_{\mathcal{Q}, n}}  \mathbb{Q}_{L,n}((l_i)_{i\in L^{-1}_{\mathcal{Q}, n}})= \mathbb{Q}_{L,n}(\text{supp}(\mathbb{Q}_{L,n}))=1,$$ so $\mathbb{O}_{M,n}\in \mathscr{P}$.   

Now we will use the obvious fact that if $0=p_0<p_1<\ldots$ are such that $(l_i)_{i=p_{n-1}+1}^{p_n}= L_{\mathcal{Q}, n}$ and $\text{supp}(\mathbb{P}_{M,n})=M_{\mathcal{P}, n}$ for all $n\in \nn$, then $$\text{supp}(\mathbb{O}_{M,n})=\bigcup_{i=p_{n-1}+1}^{p_n} \text{supp}(\mathbb{P}_{M,i})= (M\setminus \cup_{i=1}^{p_{n-1}} \text{supp}(\mathbb{P}_{M,i}))_{\mathcal{Q}[\mathcal{P}], n}$$ for all $n\in\nn$.   This yields that $(\mathfrak{Q}*\mathfrak{P}, \mathcal{Q}[\mathcal{P}])$ satisfies property (ii) of a probability block

Suppose that $M,N\in [\nn]$ and $r\in \nn$ are such that $$N=M\setminus \cup_{i=1}^{r-1}\text{supp}(\mathbb{O}_{M,i})= M\setminus \cup_{i=1}^{p_{r-1}} \text{supp}(\mathbb{P}_{M,i}).$$  Let $K=(k_i)_{i=1}^\infty=(\min \text{supp}(\mathbb{P}_{M,i}))_{i=p_{r-1}+1}^\infty$.     Then $K_{\mathcal{Q}, 1}^{-1}= [1, p_r-p_{r-1}]$, $L_{\mathcal{Q}, r}^{-1}=(p_{r-1}, p_r]$,  $k_i=l_{p_{r-1}+i}$, and $\mathbb{P}_{M, p_{r-1}+i}= \mathbb{P}_{N, i}$ for all $i\in \nn$. From this it follows that   $$\mathbb{O}_{N,1}= \sum_{i\in K^{-1}_{\mathcal{Q}, 1}} \mathbb{Q}_{K, 1}(k_i) \mathbb{P}_{N, i} = \sum_{i\in L^{-1}_{\mathcal{Q}, r}} \mathbb{Q}_{L, r}(l_i)\mathbb{P}_{M,i}=\mathbb{O}_{M,r},$$  and $(\mathfrak{Q}*\mathfrak{P}, \mathcal{Q}[\mathcal{P}])$ satisfies property (i) of a probability block.

\end{proof}

\begin{lemma} Let $(\mathfrak{P}, \mathcal{P})$ be a probability block and let $\xi$ be a countable ordinal. The following are equivalent. \begin{enumerate}[(i)]\item For every $\ee>0$, $q\in \nn$, regular family $\mathcal{G}$ with $CB(\mathcal{G})\leqslant \omega^\xi+1$, and $L\in [\nn]$, there exists $M\in [L]$ such that $$\sup \{\mathbb{P}_{N,1}(E): N\in [M], E\in \mathcal{G}, \min E\leqslant q\}\leqslant \ee.$$  \item For every $\ee>0$, regular family $\mathcal{G}$ with $CB(\mathcal{G})\leqslant \omega^\xi+1$, and $L\in [\nn]$, there exists $M\in [L]$ such that $$\sup \{\sum_{i=1}^\infty \mathbb{P}_{N,i}(E): N\in [M], E\in \mathcal{G}\}\leqslant 1+\ee.$$  \item For any $\ee>0$, any regular family $\mathcal{G}$ with $CB(\mathcal{G})\leqslant \omega^\xi$, and $L\in [\nn]$, there exists $M\in [L]$ such that $$\sup \{\mathbb{P}_{N,1}(E): N\in [M], E\in \mathcal{G}\}\leqslant \ee.$$  \end{enumerate}

\label{mkay}
\end{lemma}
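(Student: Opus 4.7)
The plan is to establish the cycle $(iii) \Rightarrow (i) \Rightarrow (ii) \Rightarrow (iii)$. Two auxiliary constructions drive the proof: the slice $\mathcal{G}^{(n)} := \{F : n < F,\ (n) \cup F \in \mathcal{G}\}$, which is regular with $CB(\mathcal{G}^{(n)}) < CB(\mathcal{G})$ (so $CB(\mathcal{G}) \leqslant \omega^\xi + 1$ yields $CB(\mathcal{G}^{(n)}) \leqslant \omega^\xi$), and the prepended family $\mathcal{G}^+ := \{F : F = \varnothing \text{ or } F \setminus \min F \in \mathcal{G}\}$, which is regular with $CB(\mathcal{G}^+) = CB(\mathcal{G}) + 1$ via a direct Cantor--Bendixson derivative computation (so $CB(\mathcal{G}) \leqslant \omega^\xi$ yields $CB(\mathcal{G}^+) \leqslant \omega^\xi + 1$). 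These bridge the gap between the CB levels $\omega^\xi$ and $\omega^\xi + 1$ appearing in the three conditions.

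For $(iii) \Rightarrow (i)$, iteratively apply $(iii)$ to the slices $\mathcal{G}^{(1)}, \ldots, \mathcal{G}^{(q)}$ to extract nested $M_1 \supseteq \cdots \supseteq M_q$, and take $M := M_q \cap (q, \infty) \in [L]$. For $E \in \mathcal{G}$ with $\min E = n \leqslant q$, the element $n$ is excluded from every $N \in [M]$, so $\mathbb{P}_{N,1}(E) = \mathbb{P}_{N,1}(E \setminus \{n\}) \leqslant \ee$ via the $(iii)$-bound for $E \setminus \{n\} \in \mathcal{G}^{(n)}$.

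For $(i) \Rightarrow (ii)$, given $E \in \mathcal{G}$ with $CB(\mathcal{G}) \leqslant \omega^\xi + 1$, decompose $E$ across the blocks $A_{N,i} = N_{\mathcal{P},i}$ as $E = \bigsqcup_i E_i$ with $E_i := E \cap A_{N,i}$. The first nonzero term contributes $\mathbb{P}_{N,i_1}(E_{i_1}) \leqslant 1$ trivially. The tail terms are handled via the probability-block shift identity $\mathbb{P}_{N,i} = \mathbb{P}_{N^{(i)},1}$ (with $N^{(i)} := N \setminus \bigcup_{j<i} A_{N,j} \in [M]$), combined with applications of $(i)$ to slices $\mathcal{G}^{(n)}$ for the possible values of $\min E$. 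A careful diagonal extraction of $M \in [L]$, together with a Ramsey-type argument invoking the infinite Ramsey theorem cited in Section 2, ensures uniform control across all $E$ and all block decompositions, yielding $\sum_i \mathbb{P}_{N,i}(E) \leqslant 1 + \ee$.

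The main obstacle is $(ii) \Rightarrow (iii)$. The direct approach---apply $(ii)$ to $\mathcal{G}^+$, then prepend $n_0 \in N$ with $n_0 < \min E$ to form $\tilde E = \{n_0\} \cup E \in \mathcal{G}^+$---yields $\mathbb{P}_{N,1}(\{n_0\}) + \sum_i \mathbb{P}_{N,i}(E) \leqslant 1 + \ee$, which does not suffice since the atom mass $\mathbb{P}_{N,1}(\{n_0\})$ is not in general close to $1$. The repair is to augment $E$ with a set $F \subseteq A_{N,1}$ of near-full first-block mass, $\mathbb{P}_{N,1}(F) \geqslant 1 - \ee$, chosen so that $F \cup E$ still lies in a regular family of $CB \leqslant \omega^\xi + 1$. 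Producing such an $F$ uniformly over $N$ and $E$ requires a Ramsey-type extraction, ultimately invoking the Galvin--Prikry--Silver--Ellentuck theorem, to pass to $M \in [L]$ on which a single augmentation scheme succeeds; $(ii)$ applied to the augmented family then forces $\mathbb{P}_{N,1}(E) \leqslant \ee$.
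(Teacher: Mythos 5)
Your $(iii) \Rightarrow (i)$ is essentially correct and only cosmetically different from the paper's: the paper uses the single family $\mathcal{H}=\{E\in\mathcal{G}: q<E,\ (q)\cup E\in\mathcal{G}\}$ and the spreading property to absorb all sets with $\min E\leqslant q$ in one pass, whereas you intersect $q$ diagonal extractions for the slices $\mathcal{G}^{(1)},\ldots,\mathcal{G}^{(q)}$; both work. Your $(i)\Rightarrow(ii)$ sketch is pointed in the right direction but is vague in exactly the place where the work is. The paper's version is purely constructive and does not invoke the infinite Ramsey theorem: one chooses a telescoping sequence $0=\ee_0<\ee_1<\cdots\to\ee$ and builds nested $M_n$ with $m_n\in M_n$ so that for $N\in[M_n]$ and $E\in\mathcal{G}$ with $\min E\leqslant m_n$ one has $\mathbb{P}_{N,1}(E)\leqslant\ee_n-\ee_{n-1}$; then for $N\in[M]$ the first block touching $E$ contributes at most $1$ and the subsequent blocks are controlled by consecutive telescoping increments, giving $\leqslant 1+\ee$. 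Your mention of ``slices $\mathcal{G}^{(n)}$'' here is unnecessary because $(i)$ already encodes the $\min E\leqslant q$ restriction directly, and your appeal to Ramsey is superfluous.

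The genuine gap is in $(ii)\Rightarrow(iii)$. Your repair --- augment $E$ with a set $F\subseteq A_{N,1}=\mathrm{supp}(\mathbb{P}_{N,1})$ satisfying $\mathbb{P}_{N,1}(F)\geqslant 1-\ee$, so that $F\cup E$ lands in a single regular family of Cantor--Bendixson index $\leqslant\omega^\xi+1$ --- cannot work in the stated generality. Any $F$ with $\mathbb{P}_{N,1}(F)\geqslant 1-\ee$ must carry most of the support of $\mathbb{P}_{N,1}$, and as $N$ varies over $[M]$ the supports $A_{N,1}$ exhaust maximal members of $\mathcal{P}$, a family whose Cantor--Bendixson index is $CB(\mathcal{P})$; this is a free parameter of the probability block, entirely unrelated to $\omega^\xi$, and is typically far larger than $\omega^\xi+1$ (for the Schreier block $(\mathfrak{S}_\xi,\mathcal{S}_\xi)$ it is $\omega^\xi+1$ already, so any family containing all $F\cup E$ would need index on the order of $\omega^\xi\cdot\omega^\xi$). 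No Ramsey extraction can compress the near-full-mass sets for infinitely many $N$ into a single family with $CB\leqslant\omega^\xi+1$. The paper's actual argument is a contradiction that avoids this entirely: by the Ramsey dichotomy one may assume $[M]\cap\mathcal{V}=\varnothing$, where $\mathcal{V}=\{N:\ \sup_{E\in\mathcal{G}}\mathbb{P}_{N,1}(E)\leqslant\ee\}$; then fix $n$ with $n\ee>2$, note that $\mathcal{H}:=\mathcal{A}_n[\mathcal{G}]$ still has $CB(\mathcal{H})\leqslant\omega^\xi$, apply $(ii)$ to $\mathcal{H}$ with tolerance $1$ so that $\sum_i\mathbb{P}_{M,i}(E)\leqslant 2$ for $E\in\mathcal{H}$, and then for the first $n$ shifted tails $M_1,\ldots,M_n$ of $M$ pick $E_i\subset\mathrm{supp}(\mathbb{P}_{M_i,1})$ in $\mathcal{G}$ with $\mathbb{P}_{M_i,1}(E_i)>\ee$. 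The union $E=\bigcup_{i=1}^n E_i\in\mathcal{H}$ then satisfies $\sum_i\mathbb{P}_{M,i}(E)>n\ee>2$, a contradiction. The crucial idea you are missing is to push mass into many \emph{successive} blocks simultaneously via $\mathcal{A}_n[\mathcal{G}]$, rather than to inflate the first block.
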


\begin{proof}$(i)\Rightarrow (ii)$ Fix $\ee>0$, a regular family $\mathcal{G}$ with $CB(\mathcal{G})\leqslant \omega^\xi+1$, and $L\in [\nn]$.  Fix a sequence $0=\ee_0<\ee_1<\ldots$ with $\lim_n \ee_n=\ee$.    Let $M_1=L$ and fix $m_1\in M_1$.   Now assume $M_1\supset \ldots \supset M_n$, $m_1<\ldots< m_n$, $m_i\in M_i$ have been chosen.  Now fix $M_{n+1}\in [M_n]$ such that $$\sup \{\mathbb{P}_{N, 1}(E): N\in [M_n], E\in \mathcal{G}, \min E\leqslant m_n\}\leqslant \ee_n-\ee_{n-1}$$ and fix $m_{n+1}\in M_{n+1}$ with $m_n<m_{n+1}$.   This completes the recursive construction. Let $M=(m_n)_{n=1}^\infty$.  Now fix $E\in \mathcal{G}$ and $N\in [M]$. If $E\cap \text{supp}(\mathbb{P}_{N,i})=\varnothing$ for all $i\in \nn$, $\sum_{i=1}^\infty \mathbb{P}_{N,i}(E)=0$. Assume there exists $i\in \nn$ such that $E\cap \text{supp}(\mathbb{P}_{N,i})\neq \varnothing$ and let $r$ be the minimum such $i$.     Let $N_n=N\setminus \cup_{i=1}^{n-1}\text{supp}(\mathbb{P}_{N,i})$ and note that $\mathbb{P}_{N, n}=\mathbb{P}_{N_n, 1}$ for all $n\in \nn$.  Let $m_j=\max \text{supp}(\mathbb{P}_{N,r})$. Note that for each $n\in \nn$, $N_{r+n}\in [M_{j+n}]$ and $\min E\leqslant m_j \leqslant m_{j+n-1}$, so $\mathbb{P}_{N,r+n}(E) \leqslant \ee_{j+n-1}-\ee_{j+n-2}$. From this it follows that $$\sum_{i=1}^\infty \mathbb{P}_{N,1}(E) =\mathbb{P}_{N,r}(E)+\sum_{n=1}^\infty \mathbb{P}_{N,r+n}(E) \leqslant 1+\sum_{i=1}^\infty \ee_i-\ee_{i-1}=1+\ee.$$

$(ii)\Rightarrow (iii)$ Fix $\ee>0$, a regular family $\mathcal{G}$ with $CB(\mathcal{G})\leqslant \omega^\xi$, and $L\in [\nn]$.  Let $\mathcal{V}$ denote the set of $M\in [\nn]$ such that $$\sup\{\mathbb{P}_{M,1}(E): E\in \mathcal{G} \}\leqslant \ee.$$ Note that this supremum is actually a maximum, since it suffices to take the supremum only over the finitely many sets $E\in \mathcal{G}$ with $E\subset \text{supp}(\mathbb{P}_{M,1})$.   Moreover, $\mathcal{V}$ is closed, whence there exists $M\in [L]$ such that either $\mathcal{V}\cap [M]=\varnothing$ or $[M]\subset \mathcal{V}$.    To obtain a contradiction, assume $\mathcal{V}\cap [M]=\varnothing$.   Fix $n\in \nn$ such that $n\ee>2$ and let $\mathcal{H}=\mathcal{A}_n[\mathcal{G}]$. Note that $CB(\mathcal{H})\leqslant \omega^\xi$. Indeed, if $\xi=0$, then since $CB(\mathcal{G})\leqslant 1$, $\mathcal{G}\subset \{\varnothing\}$ and $\mathcal{H}\subset \{\varnothing\}$.  If $0<\xi<\omega_1$, then since $CB(\mathcal{G})\leqslant \omega^\xi$ and $\omega^\xi$ is a limit ordinal, there exist $\zeta<\xi$ and $m\in \nn$ such that $CB(\mathcal{G})\leqslant \omega^\zeta m+1$, and $$CB(\mathcal{H})\leqslant \omega^\zeta mn+1<\omega^\xi.$$  Since $CB(\mathcal{H})\leqslant \omega^\xi$, by replacing $M$ with a further, infinite subset, we may assume $$\sup\{\sum_{i=1}^\infty \mathbb{P}_{M,i}(E): E\in \mathcal{H}\}\leqslant 2.$$  Now for each $i\in\nn$, let  $M_i=M\setminus \cup_{j=1}^{i-1}\mathbb{P}_{M,j}$ and note that $M_i\in [M]\subset \mathcal{V}$.  Therefore for each $i\in \nn$, there exists $E_i\in \mathcal{G}$ such that $\mathbb{P}_{M_i, 1}(E)>\ee$. By replacing $E_i$ with $E_i\cap \text{supp}(\mathbb{P}_{M_i, 1})$, we may assume $E_i\subset \text{supp}(\mathbb{P}_{M_i, 1})$.   Let $E=\cup_{i=1}^n E_i\in \mathcal{H}$ and note that $$2\geqslant \sum_{i=1}^\infty \mathbb{P}_{M, i}(E) = \sum_{i=1}^n \mathbb{P}_{M_i, 1}(E_i)> n\ee>2,$$ a contradiction. 

$(iii)\Rightarrow (i)$ Fix $q\in \nn$, $\mathcal{G}$ regular with $CB(\mathcal{G})\leqslant \omega^\xi+1$, $\ee>0$, and $L\in [\nn]$.   Let $$\mathcal{H}=\{E\in \mathcal{G}: q<E, (q)\cup E\in \mathcal{G}\}$$ and note that $\mathcal{H}$ is regular with $CB(\mathcal{H})\leqslant \omega^\xi$. Let $L_1=(q, \infty)\cap L$ and note that for each $E\in \mathcal{G}$ with $\min E\leqslant q$, $(q)\cup (E\cap (q, \infty))$ is a spread of a subset of $E$, so $E\cap (q,\infty)\in \mathcal{H}$ and $$\mathbb{P}_{M, 1}(E)=\mathbb{P}_{M,1}(E\cap (q, \infty))$$ for any $M\in [L_1]$.  Now we may choose $M\in [L_1]$ such that $$\sup \{\mathbb{P}_{N, 1}(E): N\in [M], E\in \mathcal{G}, E\leqslant q\}\leqslant \sup \{\mathbb{P}_{N,1}(E): N\in [M], E\in \mathcal{H}\}\leqslant \ee.$$

\end{proof}

If $(\mathfrak{P}, \mathcal{P})$ satisfies any of the three equivalent conditions from Lemma \ref{mkay}, we say $(\mathfrak{P}, \mathcal{P})$ is $\xi$-\emph{sufficient}.

Given a probability block $(\mathfrak{P}, \mathcal{P})$, a function $f:\nn\times MAX(\mathcal{P})\to \rr$,  $\ee\in \rr$, and $M\in [\nn]$,  we let $$\mathfrak{E}(f, \mathfrak{P}, \mathcal{P}, \ee)=\{M\in [\nn]: \sum_{i=1}^\infty \mathbb{P}_{M,1}(i)f(i, \text{supp}(\mathbb{P}_{M,1}))\geqslant \ee\}$$ and $$\mathfrak{G}(f, \mathcal{P}, M,\ee)=\{E\in [M]^{<\nn}: (\exists F\in MAX(\mathcal{P})\cap [M]^{<\nn})(\forall i\in E)(f(i, F)\geqslant \ee)\}.$$  Note that $\mathfrak{G}(f, \mathcal{P},M, \ee)$ is hereditary.

Let us say that a hereditary family $\mathfrak{E}$ is $\xi$-\emph{full} provided that there exists $M\in [\nn]$ such that $\mathcal{S}_\xi(M)\subset \mathfrak{E}$.

For a countable ordinal $\xi$ and a probability block $(\mathfrak{P}, \mathcal{P})$, we say $(\mathfrak{P}, \mathcal{P})$ is $\xi$-\emph{regulatory} provided that for any bounded function $f:\nn\times MAX(\mathcal{P})\to \rr$ and $\delta, \ee\in \rr$ with $\delta<\ee$, if there exists $L\in [\nn]$ such that $[L]\subset \mathfrak{E}(f, \mathfrak{P}, \mathcal{P}, \ee)$, then $\mathfrak{G}(f, \mathcal{P}, L,\delta)$ is $\xi$-full.     

The following result is a generalization of a result of Schlumprecht from \cite{Schlumprecht}. 

\begin{lemma} Let $\xi$ be a countable ordinal and let $(\mathfrak{P}, \mathcal{P})$ be a probability block.  If $(\mathfrak{P}, \mathcal{P})$ is $\xi$-sufficient, it is $\xi$-regulatory. 

\label{schlumprecht}
\end{lemma}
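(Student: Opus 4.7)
The plan is to argue by contradiction. Suppose $(\mathfrak{P},\mathcal{P})$ is $\xi$-sufficient but not $\xi$-regulatory; then there exist a bounded $f:\nn\times MAX(\mathcal{P})\to\rr$, reals $\delta<\ee$, and $L_0\in[\nn]$ with $[L_0]\subset\mathfrak{E}(f,\mathfrak{P},\mathcal{P},\ee)$ while $\mathfrak{H}:=\mathfrak{G}(f,\mathcal{P},L_0,\delta)$ is not $\xi$-full. After shifting $f$ by a constant, I may assume $0\leqslant f\leqslant C$ and $0\leqslant\delta<\ee$. For each $M\in[L_0]$, writing $F_M=\mathrm{supp}(\mathbb{P}_{M,1})=M_{\mathcal{P},1}$ and $A_M=\{i\in F_M: f(i,F_M)\geqslant\delta\}$, splitting $\sum_i\mathbb{P}_{M,1}(i)f(i,F_M)\geqslant\ee$ according to membership in $A_M$ and bounding $f<\delta$ off $A_M$ yields $C\,\mathbb{P}_{M,1}(A_M)+\delta\geqslant\ee$, so $\mathbb{P}_{M,1}(A_M)\geqslant(\ee-\delta)/C=:\eta>0$. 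Moreover, $A_M\in\mathfrak{H}$, witnessed by $F_M\in MAX(\mathcal{P})\cap[L_0]^{<\nn}$.

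Since $\mathfrak{H}$ fails to be $\xi$-full, the contrapositive of Lemma~\ref{mst}(i)$\Leftrightarrow$(ii) supplies $L_1\in[L_0]$ with $CB(\mathfrak{H}\cap[L_1]^{<\nn})\leqslant\omega^\xi$. This hereditary family is compact (its $CB$ is below $\omega_1$), and compact spaces cannot have limit-ordinal $CB$, so $\alpha:=CB(\mathfrak{H}\cap[L_1]^{<\nn})$ is a successor. For $\xi=0$ this forces $\alpha=1$, i.e.\ $\mathfrak{H}\cap[L_1]^{<\nn}$ is finite; choosing $M\in[L_1]$ with $\min M$ larger than every element of $\bigcup(\mathfrak{H}\cap[L_1]^{<\nn})$ then forces $A_M=\varnothing$, contradicting $\mathbb{P}_{M,1}(A_M)\geqslant\eta$. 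So I assume $\xi\geqslant 1$ henceforth, whence $\omega^\xi$ is a limit and $\alpha<\omega^\xi$ strictly.

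The key step is to interpolate a regular family $\mathcal{G}$ between $\alpha$ and $\omega^\xi$: when $\xi$ is a limit ordinal, pick $\gamma<\xi$ with $\alpha<\omega^\gamma$ and set $\mathcal{G}=\mathcal{S}_\gamma$, so $CB(\mathcal{G})=\omega^\gamma+1\leqslant\omega^\xi$; when $\xi=\xi'+1$, pick $n\in\nn$ with $\alpha<\omega^{\xi'}n$ and set $\mathcal{G}=\mathcal{A}_n[\mathcal{S}_{\xi'}]$, so $CB(\mathcal{G})=\omega^{\xi'}n+1\leqslant\omega^\xi$ by Proposition~\ref{deep facts}(i). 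Applying the second assertion of Theorem~\ref{gasp} to the hereditary family $\mathfrak{H}\cap[L_1]^{<\nn}$ and the regular family $\mathcal{G}$, using $\alpha<CB(\mathcal{G})$, yields $N\in[L_1]$ with $\mathfrak{H}\cap[N]^{<\nn}\subset\mathcal{G}$. Finally, invoking condition (iii) of Lemma~\ref{mkay} ($\xi$-sufficiency) with $\eta/2$, $\mathcal{G}$, and $N$ produces $M\in[N]$ such that $\mathbb{P}_{M',1}(E)\leqslant\eta/2$ for every $M'\in[M]$ and every $E\in\mathcal{G}$; choosing $M'=M$ and $E=A_M$, which lies in $\mathfrak{H}\cap[M]^{<\nn}\subset\mathfrak{H}\cap[N]^{<\nn}\subset\mathcal{G}$, gives $\mathbb{P}_{M,1}(A_M)\leqslant\eta/2<\eta$, the required contradiction. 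The main obstacle is the $CB$ bookkeeping in the choice of $\mathcal{G}$: it is essential both that $\omega^\xi$ be a limit (so that $\alpha$ falls strictly below it) and that Proposition~\ref{deep facts} supply regular families whose $CB$ can be placed strictly between $\alpha$ and $\omega^\xi$ in each of the successor and limit cases of $\xi$.
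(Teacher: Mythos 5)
Your proof is correct and follows essentially the same route as the paper's: reduce (via the failure of $\xi$-fullness) to the $\mathfrak{G}$ family having Cantor--Bendixson index $\leqslant\omega^\xi$, handle $\xi=0$ separately, and for $\xi>0$ use the fact that the index must fall strictly below the limit ordinal $\omega^\xi$ to interpolate a regular family $\mathcal{G}$, apply Theorem~\ref{gasp} to trap $\mathfrak{G}$ inside $\mathcal{G}$ on a subset, invoke $\xi$-sufficiency (Lemma~\ref{mkay}(iii)) to make $\mathbb{P}_{M,1}$ small on $\mathcal{G}$, and contradict the lower bound on $\mathbb{P}_{M,1}$ of the ``large'' coordinates coming from $[L]\subset\mathfrak{E}(f,\mathfrak{P},\mathcal{P},\ee)$. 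Your version is slightly more explicit in two respects that are left implicit in the paper -- you shift $f$ to normalize $\delta\geqslant 0$ (which the paper's final estimate $\delta\,\mathbb{P}_{M,1}(\nn\setminus E)\leqslant\delta$ silently uses), and you spell out the construction of the interpolating regular family $\mathcal{G}$ in the successor and limit cases of $\xi$ using $\mathcal{A}_n[\mathcal{S}_{\xi'}]$ and $\mathcal{S}_\gamma$ respectively -- but the underlying argument is the same.
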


\begin{proof} To obtain a contradiction, assume $L\in [\nn]$, $M\in [L]$, $\delta, \ee\in \rr$, $\delta<\ee$,  $f:\nn\times MAX(\mathcal{P})\to \rr$ is bounded, $[L]\subset \mathfrak{E}(f, \mathfrak{P}, \mathcal{P}, \ee)$, and $CB(\mathfrak{G}(f, \mathcal{P},L, \delta)\cap [M]^{<\nn})\leqslant \omega^\xi$. Note that $\mathfrak{G}(f, \mathcal{P}, L, \delta)\cap [M]^{<\nn}\supset \mathfrak{G}(f, \mathcal{P}, M, \delta)$, so $CB(\mathfrak{G}(f, \mathcal{P}, M, \delta))\leqslant \omega^\xi$.   If $\xi=0$, this means there exists $m\in M$ such that for all $m< i\in M$ and every $F\in \mathcal{P} \cap [M]^{<\nn}$, $f(i, F)\leqslant \delta$.  Then if $N=(m, \infty)\cap M$, $$\ee\leqslant \sum_{i=1}^\infty \mathbb{P}_{N,1}(i)f(i, \text{supp}(\mathbb{P}_{N,1}) <\delta,$$ a contradiction.  This finishes the $\xi=0$ case.

Now assume $0<\xi$.  Then since $CB(\mathfrak{G}(f, \mathcal{P},M, \delta))\leqslant \omega^\xi$, $\mathfrak{G}(f, \mathcal{P},M, \delta)$ is hereditary and compact. Since $\omega^\xi$ is a limit ordinal, it follows that $CB(\mathfrak{G}(f, \mathcal{P},M, \delta))<\omega^\xi$.  We may fix  a regular family $\mathcal{G}$ such that $CB(\mathfrak{G}(f, \mathcal{P},M, \delta))<CB(\mathcal{G})<\omega^\xi$. By using Theorem \ref{gasp} and replacing $M$ with a further infinite subset, we may assume $\mathfrak{G}(f, \mathcal{P},M, \delta)\subset \mathcal{G}$.   Now fix $s> \|f\|_\infty$ and, by passing once more to an infinite subset of $M$, assume that $$\sup \{\mathbb{P}_{N,1}(E): E\in \mathcal{G}, N\in [M]\}< \frac{\ee-\delta}{s}.$$   Now let $E=\{n\in \text{supp}(\mathbb{P}_{M,1}): f(n, \text{supp}(\mathbb{P}_{M,1}))\geqslant \delta\}$ and note that $E\in \mathfrak{G}(f, \mathcal{P},M, \delta)\subset \mathcal{G}$, whence \begin{align*} \ee &  \leqslant \sum_{i=1}^\infty \mathbb{P}_{M,1}(i) f(i, \text{supp}(\mathbb{P}_{M,1}))\\ &  = \sum_{i\in E}\mathbb{P}_{M,1}(i) f(i, \text{supp}(\mathbb{P}_{M,1})) +\sum_{i\in \nn\setminus E} \mathbb{P}_{M,1}(i) f(i, \text{supp}(\mathbb{P}_{M,1})) \\ & \leqslant s \mathbb{P}_{M,1}(E)+ \delta \mathbb{P}_{M,1}(\nn\setminus E) \\ & <  \ee. \end{align*} 

\end{proof}

\begin{theorem} Let $\xi, \zeta$ be countable ordinals. Then if $(\mathfrak{P}, \mathcal{P})$ is $\xi$-sufficient and $(\mathfrak{Q}, \mathcal{Q})$ is $\zeta$-sufficient, $(\mathfrak{Q}*\mathfrak{P}, \mathcal{Q}[\mathcal{P}])$ is $\xi+\zeta$-sufficient, and therefore $\xi+\zeta$-regulatory.  

\label{weather chopper}
\end{theorem}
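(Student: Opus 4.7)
The plan is to verify characterization (iii) of Lemma \ref{mkay} for the probability block $(\mathfrak{Q}*\mathfrak{P}, \mathcal{Q}[\mathcal{P}])$ at the ordinal $\xi+\zeta$; once $\xi+\zeta$-sufficiency is established, the ``therefore $\xi+\zeta$-regulatory'' clause is immediate from Lemma \ref{schlumprecht}. Fix $\ee>0$, a regular family $\mathcal{G}$ with $CB(\mathcal{G})\leqslant \omega^{\xi+\zeta}$, and $L_0\in [\nn]$; we seek $M\in [L_0]$ such that $\mathbb{O}_{N,1}(E)\leqslant \ee$ for all $N\in [M]$ and $E\in \mathcal{G}$.

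First I would reduce $\mathcal{G}$ to canonical form. Proposition \ref{deep facts} gives $CB(\mathcal{S}_\zeta[\mathcal{S}_\xi])=\omega^{\xi+\zeta}+1>CB(\mathcal{G})$, so Theorem \ref{gasp} produces $M_0\in [L_0]$ with $\mathcal{G}\cap [M_0]^{<\nn}\subset \mathcal{S}_\zeta[\mathcal{S}_\xi]$. Since each $\mathbb{P}_{N,i}$ is supported in $N\subset M_0$ we have $\mathbb{P}_{N,i}(E)=\mathbb{P}_{N,i}(E\cap N)$ with $E\cap N\in\mathcal{S}_\zeta[\mathcal{S}_\xi]$, so we may replace $\mathcal{G}$ by $\mathcal{S}_\zeta[\mathcal{S}_\xi]$. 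A generic $E$ then decomposes as $E=\bigcup_{j=1}^k E_j$ with $E_j\in\mathcal{S}_\xi$ non-empty, $E_1<\cdots<E_k$, and $(\min E_j)_{j=1}^k\in \mathcal{S}_\zeta$.

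For $N\in[M_0]$, write $s_i=\text{supp}(\mathbb{P}_{N,i})$, $l_i=\min s_i$, and $L=(l_i)_{i=1}^\infty$, so that
\[
\mathbb{O}_{N,1}(E)=\sum_{i\in L^{-1}_{\mathcal{Q},1}}\mathbb{Q}_{L,1}(l_i)\,\mathbb{P}_{N,i}(E).
\]
Fix $\delta=\ee/2$ and set $J=\{i\in L^{-1}_{\mathcal{Q},1}:\mathbb{P}_{N,i}(E)>\delta\}$; the trivial estimate gives $\mathbb{O}_{N,1}(E)\leqslant \delta+\mathbb{Q}_{L,1}(\{l_i:i\in J\})$. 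The strategy is to choose $M$ in two stages. First, pick $M_1\in[M_0]$ via $\xi$-sufficiency of $(\mathfrak{P},\mathcal{P})$ applied to $\mathcal{S}_\xi$, exploiting that $\mathbb{P}_{N,i}=\mathbb{P}_{N',1}$ for $N'=N\setminus\bigcup_{j<i}s_j\in[M_1]$, so that $\mathbb{P}_{N,i}(E_j\cap s_i)$ is uniformly small and $\mathbb{P}_{N,i}(E)>\delta$ forces $E\cap s_i$ to pack contributions from many distinct $E_j$. Second, pick $M_2\in[M_1]$ via $\zeta$-sufficiency of $(\mathfrak{Q},\mathcal{Q})$ applied to $\mathcal{S}_\zeta$ along the derived sequence $L$, guaranteeing $\mathbb{Q}_{L,1}(\{l_i:i\in J\})\leqslant \delta$ once this set is exhibited inside (a spread of) an element of $\mathcal{S}_\zeta$. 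Taking $M=M_2$ closes the proof.

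The main obstacle is the combinatorial matching at the last step: the $s_i$ are $\mathcal{P}$-maximal consecutive intervals of $N$, while the $\mathcal{S}_\xi$-pieces $E_j$ may nest inside a single $s_i$ or straddle several, so relating the labels $\{l_i:i\in J\}$ to $(\min E_j)_{j=1}^k\in \mathcal{S}_\zeta$ requires careful bookkeeping of ``which $E_j$'s are responsible for $i\in J$''. Aggressive thinning of $M_1$—so that consecutive $\mathcal{P}$-blocks dwarf what $\xi$-sufficiency controls—together with the spreading and hereditary properties of $\mathcal{S}_\zeta$ should carry this through. With $(\mathfrak{Q}*\mathfrak{P},\mathcal{Q}[\mathcal{P}])$ thus $\xi+\zeta$-sufficient, Lemma \ref{schlumprecht} yields the second statement.
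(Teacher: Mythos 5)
Your proposal runs into a genuine gap at the very first reduction step, and it is exactly the gap that the paper's Fact \ref{easy} is designed to close. You replace $\mathcal{G}$ (with $CB(\mathcal{G})\leqslant\omega^{\xi+\zeta}$) by $\mathcal{S}_\zeta[\mathcal{S}_\xi]$ after a Gasparis thinning, and then propose to prove the bound for \emph{all} $E\in\mathcal{S}_\zeta[\mathcal{S}_\xi]$ with $E\subset N$. But this is false: in the model case $(\mathfrak{P},\mathcal{P})=(\mathfrak{S}_\xi,\mathcal{S}_\xi)$, $(\mathfrak{Q},\mathcal{Q})=(\mathfrak{S}_\zeta,\mathcal{S}_\zeta)$, the set $E'=\mathrm{supp}(\mathbb{O}_{N,1})$ lies in $\mathcal{S}_\zeta[\mathcal{S}_\xi]$ and $\mathbb{O}_{N,1}(E')=1$, so no thinning of $M$ can make the supremum over such $E'$ small. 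The Gasparis step only gives the inclusion $\mathcal{G}\cap[M_0]^{<\nn}\subset\mathcal{S}_\zeta[\mathcal{S}_\xi]$, not the reverse; it does not license replacing $\mathcal{G}$ by the strictly larger family $\mathcal{S}_\zeta[\mathcal{S}_\xi]$. The point the paper exploits is that, when $\zeta>0$, $\omega^{\xi+\zeta}$ is a limit ordinal and $CB(\mathcal{G})$ cannot be a limit ordinal, so $CB(\mathcal{G})<\omega^{\xi+\zeta}$ strictly; Fact \ref{easy} then yields $\mathcal{G}\subset\mathcal{H}[\mathcal{F}]$ with $CB(\mathcal{H})<\omega^\zeta$ (strict) and $CB(\mathcal{F})=\omega^\xi+1$. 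The strict inequality on $\mathcal{H}$ is exactly what is needed to feed into condition (iii) of $\zeta$-sufficiency; your $\mathcal{H}=\mathcal{S}_\zeta$ has $CB=\omega^\zeta+1$, which is one too large.

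The same off-by-one defect resurfaces in your second stage: even if you could argue that $\{l_i:i\in J\}$ is a spread of a member of $\mathcal{S}_\zeta$, $\zeta$-sufficiency gives no control over $\mathbb{Q}_{L,1}$ on a family of Cantor--Bendixson index $\omega^\zeta+1$. There is also a structural issue you only gesture at: $L=(\min\mathrm{supp}(\mathbb{P}_{N,i}))_i$ varies with $N\in[M]$, so ``applying $\zeta$-sufficiency along $L$'' is not a single application to a fixed sequence. The paper's device is to fix $M$ by a Ramsey dichotomy first, set $r_n=\max\mathrm{supp}(\mathbb{P}_{M,n})$, form $\mathcal{I}=\{E:(r_n)_{n\in E}\in\mathcal{H}\}$ (still with $CB(\mathcal{I})<\omega^\zeta$), apply $\zeta$-sufficiency to $\mathcal{I}$ to get one set $S$, and only then \emph{construct} the single bad $N$ from $M$ and $S$. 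Your split $J=\{i:\mathbb{P}_{N,i}(E)>\delta\}$ is also coarser than the paper's split of the $\mathcal{F}$-blocks of $E$ according to whether they meet at most one or at least two $\mathcal{P}$-blocks of $N$, and it is the latter split (not yours) that makes the $\xi$-sufficiency estimate on the ``straddling'' blocks and the $\zeta$-sufficiency estimate on the ``contained'' blocks work together. Finally, you should handle $\zeta=0$ separately, since Fact \ref{easy} requires $\zeta>0$.
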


Before the proof, we isolate the following fact. 

\begin{fact} Suppose $\xi, \zeta$ are countable ordinals and $\zeta>0$.  Then for any regular family $\mathcal{G}$ with $CB(\mathcal{G})\leqslant \omega^{\xi+\zeta}$, there exist regular families $\mathcal{F}$, $\mathcal{H}$ with $CB(\mathcal{H})<\omega^\zeta$ and $CB(\mathcal{F})=\omega^\xi+1$ such that $\mathcal{G}\subset \mathcal{H}[\mathcal{F}]$.   

\label{easy}
\end{fact}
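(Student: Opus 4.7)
The plan is to find $\mathcal{F}$ and $\mathcal{H}$ as pull-backs of known families under a suitable injection $M\in[\nn]$, using Proposition \ref{deep facts}. Since $\zeta>0$, $\omega^{\xi+\zeta}$ is a limit ordinal and $\mathcal{G}$ is compact, so $CB(\mathcal{G})=\alpha+1$ with $\alpha<\omega^{\xi+\zeta}$. I may assume $\alpha\geqslant 1$, as the cases $\mathcal{G}=\varnothing$ and $\mathcal{G}=\{\varnothing\}$ are handled trivially by $\mathcal{H}=\mathcal{G}$.

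First I locate $\nu<\zeta$ and $n\in\nn$ with $\omega^{\xi+\nu}\cdot n\geqslant \alpha$. This splits cleanly: if $\zeta=\eta+1$, then $\omega^{\xi+\zeta}=\omega^{\xi+\eta}\cdot \omega$, so for some $n$ I have $\omega^{\xi+\eta}\cdot n\geqslant \alpha$ and I take $\nu=\eta$; if $\zeta$ is a limit, then $\omega^{\xi+\zeta}=\sup_{\nu<\zeta}\omega^{\xi+\nu}$, so some $\nu<\zeta$ satisfies $\omega^{\xi+\nu}\geqslant \alpha$ and I take $n=1$. Writing $\beta=\omega^\nu\cdot n$, a direct check in each case shows $\beta+1<\omega^\zeta$.

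Next I set $\mathcal{H}_0=\mathcal{A}_n[\mathcal{S}_\nu]$ and $\mathcal{F}_0=\mathcal{S}_\xi$. By items (i)--(iii) of Proposition \ref{deep facts}, both $\mathcal{H}_0$ and $\mathcal{H}_0[\mathcal{F}_0]$ are regular, and
$$CB(\mathcal{H}_0)=\omega^\nu\cdot n+1=\beta+1,\qquad CB(\mathcal{H}_0[\mathcal{F}_0])=\omega^\xi\cdot \beta+1=\omega^{\xi+\nu}\cdot n+1\geqslant \alpha+1=CB(\mathcal{G}).$$
Proposition \ref{deep facts}(v) then furnishes $M=(m_i)_{i=1}^\infty\in[\nn]$ with $\mathcal{G}\subset \mathcal{H}_0[\mathcal{F}_0](M^{-1})$.

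The last step is to verify the identity
$$\mathcal{H}_0[\mathcal{F}_0](M^{-1})=\mathcal{H}_0(M^{-1})[\mathcal{F}_0(M^{-1})],$$
which follows by unraveling definitions: the bijection $i\mapsto m_i$ matches any decomposition $F=\tilde E_1\cup\cdots\cup \tilde E_k$ with $\tilde E_j\in \mathcal{F}_0(M^{-1})$ and $(\min \tilde E_j)_j\in \mathcal{H}_0(M^{-1})$ to the decomposition $(m_i)_{i\in F}=E_1\cup\cdots\cup E_k$ of the same shape in $\mathcal{H}_0[\mathcal{F}_0]$, and vice versa. Taking $\mathcal{F}=\mathcal{F}_0(M^{-1})$ and $\mathcal{H}=\mathcal{H}_0(M^{-1})$, Proposition \ref{deep facts}(iv) makes them regular with $CB(\mathcal{F})=\omega^\xi+1$ and $CB(\mathcal{H})=\beta+1<\omega^\zeta$, and the inclusion $\mathcal{G}\subset \mathcal{H}[\mathcal{F}]$ is then immediate. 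The only even mild obstacle is this reindexing identity, but it is entirely routine.
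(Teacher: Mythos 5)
Your proof is correct and takes essentially the same route as the paper's: both locate $\nu<\zeta$ and $n\in\nn$ with $CB(\mathcal{G})\leqslant \omega^{\xi+\nu}\cdot n+1$, use $(\mathcal{A}_n[\mathcal{S}_\nu])[\mathcal{S}_\xi]$ as a template, pull it back along a suitable $M\in[\nn]$, and read off $\mathcal{F}=\mathcal{S}_\xi(M^{-1})$, $\mathcal{H}=(\mathcal{A}_n[\mathcal{S}_\nu])(M^{-1})$. The only surface difference is that you invoke the packaged Proposition \ref{deep facts}(v) where the paper calls Theorem \ref{gasp} directly and then uses the spreading property of $\mathcal{G}$ to pass from $\mathcal{G}\cap[M]^{<\nn}$ to $\mathcal{G}$; these are the same move.
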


\begin{proof} If $CB(\mathcal{G})\leqslant \omega^{\xi+\zeta}$, then since $CB(\mathcal{G})$ cannot be a limit ordinal and $\omega^{\xi+\zeta}$ is a limit ordinal, $CB(\mathcal{G})<\omega^{\xi+\zeta}$. This means there exist $\eta<\zeta$ and $m\in \nn$ such that $CB(\mathcal{G})<\omega^{\xi}\omega^\eta m+1$.    Using Theorem \ref{gasp}, there exists $M=(m_n)_{n=1}^\infty \in [\nn]^{<\nn}$ such that $\mathcal{G}\cap [M]^{<\nn}\subset (\mathcal{A}_m[\mathcal{S}_\eta])[\mathcal{S}_\xi]$.    Now let $$\mathcal{F}=\{E: (m_n)_{n\in E}\in \mathcal{S}_\xi\}$$ and $$\mathcal{H}= \{E: (m_n)_{n\in E} \in \mathcal{A}_m[\mathcal{S}_\eta]\}.$$    Then $\mathcal{F}$, $\mathcal{H}$ are regular, $CB(\mathcal{F})=CB(\mathcal{S}_\xi)=\omega^\xi+1$, and $CB(\mathcal{H})=CB(\mathcal{A}_m[\mathcal{S}_\eta])=\omega^\eta m+1<\omega^\zeta$.    Furthermore, if $E\in \mathcal{G}$, then $(m_n)_{n\in E}$ is a spread of $E$, and therefore lies in $\mathcal{G}\cap [M]^{<\nn}$. From this it follows that there exist $F_1<\ldots <F_n$ such that $\varnothing\neq F_i\in \mathcal{S}_\xi$, $(\min F_i)_{i=1}^n\in \mathcal{A}_m[\mathcal{S}_\eta]$, and $(m_i)_{i\in E}=\cup_{i=1}^n F_i$.   We may then write $F_i=(m_j)_{j\in E_i}$ for some $E_1<\ldots <E_n$ with $E=\cup_{i=1}^n E_i$.  Then $(m_n)_{n\in E_i}\in \mathcal{S}_\xi$, whence $E_i\in \mathcal{F}$, and $(m_{\min E_i})_{i=1}^n\in \mathcal{A}_m[\mathcal{S}_\eta]$, so $(\min E_i)_{i=1}^n\in \mathcal{H}$.  From this it follows that $E\in \mathcal{H}[\mathcal{F}]$.

\end{proof}

\begin{proof}[Proof of Theorem \ref{weather chopper}] Let $\mathfrak{Q}*\mathfrak{P}=\{\mathbb{O}_{M,n}: M\in [\nn], n\in \nn\}$.

First suppose that $\zeta=0$.   We must show that $(\mathfrak{Q}*\mathfrak{P}, \mathcal{Q}[\mathcal{P}])$ is $\xi$-sufficient.  Fix $\ee>0$, $L\in [\nn]$, $q\in \nn$, and a regular family $\mathcal{G}$ with $CB(\mathcal{G})\leqslant \omega^\xi+1$.  There exists $M\in [L]$ such that for all $N\in [\nn]$ and $E\in \mathcal{G}$ with $\min E\leqslant q$, $\mathbb{P}_{N,1}(E)\leqslant \ee$.    Now fix any $N\in [\nn]$ and $E\in \mathcal{G}$ with $\min E\leqslant q$.   Let $K=(k_i)_{i=1}^\infty=(\min \text{supp}(\mathbb{P}_{N, i}))_{i=1}^\infty$.  If $N_n=N\setminus \cup_{i=1}^{n-1} \text{supp}(\mathbb{P}_{N,i})$, $$\mathbb{P}_{N,r}(E)=\mathbb{P}_{N_r, 1}(E)\leqslant \ee,$$ whence \begin{align*} \mathbb{O}_{N,1}(E) = \sum_{i\in K^{-1}_{\mathcal{Q}, 1}} \mathbb{Q}_{K, 1}(k_i)\mathbb{P}_{N, i}(E) \leqslant \ee \mathbb{Q}_{K,1}(\nn)= \ee.\end{align*}  This gives the $\zeta=0$ case.

Now suppose that $\zeta>0$.  Fix $\ee>0$, $L\in [\nn]$, and a regular family $\mathcal{G}$ with $CB(\mathcal{G})<\omega^{\xi+\zeta}+1$. Using Fact \ref{easy} and replacing $\mathcal{G}$ with a larger family, we may assume that $\mathcal{G}=\mathcal{H}[\mathcal{F}]$ for some regular families $\mathcal{H}$, $\mathcal{F}$ with $CB(\mathcal{F})=\omega^\xi+1$ and $CB(\mathcal{H})<\omega^\zeta$. Let $\mathcal{V}$ denote the set of those $N\in [\nn]$ such that $\sup\{\mathbb{O}_{N,1}(E): E\in \mathcal{G}\}\leqslant \ee$ and note that $\mathcal{V}$ is closed.  Then  we may fix $M\in [L]$ such that either $[M]\subset \mathcal{V}$ or $[M]\cap \mathcal{V}=\varnothing$.  Seeking a contradiction, suppose $[M]\cap \mathcal{V}=\varnothing$.  Now let us recursively choose $m_1\in M_1:= M$, $M_2\in [M_1]$, $m_1<m_2\in M_2$, $M_3\in [M_2]$, etc., such that for all $1<n\in \nn$, $$\sup \{\mathbb{P}_{N,1}(E): N\in [M_n], E\in \mathcal{F}, \min E\leqslant m_{n-1}\} \leqslant \frac{\ee}{3\cdot 2^n}.$$   By replacing $M$ with $(m_n)_{n=1}^\infty\in [M]$, we may assume that for all $N\in [M]$, all $r\in \nn$, all $E\in \mathcal{F}$ such that $\text{supp}(\mathbb{P}_{N,r})\cap E\neq \varnothing$, and all $n\in \nn$, $$\mathbb{P}_{N,n+r}(E) \leqslant \frac{\ee}{3\cdot 2^{r+n}},$$ so that $$\sum_{i=r+1}^\infty \mathbb{P}_{N,i}(E) \leqslant \frac{\ee}{3\cdot 2^r}.$$  Indeed, let $j\in \nn$ be such that $m_j=\max \text{supp}(P_{N,r})$ and note that $j\geqslant r$.   Then for any $n\in \nn$, if $N_n=N\setminus \cup_{i=1}^{r+n-1} \text{supp}(\mathbb{P}_{N,i})$, $N_n\in [M_{j+n}]$ and $\min E\leqslant m_j \leqslant m_{j+n-1}$, whence $$\mathbb{P}_{N, n+r}(E)= \mathbb{P}_{N_n, 1}(E) \leqslant \frac{\ee}{3\cdot 2^{j+n}}\leqslant \frac{\ee}{3\cdot 2^{r+n}}.$$  Then $$\sum_{i=r+1}^\infty \mathbb{P}_{N,i}(E) = \sum_{i=1}^\infty \mathbb{P}_{N,r+i}(E) \leqslant \sum_{i=1}^\infty \frac{\ee}{3\cdot 2^{r+n}}= \frac{\ee}{3\cdot 2^r}.$$

Now let $r_n=\max \text{supp}(\mathbb{P}_{M,n})$ and let $$\mathcal{I}=\{E: (r_n)_{n\in E}\in \mathcal{H}\}.$$  Note that $\mathcal{I}$ is regular and $CB(\mathcal{I})=CB(\mathcal{H})<\omega^\zeta$.   Fix $S=(s_n)_{n=1}^\infty\in [\nn]$ such that $$\sup \{\mathbb{Q}_{T,1}(E): T\in [S], E\in \mathcal{I}\} \leqslant \frac{\ee}{3}$$ and let $$N=\bigcup_{n\in S}\text{supp}(\mathbb{P}_{M, n})=\bigcup_{n=1}^\infty \text{supp}(\mathbb{P}_{M, s_n})\in [M]$$ and note that $\mathbb{P}_{N,n}=\mathbb{P}_{M, s_n}$ for all $n\in \nn$. Let $t_n=\min \text{supp}(\mathbb{P}_{N,n})$ and  $T=(t_n)_{n=1}^\infty$. Write $$\mathbb{O}_{N,1}= \sum_{i\in T_{\mathcal{Q}, 1}^{-1}} \mathbb{Q}_{T,1}(t_i)\mathbb{P}_{N,i}.$$    Since $[M]\cap \mathcal{V}=\varnothing$, $$\sup \{\mathbb{O}_{N,1}(E): E\in \mathcal{G}\}>\ee.$$   We will obtain the desired contradiction by proving that $$\sup \{\mathbb{O}_{N,1}(E): E\in \mathcal{G}\}\leqslant \ee.$$    For this, it is sufficient to show that for any $E=\cup_{j=1}^p E_j$, $F=\cup_{j=1}^q F_j$ such that $E_1<\ldots <E_p$, $F_1<\ldots <F_q$, $\varnothing\neq E_j, F_j\in \mathcal{F}$, and $(\min E_j)_{j=1}^p, (\min F_j)_{j=1}^q\in \mathcal{H}$ such that for each $1\leqslant j\leqslant p$, there exists at most one $i\in T^{-1}_{\mathcal{Q},1}$ such that $E_j\cap \text{supp}(\mathbb{P}_{N,i})\neq 0$, and for each $1\leqslant  j\leqslant q$, there exist at least two values of $i\in T^{-1}_{\mathcal{Q},1}$ such that $F_j\cap \text{supp}(\mathbb{P}_{N,i})\neq 0$, then $$\mathbb{O}_{N,1}(E) \leqslant \ee/3$$ and $$\mathbb{O}_{N,1}(F)\leqslant 2\ee/3.$$  This is sufficient to produce a contradiction, because any member of $\mathcal{G}=\mathcal{H}[\mathcal{F}]$ is the union of two sets $E,F$ having the properties described above.  Let $E=\cup_{j=1}^p E_j$, $F=\cup_{j=1}^q F_j$ be as described above.   For each $i\in T^{-1}_{\mathcal{Q}, 1}$, let $$B_i=\{j\leqslant p: E_j\cap \text{supp}(\mathbb{P}_{N,i})\neq \varnothing\}.$$   Then the sets $(B_i)_{i\in T^{-1}_{\mathcal{Q}, 1}}$ are pairwise disjoint by the properties of the sets $E_1, \ldots, E_p$.    Let $B=\{i\in T^{-1}_{\mathcal{Q}, 1}: B_i\neq \varnothing\}$. We claim that $G:=(t_i)_{i\in B}\in \mathcal{I}$, from which it will follow that $$\mathbb{O}_{N,1}(E)= \sum_{i\in B} \mathbb{Q}_{T,1}(t_i)\sum_{j\in B_i} \mathbb{P}_{N,i}(E_j) \leqslant \sum_{i\in B} \mathbb{Q}_{T,1}(t_i)= \mathbb{Q}_{T,1}(G)\leqslant \ee/3.$$Let us see why $G\in \mathcal{I}$.      For each $i\in B$, fix $j_i\in B_i$ and note that $(r_{t_i})_{i\in B}$ is a spread of $(\min  E_{j_i})_{i\in B}\in \mathcal{H}$, since $$\min E_{j_i}\leqslant \max \text{supp}(\mathbb{P}_{N,i})= \max \text{supp}(\mathbb{P}_{M, s_i}) = r_{s_i} \leqslant r_{t_i}.$$ Here we are using that $t_i=\min \text{supp}(\mathbb{P}_{M, s_i})\geqslant s_i$.  From this it follows that $(r_{t_i})_{i\in B}\in \mathcal{H}$ and $G=(t_i)_{i\in B}\in \mathcal{I}$.

Now for each $j\leqslant q$, let $i_j=\min \{i\in T^{-1}_{\mathcal{Q}, 1}: F_j\cap \text{supp}(\mathbb{P}_{N,i})\neq \varnothing\}$.    Then by the properties of the sets $F_1, \ldots, F_q$, the map $j\mapsto i_j$ is an injection from $\{1, \ldots, q\}$ into $T^{-1}_{\mathcal{Q},1}$.    Let $C=(i_j)_{j\leqslant q}$. We first claim that $H:=(t_i)_{i\in C}\in \mathcal{I}$.  Indeed, just as in the last paragraph, $(r_{t_{i_j}})_{j=1}^q$ is a spread of $(\min F_j)_{j=1}^q\in \mathcal{H}$.   Therefore, using the properties of $N\in [M]$ established in the second paragraph of the proof,  \begin{align*} \mathbb{O}_{N,1}(F) & = \sum_{j=1}^q \sum_{i_j\leqslant i\in T^{-1}_{\mathcal{Q}, 1}} \mathbb{Q}_{T,1}(t_i)\mathbb{P}_{N,i}(E_j) \\ & = \sum_{j=1}^q \Bigl[ \mathbb{Q}_{T,1}(t_{i_j}) + \sum_{i=i_j+1}^\infty \mathbb{P}_{N,i}(E_j)\Bigr] \\ & \leqslant \mathbb{Q}_{T,1}(H) + \sum_{j=1}^q \frac{\ee}{3\cdot 2^{i_j}} \\ & \leqslant \frac{\ee}{3}+ \frac{\ee}{3}=2\ee/3. \end{align*}

\end{proof}

\begin{lemma}\cite[Lemma $6.4$]{C1} Fix $K=(k_n)_{n=1}^\infty\in [\nn]$ and $\ee>0$. Then for any $L=(l_n)_{n=1}^\infty$ such that  $k_{l_n}(1+2\ee)<l_{n+1}\ee$ for all $n\in \nn$, $$\sup \Bigl\{\sum_{i=1}^\infty \mathbb{S}^\xi_{L,i}(E): (k_n)_{n\in E}\in \mathcal{S}_\xi\Bigr\} \leqslant 1+\ee$$  for any $\xi<\omega_1$. 

\label{fast growing}

\end{lemma}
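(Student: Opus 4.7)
The plan is to proceed by transfinite induction on $\xi$, following the standard template for the repeated averages hierarchy developed in \cite{AMT}. For $\xi=0$, the condition $(k_n)_{n\in E}\in \mathcal{S}_0=\mathcal{A}_1$ forces $|E|\leqslant 1$, and since $\mathbb{S}^0_{L,i}=\delta_{l_i}$, we immediately get $\sum_i \mathbb{S}^0_{L,i}(E)=|E\cap L|\leqslant 1\leqslant 1+\ee$.

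For the successor step $\xi\to\xi+1$, I would fix $E$ with $(k_n)_{n\in E}\in \mathcal{S}_{\xi+1}$ and use the admissible decomposition $E=E_1\cup\cdots\cup E_p$, where $E_1<\cdots<E_p$, each $(k_n)_{n\in E_j}\in \mathcal{S}_\xi$, and $p\leqslant k_{\min E_1}$. By construction $\mathbb{S}^{\xi+1}_{L,n}$ is the average with weight $1/p_n$ of the measures $\mathbb{S}^\xi_{L_n,1},\ldots,\mathbb{S}^\xi_{L_n,p_n}$, where $L_n$ is the appropriate tail of $L$ (still satisfying a growth condition relative to the corresponding subsequence of $K$) and $p_n=\min L_n$ grows very rapidly by hypothesis. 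After exchanging sums to regroup $\sum_n \mathbb{S}^{\xi+1}_{L,n}(E)$ into contributions from distinct chunks $E_j$, I would apply the inductive hypothesis at level $\xi$ (with $\ee$ replaced by a slightly smaller $\ee'$ so that errors still sum to $\ee$) to each $E_j$, and then control the ``tail'' error produced by the $1/p_n$ averaging using the growth inequality $k_{l_m}(1+2\ee)<l_{m+1}\ee$.

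For the limit step, write $\mathcal{S}_\xi=\{E:\exists n\leqslant E,\ E\in \mathcal{S}_{\xi_n+1}\}$ for the fixed sequence $\xi_n\uparrow\xi$ (with $\mathcal{S}_{\xi_n+1}\subset\mathcal{S}_{\xi_{n+1}}$). Any $E$ with $(k_n)_{n\in E}\in \mathcal{S}_\xi$ satisfies $(k_n)_{n\in E}\in \mathcal{S}_{\xi_m+1}$ for some $m\leqslant k_{\min E}$. Since $\mathbb{S}^\xi_{L,n}=\mathbb{S}^{\xi_{p_n}+1}_{L_n,1}$ with $p_n=\min L_n$ strictly increasing, for all but finitely many $n$ the ordinal $\xi_{p_n}+1$ dominates $\xi_m+1$, and the inductive hypothesis at $\xi_{p_n}+1<\xi$ applies to the corresponding term; the finitely many initial terms are absorbed by shrinking $L$ using the growth condition.

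The main obstacle will be the quantitative bookkeeping in the successor step. The naive estimate of $p$ applications of the inductive bound gives $p(1+\ee)$, which is useless because $p$ may be as large as $k_{\min E_1}$, and moreover a single chunk $E_j$ can spread across several supports $L_{\mathcal{S}_{\xi+1},n}$. What must be shown is that the ``diagonal'' contributions (where the inductive hypothesis provides $1$) yield at most $1$ in total, while the ``off-diagonal'' contributions coming from the $1/p_n$ weighting sum to at most $\ee$; the precise calibration $k_{l_m}(1+2\ee)<l_{m+1}\ee$ is exactly what is needed to make this balance work uniformly in $\xi$, and in particular to avoid a geometric blow-up of the constant $1+\ee$ as one climbs the hierarchy.
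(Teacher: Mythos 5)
The paper does not itself prove this lemma; it cites \cite[Lemma~6.4]{C1}, so there is no internal argument to compare your proposal against. Taken on its own terms, your plan has the right shape: transfinite induction is the correct strategy, the base case $\xi=0$ is handled correctly, and you have accurately identified the two dangers in the successor step. Your successor step can in fact be closed essentially as you anticipate, though one of your characterizations is off: writing $\min E = l_{j_0}$ and letting $i_0$ be the least $i$ with $\mathbb{S}^{\xi+1}_{L,i}(E)\neq 0$, the bound $\mathbb{S}^{\xi+1}_{L,i_0}(E)\leqslant 1$ is not ``where the inductive hypothesis provides $1$'' --- it is simply the statement that $\mathbb{S}^{\xi+1}_{L,i_0}$ is a probability measure. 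The inductive hypothesis enters only in the error term: for $i>i_0$ each $\mathbb{S}^\xi_{L,m}$ appearing inside $\mathbb{S}^{\xi+1}_{L,i}$ carries weight $1/p_i\leqslant 1/p_{i_0+1}\leqslant 1/l_{j_0+1}$, so summing the inductive bound $\sum_m\mathbb{S}^\xi_{L,m}(E_a)\leqslant 1+\ee$ over the $p\leqslant k_{l_{j_0}}$ chunks gives $\sum_{i>i_0}\mathbb{S}^{\xi+1}_{L,i}(E)\leqslant k_{l_{j_0}}(1+\ee)/l_{j_0+1}<\ee$, using $k_{l_{j_0}}(1+2\ee)<l_{j_0+1}\ee$. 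Notice this needs no shrinking of $\ee$; the slack in the ``$2\ee$'' absorbs the $(1+\ee)$ factor.

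The genuine gap is in the limit step. You assert that ``the inductive hypothesis at $\xi_{p_n}+1<\xi$ applies to the corresponding term,'' but this conflates two different objects. The inductive hypothesis at level $\xi_{p_n}+1$ bounds the \emph{entire series} $\sum_i\mathbb{S}^{\xi_{p_n}+1}_{L_n,i}(E)$ by $1+\ee$; it gives nothing useful about the single value $\mathbb{S}^{\xi_{p_n}+1}_{L_n,1}(E)=\mathbb{S}^\xi_{L,n}(E)$ beyond the trivial bound by $1$, and you must sum these over infinitely many $n$. As written, the sketch contains no mechanism for proving $\sum_{n>n_0}\mathbb{S}^\xi_{L,n}(E)\leqslant\ee$; the phrase about ``shrinking $L$ using the growth condition'' does not supply one, since the growth condition controls magnitudes of averaging weights, not the number of surviving terms. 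Some additional structural input is required --- for example, unwinding each $\mathbb{S}^{\xi_{p_n}+1}_{L_n,1}$ as $\frac{1}{p_n}\sum_{i=1}^{p_n}\mathbb{S}^{\xi_{p_n}}_{L_n,i}$ and invoking the inductive bound at level $\xi_{p_n}$ together with the observation that $(k_n)_{n\in E}\in\mathcal{S}_{\xi_{p_n}+1}$ for $n>n_0$ (because $p_n\geqslant l_{j_0+1}>k_{l_{j_0}}$ dominates the witnessing index), or expressing each $\mathbb{S}^\xi_{L,n}$ with $n>n_0$ as a convex combination of fixed-lower-level measures with uniformly small weights. Until such an input is supplied and calibrated against the growth inequality, the limit case is not proved.
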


The next corollary follows immediately from Lemmas \ref{schlumprecht} and  \ref{fast growing}.  

\begin{corollary} For every $\xi<\omega_1$,  $(\mathfrak{S}_\xi, \mathcal{S}_\xi)$ is $\xi$-sufficient. Therefore for every  $\xi, \zeta<\omega_1$, $(\mathfrak{S}_\zeta*\mathfrak{S}_\xi, \mathfrak{S}_\zeta[\mathfrak{S}_\xi])$ is $\xi+\zeta$-sufficient and therefore $\xi+\zeta$-regulatory.

\end{corollary}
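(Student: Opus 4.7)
The plan is to derive both assertions directly from the tools already assembled: Lemma \ref{fast growing} together with the characterization in Lemma \ref{mkay} for the first assertion, and Theorem \ref{weather chopper} together with Lemma \ref{schlumprecht} for the second. Since the corollary asserts that both conclusions follow immediately, the job is really to bridge the gap between the formulation of Lemma \ref{fast growing} (which bounds $\mathbb{S}^\xi_{L,i}(E)$ only for sets $E$ with $(k_n)_{n\in E}\in \mathcal{S}_\xi$, for a prescribed $K$) and condition (ii) of Lemma \ref{mkay} (which asks for a bound holding for all $E$ in an arbitrary regular family $\mathcal{G}$ with $CB(\mathcal{G})\leqslant \omega^\xi+1$).

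For the first assertion I aim to verify condition (ii) of Lemma \ref{mkay}. Fix $\ee>0$, a regular family $\mathcal{G}$ with $CB(\mathcal{G})\leqslant \omega^\xi+1$, and $L\in [\nn]$. Since $CB(\mathcal{G})\leqslant \omega^\xi+1=CB(\mathcal{S}_\xi)$, Proposition \ref{deep facts}(v) produces $K=(k_n)_{n=1}^\infty\in [\nn]$ with $\mathcal{G}(K)\subset \mathcal{S}_\xi$; equivalently, every $E\in \mathcal{G}$ satisfies $(k_n)_{n\in E}\in \mathcal{S}_\xi$. I then choose $M=(m_n)_{n=1}^\infty\in [L]$ growing so quickly that $k_{m_n}(1+2\ee)<m_{n+1}\ee$ for every $n\in \nn$. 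The crucial observation is that this fast-growth condition propagates to every $N=(n_i)_{i=1}^\infty\in [M]$: writing $n_i=m_{j_i}$ with $j_1<j_2<\ldots$, monotonicity of $(k_n)$ and the inequality $j_{i+1}\geqslant j_i+1$ give
\[
k_{n_i}(1+2\ee)=k_{m_{j_i}}(1+2\ee)<m_{j_i+1}\ee\leqslant m_{j_{i+1}}\ee=n_{i+1}\ee.
\]
Thus Lemma \ref{fast growing} applies with $N$ in place of $L$ and yields $\sum_{i=1}^\infty \mathbb{S}^\xi_{N,i}(E)\leqslant 1+\ee$ for every $E$ with $(k_n)_{n\in E}\in \mathcal{S}_\xi$, which in particular covers every $E\in \mathcal{G}$. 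This verifies condition (ii) of Lemma \ref{mkay}, so $(\mathfrak{S}_\xi,\mathcal{S}_\xi)$ is $\xi$-sufficient.

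For the second assertion, applying Theorem \ref{weather chopper} to the probability blocks $(\mathfrak{S}_\zeta,\mathcal{S}_\zeta)$ and $(\mathfrak{S}_\xi,\mathcal{S}_\xi)$ — which by the first part are $\zeta$- and $\xi$-sufficient respectively — immediately gives that $(\mathfrak{S}_\zeta*\mathfrak{S}_\xi,\mathcal{S}_\zeta[\mathcal{S}_\xi])$ is $\xi+\zeta$-sufficient. Lemma \ref{schlumprecht} then upgrades $\xi+\zeta$-sufficiency to $\xi+\zeta$-regulatory, completing the proof.

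The only genuine subtlety is the propagation of the fast-growth hypothesis from $M$ to arbitrary $N\in [M]$; once that is in place, the invocation of Proposition \ref{deep facts}(v) neutralizes the mismatch between a general regular $\mathcal{G}$ and the family $\mathcal{S}_\xi$ appearing in Lemma \ref{fast growing}, and the second assertion is a direct concatenation of the results already proved.
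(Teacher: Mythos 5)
Your proof is correct and fills in exactly the argument that the paper leaves implicit when it says the corollary ``follows immediately from Lemmas \ref{schlumprecht} and \ref{fast growing}'': you pass from a general regular family $\mathcal{G}$ with $CB(\mathcal{G})\leqslant\omega^\xi+1$ to $\mathcal{S}_\xi$ via Proposition \ref{deep facts}(v), verify that the fast-growth hypothesis of Lemma \ref{fast growing} is inherited by every $N\in[M]$, and thereby verify condition (ii) of Lemma \ref{mkay}; the second assertion is a direct application of Theorem \ref{weather chopper} and Lemma \ref{schlumprecht}. This is the paper's intended route.
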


\begin{corollary} Let $\xi, \zeta$ be countable ordinals. If $X,Y,Z$ are Banach spaces, $B:X\to Y$ has $\textsf{\emph{v}}(B)> \zeta$, and $A:Y\to Z$ has $\textsf{\emph{v}}(A)> \xi$, then $\textsf{\emph{v}}(AB)> \xi+\zeta$.

In particular, if $X$ is a Banach space with $\textsf{\emph{v}}(X)>\xi$, then $\textsf{\emph{v}}(X)>\xi+\xi$. 

\label{block of block}
\end{corollary}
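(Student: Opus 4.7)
The plan is to argue by contradiction. Suppose $\textsf{v}(AB)\leqslant \xi+\zeta$; by Corollary \ref{divad}, there exists an $(\xi+\zeta)$-weakly null sequence $(x_n)_{n=1}^\infty\subset B_X$ with $\|ABx_n\|\geqslant 2\delta$ for some $\delta>0$. The sequence $(Bx_n)$ is weakly null in $Y$ with $\|A(Bx_n)\|\geqslant 2\delta$; since $\textsf{v}(A)>\xi$, the Remark following Theorem \ref{gds} (together with Theorem \ref{convex unc}) lets me pass to a subsequence and relabel so that $(Bx_n)_{n=1}^\infty$ is an $\ell_1^\xi$-spreading model: $\mathcal{S}_\xi\subset \mathfrak{F}_\ee((Bx_n)_{n=1}^\infty)$ for some $\ee>0$.

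Next I form the $\xi$-averages $u_k:=\sum_n \mathbb{S}^\xi_{\nn,k}(n)\,x_n\in X$. Since $\mathrm{supp}(\mathbb{S}^\xi_{\nn,k})\in MAX(\mathcal{S}_\xi)\subset \mathcal{S}_\xi$, the spreading-model hypothesis provides $y^*_k\in B_{Y^*}$ with $\mathrm{Re}\,y^*_k(Bx_n)\geqslant \ee$ on $\mathrm{supp}(\mathbb{S}^\xi_{\nn,k})$, and testing $y^*_k$ against $Bu_k$ gives $\|Bu_k\|\geqslant \ee$ for every $k$. The sequence $(u_k)$ is weakly null (it is a convex combination of the weakly null $(x_n)$, with supports moving out to infinity). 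The proof now reduces to the sub-claim that $(u_k)$ is $\zeta$-weakly null: for then $\textsf{v}(B)>\zeta$ forces $\|Bu_k\|\to 0$, contradicting $\|Bu_k\|\geqslant \ee$.

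To prove the sub-claim, suppose for contradiction that $\mathcal{S}_\zeta(N')\subset \mathfrak{F}^a_{\ee'}((u_k)_{k=1}^\infty)$ for some $\ee'>0$ and $N'\in[\nn]$, with witnessing $\{x^*_E\in B_{X^*}:E\in\mathcal{S}_\zeta(N')\}$; the triangle inequality yields $\sum_n \mathbb{S}^\xi_{\nn,k}(n)\,|x^*_E(x_n)|\geqslant \ee'$ for $k\in E$. I now exploit the $(\xi+\zeta)$-regulatory property of the convolution block $(\mathfrak{S}_\zeta*\mathfrak{S}_\xi,\mathcal{S}_\zeta[\mathcal{S}_\xi])$ from the preceding corollary. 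Define a bounded $f:\nn\times MAX(\mathcal{S}_\zeta[\mathcal{S}_\xi])\to \rr$ by $f(j,F)=|x^*_{E(F)}(x_j)|$, where $E(F)$ is extracted from the canonical $\mathcal{S}_\xi$-block decomposition of $F$: if $F=F_1\cup\cdots\cup F_p$ with $F_i=\mathrm{supp}(\mathbb{S}^\xi_{\nn,k_i})$, then $E(F)=\{k_1,\ldots,k_p\}$. Using $\mathbb{O}_{\nn,m}=\sum_i \mathbb{S}^\zeta_{L,m}(l_i)\mathbb{S}^\xi_{\nn,i}$, a direct computation gives $\sum_j \mathbb{O}_{\nn,m}(j)\,f(j,\mathrm{supp}(\mathbb{O}_{\nn,m}))\geqslant \ee'$ for all $m$, so $[\nn]\subset \mathfrak{E}(f,\mathfrak{S}_\zeta*\mathfrak{S}_\xi,\mathcal{S}_\zeta[\mathcal{S}_\xi],\ee')$. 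The regulatory property then yields $L\in[\nn]$ with $\mathcal{S}_{\xi+\zeta}(L)\subset \mathfrak{G}(f,\mathcal{S}_\zeta[\mathcal{S}_\xi],\nn,\ee'/2)$, and unpacking the definition gives $\mathcal{S}_{\xi+\zeta}(L)\subset \mathfrak{F}^a_{\ee'/2}((x_n)_{n=1}^\infty)$, contradicting the $(\xi+\zeta)$-weak-nullity of $(x_n)$.

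The main obstacle is that the index set $E(F)=L^{-1}_{\mathcal{S}_\zeta,m}$ arising from $F=\mathrm{supp}(\mathbb{O}_{\nn,m})$ need not lie in $\mathcal{S}_\zeta(N')$, since $L_{\mathcal{S}_\zeta,m}\in\mathcal{S}_\zeta$ is merely a spread of $E(F)$ and the spreading property of $\mathcal{S}_\zeta$ runs only in one direction. The fix is to replace $\nn$ throughout by a sufficiently sparse $M\in[\nn]$, chosen diagonally (via Proposition \ref{ironside} or Theorem \ref{gasp}) so that every $E(F)$ produced by the convolution does lie in $\mathcal{S}_\zeta(N')$—and correspondingly $x^*_{E(F)}$ is one of the provided witnesses. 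With this alignment, the regulatory machinery closes the argument, and the in-particular statement follows by taking $A=B=I_X$, $\zeta=\xi$.
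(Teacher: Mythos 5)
Your overall strategy is in the same spirit as the paper's: build $\mathcal{S}_\xi$-averages of $(x_n)$ to exploit $\textsf{v}(A)>\xi$, and then exploit $\textsf{v}(B)>\zeta$ on those averages, with the $(\xi+\zeta)$-regulatory property of the convolution block $(\mathfrak{S}_\zeta*\mathfrak{S}_\xi,\mathcal{S}_\zeta[\mathcal{S}_\xi])$ doing the combinatorial bookkeeping. However, the proposal has two genuine gaps that the suggested fix does not resolve.

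First, the claim ``$[\nn]\subset\mathfrak{E}(f,\mathfrak{S}_\zeta*\mathfrak{S}_\xi,\mathcal{S}_\zeta[\mathcal{S}_\xi],\ee')$'' does not follow from your computation. You verify $\sum_j\mathbb{O}_{\nn,m}(j)f(j,\mathrm{supp}(\mathbb{O}_{\nn,m}))\geqslant\ee'$ for every $m\in\nn$, but membership of $M$ in $\mathfrak{E}$ concerns $\mathbb{O}_{M,1}$, and for a general $M\in[\nn]$ one has $\mathbb{O}_{M,1}=\sum_{i}\mathbb{Q}_{L,1}(l_i)\,\mathbb{S}^\xi_{M,i}$, where the measures $\mathbb{S}^\xi_{M,i}$ bear no relation to the fixed $\mathbb{S}^\xi_{\nn,k}$'s that define $u_k$. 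The ``triangle inequality'' step $\sum_n\mathbb{S}^\xi_{\nn,k}(n)|x^*_E(x_n)|\geqslant\ee'$ then simply does not apply. Passing to a sparse $K'\in[\nn]$ and taking $M$ to be a union of supports $\mathrm{supp}(\mathbb{S}^\xi_{\nn,k})$, $k\in K'$, restores the alignment for those particular $M$, but the regulatory property requires $[L]\subset\mathfrak{E}$ for an infinite $L$, and $[L]$ contains many $M$ that are not such unions. This is not a minor bookkeeping issue: it is the reason the paper's $f$ is defined differently, via $x^*_F$ extremizing against the convolution average $x_F=\sum_i\mathbb{P}_{M,1}(i)x_i$ itself rather than against the $u_k$'s, so that $\sum_i\mathbb{P}_{M,1}(i)f(i,F)=\|x_F\|$ holds for \emph{every} $M$ with $F\prec M$, with no alignment hypothesis.

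Second, even granting the alignment, you have not shown that the index set $E(F)$ lies in $\mathfrak{F}^a_{\ee'}((u_k))$, so the witness $x^*_{E(F)}$ need not exist. You correctly note that $L_{\mathcal{S}_\zeta,m}$ is merely a spread of $E(F)$ and spreading goes the wrong way. The paper's resolution is not a passive sparsification but an active one: in the final step it constructs $N$ recursively, choosing $t_{i+1}>\min\mathrm{supp}(\mathbb{S}^\xi_{L,m_{t_i}})$, precisely so that the relevant index set becomes a \emph{spread} of $S_{\mathcal{S}_\zeta,1}\setminus\{\max S_{\mathcal{S}_\zeta,1}\}$ (losing only one term, which contributes at most $1/\min S<\ee/3$) and thereby lands in $\mathcal{S}_\zeta(M)$. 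Relatedly, your averages $u_k=\sum_n\mathbb{S}^\xi_{\nn,k}(n)x_n$ use the undiagonalized index set $\nn$, whereas the paper first runs a Ramsey dichotomy on the sets $\mathcal{V}_n=\{M:\|\sum_i\mathbb{P}_{M,1}(i)x_i\|\geqslant 1/n\}$, shows $[L_n]\cap\mathcal{V}_n=\varnothing$ (this is where the regulatory property is actually used, together with the $(\xi+\zeta)$-weak nullity of $(x_n)$), and only then forms the averages $y_n=\sum_i\mathbb{S}^\xi_{L,n}(i)x_i$ along the diagonal $L$. It is not at all clear that the sub-claim is true for your undiagonalized $u_k$, and the paper's $\mathcal{V}_n$ machinery is precisely what produces a block sequence of averages for which the conclusion holds. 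In short: the plan is morally right, but it omits the diagonalization step and misapplies the regulatory property; both are essential, and the paper supplies them.
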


\begin{proof} If $\zeta=0$, the result is trivial, so assume $\zeta>0$. If either $A$ or $B$ is completely continuous, $\textsf{v}(AB)=\omega_1>\xi+\zeta$.  Assume neither $A$ nor $B$ is completely continuous.  Let $$\mathfrak{S}_\zeta*\mathfrak{S}_\xi=\{\mathbb{P}_{M,n}: M\in [\nn], n\in \nn\}.$$ Seeking a contradiction, assume $(x_i)_{i=1}^\infty\subset X$ is weakly null, $(ABx_i)_{i=1}^\infty$ is seminormalized, but there do not exist $N\in [\nn]$ and $\delta>0$ such that $\mathcal{S}_{\xi+\zeta}(N)\subset \mathfrak{F}_\delta((x_i)_{i=1}^\infty)$.  We may assume $(x_i)_{i=1}^\infty \subset B_X$.   Since $(Bx_i)_{i=1}^\infty$ is weakly null and $(ABx_i)_{i=1}^\infty$ is seminormalized, there exist $\ee_1>0$ and $R\in [\nn]$ such that $\mathcal{S}_\xi(R)\subset \mathfrak{F}_{\ee_1}((Bx_i)_{i=1}^\infty)$. By passing to a subsequence, we may assume $R=\nn$ and $\mathcal{S}_\xi\subset \mathfrak{F}_{\ee_1}((Bx_i)_{i=1}^\infty)$.   Now for each $n\in\nn$, let $$\mathcal{V}_n=\{M\in [\nn]: \|\sum_{i=1}^\infty \mathbb{P}_{M,1}(i)x_i\|\geqslant 1/n\}.$$    We recursively select $L_1\supset L_2\supset \ldots$ such that for all $n\in \nn$, either $[L_n]\subset \mathcal{V}_n$ or $[L_n]\cap \mathcal{V}_n=\varnothing$. We first remark that for all $n\in \nn$, $[L_n]\cap \mathcal{V}_n=\varnothing$. If it were not so, then $[L_n]\subset \mathcal{V}_n$.   For all $F\in MAX(\mathcal{S}_\zeta[\mathcal{S}_\xi])$, if $M,N\in [\nn]$ are such that $F$ is a common initial segment of $M$ and $N$, then $$x_F:=\sum_{i=1}^\infty \mathbb{P}_{M,1}(i)x_i=\sum_{i=1}^\infty \mathbb{P}_{N,1}(i)x_i.$$  We may fix $x^*_F\in B_{X^*}$ such that $\text{Re\ }x^*_F(x_F)=\|x_F\|$.  Define $f:\nn\times MAX(\mathcal{S}_\zeta[\mathcal{S}_\xi])\to \rr$ by $f(i, F)=\text{Re\ }x^*_F(x_i)$.   Then $[L_n]\subset \mathfrak{F}(f, \mathfrak{S}_\zeta*\mathfrak{S}_\xi, \mathcal{S}_\zeta[\mathcal{S}_\xi], \ee_1)$, whence for any $0<\delta<\ee_1$ and any $M\in [L_n]$, $CB(\mathfrak{G}(f, \mathcal{S}_\zeta[\mathcal{S}_\xi], M, \delta))\geqslant \omega^{\xi+\zeta}+1$. From this and Lemma \ref{mst}, there would exist some $M\in [L_n]$ such that $\mathcal{S}_{\xi+\zeta}(M)\subset \mathfrak{F}_\delta((x_i)_{i=1}^\infty)$, which is a contradiction.  Therefore $[L_n]\cap \mathcal{V}_n=\varnothing$ for all $n\in\nn$.   Now fix $l_1<l_2<\ldots$, $l_n\in L_n$ and let $L=(l_n)_{n=1}^\infty$. 

Now let $y_n=\sum_{i=1}^\infty \mathbb{S}^\xi_{L,n}(i)x_i$.  Since $\mathcal{S}_\xi\subset \mathfrak{F}_{\ee_1}((Bx_i)_{i=1}^\infty)$, $\inf_n \|By_n\|\geqslant \ee_1$.  Since $(y_n)_{n=1}^\infty$ is weakly null in $X$, there exist $0<\ee<1$ and $M=(m_n)_{n=1}^\infty\in [\nn]$ such that $\mathcal{S}_\zeta(M)\subset \mathfrak{F}_\ee((y_n)_{n=1}^\infty)$. Fix $m>3/\ee$ and recursively select $m=t_1<t_2<\ldots$ such that $t_{i+1}>\min \text{supp}(\mathbb{S}^\xi_{L, m_{t_i}})$ for all $i\in \nn$.    Now let $$N=\bigcup_{n=1}^\infty \text{supp}(\mathbb{S}^\xi_{L, m_{t_n}})\in [L_m],$$ $$s_i=\min \text{supp}(\mathbb{S}^\xi_{L, m_{t_i}}),$$ and $$S= (s_i)_{i=1}^\infty.$$   We remark that $(t_i)_{i\in S^{-1}_{\mathcal{S}_\zeta, 1}\setminus \{1\}}\in \mathcal{S}_\zeta$, whence $(m_{t_i})_{i\in S^{-1}_{\mathcal{S}_\zeta, 1}\setminus \{1\}}\in \mathcal{S}_\zeta(M)\subset \mathfrak{F}_\ee((y_i)_{i=1}^\infty)$.    In order to see that $(t_i)_{i\in S^{-1}_{\mathcal{S}_\zeta, 1}\setminus \{1\}}\in \mathcal{S}_\zeta$, note that since $t_{i+1}>s_i$ for all $i\in \nn$, $(t_i)_{i\in S^{-1}_{\mathcal{S}_\zeta, 1}\setminus \{1\}}$ is a spread of $$(s_i)_{i\in S^{-1}_{\mathcal{S}_\zeta, 1}\setminus \{\max S^{-1}_{\mathcal{S}_\zeta, 1}\}}=S_{\mathcal{S}_\zeta, 1}\setminus \{\max S_{\mathcal{S}_\zeta, 1}\} \subset S_{\mathcal{S}_\zeta, 1}\in \mathcal{S}_\zeta.$$  Now since $(m_{t_i})_{i\in S^{-1}_{\mathcal{S}_\zeta, 1}\setminus \{1\}}\in \mathfrak{F}_\ee((y_i)_{i=1}^\infty)$, $$\|\sum_{i\in S^{-1}_{\mathcal{S}_\zeta, 1}\setminus \{1\}} \mathbb{S}^\zeta_{S,1}(s_i)  y_{m_{t_i}}\|\geqslant \ee \sum_{i\in S^{-1}_{\mathcal{S}_\zeta, 1}\setminus \{1\}} \mathbb{S}^\zeta_{S,1}(s_i) \geqslant \ee(1- \mathbb{S}^\zeta_{S,1}(s_1)) \geqslant \ee (1-1/\min S) \geqslant 2\ee/3,$$  and $$\|\sum_{i\in S^{-1}_{\mathcal{S}_\zeta, 1}} \mathbb{S}^\zeta_{S,1}(s_i) y_{m_{t_i}}\|\geqslant 2\ee/3 - \mathbb{S}^\zeta_{S,1}(s_1) \|y_{m_{t_1}}\| \geqslant 2\ee/3 - 1/\min S \geqslant \ee/3.$$  Here we are using the fact that for any $0<\xi<\omega_1$ and any $J\in[\nn]$, $\sup_{j\in\nn} \mathbb{S}^\xi_{J,1}(j)\leqslant 1/\min J$.   However, since $N\in [L_m]$, \begin{align*} 1/m & > \|\sum_{i=1}^\infty \mathbb{P}_{N,1}(i) x_i\| = \|\sum_{i\in S^{-1}_{\mathcal{S}_\zeta, 1}}\mathbb{S}^\zeta_{S,1}(s_i)\sum_{j=1}^\infty \mathbb{S}^\xi_{N, i}(j)x_j\| \\ & = \|\sum_{i\in S^{-1}_{\mathcal{S}_\zeta, 1}}\mathbb{S}^\zeta_{S,1}(s_i)y_{m_{t_i}}\| \geqslant \ee/3>1/m, \end{align*} a contradiction.  This contradiction yields that $\textsf{v}(AB)>\xi+\zeta$.

For the last statement, apply the first part with $X=Y=Z$, $A=B=I_X$, and $\zeta=\xi$. 

\end{proof}

\begin{rem}\upshape There are some instances in which the lower estimate provided by Corollary \ref{block of block} does not provide anything stronger than the ideal property. Corollary \ref{block of block} yields that for any $n\in\nn$, Banach spaces $X_0, \ldots, X_n$, and operators $A_i:X_i\to X_{i-1}$, $$\textsf{v}(A_1\ldots A_n)\geqslant \sup\{\zeta_1+\ldots +\zeta_n+1: \zeta_i<\textsf{v}(A_i)\}.$$  However,  if $0<\xi<\omega_1$ and $\zeta_1, \ldots, \zeta_n<\omega^\xi$, $\zeta_1+\ldots +\zeta_n+1\leqslant \omega^\xi$.  Therefore if $\textsf{v}(A_i)\leqslant \omega^\xi$ for each $1\leqslant i\leqslant n$ and $\textsf{v}(A_i)=\omega^\xi$ for at least one value of $i$, $$\textsf{v}(A_1\ldots A_n)\geqslant \sup\{\zeta_1+\ldots +\zeta_n+1: \zeta_i<\textsf{v}(A_i)\}=\omega^\xi=\max\{\textsf{v}(A_i): 1\leqslant i\leqslant n\},$$ which is the same estimate provided by the ideal property of $\mathfrak{V}_{\omega^\xi}$.

\end{rem}

\begin{corollary} For any Banach space $X$, $\textsf{\emph{v}}(X)=\omega^\gamma$ for some ordinal $\gamma\leqslant \omega_1$. 

\end{corollary}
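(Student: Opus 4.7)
The plan is to obtain this from Corollary \ref{block of block} by iterating its doubling estimate and using the fact that powers of $\omega$ are exactly the additively indecomposable ordinals.

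Set $\eta = \textsf{v}(X)$. By Proposition \ref{easy prop}(ii), every operator is $0$-completely continuous, so $\eta \geqslant 1$.  If $\eta = \omega_1$, then since $\omega^{\omega_1} = \sup_{\alpha<\omega_1}\omega^\alpha = \omega_1$, we may take $\gamma = \omega_1$ and are done.  So assume $1 \leqslant \eta < \omega_1$.  I would define
\[
\gamma = \max\{\delta : \omega^\delta \leqslant \eta\}.
\]
This maximum exists: the set $S = \{\delta : \omega^\delta \leqslant \eta\}$ is non-empty (contains $0$) and bounded above (by $\eta+1$); since $\delta \mapsto \omega^\delta$ is continuous and strictly increasing, a routine case analysis on whether $\sup S$ is $0$, a successor, or a limit shows $\sup S \in S$.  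Thus $\omega^\gamma \leqslant \eta < \omega^{\gamma+1}$, and it remains to rule out $\omega^\gamma < \eta$.

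Suppose, for contradiction, that $\omega^\gamma < \eta$.  Since $\omega^\gamma \leqslant \eta < \omega_1$, $\omega^\gamma$ is a countable ordinal, so Corollary \ref{block of block} applies: from $\textsf{v}(X) > \omega^\gamma$ we deduce $\textsf{v}(X) > \omega^\gamma + \omega^\gamma = \omega^\gamma \cdot 2$.  Applying the corollary again with $\xi = \omega^\gamma \cdot 2$ gives $\textsf{v}(X) > \omega^\gamma \cdot 4$, and by induction $\textsf{v}(X) > \omega^\gamma \cdot 2^n$ for every $n \in \nn$.  Taking the supremum,
\[
\textsf{v}(X) \geqslant \sup_{n} \omega^\gamma \cdot 2^n = \omega^\gamma \cdot \omega = \omega^{\gamma+1}.
\]
But then $\gamma+1 \in S$, contradicting the maximality of $\gamma$.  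Hence $\eta = \omega^\gamma$, as required.

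There is no real obstacle here; the content has already been placed in Corollary \ref{block of block}.  The only points that require a moment of care are the existence of $\max S$ (handled by the three-case continuity argument above) and the observation that iterating the doubling estimate finitely many times yields a supremum of the form $\omega^{\gamma+1}$, which is exactly the power-of-$\omega$ above $\omega^\gamma$ and therefore produces the contradiction on the nose.
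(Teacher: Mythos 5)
Your proof is correct and follows essentially the same route as the paper: Corollary \ref{block of block} provides the doubling estimate $\xi<\textsf{v}(X)\Rightarrow\xi+\xi<\textsf{v}(X)$, and the fact that a positive ordinal closed under $\xi\mapsto\xi+\xi$ (i.e., additively indecomposable) must be a power of $\omega$ gives the conclusion. The paper simply cites that last step as a ``standard property of ordinals,'' whereas you unpack it by constructing $\gamma$ explicitly and iterating the doubling to reach $\omega^{\gamma+1}$; the mathematical content is the same.
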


\begin{proof} Suppose that $\xi<\textsf{v}(X)$. Then $\xi+\xi<\textsf{v}(X)$. By standard properties of ordinals, since $\textsf{v}(X)>0$, there exists an ordinal $\gamma$ such that $\textsf{v}(X)=\omega^\gamma$.   Obviously $\gamma\leqslant \omega_1$.

\end{proof}

We also obtain the following generalization and quantitative improvement of a theorem of Argyros and Gasparis.  This is a quantitataive improvement in the sense that Argyros and Gasparis showed that if $(x_i)_{i=1}^\infty$ is a sequence in $X$ such that for some $L\in[\nn]$, $$[L]\subset \{M\in[\nn]: \|\sum_{i=1}^\infty \mathbb{S}^\xi_{M,1}(i)x_i\|\geqslant \ee\},$$  then there exists $P\in [L]$ such that $$\mathcal{S}_\xi(P)\subset \mathfrak{F}_{\ee/2}((x_i)_{i=1}^\infty).$$ Under the same hypothesis, we show that for any $0<\delta<\ee$, there exists $P\in [L]$ such that $$\mathcal{S}_\xi(P)\subset \mathfrak{F}_\delta((x_i)_{i=1}^\infty).$$

\begin{theorem} Let $(x_i)_{i=1}^\infty$ be a sequence in the Banach space $X$.  Suppose that $(\mathfrak{P}, \mathcal{P})$ is $\xi$-regulatory and $CB(\mathcal{P})=\omega^\xi+1$.    Then for any $\ee>0$, exactly one of the following holds: \begin{enumerate}[(i)]\item There exist $\ee_1>\ee$ and $M\in [\nn]$ such that $\mathcal{P}(M)\subset \mathfrak{F}_{\ee_1}((x_i)_{i=1}^\infty)$. \item For every $\ee_2>\ee$ and $L\in [\nn]$, there exists $M\in [L]$ such that $$\inf \{\|\sum_{i=1}^\infty \mathbb{P}_{N,1}(i)x_i\|: N\in [M]\}\leqslant \ee_2.$$ \end{enumerate}

In particular, given a weakly null sequence $(x_i)_{i=1}^\infty$, the following are equivalent. \begin{enumerate}[(i)]\item There exist  $(\mathfrak{Q}, \mathcal{Q})$ which is $\xi$-regulatory and $L\in [\nn]$ such that  $CB(\mathcal{Q})=\omega^\xi+1$ and $$\inf\Bigl\{\|\sum_{i=1}^\infty \mathbb{Q}_{N,1}(i)x_i\|: N\in [L]\Bigr\}>0.$$ \item For any $(\mathfrak{Q}, \mathcal{Q})$ which is $\xi$-regulatory and such that $CB(\mathcal{Q})=\omega^\xi+1$,  there exists $\ee>0$ and $M\in[\nn]$ such that  $$\inf\Bigl\{\|\sum_{i=1}^\infty \mathbb{Q}_{N,1}(i) x_i\|: N\in [M]\Bigr\}\geqslant \ee.$$  \item $(x_i)_{i=1}^\infty$ fails to be $\xi$-weakly null. \end{enumerate}

\label{strm}
\end{theorem}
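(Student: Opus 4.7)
The plan is to prove the dichotomy $(i)\Leftrightarrow\neg(ii)$ by separately establishing $\neg(ii)\Rightarrow(i)$ and $(i)\Rightarrow\neg(ii)$, and then to derive the ``in particular'' equivalence from the main dichotomy combined with Lemma~\ref{mst} and Theorem~\ref{convex unc}.

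The direction $\neg(ii)\Rightarrow(i)$ proceeds via the $\xi$-regulatory property. Assume $\ee_2>\ee$ and $L\in[\nn]$ satisfy $\|\sum_i\mathbb{P}_{N,1}(i)x_i\|>\ee_2$ for every $N\in[L]$. Note first that $\mathbb{P}_{N,1}$ is determined by its support $N_{\mathcal{P},1}$ (immediate from the first axiom of a probability block), so for each $F\in MAX(\mathcal{P})$ I can select a norming functional $x^*_F\in B_{X^*}$ for the common value $\sum_i\mathbb{P}_F(i)x_i$ of $\sum_i\mathbb{P}_{N,1}(i)x_i$ over $N$ with $N_{\mathcal{P},1}=F$, and then define $f(i,F)=\text{Re\ }x^*_F(x_i)$. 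Then $[L]\subset\mathfrak{E}(f,\mathfrak{P},\mathcal{P},\ee_2)$, and $\xi$-regulatory yields, for any $\delta\in(\ee,\ee_2)$, an $M$ with $\mathcal{S}_\xi(M)\subset\mathfrak{G}(f,\mathcal{P},L,\delta)\subset\mathfrak{F}_\delta((x_i)_{i=1}^\infty)$. Since $CB(\mathcal{P})=\omega^\xi+1$, Lemma~\ref{mst} (the implication $(i)\Rightarrow(iii)$ applied with $\mathcal{G}=\mathcal{P}$) then produces $M'$ with $\mathcal{P}(M')\subset\mathfrak{F}_\delta$, which is (i) with $\ee_1=\delta$.

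For $(i)\Rightarrow\neg(ii)$, I would start from $\mathcal{P}(M)\subset\mathfrak{F}_{\ee_1}$ and apply Proposition~\ref{ironside} with $\mathcal{F}=\mathcal{G}=\mathcal{P}$ (legal as they share the same $CB$-index) to produce $L_0\in[M]$ such that $E\setminus\{\min E\}\in\mathcal{P}(M)\subset\mathfrak{F}_{\ee_1}$ for every nonempty $E\in\mathcal{P}\cap[L_0]^{<\nn}$. Then for $N\in[L_0]$, writing $F=N_{\mathcal{P},1}$ and picking $y^*\in B_{X^*}$ witnessing $F\setminus\{\min F\}\in\mathfrak{F}_{\ee_1}$, one has
\[
\text{Re\ }y^*\Bigl(\sum_i\mathbb{P}_{N,1}(i)x_i\Bigr)\;\geqslant\;\ee_1\bigl(1-\mathbb{P}_{N,1}(\min F)\bigr)-\mathbb{P}_{N,1}(\min F)\,\|x_{\min F}\|,
\]
which exceeds any prescribed $\ee_2\in(\ee,\ee_1)$ as soon as $\mathbb{P}_{N,1}(\min F)$ is small enough, uniformly over $N\in[L]$ for some $L\in[L_0]$. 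I would secure this uniform smallness by further restricting $L_0$ via Lemma~\ref{mkay}(iii) applied to the singleton family $\mathcal{A}_1$, whose $CB$-index is $2\leqslant\omega^\xi$ when $\xi\geqslant 1$; this step makes essential use of $\xi$-sufficiency (the form in which the theorem is actually applied in the paper, via the repeated averages blocks and their convolutions, and which implies $\xi$-regulatory by Lemma~\ref{schlumprecht}).

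The main obstacle is this last uniform control on $\mathbb{P}_{N,1}(\min F)$, since $\xi$-regulatory as stated does not obviously force diffuseness of the measures $\mathbb{P}_{N,1}$. Granting the dichotomy, the ``in particular'' equivalences follow routinely: $(ii)\Rightarrow(i)$ is trivial by exhibiting $(\mathfrak{S}_\xi,\mathcal{S}_\xi)$; for $(i)\Rightarrow(iii)$, applying $\neg(ii)\Rightarrow(i)$ of the main theorem with $(\mathfrak{Q},\mathcal{Q})$ and $\ee=0$ produces $\mathcal{Q}(M)\subset\mathfrak{F}_{\ee_1}$, and Lemma~\ref{mst} transfers this to $\mathcal{S}_\xi(M')\subset\mathfrak{F}_{\ee_1}$, contradicting $\xi$-weak nullity; for $(iii)\Rightarrow(ii)$, Theorem~\ref{convex unc} first gives $\mathcal{S}_\xi(M)\subset\mathfrak{F}_{\ee_1}$, Lemma~\ref{mst} transfers this to $\mathcal{Q}(M'')\subset\mathfrak{F}_{\ee_1}$ for the arbitrary $(\mathfrak{Q},\mathcal{Q})$, and the main theorem's $(i)\Rightarrow\neg(ii)$ delivers the required uniform lower bound on the $\mathbb{Q}_{N,1}$-averages.
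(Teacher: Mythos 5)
Your proposal matches the paper's argument closely, including the decomposition into the two directions of the dichotomy and the derivation of the ``in particular'' equivalences from it via Lemma~\ref{mst} and Theorem~\ref{convex unc}. One small improvement: in the direction $(i)\Rightarrow\neg(ii)$ you apply Proposition~\ref{ironside} with $\mathcal{F}=\mathcal{G}=\mathcal{P}$; the paper writes $\mathcal{F}=\mathcal{S}_\xi$ at the corresponding step but then applies the conclusion to $E=\text{supp}(\mathbb{P}_{Q,1})\in MAX(\mathcal{P})$, so your choice $\mathcal{F}=\mathcal{P}$ is the one that actually makes the argument go through.

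The ``main obstacle'' you flag is a genuine gap, and it is present verbatim in the paper's own proof, which at that point says ``using the definition of $\xi$-sufficient'' although the hypothesis is only $\xi$-regulatory. It is, however, fillable from $\xi$-regulatoriness alone when $\xi\geqslant 1$. Since $\mathbb{P}_{N,1}$ is determined by $N_{\mathcal{P},1}$, write $\mathbb{P}_F$ for the unique member of $\mathfrak{P}$ supported on $F\in MAX(\mathcal{P})$ and set $h(i,F)=\mathbb{P}_F(i)$. If $\sup_i\mathbb{P}_{N,1}(i)\geqslant\delta_0$ for every $N\in[L]$, then $\sum_i\mathbb{P}_{N,1}(i)\,h(i,\text{supp}(\mathbb{P}_{N,1}))=\sum_i\mathbb{P}_{N,1}(i)^2\geqslant\delta_0^2$, so $[L]\subset\mathfrak{E}(h,\mathfrak{P},\mathcal{P},\delta_0^2)$; yet for any $0<\delta'<\delta_0^2$, every $E\in\mathfrak{G}(h,\mathcal{P},L,\delta')$ has $|E|\leqslant 1/\delta'$ because $\mathbb{P}_F$ has total mass $1$, hence $\mathfrak{G}(h,\mathcal{P},L,\delta')\subset\mathcal{A}_{\lfloor 1/\delta'\rfloor}$, which is not $\xi$-full for $\xi\geqslant 1$. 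This contradicts $\xi$-regulatoriness. Since $\{N:\sup_i\mathbb{P}_{N,1}(i)<\delta_0\}$ is clopen, the Ramsey theorem then yields $L'\in[L]$ with $\sup\{\mathbb{P}_{N,1}(i):N\in[L'],\ i\in\nn\}\leqslant\delta_0$, which is exactly the diffuseness your estimate needs. So the theorem is correct under the stated $\xi$-regulatory hypothesis; you should also spell out the case $\xi=0$, which the paper treats separately by observing that both (i) and (ii) reduce to a statement about $\limsup_n\|x_n\|$.
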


\begin{proof} If $\xi=0$, then $(i)$ and $(ii)$ are equivalent to the following and the negation of the following, respectively: There exists $\ee_1>\ee$ such that  $\{n\in\nn: \|x_n\|\geqslant \ee_1\}$ is infinite.

We now prove that $(i)$ and $(ii)$ are exclusive and exhaustive in the case that $0<\xi<\omega_1$.  First assume there exist $\ee_1>\ee$ and $M\in[\nn]$ such that $\mathcal{P}(M)\subset \mathfrak{F}_{\ee_1}((x_i)_{i=1}^\infty)$.    By  Proposition \ref{ironside}, there exists $N\in[\nn]$ such that for any $\varnothing\neq E\in \mathcal{S}_\xi\cap [N]^{<\nn}$, $E\setminus (\min E)\in \mathcal{P}(M)$.  Now fix $R>0$ such that $\sup_n \|x_n\|<R$ and $\delta>0$ such that $(1-\delta)\ee_1-R\delta>\ee$ and fix $\ee<\ee_2< (1-\delta)\ee_1-R\delta$.    Since $CB(\mathcal{A}_1)=2<\omega^\xi$, using the definition of $\xi$-sufficient and by passing to a further infinite subset of $N$ and relabeling, we may assume $\sup \{\mathbb{P}_{Q,1}(i): i\in\nn, Q\in [N]\}\leqslant \delta$.    Now fix $Q\in [N]$ and let $E=\text{supp}(\mathbb{P}_{Q,1})$ and $m=\min E$.     Note that $E\setminus (m)\in \mathcal{P}(M)$, whence $$\|\sum_{i\in E\setminus (m)} \mathbb{P}_{Q,1}(i)x_i\|\geqslant \ee_1 \mathbb{P}_{Q,1}(E\setminus (m)) \geqslant (1-\delta)\ee_1.$$   Now $$\|\sum_{i\in E} \mathbb{P}_{Q,1}(i)x_i\|\geqslant \|\sum_{i\in E\setminus (m)} \mathbb{P}_{Q,1}x_i\|- \mathbb{P}_{Q,1}(m)R \geqslant (1-\delta)\ee_1 -\delta R.$$  Thus $$\inf\{\|\sum_{i=1}^\infty \mathbb{P}_{Q,1}(i)x_i\|: Q\in [N]\}>\ee_2>\ee,$$ and $(ii)$ does not hold.

Now assume that $(ii)$ does not hold.  This means there exist $\ee_2>\ee_1>\ee$ and $L\in[\nn]$ such that for every $M\in[\nn]$, $$\inf \{\|\sum_{i=1}^\infty \mathbb{P}_{N,1}(i)x_i\|: N\in[M]\}>\ee_2.$$  Now for each $F\in MAX(\mathcal{P})$, fix a functional $x^*_F\in B_{X^*}$ such that $$\|\sum_{i=1}^\infty \mathbb{P}_{N,1}(i)x_i\|=\text{Re }x^*_N(\sum_{i=1}^\infty \mathbb{P}_{N,1}(i)x_i),$$   where $N$ is any member of $[\nn]$ which has $F$ as an initial segment.  Note that the vector $x_F:=\sum_{i=1}^\infty \mathbb{P}_{N,1}(i)x_i$ is independent of the choice of $N$ which extends $F$.    Now define $f:\nn\times MAX(\mathcal{P})\to \rr$ by $f(n, F)=\text{Re\ }x^*_F(x_n)$.   Then $$[M]\subset \mathfrak{E}(f, \mathfrak{P}, \mathcal{P}, \ee_2).$$   Since $(\mathfrak{P}, \mathcal{P})$ is $\xi$-regulatory and $\ee_1<\ee_2$, there exists $R\in[\nn]$ such that $$\mathcal{P}(R)\subset \mathfrak{G}(f, \mathcal{P}, M, \ee_1)\subset \mathfrak{F}_{\ee_1}((x_i)_{i=1}^\infty).$$

The second statement follows immediately from the first.

\end{proof}

The next easy consequence will be a convenient characterization for us to use in the final section. 

\begin{proposition} Let $\xi$ be a countable ordinal and let $(\delta_n)_{n=1}^\infty$ be a positive sequence with $\lim_n \delta_n=0$. Then for an operator $A:X\to Y$, $A$ fails to be $\xi$-completely continuous if and only if there exist a bounded sequence $(x_i)_{i=1}^\infty\subset X$ and $M\in[\nn]$ such that for all $N\in[M]$ and $n\in\nn$, $\|\sum_{i=1}^\infty \mathbb{S}^\xi_{N,n}(i)x_i\|\leqslant \delta_n$ and $\inf_n \|Ax_i\|>0$.  

\label{and another thing}
\end{proposition}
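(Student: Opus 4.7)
The strategy is to prove each implication by invoking Theorem \ref{strm} for the $\xi$-regulatory probability block $(\mathfrak{S}_\xi, \mathcal{S}_\xi)$ (which is $\xi$-regulatory by the corollary following Theorem \ref{weather chopper}).

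For the forward implication, Corollary \ref{divad} supplies a $\xi$-weakly null sequence $(x_i) \subset B_X$ with $\inf_i \|Ax_i\| > 0$. For each $n \in \nn$ the set $\mathcal{V}_n = \{N \in [\nn] : \|\sum_{i=1}^\infty \mathbb{S}^\xi_{N,1}(i) x_i\| \leqslant \delta_n\}$ is closed, since this sum depends only on the initial maximal $\mathcal{S}_\xi$-segment of $N$. Starting from $M_0 = \nn$, recursively apply the Galvin--Prikry--Silver--Ellentuck theorem to obtain $M_n \in [M_{n-1}]$ with either $[M_n] \subset \mathcal{V}_n$ or $[M_n] \cap \mathcal{V}_n = \varnothing$. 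The second alternative is impossible: Theorem \ref{strm}, in view of $(x_i)$ being $\xi$-weakly null, yields $\inf\{\|\sum \mathbb{S}^\xi_{N,1}(i) x_i\| : N \in [M_n]\} = 0$, so some $N \in [M_n]$ lies in $\mathcal{V}_n$. Pick $m_n \in M_n$ with $m_1 < m_2 < \cdots$ and set $M = (m_n)$. For any $N \in [M]$ and any $n \in \nn$, since each $\mathcal{S}_\xi$-max block is non-empty, the $n$-th tail $N_n := N \setminus \bigcup_{i < n} \text{supp}(\mathbb{S}^\xi_{N, i})$ has $\min N_n \geqslant m_n$, hence $N_n \subset \{m_n, m_{n+1}, \ldots\} \subset M_n$. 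The probability-block identity $\mathbb{S}^\xi_{N,n} = \mathbb{S}^\xi_{N_n, 1}$ then yields $\|\sum \mathbb{S}^\xi_{N,n}(i) x_i\| \leqslant \delta_n$.

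For the reverse implication, let $(y_j) = (x_{m_j})$ and note $\inf_j \|Ay_j\| \geqslant \inf_i \|Ax_i\| > 0$, so it suffices to show $(y_j)$ is $\xi$-weakly null. To verify $(y_j)$ is weakly null, suppose otherwise: after passing to a subsequence and, in the complex case, rotating a functional into a fixed sector, one finds $x^* \in B_{X^*}$, $\ee_0 > 0$, and infinite $K \subset \nn$ with $\text{Re } x^*(y_j) \geqslant \ee_0$ on $K$. Letting $\phi : \nn \to M$ be the order isomorphism, $\text{Re } x^*(x_i) \geqslant \ee_0$ on $\phi(K) \in [M]$, so $\|\sum \mathbb{S}^\xi_{\phi(K), n}(i) x_i\| \geqslant \ee_0$ for all $n$, contradicting $\delta_n \to 0$. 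Now assume $(y_j)$ is not $\xi$-weakly null. Theorem \ref{convex unc} provides $\ee > 0$ and $N' \in [\nn]$ with $\mathcal{S}_\xi(N') \subset \mathfrak{F}_\ee((y_j))$. Each $F = (n'_k)_{k \in E} \in \mathcal{S}_\xi(N')$ has a witness $x^*_F$ with $\text{Re } x^*_F(x_{m_{n'_k}}) \geqslant \ee$ for $k \in E$; accordingly $\phi(F) = (m_{n'_k})_{k \in E} \in \mathcal{S}_\xi(\phi(N'))$ lies in $\mathfrak{F}_\ee((x_i))$, whence $\mathcal{S}_\xi(\phi(N')) \subset \mathfrak{F}_\ee((x_i))$ with $\phi(N') \in [M]$. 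Applying Theorem \ref{strm} to $(x_i)$ with $\mathcal{P} = \mathcal{S}_\xi$, its clause (i) holds (taking the parameter $M$ in that theorem to be $\phi(N')$), so (ii) fails; inspecting the proof shows the witnessing set $L$ may be chosen in $[\phi(N')] \subset [M]$, and some $\ee_2 > 0$ satisfies $\|\sum \mathbb{S}^\xi_{Q,1}(i) x_i\| > \ee_2$ for every $Q \in [L]$. Specialising to the $n$-th $\mathcal{S}_\xi$-tail $Q = L_n$ of $L$ and using $\mathbb{S}^\xi_{L, n} = \mathbb{S}^\xi_{L_n, 1}$ gives $\|\sum \mathbb{S}^\xi_{L, n}(i) x_i\| > \ee_2$ for every $n$, contradicting the hypothesis (with $N = L \in [M]$) once $\delta_n < \ee_2$.

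The main obstacle is the last step of the reverse direction: the supports of the measures $\mathbb{S}^\xi_{N, n}$ are not in general members of $\mathcal{S}_\xi(\phi(N'))$, because the weights of $\mathbb{S}^\xi$ depend on $\min N$ and related data, so one cannot directly pair a member of $\mathcal{S}_\xi(\phi(N')) \subset \mathfrak{F}_\ee((x_i))$ with a specific $\mathbb{S}^\xi$-average of $(x_i)$. Routing through Theorem \ref{strm} (whose proof, via Proposition \ref{ironside}, supplies the convex-combination bridge) converts the $\mathfrak{F}_\ee$-density into a uniform lower bound on $\mathbb{S}^\xi$-averages, and the subsequent tail substitution turns the single constant $\ee_2$ into the free parameter $n$ required to exploit $\delta_n \to 0$.
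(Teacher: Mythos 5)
Your proof is correct and takes essentially the same approach as the paper: in the forward direction, Ramsey's theorem applied to the clopen sets $\mathcal{V}_n$ together with Theorem \ref{strm} to rule out one Ramsey alternative, and in the reverse direction, Theorem \ref{convex unc} followed by Theorem \ref{strm} and the tail identity $\mathbb{S}^\xi_{L,n}=\mathbb{S}^\xi_{L_n,1}$. You spell out a few details the paper elides, notably the separate weak-nullity check for $(y_j)$ and the observation (requiring inspection of the proof of Theorem \ref{strm} via Proposition \ref{ironside}) that the witnessing set $L$ may be taken in $[\phi(N')]\subset[M]$.
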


\begin{proof}

First suppose that $A$ fails to be $\xi$-completely continuous.   Then there exists $(x_i)_{i=1}^\infty\subset X$ which is $\xi$-weakly null and $\inf_n \|Ax_n\|>0$.    For each $\ee>0$, let $\mathcal{V}_\ee$ denote the set of those $M\in[\nn]$ such that $\|\sum_{i=1}^\infty \mathbb{S}^\xi_{M,1}(i)x_i\|>\ee$. Note that $\mathcal{V}_\ee$ is closed, and by Theorem \ref{strm}, there do not exist $\ee>0$ and $M\in[\nn]$ such that $[M]\subset \mathcal{V}_\ee$.    From this and the Ramsey theorem, for any $L\in[\nn]$ and $\ee>0$,  there exists $M\in[L]$ such that $[M]\cap \mathcal{V}_\ee=\varnothing$.    Now for any $L\in[\nn]$, we may recursively select $L\supset L_1\supset \ldots$ such that for each $n\in\nn$, $[L_n]\cap \mathcal{V}_{\delta_n}=\varnothing$.   Now fix $m_1<m_2<\ldots$ and $M=(m_n)_{n=1}^\infty$.  For $N\in[M]$ and $n\in\nn$, let $R=\cup_{i=1}^{n-1} \text{supp}(\mathbb{S}^\xi_{N,i})\in [L_n]$.  Then $$\|\sum_{i=1}^\infty \mathbb{S}^\xi_{N,n}(i)x_i\|=\|\sum_{i=1}^\infty \mathbb{S}^\xi_{R,1}(i)x_i\|\leqslant \delta_n.$$   This gives one implication.

Now suppose that $(x_i)_{i=1}^\infty$, $M\in[\nn]$ are such that for all $N\in[M]$ and $n\in\nn$, $\|\sum_{i=1}^\infty \mathbb{S}^\xi_{N,n}(i)x_i\| \leqslant \delta_n$ and $\inf_n \|Ax_n\|>0$.  We claim that $(x_n)_{n\in M}$ is $\xi$-weakly null, which will yield that $A$ is not $\xi$-completely continuous.  Write $M=(m_n)_{n=1}^\infty$.  If $\xi=0$, then the hypothesis yields that for all $n\in\nn$, $\|x_{m_n}\|\leqslant \delta_n$, and $(x_n)_{n\in M}$ is norm null, and therefore $0$-weakly null.    Now suppose that $0<\xi<\omega_1$.     Suppose that $\ee>0$, $R\in[\nn]$ are such that $$\mathcal{S}_\xi(R)\subset \mathfrak{F}_\ee((x_{m_n})_{n=1}^\infty).$$  Then with $s_n=m_{r_n}$, $$\mathcal{S}_\xi(S)\subset \mathfrak{F}_\ee((x_n)_{n=1}^\infty).$$  Now arguing as in Theorem \ref{strm}, we may select $N\in[S]\subset [M]$ such that $$\inf\{\|\sum_{i=1}^\infty \mathbb{S}^\xi_{Q,1}(i)x_i\|: Q\in [N]\}\geqslant \ee/2.$$   Now fix $n\in\nn$ such that $\ee/2>\delta_n$ and let $Q=N\setminus \cup_{i=1}^{n-1}\text{supp}(\mathbb{S}^\xi_{N,i})$.     Then $\mathbb{S}^\xi_{Q,1}=\mathbb{S}^\xi_{N,n}$, and $$\ee/2 \leqslant \|\sum_{i=1}^\infty \mathbb{S}^\xi_{Q,1}(i)x_i\|=\|\sum_{i=1}^\infty \mathbb{S}^\xi_{N,n}(i)x_i\|\leqslant \delta_n,$$ a contradiction.

\end{proof}

We now give the promised proof of Proposition \ref{easy prop}$(iv)$. 

\begin{proof}[Proof of Proposition \ref{easy prop}$(iv)$] It is sufficient to prove that for any $0<\xi<\omega_1$, there exists an operator $A$ with $\textsf{v}(A)=\xi$.    If $\xi$ is a successor, say $\xi=\gamma+1$, let $A:X_\gamma\to c_0$ be the canonical inclusion of the Schreier space $X_\gamma$ into $c_0$. We recall that the norm of $X_\gamma$ is given by $$\|x\|_{X_\gamma}= \max\{ \|Ex \|_{\ell_1}: E\in \mathcal{S}_\gamma\}.$$    We will first show that $A$ is $\gamma$-completely continuous, for which it is sufficient to show that if $(x_n)_{n=1}^\infty$  is a block sequence such that $\inf_n \|x_n\|_{c_0}\geqslant 1$, then $(x_i)_{i=1}^\infty$ is an $\ell_1^\gamma$-spreading model.  To that end, suppose $(x_n)_{n=1}^\infty$ is such a block sequence and for each $n\in\nn$, fix $m_n\in \text{supp}(x_n)$ such that $|e^*_{m_n}(x_n)|\geqslant 1$.   Then for any $E\in \mathcal{S}_\gamma$, $(m_n)_{n\in E}$ is a spread of $E$ and lies in $\mathcal{S}_\gamma$.  From this it follows that for any $E\in \mathcal{S}_\gamma$ and scalars $(a_n)_{n\in E}$, $$\|\sum_{n\in E} a_n x_n\|_{X_\gamma} \geqslant \|(m_i)_{i\in E}\sum_{n\in E}a_nx_n\|_{\ell_1} \geqslant \sum_{n\in E}|a_n|.$$  This yields that $\textsf{v}(A)>\gamma$, and $\textsf{v}(A)\geqslant \gamma+1=\xi$.  Now it is well-known that the canonical $X_\gamma$ basis has no subsequence which is an $\ell_1^{\gamma+1}$ spreading model (and we will prove a more general fact in the limit ordinal case), while the canonical $X_\gamma$ basis is normalized in $c_0$.  Therefore the canonical $X_\gamma$ basis is $\gamma+1=\xi$-weakly null in $X_\gamma$ and the image is not norm null in $c_0$, yielding that $\textsf{v}(A)\leqslant \xi$.

Now suppose that $\xi$ is a limit ordinal and fix any sequence $(\gamma_n)_{n=1}^\infty\subset [0, \xi)$ with $\sup_n \gamma_n=\xi$.     Let $X$ be the completion of $c_{00}$ with respect to the norm $\|\cdot\|_X=  \sum_{n=1}^\infty 2^{-n} \|\cdot\|_{X_{\gamma_n}}$ and let $A:X\to c_0$ be the canonical inclusion.  For each $n\in\nn$, let $I_n:X\to X_{\gamma_n}$ and $J_n:X_{\gamma_n}\to c_0$ be the canonical inclusions.   Then since $A=J_nI_n$, the ideal property yields that $$\textsf{v}(A)\geqslant \sup_n \textsf{v}(J_nI_n) \geqslant \sup_n \textsf{v}(I_n)=\xi.$$   We will show that the canonical basis of $X$ has no subsequence which is an $\ell_1^\xi$-spreading model. Since the image of this basis under $A$ is normalized in $c_0$, this will yield that $\textsf{v}(A)\leqslant \xi$ and finish the proof.   Seeking a contradiction, suppose that $M\in [\nn]$ and $m\in\nn$ are such that $$2^{-m}\leqslant \inf \{\|x\|_X: E\in \mathcal{S}_\xi, x\in \text{co}(e_{m_i}: i\in E)\}.$$  Let $$\mathcal{G}=\bigcup_{j=1}^{m+1} \{E: (m_i)_{i\in E}\in \mathcal{S}_{\gamma_i}\}.$$   Note that $$CB(\mathcal{G})= \max_{1\leqslant i\leqslant m+1} CB(\mathcal{S}_{\gamma_i})<\omega^\xi.$$  This means there exists $N\in[\nn]$ such that $$\sup\{\mathbb{S}^\xi_{N,1}(E): E\in \mathcal{G}\}< 2^{-m-1}.$$   Now let $x=\sum_{i=1}^\infty \mathbb{S}^\xi_{N,1}(i) e_{m_i}$ and note that, by our contradiction hypothesis, $\|x\|_X\geqslant 2^{-m}$.      Now for each $n\in\nn$, fix $E_n\in \mathcal{S}_{\gamma_n}$ such that $\|E_nx\|_{\ell_1}=\|x\|_{X_{\gamma_n}}$.    Now for each $n\leqslant m+1$, let $F_n=(i: m_i\in E_n)\in \mathcal{G}$, so \begin{align*} \|x\|_X & =\sum_{n=1}^\infty 2^{-n} \|E_nx\|_{\ell_1} \leqslant \sum_{n=1}^{m+1} 2^{-n}\mathbb{S}^\xi_{N,1}(F_n) + \sum_{n=m+2}^\infty 2^{-n}\|x\|_{\ell_1} \\ & <2^{-m-1}\sum_{n=1}^\infty 2^{-n} + \sum_{n=m+2}^\infty 2^{-n} = 2^{-m},\end{align*} a contradiction.

\end{proof}

\section{An interesting construction}

In this section, we perform a contruction of a transfinite modification of spaces studied by Argyros and Beanland \cite{AB}, which were themselves modifications of a space constructed by Odell and Schlumprecht \cite{OS} of a Banach space which admits no $c_0$ or $\ell_p$ spreading model.

In this section, for a subset $F$ of $\nn$, we let $F$ denote the projection from $c_{00}$  onto $\{e_i: i\in F\}$. That is, for $x=\sum a_i e_i$, $Fx=\sum_{i\in F} a_ie_i$. 

Throughout this section, $N_1, N_2, \ldots$ will be infinite, pairwise disjoint subsets of $\nn$ such that $\cup_{i=1}^\infty N_i=\nn$.   For a Banach space $X$ with normalized, bimonotone basis $(x_i)_{i=1}^\infty$, we let $q:c_{00}\to X$ be the linear extension of the map $q e_n=x_i$ for $n\in N_i$.    We then define the norm $\|\cdot\|_{E_X}$ on $c_{00}$ by $$\|x\|_{E_X}= \sup \{\|qIx\|: I\subset \nn\text{\ an interval}\}.$$   We let $E_X$ be the completion of $c_{00}$ with respect to this norm, and we let $q:E_X\to X$ denote the linear extension of $q$ to $E_X$. We note that for any $n_1<n_2<\ldots$, if $n_i\in N_i$, then $\|\sum_{i=1}^\infty a_i e_{n_i}\|_{E_X}=\|\sum_{i=1}^\infty a_ix_i\|$ for all $(a_i)_{i=1}^\infty\in c_{00}$.  We recall the following properties from this construction, shown in \cite{AB}.  

\begin{proposition} If $(u_i)_{i=1}^\infty$ is a weakly null sequence in $X$ and $(v_i)_{i=1}^\infty$ is a weakly null, bounded block sequence in $E_X$ such that $qv_i=u_i$ for all $i\in\nn$, then $(v_i)_{i=1}^\infty$ is weakly null in $E_X$.  Moreover, for any $0<\zeta<\omega_1$, if $(v_i)_{i=1}^\infty$ does not have a subsequence which is an $\ell_1^\zeta$ spreading model, then $(u_i)_{i=1}^\infty$ does not have a subsequence which is an $\ell_1^\zeta$ spreading model. 

\label{facts}

\end{proposition}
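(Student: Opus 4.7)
My plan has two halves matching the two assertions. First, the conclusion that $(v_i)_{i=1}^\infty$ is weakly null in $E_X$ is formally guaranteed by the hypothesis, which already posits weak nullity; I read this as a typographical redundancy and interpret the substantive content as the ``moreover'' clause together with the underlying lifting fact that a bounded block sequence $(v_i)_{i=1}^\infty$ in $E_X$ with $qv_i=u_i$ for a weakly null $(u_i)_{i=1}^\infty$ in $X$ is weakly null in $E_X$. I will tackle the moreover clause first, as it reduces to a one-line contractivity observation.

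For the moreover clause, the plan is to prove the contrapositive. Assume $(u_{i_k})_{k=1}^\infty$ is an $\ell_1^\zeta$-spreading model, so by definition there exists $\ee>0$ with $\mathcal{S}_\zeta\subset \mathfrak{F}_\ee^\sigma((u_{i_k})_{k=1}^\infty)$. The key observation is that $q:E_X\to X$ is contractive: taking $I=\nn$ in the supremum defining $\|\cdot\|_{E_X}$ yields $\|qx\|_X\leq \|x\|_{E_X}$. Hence for any $F\in \mathcal{S}_\zeta$, any unimodular scalars $(\ee_k)_{k\in F}$, and any $a\in \text{co}(\ee_k v_{i_k}: k\in F)$, one has $qa\in \text{co}(\ee_k u_{i_k}: k\in F)$ with
\[
\|a\|_{E_X}\ \geq\ \|qa\|_X\ \geq\ \ee.
\]
Therefore $\mathcal{S}_\zeta\subset \mathfrak{F}_\ee^\sigma((v_{i_k})_{k=1}^\infty)$, so $(v_{i_k})_{k=1}^\infty$ is itself an $\ell_1^\zeta$-spreading model, giving the contrapositive of the desired statement.

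For the weak-null lifting, I would first verify that the canonical basis $(e_n)_{n=1}^\infty$ of $E_X$ is a bimonotone Schauder basis. Bimonotonicity follows from $\|Jx\|_{E_X}=\sup_I\|qIJx\|_X\leq \|x\|_{E_X}$, since $IJ$ is again an interval; the Schauder basis property then follows by density of $c_{00}$. Consequently, a bounded block sequence in $E_X$ is coordinatewise null. The set $K=\{x^*\circ q\circ I: I\text{ an interval},\ x^*\in B_{X^*}\}$ is norming for $E_X$ by the very definition of the norm. Given any $w^*$-cluster point $V\in E_X^{**}$ of $(v_i)_{i=1}^\infty$, I would check that $V$ annihilates each coordinate functional $e_n^*$ (from the block structure), that $q^{**}V=0$ (from weak nullity of $(u_i)=(qv_i)$ in $X$, so any weak limit of $qv_i$ vanishes), and that $V$ vanishes on $K$, because for fixed $f=x^*\circ q\circ I$ with $I=[a,b]$ one has $Iv_i=0$ and hence $f(v_i)=0$ once $\min\text{supp}(v_i)>b$. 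The main obstacle is promoting ``$V$ vanishes on $K$'' to ``$V=0$'': since a merely norming subset need not be norm-dense in its span, this step requires the specific structural analysis of $E_X^*$ carried out in \cite{AB}, which I would invoke here rather than reprove.
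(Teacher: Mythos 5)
The paper states Proposition \ref{facts} only as a citation to Argyros and Beanland \cite{AB} and supplies no proof, so there is no in-paper argument to compare against; you are also right that the ``weakly null'' in the hypothesis on $(v_i)_{i=1}^\infty$ is a typographical slip, the intended hypothesis being merely that $(v_i)_{i=1}^\infty$ is a bounded block sequence. Your contrapositive proof of the ``moreover'' clause is correct and self-contained: taking $I=\nn$ in the defining supremum gives $\|q\|_{E_X\to X}\leqslant 1$, and linearity of $q$ then transfers the $\mathfrak{F}_\ee^\sigma$ lower bound witnessed by $(u_{i_k})_{k=1}^\infty$ up to the block preimages $(v_{i_k})_{k=1}^\infty$, which is exactly the contrapositive of the claim. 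For the weak-null lifting you correctly isolate the genuine obstruction: a bounded, coordinatewise-null block sequence on which a $1$-norming set of functionals tends to zero need not be weakly null in general (the unit vector basis of $\ell_1$ against $\{\pm e_n^*\}$ is the standard example, and it is precisely the Schur phenomenon the proposition is designed to rule out for $E_X$), so passing from ``$V$ annihilates $K$'' to ``$V=0$'' really does require the structural description of $E_X^*$ from \cite{AB}; citing that result for this step is consistent with what the paper itself does.

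One small repair to the sketch: $K$ also contains functionals $x^*\circ q\circ I$ for unbounded intervals $I=[a,\infty)$, which your finite-interval analysis leaves out. For such $I$, once $\min\text{supp}(v_i)>a$ one has $Iv_i=v_i$, hence $f(v_i)=x^*(qv_i)=x^*(u_i)\to 0$ by weak nullity of $(u_i)_{i=1}^\infty$; this is the role your separate observation $q^{**}V=0$ is meant to play, and it covers the unbounded intervals once you note the tails agree.
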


    Fix $0<\vartheta<1$ and let $\vartheta_n=\vartheta/2^n$.    For each $0<\xi<\omega_1$, let $\xi_n\uparrow \omega^\xi$ be the sequence such that $$\mathcal{S}_{\omega^\xi}= \{\varnothing\}\cup \{E: \exists n\leqslant E\in \mathcal{S}_{\xi_n}\}.$$   If $E$ is a Banach space such that the canonical $c_{00}$ basis is a normalized, bimonotone Schauder basis for $E$, then we define the space $Z_\xi(E)$ to be the completion of $c_{00}$ under the unique norm satisfying $$\|x\|_{Z_\xi(E)}=\max\Bigl\{\|x\|_E, \bigl(\sum_{n=1}^\infty \|x\|_n^2\bigr)^{1/2}\Bigr\},$$ where for each $n\in \nn$, $$\|x\|_n= \sup\Bigl\{\vartheta_n \sum_{i=1}^d \|I_ix\|_{Z_\xi(E)}: I_1<\ldots <I_d, \varnothing\neq I_i\text{\ an interval}, (\min I_i)_{i=1}^d\in \mathcal{S}_{\xi_n}\Bigr\}.$$

\begin{proposition} Suppose $E$ is a Banach space for which the canonical $c_{00}$ basis is a normalized, bimonotone basis.  Then for a  block sequence $(w_i)_{i=1}^\infty$ in $B_{Z_\xi(E)}$, $$\inf\{\|z\|_{Z_\xi(E)}: F\in \mathcal{S}_{\omega^\xi}, z\in \text{\emph{co}}(w_i: i\in F)\} \leqslant \max\Bigl\{\Bigl( \frac{1+\vartheta^2}{2}\Bigr)^{1/2}, \sup_i \|w_i\|_E\Bigr\}.$$

\label{nas}
\end{proposition}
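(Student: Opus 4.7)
Let $c=\sup_i\|w_i\|_E$. The plan is to construct a single convex combination $z=\sum_{i\in F}\lambda_i w_i$ with $F\in\mathcal{S}_{\omega^\xi}$ satisfying the two separate estimates $\|z\|_E\leqslant c$ and $\sum_{n=1}^\infty\|z\|_n^2\leqslant (1+\vartheta^2)/2$. Since $\|z\|_{Z_\xi(E)}=\max\bigl\{\|z\|_E,(\sum_n\|z\|_n^2)^{1/2}\bigr\}$, the bound in the proposition follows at once.

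The $E$-estimate is the triangle inequality: $\|z\|_E\leqslant \sum_{i\in F}\lambda_i\|w_i\|_E\leqslant c$. For the more substantial second estimate, fix $k\in\nn$ large enough that $\sum_{n>k}\vartheta_n^2<\vartheta^2/2$. After passing to a subsequence of $(w_i)$, Lemma \ref{fast growing} applied to $K=(\min\mathrm{supp}(w_i))_{i=1}^\infty$ guarantees that for each $n\leqslant k$, any $\mathcal{S}_{\xi_n}$-admissible subset of the index set captures only a small total mass under the repeated averages $\mathbb{S}^{\xi_n}_{L,1}$. Then pick $F\in\mathcal{S}_{\omega^\xi}$ with $\min F$ large (at least $k$, and chosen larger still to absorb further errors below) such that $F\in\mathcal{S}_{\xi_{\min F}}$, and set $\lambda_i=\mathbb{S}^{\xi_{\min F}}_{F,1}(i)$ for $i\in F$.

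To control $\sum_n\|z\|_n^2$, fix $n$ and an admissible interval partition $I_1<\cdots<I_d$ with $(\min I_j)_{j=1}^d\in\mathcal{S}_{\xi_n}$. Expand $I_jz=\sum_{i\in F}\lambda_i I_jw_i$ and classify the pairs $(i,j)$ with $I_jw_i\neq 0$ as \emph{interior} (when $\mathrm{supp}(w_i)\subset I_j$) or \emph{boundary} (when $I_j$ only partially intersects $\mathrm{supp}(w_i)$, of which there are at most two $j$'s per $i$). The boundary contribution to $\vartheta_n\sum_j\|I_jz\|_{Z_\xi(E)}$ is dominated by $2\vartheta_n\max_i\lambda_i\cdot d$, which is tiny because $\max_i\lambda_i\leqslant 1/\min F$. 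The interior contribution corresponds, via the map $i\mapsto\min\mathrm{supp}(w_i)$, to an $\mathcal{S}_{\xi_n}$-admissible subset of $K$; the resulting $\lambda$-mass is $\leqslant 1+\varepsilon$ by our preparation via Lemma \ref{fast growing}. Summing, the tail $n>k$ contributes at most $\vartheta^2/2$ by direct $\vartheta_n^2$-decay, while the head $n\leqslant k$ contributes at most $1/2$ thanks to the interior/boundary decomposition combined with $\min F$ large; this yields $\sum_n\|z\|_n^2\leqslant 1/2+\vartheta^2/2=(1+\vartheta^2)/2$.

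The main obstacle is the head estimate $\sum_{n\leqslant k}\|z\|_n^2\leqslant 1/2$. The crude bound $\|z\|_n\leqslant \vartheta_n\cdot d\cdot \|z\|_{Z_\xi(E)}$ is useless because $d$ may be large, so the estimate has to come from the interior/boundary decomposition together with the Argyros--Mercourakis--Tsarpalias-type mass bound of Lemma \ref{fast growing}. Concretely, the interior mass is forced to be close to the total mass in a controlled way (producing a $\vartheta_n$-factor after normalization), and the boundary mass is $O(1/\min F)$; combining these carefully across all $n\leqslant k$ produces the clean $1/2$ in the final bound.
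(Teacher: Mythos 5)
Your proposal takes a genuinely different route (a direct construction rather than a proof by contradiction), but it contains two serious gaps that I do not think can be repaired without fundamentally reorganizing the argument along the lines the paper actually uses.

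First, the boundary estimate is incorrect as stated. You bound the boundary contribution to $\vartheta_n\sum_j\|I_jz\|$ by $2\vartheta_n\max_i\lambda_i\cdot d$, but $d$ (the number of intervals in the admissible partition) is unbounded, so $\max_i\lambda_i\leqslant 1/\min F$ gives no control. The paper's argument handles split vectors differently: writing $x$ for the part of $w$ supported on the indices $G$ of split vectors, one uses that the same admissible interval collection $I_1<\cdots<I_d$ witnesses $\vartheta_n\sum_i\|I_ix\|_{Z_\xi(E)}\leqslant\|x\|_n\leqslant\|x\|_{Z_\xi(E)}\leqslant\sum_{j\in G}\lambda_j$, which is $<\varepsilon$ once $G$ has small repeated-average mass. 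This eliminates the $d$-dependence entirely, and the Argyros--Mercourakis--Tsarpalias bound is applied to the set $G$ (not to the index set itself as your reading of Lemma \ref{fast growing} suggests).

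Second, and more fundamentally, the tail estimate does not follow from $\vartheta_n$-decay. You claim $\sum_{n>k}\|z\|_n^2$ is controlled because $\sum_{n>k}\vartheta_n^2<\vartheta^2/2$; but $\|z\|_n$ is not bounded by $\vartheta_n$ (it can be close to $\|z\|_{Z_\xi(E)}\approx 1$ when the support of $z$ is split into many admissible pieces). The paper's proof is by contradiction precisely to capture the tail: one assumes the infimum exceeds $\theta>((1+\vartheta^2)/2)^{1/2}$, examines the $\ell_2$-sequences $u_i=(\|w_i\|_n)_n\in B_{\ell_2}$, and uses $\mathcal{S}_1\subset\mathcal{S}_{\omega^\xi}$ together with Ces\`aro averaging to show the weak limit $u$ satisfies $\|u\|_{\ell_2}>\theta$. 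Choosing $k$ with $\sum_{n\leqslant k}u(n)^2>\theta^2$ then forces $\sum_{n>k}\|w_j\|_n^2<1-\theta^2$ for the subsequence, because $\sum_n\|w_j\|_n^2\leqslant\|w_j\|_{Z_\xi(E)}^2\leqslant 1$. The tail bound $1-\theta^2$, combined with the head bound $\leqslant\vartheta^2+\mathrm{(error)}$, gives a total $<\vartheta^2+1-\theta^2<\theta^2$ exactly when $\theta^2>(1+\vartheta^2)/2$; this is the source of the constant. Your split into ``head $\leqslant 1/2$'' and ``tail $\leqslant\vartheta^2/2$'' does not track the actual origin of the $1/2$, and without the $\ell_2$ weak-compactness step the tail cannot be controlled.
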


\begin{proof} Suppose that $$\inf \{\|z\|_{Z_\xi(E)}: F\in \mathcal{S}_{\omega^\xi}, z\in \text{co}(w_i: i\in F)\}>\theta>\max\Bigl\{\Bigl(\frac{1+\vartheta^2}{2}\Bigr)^{1/2}, \sup_i \|w_i\|_E\Bigr\}.$$       For each $i,n\in\nn$, let $u_i(n)=\|w_i\|_n$. Let $u_i=(u_i(n))_{n=1}^\infty\in B_{\ell_2}$, and, by passing to subsequence, we may assume $(u_i)_{i=1}^\infty$ is weakly convergent to $u=(u(n))_{n=1}^\infty\in B_{\ell_2}$.  Now note that since $\mathcal{S}_1\subset \mathcal{S}_{\omega^\xi}$, \begin{align*} \theta & <  \underset{n\to \infty}{\ \lim\ }\underset{l_1}{\lim\inf}\ldots \underset{l_n}{\lim\inf} \|n^{-1}\sum_{i=1}^n w_{l_i}\|_{Z_\xi(E)} \\ &  \leqslant \underset{n\to \infty}{\ \lim\ }\underset{l_1}{\lim}\ldots \underset{l_n}{\lim} \|n^{-1}\sum_{i=1}^n u_{l_i}\|_{\ell_2} \\ & = \|u\|_{\ell_2}. \end{align*}  We may fix $k\in\nn$ such that $\theta^2 <\sum_{i=1}^k u(n)^2$.  Note that since $2\theta^2 > 1+\vartheta^2$, $$(1+ \vartheta^2 -\theta^2)^{1/2}<\theta,$$ so we may fix $\ee>0$ such that $\ee k+ (\vartheta^2 +1-\theta^2)^{1/2}<\theta$.   By passing to a subsequence, we may assume that for all $1\leqslant n\leqslant k$, $\|w_i\|_n \leqslant u(n)+\ee$ and $\sum_{j=1}^k \|w_j\|_n^2>\theta^2$.  Now let $m_i=\max \text{supp}(w_i)$ and let $$\mathcal{G}=\{F: (m_j)_{j\in F}\in \cup_{i=1}^k \mathcal{S}_{\xi_i}\}.$$  Fix $N\in[\nn]$ such that $$\sup \{\mathbb{S}^{\omega^\xi}_{N,1}(G): G\in \mathcal{G}\}\leqslant \ee.$$   We may do this, since $CB(\mathcal{G})=\omega^{\xi_k}+1<\omega^{\omega^\xi}+1$.

Now let $F=N_{\mathcal{S}_{\omega^\xi}}$ and let  $w=\sum_{j\in F} \mathbb{S}^{\omega^\xi}_{N,1}(j) w_j$.    Fix $1\leqslant n\leqslant k$ and non-empty intervals $I_1<\ldots <I_d$ such that $(\min I_i)_{i=1}^d \in \mathcal{S}_{\xi_n}$.    Let $G$ denote the set of those $j\in F$ such that there exist two distinct values $s,t\in \{1, \ldots, d\}$ such that $I_s w_j, I_t w_j\neq 0$. For each $j\in G$, let $t_j=\min\{s\in \{1, \ldots, d\}: I_s w_j\neq 0\}$.   Then $G\in \mathcal{G}$, since $(m_j)_{j\in G}$ is a spread of $(\min \text{supp} I_{t_j})_{j\in G}$, so that $$\vartheta_n \sum_{i=1}^d \|I_i \sum_{j\in G} \mathbb{S}^{\omega^\xi}_{N,1}(j)w_j\|_{Z_\xi(E)} \leqslant \sum_{j\in G} \mathbb{S}^{\omega^\xi}_{N,1}(j) <\ee.$$    Moreover, for each $j\in F\setminus G$, $$\vartheta_n \sum_{i=1}^d \|I_i w_j\|_{Z_\xi(E)} \leqslant  \vartheta_n.$$  Thus $$\vartheta_n\sum_{i=1}^d \|I_j w\|_{Z_\xi(E)} \leqslant \vartheta_n+\ee.$$   From this it follows that for each $1\leqslant n\leqslant k$, $\|w\|_n\leqslant \vartheta_n+\ee$.     Now since $\theta^2 < \sum_{n=1}^k \|w_j\|_n^2$ for each $j\in \nn$, it follows that $$1-\theta^2 > \sum_{n=k+1}^\infty \|w_j\|_n^2$$ for all $j\in\nn$, whence by the triangle inequality, $$\sum_{n=k+1}^\infty \|w\|_n^2 \leqslant 1-\theta^2.$$    Then $$\|w\|_E\leqslant \sup_i \|w_i\|_E<\theta,$$   \begin{align*}  \Bigl(\sum_{n=1}^\infty \|w\|_n^2 \Bigr)^{1/2} & =\Bigl(\sum_{n=1}^k \|w\|_n^2 + \sum_{n=k+1}^\infty \|w\|_n^2\Bigr)^{1/2} \\ & < \ee k + \Bigl( \sum_{n=1}^k \vartheta_n^2 +  1-\theta^2  \Bigr)^{1/2}\\ & < \ee k + \Bigl(\vartheta^2 + 1-\theta^2\Bigr)^{1/2}< \theta,\end{align*} and $$\|w\|_{Z_\xi(E)} = \max\bigl\{\|w\|_E, \bigl(\sum_{n=1}^\infty \|w\|_n^2\bigr)^{1/2}\bigr\}<\theta,$$ a contradiction.

\end{proof}

\begin{proposition}  Suppose that $(u_i)_{i=1}^\infty\subset B_Z$ is an $\ell_1^{\omega^\xi}$ spreading model in a Banach space $Z$ and $T:Z\to Y$ is a bounded linear operator such that there does not exist a subsequence of $(u_i)_{i=1}^\infty$ the image of which under $T$ is an $\ell_1^{\omega^\xi}$ spreading model in $Y$.  Then for any $0<\delta<1$, there exists an absolutely convex block $(u_i')_{i=1}^\infty$ of $(u_i)_{i=1}^\infty$ such that \begin{enumerate}[(i)]\item $\|Tu_i'\|\leqslant \delta$ for all $i\in \nn$, \item for all $F\in \mathcal{S}_{\omega^\xi}$ and all scalars $(a_i)_{i\in F}$, $$(1-\delta) \sum_{i\in F}|a_i|\leqslant \|\sum_{i\in F} a_i u_i'\| \leqslant \sum_{i\in F}|a_i|.$$   \end{enumerate}

\label{BCM}
\end{proposition}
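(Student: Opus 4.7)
The proof is a recursive construction of the block $(u_i')$, balancing at each step two competing requirements: keeping $\|Tu_i'\|$ small (using the hypothesis that no subsequence of $(Tu_i)$ is an $\ell_1^{\omega^\xi}$ spreading model) and preserving a strong $\ell_1^{\omega^\xi}$ lower estimate (using that $(u_i)$ is an $\ell_1^{\omega^\xi}$ spreading model). First, by passing to a subsequence and relabeling, I would fix $\ee_0 > 0$ with $\mathcal{S}_{\omega^\xi} \subset \mathfrak{F}_{\ee_0}^\sigma((u_i))$ via Theorem \ref{convex unc}. Then I would recursively build $F_1 < F_2 < \ldots$ with $F_i \in \mathcal{S}_{\omega^\xi}$ and set $u_i' = \sum_{j \in F_i} \lambda_{i,j} \ee_{i,j} u_j$ as an absolutely convex combination. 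At each step, the hypothesis applied to the tail $(Tu_j)_{j > \max F_{i-1}}$ says $\mathcal{S}_{\omega^\xi}(M) \not\subset \mathfrak{F}_{\delta}^{\sigma}((Tu_j))$ for every $M \in [\nn]$ beyond $\max F_{i-1}$; by the geometric Hahn--Banach characterization of $\mathfrak{F}^\sigma$, this gives an admissible $F_i$ together with weights $\lambda_{i,j} \geq 0$, $\sum_j \lambda_{i,j} \leq 1$, and unimodular $\ee_{i,j}$ for which $\|T u_i'\|$ is below any prescribed threshold. This handles (i).

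For (ii), the upper bound is immediate from $u_i' \in B_Z$. For the lower bound, the strategy is to pass through Hahn--Banach: for a fixed $F \in \mathcal{S}_{\omega^\xi}$ and sign pattern $(\sigma_i)_{i \in F}$, if a functional $x^* \in B_{Z^*}$ can be located with the property that, for each $i \in F$,
\[
\sigma_i \sum_{j \in F_i} \lambda_{i,j} \ee_{i,j}\, x^*(u_j) \;\geq\; 1 - \delta,
\]
then testing $\sum_{i \in F} a_i u_i'$ against $x^*$ after choosing signs of $a_i$ coherently yields the desired bound $\|\sum a_i u_i'\| \geq (1-\delta) \sum |a_i|$. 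Pre-selecting $\ee_{i,j}$ so that $\ee_{i,j}\, x^*(u_j) = \sigma_i |x^*(u_j)|$ reduces this demand to the existence, for every admissible $F$ and sign pattern, of $x^* \in B_{Z^*}$ with $|x^*(u_j)| \geq 1 - \delta$ simultaneously for every $j \in \bigcup_{i \in F} F_i$.

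The main obstacle lies precisely here. The union $\bigcup_{i \in F} F_i$ generically belongs to $\mathcal{S}_{\omega^\xi}[\mathcal{S}_{\omega^\xi}]$ rather than $\mathcal{S}_{\omega^\xi}$, while $(u_i)$ is only known to be $\ell_1^{\omega^\xi}$-spreading with constant $\ee_0$, and $\ee_0$ may be far below $1 - \delta$. The resolution is an amplification of the spreading-model constant. One takes the $F_i$'s sparse, with weights $\lambda_{i,j}$ drawn from the repeated-averages hierarchy $\mathbb{S}^{\omega^\xi}_{N, \cdot}$, and applies Lemma \ref{schlumprecht} (together with the convolution structure of Theorem \ref{weather chopper}) to show that, after passing to a subsequence of $(u_i)$, one may arrange the $u_i'$ themselves to be normed arbitrarily close to $1$ by common functionals, so that the combined weight $\sum_j \lambda_{i,j} |x^*(u_j)|$ approaches $1$ uniformly in $i \in F$. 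A diagonalization processes the countably many pairs $(F, \sigma)$ arising at each stage, producing a single block $(u_i')$ that satisfies both (i) and (ii) simultaneously.
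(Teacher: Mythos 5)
You correctly locate the crux of the argument — the union $\bigcup_{i\in F}F_i$ falls in $\mathcal{S}_{\omega^\xi}[\mathcal{S}_{\omega^\xi}]$ rather than $\mathcal{S}_{\omega^\xi}$, and the given spreading-model constant $\ee_0$ may be much smaller than $1-\delta$ — but your proposed resolution of these two problems does not work, and the paper handles them quite differently.

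First, Lemma \ref{schlumprecht} (and its enhancement via the convolution in Theorem \ref{weather chopper}) cannot amplify the spreading-model constant. That lemma, and Theorem \ref{strm} which it drives, pass between statements of the form ``$\mathbb{P}$-averages of $f$ are $\geqslant \ee$'' and ``$f$ is pointwise $\geqslant \delta$ on a $\xi$-full set'' only with $\delta < \ee$; the constant is strictly weakened, never improved. If $\ee_0$ is small, no amount of iterating these lemmas or passing to repeated-averages weights produces functionals normalizing the $u_j$'s with $|x^*(u_j)|$ close to $1$. The amplification the paper uses is the transfinite James non-distortability of $\ell_1$ argument: starting from $b_0 = \ee$, $c_0 = 1$, one recursively extracts absolutely convex blocks $(u^n_i)$, at each step either improving the ratio $b_n/c_n$ or detecting that a nearly-optimal block already exists, until $b_n/c_n > 1-\delta$, and then normalizes by $c_n^{-1}$. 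This sup-argument has no analogue in your sketch.

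Second, the support-level mismatch is resolved in the paper by running the dichotomy at a fixed \emph{subexponential} level $\xi_n < \omega^\xi$, not at level $\omega^\xi$. The hypothesis on $T$ is used to produce, on a suitable subsequence, sets $F_1 < F_2 < \ldots$ with $F_j \in \mathcal{S}_{\xi_n}$ and absolutely convex combinations $u^0_j = \sum_{i\in F_j} d_i u_{n_i}$ with $\|Tu^0_j\|$ small. Because $\xi_n + \omega^\xi = \omega^\xi$, the family $\mathcal{S}_{\omega^\xi}[\mathcal{S}_{\xi_n}]$ has Cantor--Bendixson index $\omega^{\omega^\xi}+1 = CB(\mathcal{S}_{\omega^\xi})$, so after choosing $N$ with $\mathcal{S}_{\omega^\xi}[\mathcal{S}_{\xi_n}](N)\subset\mathcal{S}_{\omega^\xi}$, the block sequence $(u^0_j)$ inherits the $\ell_1^{\omega^\xi}$ lower estimate with the \emph{same} constant $\ee$ as $(u_i)$. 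Your proposal keeps $F_i\in\mathcal{S}_{\omega^\xi}$, for which the unions genuinely live in $\mathcal{S}_{\omega^\xi}[\mathcal{S}_{\omega^\xi}]$, of strictly larger index $\omega^{\omega^\xi\cdot 2}+1$, and there is then no way to spread these back into $\mathcal{S}_{\omega^\xi}$. So even before the amplification issue, the construction as you describe it loses the $\ell_1^{\omega^\xi}$ estimate at the very first step. You need both the level drop to $\xi_n$ and the James iteration; neither appears in your argument.
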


\begin{proof}[Sketch] This fact is essentially contained in \cite{BCM}, and is a transfinite version of the James non-distortability of $\ell_1$ argument.     Fix $\ee>0$ such that for all $F\in \mathcal{S}_{\omega^\xi}$ and scalars $(a_i)_{i\in F}$, $$\ee\sum_{i\in F}|a_i|\leqslant \|\sum_{i\in F} a_iu_i\|\leqslant \sum_{i\in F}|a_i|.$$   

Now either for every $n\in\nn$ and $M\in [\nn]$, there exists $(n_i)_{i=1}^\infty\in [M]$ such that for all $F\in \mathcal{S}_{\xi_n}$ and all scalars $(a_i)_{i\in F}$, $$\delta/\ee \sum_{i\in F}|a_i|\leqslant \|T\sum_{i\in F}a_i u_{n_i}\|,$$ or there exist $n\in \nn$ and  $M\in [\nn]$ such that for all $(n_i)_{i=1}^\infty\in [M]$, there exist $F\in \mathcal{S}_{\xi_n}$ and scalars $(a_i)_{i\in F}$ with $\sum_{i\in F}|a_i|=1$ and $\|T\sum_{i\in F} a_i u_{n_i}\|<\ee/\delta$.    The first alternative cannot hold, otherwise we could extract a subsequence of $(u_i)_{i=1}^\infty$ the image of which under $T$ is an $\ell_1^{\omega^\xi}$ spreading model.   Since the second alternative must hold, there must exist some $n\in\nn$ and $M\in[\nn]$ as in the second alternative.   We may then fix $N\in[M]$ such that  $\mathcal{S}_{\omega^\xi}[\mathcal{S}_{\xi_n}](N)\subset \mathcal{S}_{\omega^\xi}$,  $F_1<F_2<\ldots$, $F_i\in \mathcal{S}_{\xi_n}$, and scalars $(d_i)_{i\in \cup_{j=1}^\infty F_j}$ such that $\sum_{i\in F_j}|d_i|=1$ and $\|T\sum_{i\in F_j} d_iu_{n_i}\|<\delta/\ee$.  Then with $u^0_j=\sum_{i\in F_j} d_i u_{n_i}$, for every $F\in \mathcal{S}_{\omega^\xi}$ and all scalars $(a_i)_{i\in F}$, $$\ee\sum_{i\in F}|a_i| \leqslant \|\sum_{i\in F} a_i u^0_i\|\leqslant \sum_{i\in F}|a_i|.$$   Now let $b_0=\ee$ and $c_0=1$.

Now applying a similar dichotomy to that in the previous paragraph, we recursively select absolutely convex blocks $(u^n_i)_{i=1}^\infty$ of $(u_i)_{i=1}^\infty$ and numbers $b_n\leqslant c_n$ such that for all $n\in \nn\cup \{0\}$, \begin{enumerate}[(i)]\item $b_n\leqslant b_{n+1}\leqslant c_{n+1}\leqslant c_n$, \item $(u^{n+1}_i)_{i=1}^\infty$ is an absolutely convex block of $(u_i^n)_{i=1}^\infty$, \item $b_n/c_n\leqslant \ee^{1/2^n}$, \item for all $F\in \mathcal{S}_{\omega^\xi}$ and all scalars $(a_i)_{i\in F}$, $$b_n\sum_{i\in F} |a_i|\leqslant \|\sum_{i\in F} a_i u^n_i\|\leqslant c_n\sum_{i\in F} |a_i|.$$ \end{enumerate}  Now we fix $n\in\nn$ such that $\ee^{1/2^n}>1-\delta$ and let $u_i'=c_n^{-1}u^n_i$.

\end{proof}

\begin{corollary} Let $(v_i)_{i=1}^\infty$ be a bounded block sequence in $Z_\xi(E)$. \begin{enumerate}[(i)]\item $\inf\{\|z\|_{Z_\xi(E)}: z\in \text{\emph{co}}(z_i: i\in\nn)\} \leqslant \sup \{\|v_i\|_E: i\in \nn\}.$  \item If $(v_i)_{i=1}^\infty$ is weakly null in $E$, it is weakly null in $Z_\xi(E)$. \item If $(v_i)_{i=1}^\infty$ does not have a subsequence which is an $\ell_1^{\omega^\xi}$ spreading model in $E$, then $(v_i)_{i=1}^\infty$ does not have a subsequence which is an $\ell_1^{\omega^\xi}$ spreading model in $Z_\xi(E)$.  \end{enumerate}

\label{conn}
\end{corollary}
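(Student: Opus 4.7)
The plan is to deduce all three items from Proposition \ref{nas}, combined with Mazur's theorem for $(ii)$ and Proposition \ref{BCM} for $(iii)$. Set $\alpha=\bigl((1+\vartheta^2)/2\bigr)^{1/2}<1$, $c=\sup_i\|v_i\|_E$, and $M=\sup_i\|v_i\|_{Z_\xi(E)}$; since $\|\cdot\|_E\leqslant \|\cdot\|_{Z_\xi(E)}$, one has $c\leqslant M<\infty$. For $(i)$, I would iterate Proposition \ref{nas}: applied to the normalized tails $(v_i/M)_{i\geqslant N}$ for successively larger $N$, it produces a block sequence $(z_k^{(1)})_{k=1}^\infty$ of convex combinations of $(v_i)$ satisfying $\|z_k^{(1)}\|_{Z_\xi(E)}\leqslant\max\{\alpha M,c\}$ and, by the triangle inequality in $E$, $\|z_k^{(1)}\|_E\leqslant c$. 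Repeating the construction with $(z_k^{(1)})$ in place of $(v_i)$ yields $(z_k^{(2)})$ with $\|z_k^{(2)}\|_{Z_\xi(E)}\leqslant\max\{\alpha^2 M,c\}$ and $\|z_k^{(2)}\|_E\leqslant c$, and after $j$ stages one has a convex combination of the original $v_i$'s with $Z_\xi(E)$-norm at most $\max\{\alpha^j M,c\}$; since $\alpha<1$, letting $j\to\infty$ yields $(i)$.

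For $(ii)$, I would argue by contradiction. If $(v_i)$ is weakly null in $E$ but not in $Z_\xi(E)$, pass to a subsequence $(v_{i_k})$ and fix $z^*\in Z_\xi(E)^*$ and $\varepsilon>0$ with $\mathrm{Re}\,z^*(v_{i_k})\geqslant \varepsilon$ for all $k$. By Mazur's theorem applied to the weakly null tails of $(v_{i_k})$ in $E$, I recursively select a block sequence $(w_m)$ of convex combinations of $(v_{i_k})$ with $\|w_m\|_E<1/m$; since $\|w_m\|_{Z_\xi(E)}\leqslant M$ by convexity, $(w_m)$ is a bounded block sequence in $Z_\xi(E)$. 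Part $(i)$ then supplies a convex combination $z$ of the $w_m$'s---hence of the $v_{i_k}$'s---with $\|z\|_{Z_\xi(E)}$ arbitrarily small, while $\mathrm{Re}\,z^*(z)\geqslant \varepsilon$ forces $\|z\|_{Z_\xi(E)}\geqslant \varepsilon/\|z^*\|$, the desired contradiction.

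For $(iii)$, I would argue by contraposition. Assume some subsequence $(v_{i_k})$ is an $\ell_1^{\omega^\xi}$ spreading model in $Z_\xi(E)$ while no subsequence of $(v_i)$ is an $\ell_1^{\omega^\xi}$ spreading model in $E$; after rescaling we may assume $(v_{i_k})\subset B_{Z_\xi(E)}$. The formal identity $T:Z_\xi(E)\to E$ is contractive (since $\|\cdot\|_E\leqslant\|\cdot\|_{Z_\xi(E)}$), and by hypothesis no subsequence of $(v_{i_k})$ has $T$-image which is an $\ell_1^{\omega^\xi}$ spreading model in $E$. Fix $\delta\in\bigl(0,\min(1-\alpha,1/2)\bigr)$. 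Proposition \ref{BCM} supplies an absolutely convex block $(v_k')\subset B_{Z_\xi(E)}$ of $(v_{i_k})$ with $\|v_k'\|_E=\|Tv_k'\|\leqslant \delta$ and $(1-\delta)\sum_{k\in F}|a_k|\leqslant \|\sum_{k\in F}a_k v_k'\|_{Z_\xi(E)}$ for all $F\in\mathcal{S}_{\omega^\xi}$ and scalars $(a_k)_{k\in F}$. Proposition \ref{nas} applied to $(v_k')$ then yields $F\in\mathcal{S}_{\omega^\xi}$ and $z=\sum_{k\in F}a_k v_k'$ with $\sum_{k\in F}a_k=1$ and $\|z\|_{Z_\xi(E)}\leqslant \max\{\alpha,\delta\}$, while the spreading-model bound forces $\|z\|_{Z_\xi(E)}\geqslant 1-\delta>\max\{\alpha,\delta\}$, a contradiction.

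The main obstacle is the bookkeeping in $(i)$: each round of Proposition \ref{nas} must yield an infinite block sequence of convex combinations (by successive application to tails of the previous sequence), and one must simultaneously track the $E$-norm bound $c$, which persists through each stage thanks to the triangle inequality applied to convex combinations of block vectors.
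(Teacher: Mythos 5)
Your proof is correct and runs along the same lines as the paper's (Proposition \ref{nas} for part $(i)$, Mazur plus part $(i)$ for part $(ii)$, and Proposition \ref{BCM} together with Proposition \ref{nas} for part $(iii)$), though in part $(i)$ your argument is organized differently. Where you iterate Proposition \ref{nas} infinitely — extracting a block sequence of convex blocks and re-applying to push the $Z_\xi(E)$-norm bound down geometrically from $M$ toward $c$ while the $E$-norm bound $c$ persists by convexity — the paper instead gets a contradiction from a \emph{single} application: setting $a_n=\inf\{\|z\|_{Z_\xi(E)}:z\in\text{co}(v_i:i\geqslant n)\}$, choosing $n$ so that $a_n$ is close enough to $\sup_m a_m$ and then $\alpha$ with $\sup_m a_m<\alpha< a_n\bigl(\tfrac{1+\vartheta^2}{2}\bigr)^{-1/2}$, extracting a convex block $(w_i)$ of $(v_i)_{i\geqslant n}$ with $\|w_i\|_{Z_\xi(E)}\leqslant\alpha$, and observing that $(\alpha^{-1}w_i)\subset B_{Z_\xi(E)}$ violates Proposition \ref{nas}. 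Your iterative version is perhaps more transparent conceptually but requires the bookkeeping you flagged (including a small $\varepsilon$-management since Proposition \ref{nas} gives an infimum, not an attained minimum); the paper's version is shorter once the choices of $n$ and $\alpha$ are justified. For parts $(ii)$ and $(iii)$ your arguments are essentially identical to the paper's: in $(ii)$ the paper chooses a convex block $(w_i)$ with $\sup_i\|w_i\|_E<\ee$ uniformly rather than $\|w_m\|_E<1/m$, then applies $(i)$ once rather than to tails, but this is only a cosmetic difference; in $(iii)$ both apply Proposition \ref{BCM} with $T$ the formal inclusion $Z_\xi(E)\to E$, and then contradict Proposition \ref{nas}.
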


\begin{proof}$(i)$ Seeking a contradiction, assume $$\inf\{\|z\|_{Z_\xi(E)}: z\in \text{co}(v_i: i\in\nn)\}=a$$ and $$\sup \{\|v_i\|_E: i\in\nn\}=b<a.$$   For each $n\in\nn$, let $$a_n=\{\|z\|_{Z_\xi(E)}: z\in \text{co}(v_i: i\geqslant n)\}\geqslant a>b.$$    We may select $\alpha>\sup_n a_n$ and $n\in\nn$ such that $$\frac{a_n}{\alpha}> \max\bigl\{\bigl(\frac{1+\vartheta^2}{2}\bigr)^{1/2}, b/\alpha\bigr\}.$$  We may then select a convex block sequence $(w_i)_{i=1}^\infty$ of $(v_i)_{i=n}^\infty$ such that $\|w_i\|_{Z_\xi(E)}\leqslant \alpha$ and $$a_n \leqslant \sup\{\|w\|_{Z_\xi(E)}: w\in \text{co}(w_i:i\in\nn)\}.$$  Then $(\alpha^{-1} w_i)_{i=1}^\infty \subset B_{Z_\xi(E)}$ and $$\inf \{\|w\|_{Z_\xi(E)}: w\in \text{co}(\alpha^{-1} w_i:i\in\nn)\} \geqslant \frac{a_n}{\alpha} > \max\bigl\{\bigl(\frac{1+\vartheta^2}{2}\bigr)^{1/2}, \sup_i \|\alpha^{-1} w_i\|_E\bigr\},$$ contradicting Proposition \ref{nas}.

$(ii)$ If $(v_i)_{i=1}^\infty$ is not weakly null in $Z_\xi(E)$, then after passing to a subsequence, we may assume $$\inf\{\|z\|_{Z_\xi(E)}: z\in \text{co}(v_i:i\in\nn)\}=\ee>0.$$  Since $(v_i)_{i=1}^\infty$ is weakly null in $E$, there is a convex block sequence $(w_i)_{i=1}^\infty$ of $(v_i)_{i=1}^\infty$ such that $\sup_i \|w_i\|_E<\ee$. But since $\inf\{\|z\|_{Z_\xi(E)}: z\in \text{co}(w_i:i\in\nn)\}\geqslant \ee$, this contradicts $(i)$.

$(iii)$ Seeking a contradiction, assume that, after scaling, passing to a subsequence, and relabeling, $(v_i)_{i=1}^\infty$ is an $\ell_1^{\omega^\xi}$ spreading model in $B_{Z_\xi(E)}$.  Fix $0<\delta<1/2$ such that $1-\delta >\bigl(\frac{1+\vartheta^2}{2}\bigr)^{1/2}$. Then by Proposition \ref{BCM} applied with $Z=Z_\xi(E)$, $Y=E$, and $T:Z_\xi(E)\to E$ the formal inclusion,  there exists an absolutely convex block sequence $(w_i)_{i=1}^\infty$ of $(v_i)_{i=1}^\infty$ such that \begin{enumerate}[(i)]\item $\|w_i\|_E\leqslant \delta<1-\delta$ for all $i\in \nn$, \item $\inf\{\|w\|: F\in \mathcal{S}_{\omega^\xi}, w\in \text{co}(w_i: i\in F)\} \geqslant 1-\delta> \bigl(\frac{1+\vartheta^2}{2}\bigr)^{1/2}$. \end{enumerate} But these two things contradict Proposition \ref{nas}.

\end{proof}

\begin{corollary} Let $X$ be a Banach space with normalized, bimonotone basis $(x_i)_{i=1}^\infty$ and let $E$ be a Banach space such that the canonical $c_{00}$ basis  is a normalized, bimonotone basis for $E$. \begin{enumerate}[(i)]\item $X$ is a quotient of $Z_\xi(E_X)$. \item If $X$ is not $\omega^\xi$-Schur, $Z_\xi(E_X)$ is not $\omega^\xi$-Schur. \item  $\textsf{\emph{v}}(Z_\xi(E))\geqslant \omega^\xi$. \item If $\textsf{\emph{v}}(X)\leqslant \omega^\xi$, then $\textsf{\emph{v}}(Z_\xi(E_X))=\omega^\xi$.  \end{enumerate}

\label{beans}

\end{corollary}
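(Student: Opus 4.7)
The plan is to handle parts (i)--(iii) individually and derive (iv) at once from (ii) and (iii).

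For part (i), the candidate quotient $q:Z_\xi(E_X)\to X$ is obtained by composing the natural inclusion $Z_\xi(E_X)\hookrightarrow E_X$ (the continuous extension of the identity on $c_{00}$, using $\|\cdot\|_{E_X}\leq\|\cdot\|_{Z_\xi(E_X)}$) with the quotient $q:E_X\to X$ of the Argyros--Beanland construction. Since the range contains $\operatorname{span}(x_i)$, it is dense in $X$; by the open mapping theorem it suffices to produce a bounded linear right inverse on $\operatorname{span}(x_i)$. For this, lift each $x_i$ to an averaged vector $v_i\in c_{00}$ supported on a finite subset $F_i\subset N_i$ with $q(v_i)=x_i$, chosen spread out enough that Corollary \ref{conn}(i) bounds $\|\sum a_iv_i\|_{Z_\xi(E_X)}$ in terms of $\sup_i\|v_i\|_{E_X}$ and hence in terms of $\|\sum a_ix_i\|_X$.

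For part (ii), assume $X$ is not $\omega^\xi$-Schur and choose (via the Remark after Corollary \ref{willits}) a normalized weakly null $(u_i)\subset X$ with no $\ell_1^{\omega^\xi}$ spreading model subsequence. Write $u_i=\sum_k a_k^{(i)}x_k$ and lift $(u_i)$ to a bounded block sequence $(v_i)\subset Z_\xi(E_X)\subset E_X$ by taking $v_i=\sum_k a_k^{(i)}e_{n_k^{(i)}}$ with $n_k^{(i)}\in N_k$ and pairwise disjoint supports, so that $qv_i=u_i$. Proposition \ref{facts} then delivers that $(v_i)$ is weakly null in $E_X$ and inherits the no-$\ell_1^{\omega^\xi}$-subsequence property. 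Corollary \ref{conn}(ii)--(iii) transfers both properties from $E_X$ to $Z_\xi(E_X)$; moreover $\|v_i\|_{Z_\xi(E_X)}\geq\|v_i\|_{E_X}\geq\|u_i\|_X=1$, so $(v_i)$ is bounded below in $Z_\xi(E_X)$. Thus $Z_\xi(E_X)$ admits a normalized weakly null sequence with no $\ell_1^{\omega^\xi}$ spreading model subsequence, and is not $\omega^\xi$-Schur.

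For part (iii), fix $\zeta<\omega^\xi$ and a normalized weakly null $(w_i)\subset Z_\xi(E)$; a standard perturbation argument lets us assume $(w_i)$ is a normalized block. Using $\xi_n\uparrow\omega^\xi$, fix $n$ with $\xi_n\geq\zeta$, and by Proposition \ref{deep facts}(v), after passing to a subsequence, arrange that for every $F\in\mathcal{S}_\zeta$ the tuple $(\min\operatorname{supp}(w_i))_{i\in F}$ lies in $\mathcal{S}_{\xi_n}$. Then partitioning $\sum_{i\in F}a_iw_i$ via the block supports $I_i:=\operatorname{supp}(w_i)$ gives an $\mathcal{S}_{\xi_n}$-admissible interval family, whence
\[
\Bigl\|\sum_{i\in F}a_iw_i\Bigr\|_{Z_\xi(E)}\geq\Bigl\|\sum_{i\in F}a_iw_i\Bigr\|_n\geq\vartheta_n\sum_{i\in F}|a_i|\,\|w_i\|_{Z_\xi(E)}=\vartheta_n\sum_{i\in F}|a_i|.
\]
Thus some subsequence of $(w_i)$ is an $\ell_1^\zeta$ spreading model with constant $\vartheta_n>0$; since this holds for every $\zeta<\omega^\xi$, $Z_\xi(E)$ is $\zeta$-Schur for each such $\zeta$, giving $\textsf{v}(Z_\xi(E))\geq\omega^\xi$. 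Finally, (iv) is immediate: $\textsf{v}(X)\leq\omega^\xi$ forces $X$ not to be $\omega^\xi$-Schur, so by (ii) $Z_\xi(E_X)$ is not $\omega^\xi$-Schur, giving $\textsf{v}(Z_\xi(E_X))\leq\omega^\xi$; combining with (iii) yields equality. The principal obstacle is the lifting step in (i): the naive choice $\sum a_ie_{n_i}$ fails because the $\|\cdot\|_n$ contributions can blow up, so averaged lifts supported inside the $N_i$'s, together with Corollary \ref{conn}(i), are the device that keeps the $Z_\xi(E_X)$-norm bounded.
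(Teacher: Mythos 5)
Your part (iii) and the derivation of (iv) from (ii) and (iii) are essentially the paper's argument. However, (i) and (ii) contain genuine gaps.

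The plan for (i) cannot work: you say it suffices to produce a bounded linear right inverse $R:\operatorname{span}(x_i)\to Z_\xi(E_X)$ to $Q$. Such a map extends to a bounded right inverse $R:X\to Z_\xi(E_X)$ with $QR=I_X$, making $X$ isomorphic to a closed subspace of $Z_\xi(E_X)$. But then Proposition \ref{ds}(ii) forces $\textsf{v}(Z_\xi(E_X))\leqslant \textsf{v}(X)$, which is incompatible with part (iii) whenever $\textsf{v}(X)<\omega^\xi$ (e.g.\ $X=c_0$). So no bounded linear right inverse exists in general. The correct strategy is the weaker, metric one: to show $Q$ is a quotient it suffices to produce, for each $x=\sum_{i=1}^d a_ix_i\in S_X$, lifts of $Z_\xi(E_X)$-norm arbitrarily close to $1$. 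This is done by taking infinitely many disjointly supported shift lifts $y_j=\sum_{i=1}^d a_i e_{n_{ji}}$ (with $n_{ji}\in N_i$, $\|y_j\|_{E_X}=1$, $Qy_j=x$) and applying Corollary \ref{conn}(i) to the \emph{block sequence} $(y_j)_j$: the infimum over $z\in\operatorname{co}(y_j:j)$ of $\|z\|_{Z_\xi(E_X)}$ is $\leqslant\sup_j\|y_j\|_{E_X}=1$, and each such $z$ still satisfies $Qz=x$. Your formulation ``$\|\sum a_iv_i\|_{Z_\xi(E_X)}$ is bounded in terms of $\sup_i\|v_i\|_{E_X}$'' is not what Corollary \ref{conn}(i) gives; it controls only the infimum over a convex hull, not a fixed linear combination.

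In (ii), the lift you write down, $v_i=\sum_k a_k^{(i)}e_{n_k^{(i)}}$, is the naive lift; its $E_X$-norm is $1$ but, as you yourself note at the end of the proposal, its $Z_\xi(E_X)$-norm need not be bounded, since the $\|\cdot\|_n$ contributions can blow up. So the sequence $(v_i)$ you produce is not a priori bounded in $Z_\xi(E_X)$, and the rest of the argument cannot proceed. What is actually needed is the averaged lifts from the corrected proof of (i): for any $x\in S_X\cap c_{00}$ and any $n\in\nn$ there is $z\in c_{00}\cap 2B_{Z_\xi(E_X)}$ with $\min\operatorname{supp}(z)>n$ and $Qz=x$. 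Using this one lifts $(u_i)$ to a block sequence $(z_i)\subset 2B_{Z_\xi(E_X)}$ with $Qz_i=u_i$, and only then Proposition \ref{facts} and Corollary \ref{conn}(ii)--(iii) transfer weak nullity and the absence of $\ell_1^{\omega^\xi}$ spreading model subsequences from $E_X$ to $Z_\xi(E_X)$. Your closing remark correctly identifies this issue; the body of the proof of (ii) needs to be rewritten so that it actually uses the averaged lift rather than the naive one.
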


\begin{proof}$(i)$ Fix $x=\sum_{i=1}^d a_i x_i\in S_X$.   Fix $n_{11}<n_{12}<\ldots <n_{1d}<n_{21}<n_{22}<\ldots <n_{2d}<\ldots$ such that $n_{ji}\in N_i$.  For each $j\in\nn$, let $y_j=\sum_{i=1}^d a_i e_{n_{ji}}$.  Then $\|y_j\|_{Z_\xi(E_X)}\leqslant d$ and $\|y_j\|_{E_X}=1$ for all $j\in \nn$.   Furthermore, if $Q:Z_\xi(E_X)\to X$ is the continuous, linear extension of the map $q$ sending $e_n$ to $x_i$ for $n\in N_i$, then $Qy_j=x$ for all $j\in\nn$.   By Corollary \ref{conn}, since $Q$ is norm $1$,  $$1 \leqslant \inf\{\|z\|_{Z_\xi(E_X)}: z\in Z_\xi(E_X), Qz=x\} \leqslant \inf\{\|z\|_{Z_\xi(E)}: z\in \text{co}(y_j: j\in\nn)\}\leqslant 1.$$   Thus $Q$ is a quotient map.

We remark that in the proof above, if $n\in\nn$ is fixed, we could have chosen $n_{11}>n$.  From this it follows that for any $x=\sum_{i=1}^d a_ix_i\in S_X$ and any $n\in \nn$, there exists $z\in c_{00}\cap 2B_{Z_\xi(E_X)}$ with $\min \text{supp}(z)>n$ such that $Qz=x$.

$(ii)$ If $X$ is not $\omega^\xi$-Schur, then there exists a normalized, weakly null block sequence $(u_i)_{i=1}^\infty \subset S_X$ which has no subsequence which is an $\ell_1^{\omega^\xi}$ spreading model. By the previous paragraph, we may fix a block sequence $(z_j)_{j=1}^\infty \subset 2B_{Z_\xi(E_X)}$ such that $Qz_j=u_j$ for all $j\in\nn$.    Then using Proposition \ref{facts} and Corollary \ref{conn}, $(z_j)_{j=1}^\infty$ is weakly null in $Z_\xi(E_X)$ and has no subsequence which is an $\ell_1^{\omega^\xi}$, so $Z_\xi(E_X)$ is not $\omega^\xi$-Schur.

$(iii)$ Fix a seminormalized, weakly null sequence $(z_i)_{i=1}^\infty$ in $Z_\xi(E)$.  By passing to a subsequence,  perturbing, and scaling, we may assume $(z_i)_{i=1}^\infty$ is a normalized block sequence in $Z_\xi(E)$.  For each $i\in\nn$, let  $I_i=[\min \text{supp}(z_i), \max \text{supp}(z_i)]$.  Fix $n\in\nn$ and note that for any $F\in \mathcal{S}_{\xi_n}$ and any scalars $(a_i)_{i\in F}$, $(\min I_i)_{i\in F}\in \mathcal{S}_{\xi_n}$, whence $$\|\sum_{i\in F} a_iz_i\|_{Z_\xi(E)} \geqslant \Bigl(\sum_{j=1}^\infty \bigl\|\sum_{i\in F}a_iz_i\|_j^2\Bigr)^{1/2} \geqslant \vartheta_n \sum_{i\in F} \|I_i \sum_{j\in F}a_jz_j\|_{Z_\xi(E)} = \vartheta_n \sum_{i\in F}|a_i|.$$  Thus every seminormalized, weakly null sequence in $Z_\xi(E)$ has a subsequence which is an $\ell_1^{\xi_n}$ spreading model. From this it follows that $\textsf{v}(Z_\xi(E))>\xi_n$. Since $\sup_n \xi_n =\omega^\xi$, $\textsf{v}(Z_\xi(E))\geqslant \omega^\xi$.

$(iv)$ By $(ii)$, $\textsf{v}(Z_\xi(E_X))\leqslant \omega^\xi$, and by $(iii)$, $\textsf{v}(Z_\xi(E_X))\geqslant \omega^\xi$.

\end{proof}

In what follows, let $Z_0(c_0)=c_0$ and for any Banach space $E$, let  $Z_{\omega_1}(E)=\ell_1$.  

\begin{proposition} \begin{enumerate}[(i)]\item For any $\xi\leqslant \omega_1$, $\textsf{\emph{v}}(Z_\xi(c_0))=\omega^\xi$. \item  For any Banach space $Z$ and any closed subspace $Y$ of $Z$, $$\min \{\textsf{\emph{v}}(Y), \textsf{\emph{v}}(Z/Y)\}\leqslant \textsf{\emph{v}}(Z)\leqslant \textsf{\emph{v}}(Y).$$  \item For any Banach spaces $X,Y$, $$\textsf{\emph{v}}(X\oplus Y)=\min \{\textsf{\emph{v}}(X), \textsf{\emph{v}}(Y)\}.$$   \end{enumerate} 

\label{ds}
\end{proposition}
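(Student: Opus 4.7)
The plan is to address the three parts in order. For part $(i)$, the boundary cases are immediate from the conventions: $\textsf{v}(c_0)=1=\omega^0$ because the canonical basis of $c_0$ is a normalized, weakly null sequence with no $\ell_1^1$-spreading model subsequence, and $\textsf{v}(\ell_1)=\omega_1=\omega^{\omega_1}$ by the Schur property of $\ell_1$. For $0<\xi<\omega_1$, the lower bound $\textsf{v}(Z_\xi(c_0))\geqslant \omega^\xi$ is immediate from Corollary \ref{beans}$(iii)$ applied with $E=c_0$. For the matching upper bound, I would consider the canonical basis $(e_n)$ of $Z_\xi(c_0)$: a short calculation gives $\|e_n\|_{Z_\xi(c_0)}=1$ (using $\vartheta<1<\sqrt 3$), and $(e_n)$ is weakly null and has no $\ell_1^1$-spreading model subsequence already in $c_0$. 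By Corollary \ref{conn}$(ii),(iii)$, both properties transfer to $Z_\xi(c_0)$, so by the remark following Proposition \ref{easy prop}, $(e_n)$ is a normalized, $\omega^\xi$-weakly null sequence in $Z_\xi(c_0)$ that is not norm null, witnessing $\textsf{v}(Z_\xi(c_0))\leqslant \omega^\xi$.

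For the upper inequality $\textsf{v}(Z)\leqslant \textsf{v}(Y)$ in part $(ii)$, the key observation is that for a sequence $(y_n)\subset Y$, the family $\mathfrak{F}^a_\ee((y_n))$ is the same whether one uses $B_{Y^*}$ or $B_{Z^*}$: by Hahn-Banach, restrictions and extensions preserve both norms and values on $Y$. Hence $\xi$-weak nullity of a sequence in $Y$ is intrinsic, and any witness to $\textsf{v}(Y)\leqslant \xi$ simultaneously witnesses $\textsf{v}(Z)\leqslant \xi$.

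For the lower inequality in part $(ii)$, fix $\xi<\min\{\textsf{v}(Y),\textsf{v}(Z/Y)\}$ and a $\xi$-weakly null $(z_n)\subset Z$. First I would verify that bounded operators preserve $\xi$-weak nullity (from $\mathfrak{F}^a_\ee((Bv_n))\subset \mathfrak{F}^a_{\ee/\|B\|}((v_n))$, via the adjoint $B^*$), so $(qz_n)$ is $\xi$-weakly null in $Z/Y$ for the quotient map $q\colon Z\to Z/Y$. The $\xi$-Schur property of $Z/Y$ then yields $\|qz_n\|\to 0$, so one can choose $y_n\in Y$ with $u_n:=z_n-y_n$ satisfying $\|u_n\|\to 0$. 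The technical crux is to show that $(y_n)$ is $\xi$-weakly null in $Z$: supposing not, fix $\ee>0$ and $M\in [\nn]$ with $\mathcal{S}_\xi(M)\subset \mathfrak{F}^a_\ee((y_n))$, pick $n_0$ so that $\|u_n\|<\ee/2$ for $n>n_0$, and restrict $M$ to its tail $N$ with $\min N>n_0$. Since $\mathcal{S}_\xi$ is spreading, $\mathcal{S}_\xi(N)\subset \mathcal{S}_\xi(M)$, and for each $F\in \mathcal{S}_\xi(N)$ the witnessing functional for $\mathfrak{F}^a_\ee((y_n))$ on $F$ also witnesses $\mathfrak{F}^a_{\ee/2}((z_n))$ on $F$, contradicting the $\xi$-weak nullity of $(z_n)$. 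By the Hahn-Banach observation from the upper bound, $(y_n)$ is then $\xi$-weakly null in $Y$; the $\xi$-Schur property of $Y$ gives $\|y_n\|\to 0$, whence $\|z_n\|\leqslant \|z_n-y_n\|+\|y_n\|\to 0$.

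Finally, part $(iii)$ follows by applying part $(ii)$ twice to $Z=X\oplus Y$ with the closed subspaces $X\oplus\{0\}$ and $\{0\}\oplus Y$ and the natural isomorphisms $(X\oplus Y)/(X\oplus\{0\})\cong Y$ and $(X\oplus Y)/(\{0\}\oplus Y)\cong X$: the two upper bounds of $(ii)$ yield $\textsf{v}(X\oplus Y)\leqslant \min\{\textsf{v}(X),\textsf{v}(Y)\}$, and the lower bound of $(ii)$ yields the reverse inequality. The main obstacle throughout is the norm-null perturbation step in part $(ii)$: showing that $\xi$-weak nullity is stable under adding a norm-null sequence is the only place the combinatorial structure of $\mathcal{S}_\xi$ (specifically, the spreading property and the effect of restricting to a tail) is genuinely used.
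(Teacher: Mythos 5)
Your proof is correct and follows essentially the same route as the paper: part (i) via Corollaries \ref{conn} and \ref{beans}, part (ii) via a norm-null perturbation argument into $Y$ for the lower bound and the heredity of the $\xi$-Schur property for the upper bound, and part (iii) by specializing (ii) to the two factors of the direct sum. The only cosmetic differences are that the paper invokes the injectivity of the ideal $\mathfrak{V}_\xi$ for $\textsf{v}(Z)\leqslant\textsf{v}(Y)$ rather than your equivalent Hahn--Banach observation, and it leaves implicit the two small verifications you spell out (stability of $\xi$-weak nullity under bounded operators and under norm-null perturbations), which are indeed needed to make the argument rigorous.
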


\begin{proof} $(i)$ It is clear that $\textsf{v}(c_0)=\textsf{v}(Z_0(c_0))=1=\omega^0$ and $\textsf{v}(Z_{\omega_1}(c_0))=\omega_1=\omega^{\omega_1}$.   For $0<\xi<\omega_1$, $\textsf{v}(Z_\xi(c_0))=\omega^\xi$ by Corollary \ref{beans}.

$(ii)$ Fix $\xi<\min\{\textsf{v}(Y), \textsf{v}(Z/Y)\}$ and a $\xi$-weakly null sequence $(z_i)_{i=1}^\infty$ in $Z$.  Then $(z_i+Y)_{i=1}^\infty$ is $\xi$-weakly null in $Z/Y$, and therefore norm null.  For all $i\in \nn$, we may fix $y_i\in Y$ such that $\|z_i-y_i\|\leqslant 2 \|z_i+Y\|_{Z/Y}$.  Then since $\lim_i \|z_i-y_i\|=0$,  $(y_i)_{i=1}^\infty\subset Y$ is $\xi$-weakly null, and therefore norm null.   From this it follows that $\lim_i z_i=0$, and $Z$ is $\xi$-Schur. 

Now if $j:Y\to X$ is the inclusion, then $I_X j=jI_Y\in \mathfrak{V}_\xi$. Since $\mathfrak{V}_\xi$ is an injective ideal, $\mathfrak{V}_\xi$, $\textsf{v}(I_Y) \geqslant \textsf{v}(I_X)$.

$(iii)$ This follows from $(ii)$.

\end{proof}

We now have the converse of Corollary \ref{block of block}.  

\begin{corollary} For any $\xi\leqslant \omega_1$, there exists a Banach space $X$ with $\textsf{\emph{v}}(X)=\xi$ if and only if there exists $\gamma\leqslant \omega_1$ such that $\xi=\omega^\gamma$.

\end{corollary}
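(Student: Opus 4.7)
Both directions are essentially already established in the preceding material; what remains is to assemble the two ingredients.

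For the forward direction, suppose $X$ is a Banach space with $\textsf{v}(X) = \xi$. By the corollary proved immediately after Corollary \ref{block of block}, which was obtained from the self-convolution estimate $\textsf{v}(X) > \zeta \Rightarrow \textsf{v}(X) > \zeta + \zeta$ together with the standard characterization of ordinals closed under addition, every non-zero value of $\textsf{v}(X)$ is of the form $\omega^\gamma$ for some ordinal $\gamma$. Since $\textsf{v}$ takes values in $[1, \omega_1]$, we necessarily have $\gamma \leqslant \omega_1$, which gives the required representation $\xi = \omega^\gamma$.

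For the reverse direction, suppose $\gamma \leqslant \omega_1$ is given. I would simply exhibit the space $X = Z_\gamma(c_0)$, where the convention $Z_0(c_0) = c_0$ and $Z_{\omega_1}(E) = \ell_1$ is in force. Proposition \ref{ds}(i) explicitly records that $\textsf{v}(Z_\gamma(c_0)) = \omega^\gamma$ for every $\gamma \leqslant \omega_1$, so this space witnesses $\xi = \omega^\gamma$. For $0 < \gamma < \omega_1$ this relied on Corollary \ref{beans}, and the two endpoint cases are the classical facts $\textsf{v}(c_0) = 1 = \omega^0$ and $\textsf{v}(\ell_1) = \omega_1$.

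Since all substantive work has been carried out in Corollary \ref{block of block}, Corollary \ref{beans}, and Proposition \ref{ds}, the proof at this stage is just a one-line citation of each direction; there is no remaining obstacle.
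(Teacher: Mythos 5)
Your proof is correct and is precisely the assembly the paper has in mind: the forward (``only if'') direction is the unnumbered corollary following Corollary \ref{block of block}, which deduces from $\textsf{v}(X)>\zeta\Rightarrow\textsf{v}(X)>\zeta+\zeta$ that $\textsf{v}(X)$ is always of the form $\omega^\gamma$ with $\gamma\leqslant\omega_1$, and the reverse (``if'') direction is Proposition \ref{ds}(i), which exhibits $Z_\gamma(c_0)$ (with the conventions $Z_0(c_0)=c_0$, $Z_{\omega_1}(E)=\ell_1$) as a space realizing each value $\omega^\gamma$. The paper leaves this corollary unproved precisely because both halves are immediate citations, which is what you did.
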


\begin{theorem} \begin{enumerate}[(i)]\item for any $\xi, \zeta\leqslant \omega_1$, there is a Banach space $Z$ with closed subspace $Y$ such that $\textsf{\emph{v}}(Z)=\omega^\xi$ and $\textsf{\emph{v}}(Y)=\omega^\zeta$ if and only if $\xi\leqslant \zeta$.   \item For any $\xi, \zeta\leqslant \omega_1$, there is a Banach space $Z$ with a closed subspace $Y$ such that $\textsf{\emph{v}}(Z)=\omega^\xi$ and $\textsf{\emph{v}}(Z/Y)=\omega^\zeta$.  \item For any ordinals $\xi, \zeta, \eta\leqslant \omega_1$, there exists a Banach space $W$ with closed subspace $X$ such that $\textsf{\emph{v}}(W)=\omega^\xi$, $\textsf{\emph{v}}(X)=\omega^\zeta$, and $\textsf{\emph{v}}(W/X)=\omega^\eta$ if and only if either $\eta\leqslant \xi\leqslant \zeta$ or $\xi=\zeta$.  \end{enumerate}

\end{theorem}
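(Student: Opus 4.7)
The necessity in (i) and (iii) will follow immediately from Proposition \ref{ds}(ii). In (i), the closed embedding $Y \hookrightarrow Z$ forces $\omega^\xi = \textsf{v}(Z) \leqslant \textsf{v}(Y) = \omega^\zeta$, hence $\xi \leqslant \zeta$. In (iii) the same inequality again gives $\xi \leqslant \zeta$, while the complementary bound $\min\{\textsf{v}(X), \textsf{v}(W/X)\} \leqslant \textsf{v}(W)$ reads $\min\{\omega^\zeta, \omega^\eta\} \leqslant \omega^\xi$; when $\xi < \zeta$ the minimum must be attained at $\omega^\eta$, forcing $\eta \leqslant \xi$, whereas when $\xi = \zeta$ the condition holds automatically and $\eta$ is unrestricted.

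For sufficiency my plan is to build every required example out of two ingredients: the spaces $Z_\xi(c_0)$, which by Proposition \ref{ds}(i) realize $\textsf{v} = \omega^\xi$ for each $\xi \leqslant \omega_1$, and $\ell_1$, the classical feature being that every separable Banach space is a quotient of $\ell_1$. The decisive property of $\ell_1$ here is that it is Schur, so Proposition \ref{ds}(ii) forces \emph{every} closed subspace $L \subseteq \ell_1$ to satisfy $\textsf{v}(L) = \omega_1$. For each $\eta \leqslant \omega_1$ I will therefore fix a surjection $q_\eta : \ell_1 \to Z_\eta(c_0)$ and write $L_\eta := \ker q_\eta$, so that $\ell_1/L_\eta \cong Z_\eta(c_0)$ and $\textsf{v}(L_\eta) = \omega_1$. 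Every subsequent computation of $\textsf{v}$ will then be handled by Proposition \ref{ds}(iii), which evaluates $\textsf{v}$ of a direct sum as the minimum of the factors.

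For (i), given $\xi \leqslant \zeta$, I take $Z := Z_\xi(c_0) \oplus Z_\zeta(c_0)$ and $Y := \{0\} \oplus Z_\zeta(c_0)$, yielding $\textsf{v}(Z) = \min\{\omega^\xi, \omega^\zeta\} = \omega^\xi$ and $\textsf{v}(Y) = \omega^\zeta$. For (ii), given arbitrary $\xi, \zeta \leqslant \omega_1$, I take $Z := Z_\xi(c_0) \oplus \ell_1$ with $Y := Z_\xi(c_0) \oplus L_\zeta$; then $Z/Y \cong \ell_1/L_\zeta \cong Z_\zeta(c_0)$, so $\textsf{v}(Z/Y) = \omega^\zeta$, while $\textsf{v}(Z) = \min\{\omega^\xi, \omega_1\} = \omega^\xi$.

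For (iii) the construction splits into exactly the two cases identified in the necessity analysis. When $\eta \leqslant \xi \leqslant \zeta$ I put $W := Z_\xi(c_0) \oplus \ell_1 \oplus Z_\zeta(c_0)$ and $X := \{0\} \oplus L_\eta \oplus Z_\zeta(c_0)$; then $W/X \cong Z_\xi(c_0) \oplus Z_\eta(c_0)$ has $\textsf{v} = \min\{\omega^\xi, \omega^\eta\} = \omega^\eta$ (because $\eta \leqslant \xi$), and the identities $\textsf{v}(X) = \min\{\omega_1, \omega^\zeta\} = \omega^\zeta$ and $\textsf{v}(W) = \min\{\omega^\xi, \omega_1, \omega^\zeta\} = \omega^\xi$ follow from Proposition \ref{ds}. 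When instead $\xi = \zeta$ with $\eta$ unrestricted (in particular the case $\eta > \xi$ that the first construction cannot reach) I put $W := \ell_1 \oplus Z_\xi(c_0)$ and $X := L_\eta \oplus Z_\xi(c_0)$; now $W/X \cong Z_\eta(c_0)$ gives $\textsf{v}(W/X) = \omega^\eta$, and $\textsf{v}(W) = \textsf{v}(X) = \min\{\omega_1, \omega^\xi\} = \omega^\xi = \omega^\zeta$. I anticipate no serious obstacle beyond the bookkeeping of which summand absorbs which constraint: the only conceptual point is to route every prescribed quotient through $\ell_1$ and place the required subspace $\textsf{v}$ on a separate direct summand, so that the Schur property of $\ell_1$ prevents its kernel from dragging the subspace index below the target value.
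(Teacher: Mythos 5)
Your proposal is correct, and for the sufficiency direction it takes a genuinely different route from the paper. The paper's constructions in (ii) and (iii) rely on the interpolation spaces of Section 5: by Corollary \ref{beans}, $Z_\xi(E_{Z_\eta(c_0)})$ has $\textsf{v}$-index $\omega^\xi$ (when $\omega^\eta\leqslant\omega^\xi$) and admits $Z_\eta(c_0)$ as a quotient, and the kernel of that quotient map supplies the required subspace, with the cases $\xi\leqslant\zeta$ and $\zeta<\xi$ in (ii) (and the three subcases of (iii)) handled by separate such constructions. You instead route every prescribed quotient through $\ell_1$: using the classical universal quotient property of $\ell_1$ (an ingredient the paper does not invoke), you fix a quotient map $q_\eta:\ell_1\to Z_\eta(c_0)$ with kernel $L_\eta$, observe that $\textsf{v}(L_\eta)=\omega_1$ since $L_\eta$ is a closed subspace of the Schur space $\ell_1$ (via Proposition \ref{ds}(ii)), and assemble direct sums whose indices are computed entirely by the minimum formula of Proposition \ref{ds}(iii). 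This buys a unified construction for (ii) and, in (iii), sidesteps any need to determine the $\textsf{v}$-index of the kernel of the quotient map out of $Z_\xi(E_X)$: in your version the kernel always sits inside $\ell_1$ and therefore has index $\omega_1$ automatically, which the direct-sum minimum then absorbs. The necessity arguments are essentially the paper's.
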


\begin{proof}$(i)$ By Proposition \ref{ds}, if there exist Banach spaces $Y\leqslant Z$ such that $\textsf{v}(Z)=\omega^\xi$ and $\textsf{v}(Y)=\omega^\zeta$, then $\zeta\geqslant \xi$.  Conversely, for any $0\leqslant \xi\leqslant \zeta\leqslant \omega_1$, we may take $Y=Z_\zeta(c_0)$ and $Z=Z_\xi(c_0)\oplus Y$.

$(ii)$ Fix $\xi, \zeta\leqslant \omega_1$.  If $0\leqslant \xi\leqslant \zeta\leqslant \omega_1$, we let $Y=Z_\xi(c_0)$ and $Z=Z_\xi(c_0)\oplus Z_\zeta(c_0)$, so $Z/Y=Z_\zeta(c_0)$.   Now suppose that $0\leqslant \zeta<\xi\leqslant \omega_1$.  Then $Z_\zeta(c_0)$ is  a quotient of $Z_\xi(Z_\zeta(c_0))$.  We take $Y$ to be the kernel of any quotient map from $Z_\xi(Z_\zeta(c_0))$ onto $Z_\zeta(c_0)$.

$(iii)$ Suppose $W$ is a Banach space and $X$ is a closed subspace such that $\textsf{v}(W)=\omega^\xi$, $\textsf{v}(X)=\omega^\zeta$, and $\textsf{v}(W/X)=\omega^\eta$.  Then $$\min \{\textsf{v}(X), \textsf{v}(W/X)\} \leqslant \textsf{v}(W) \leqslant \textsf{v}(X).$$  If $\min \{\textsf{v}(X), \textsf{v}(W/X)\}=\textsf{v}(X)$, then $\xi=\zeta$. If $\min \{\textsf{v}(X), \textsf{v}(W/X)\}=\textsf{v}(W/X)$, then $\eta\leqslant \xi\leqslant \zeta$.  

Conversely, if $\eta\leqslant \xi\leqslant \zeta$, then $Z_\eta(c_0)$ is a quotient of $Z:=Z_\xi(Z_\eta(c_0))$.  We let $Y$ be the kernel of any quotient map from $Z$ onto $Z_\eta(c_0)$ and let $X=Z_\zeta(c_0)$ and $W=Z\oplus X$.   

If $\xi=\zeta$ and $0\leqslant \xi\leqslant \eta \leqslant \omega_1$, we may take $X=Z_\eta(c_0)$ and $W=Z_\xi(c_0)\oplus X$.

If $\xi=\zeta$ and $0\leqslant \eta<\xi$, we take $Z=Z_\xi(Z_\eta(c_0))$ and let $X$ be the kernel of any quotient map from $Z_\xi(Z_\eta(c_0))$ onto $Z_\eta(c_0)$.

\end{proof}

\section{A remark on descriptive set theoretic complexity}

Due to the brevity of this section, we refer the reader to \cite{Dodos} and \cite{BCFW} for a more detailed presentation of $\textbf{SB}$ and $\mathcal{L}$, respectively.   We recall that $C(2^\nn)$ denotes the space of all continuous, scalar-valued functions on the Cantor set $2^\nn$ and $\textbf{SB}$ denotes  the set of all closed subsets of $C(2^\nn)$  which are linear subspaces. The space $\textbf{SB}$ can be endowed with a Polish topology, and as such, we can consider the Effros-Borel structure of $\textbf{SB}$.     This is a coding of all separable Banach spaces, and it is of interest to study the descriptive set theoretic complexity of subsets of $\textbf{SB}$.  We also recall the existence of a sequence $d_n:\textbf{SB}\to C(2^\nn)$ of Borel functions such that for each $X\in \textbf{SB}$, $X=\overline{\{d_n(X): n\in\nn\}}$ \cite{KRN}.   As usual, we are not concerned with the particular Polish topology on $\textbf{SB}$, but only the Borel $\sigma$-algebra it generates, so we may select a specific topology on $\textbf{SB}$ which generates the Effros-Borel $\sigma$-algebra which is convenient for our purposes.   By standard techniques \cite[Theorem $13.1$, page 82]{Ke}, we may take a sequence $(U_n)_{n=1}^\infty$ of open subsets of $C(2^\nn)$ which form a neighborhood basis for the norm topology on $C(2^\nn)$ and fix a Polish topology on $\textbf{SB}$ which generates the Effros-Borel $\sigma$-algebra and contains each of the sets $d_m^{-1}(U_m)$, $m,n\in\nn$.    Therefore we can, and in the sequel do, assume a fixed topology on $\textbf{SB}$ such that for each $n\in\nn$, $d_n:\textbf{SB}\to C(2^\nn)$ is continuous.

We endow $\textbf{SB}\times \textbf{SB}\times C(2^\nn)^\nn$ with the product topology, which is also Polish.   We then let $\mathcal{L}$ denote the set of all triples $(X,Y, (y_n)_{n=1}^\infty)\in \textbf{SB}\times \textbf{SB}\times C(2^\nn)^\nn$ such that there exists $k\in\nn$ such that for any $n\in\nn$ and any scalars $(a_i)_{i=1}^n$, $$\|\sum_{i=1}^n a_i y_n\|\leqslant k \|\sum_{i=1}^n a_i d_i(X)\|.$$  Then $\mathcal{L}$ is a Borel (and, in fact, $F_\sigma$) subset of $\textbf{SB}\times \textbf{SB}\times C(2^\nn)^\nn$, and we may endow it with a Polish topology which generates the Effros-Borel $\sigma$-algebra (which is the product of the Borel $\sigma$-algebras) and which is stronger than the product topology coming from $\textbf{SB}\times \textbf{SB}\times C(2^\nn)^\nn$.   However, since we are not concerned with the particular topology on $\mathcal{L}$, only the corresponding  Borel $\sigma$-algebra, we will endow $\mathcal{L}$ with the product topology.

We recall that a subset $T$ of a Polish space $P$ is \begin{enumerate}[(i)]\item \emph{analytic} (or $\Sigma_1^1$) if there exist a Polish space $Y$, a Borel subset $B$ of $Y$, and a continuous function $f:Y\to P$ such that $f(B)=T$, \item \emph{coanalytic} (or $\Pi_1^1$) if its complement is $\Sigma_1^1$, \item $\Sigma_2^1$ if there exists a Polish space $Y$, a coanalytic subset $A$ of $Y$, and a continuous function $f:Y\to P$ such that $f(A)=Y$, \item $\Pi_2^1$ if its complement is $\Sigma_2^1$. \end{enumerate}

We say $\Gamma$ is a \emph{pointclass} if $\Gamma$ is a class of subsets of Polish spaces such that for every Polish space $P$, $\Gamma(P)\subset \Gamma$ is a subset of the power set of $P$.   For example, we let $\Sigma_1^1$ denote the class of all analytic subsets of Polish spaces and $\Sigma_1^1(P)$ denotes the set of analytic subsets of $P$.   In this case, if $P$ is a Polish space and $B\in \Gamma(P)$, we say $B$ is $\Gamma$-\emph{complete} provided that for any Polish space $Y$ and any $A\in \Gamma(Y)$, there exists a Borel function $f:Y\to P$ such that $f^{-1}(B)=A$.

We let $\textbf{Tr}$ denote the subset of $2^{\nn^{<\nn}}$ consisting of those subsets of $\nn^{<\nn}$ which are trees and $\textbf{WF}$ the subset of $\textbf{Tr}$ consisting of well-founded trees (that is, the trees $T\in \textbf{Tr}$ such that there do not exist $(n_i)_{i=1}^\infty\in \nn^\nn$ such that $(n_i)_{i=1}^k\in T$ for all $k\in \nn$).   

\begin{fact}\cite{Dodos} The set $\textbf{\emph{Tr}}$ is a Polish space and $\textbf{\emph{WF}}$ is $\Pi_1^1$-complete in $\textbf{\emph{Tr}}$.  
\label{f1}
\end{fact}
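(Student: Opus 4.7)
The plan is to proceed in three steps: realize $\textbf{Tr}$ as a compact Polish space, exhibit $\textbf{WF}$ as the complement of a closed-set projection, and produce a Borel reduction from an arbitrary $\Pi_1^1$ set to $\textbf{WF}$.

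For the Polish structure, the space $2^{\nn^{<\nn}}$ with the product topology is compact metrizable (homeomorphic to the Cantor set), and $\textbf{Tr}$ is the set of $T\in 2^{\nn^{<\nn}}$ satisfying the countable family of clopen conditions ``if $s\in T$ and $t\preceq s$, then $t\in T$,'' indexed by pairs in $\nn^{<\nn}\times\nn^{<\nn}$. Hence $\textbf{Tr}$ is closed in $2^{\nn^{<\nn}}$ and is itself a compact Polish space. For the $\Pi_1^1$ assertion, I would consider
\[C=\bigl\{(T,\sigma)\in\textbf{Tr}\times\nn^\nn: (\forall k\in\nn)\ (\sigma(1),\ldots,\sigma(k))\in T\bigr\}.\]
Each defining condition is clopen, so $C$ is closed; by definition, $\textbf{Tr}\setminus\textbf{WF}$ is the projection of $C$ onto $\textbf{Tr}$, hence analytic.

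For $\Pi_1^1$-completeness, given a Polish space $Y$ and a coanalytic set $A\subseteq Y$, I must produce a Borel function $f:Y\to\textbf{Tr}$ with $f^{-1}(\textbf{WF})=A$. The naive choice---writing $Y\setminus A=\pi_Y(F)$ for some closed $F\subseteq Y\times\nn^\nn$ and taking $f(y)$ to be the initial-segment tree of the section $F_y$---would correctly satisfy $f^{-1}(\textbf{WF})=A$ but is only guaranteed to be analytically measurable, since ``$s\in f(y)$'' amounts to ``$F\cap(\{y\}\times N_s)\neq\varnothing$,'' a projection of a closed set. To repair this, I would first fix a Borel injection $\phi:Y\to\nn^\nn$ (every Polish space is Borel-isomorphic to a Borel subset of $\nn^\nn$) and represent the analytic set $\phi(Y\setminus A)\subseteq\nn^\nn$ as $p[S]$ for some tree $S$ on $\nn\times\nn$. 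Setting $f(y)=\{s\in\nn^{<\nn}:(\phi(y)|_{|s|},s)\in S\}$ gives a tree, and now ``$s\in f(y)$'' depends only on $\phi(y)|_{|s|}$, so $\alpha\mapsto\{s:(\alpha|_{|s|},s)\in S\}$ is continuous from $\nn^\nn$ to $\textbf{Tr}$, making $f$ Borel through $\phi$. Injectivity of $\phi$ together with the identity $p[S]=\phi(Y\setminus A)$ then yields $f(y)\in\textbf{WF}$ iff $\phi(y)\notin p[S]$ iff $y\in A$.

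The principal obstacle is exactly this Borel-versus-analytic subtlety in the completeness step: pointwise each closed section $F_y$ has a canonical tree representation, but assembling these pointwise trees into a Borel function of $y$ requires passing through a continuous tree parametrization of analytic sets in $\nn^\nn$, which is available only because $\nn^\nn$ is zero-dimensional. The remaining ingredients---the compactness of $\textbf{Tr}$ inside the Cantor space and the closed-projection description of the complement of $\textbf{WF}$---are essentially formal once one writes out the clopen defining conditions.
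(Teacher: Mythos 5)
Your proof is correct, and it is essentially the standard textbook argument; the paper itself gives no proof of this fact but simply cites \cite{Dodos}, which in turn follows the classical treatment (as in Kechris's book). You have correctly identified the one genuine subtlety: in the completeness step, passing from a closed set $F\subseteq Y\times\nn^\nn$ witnessing $Y\setminus A$ to the pointwise initial-segment tree of $F_y$ need not be Borel in $y$, and the fix is precisely to first Borel-embed $Y$ into $\nn^\nn$ and then use a tree $S$ on $\nn\times\nn$ representing the analytic image, so that membership of $s$ in the associated tree depends only on the first $|s|$ coordinates of $\phi(y)$, yielding continuity of the parametrized tree assignment. It is worth remarking that the resulting tree $\{s:(\phi(y)\restriction|s|,\,s)\in S\}$ may contain dead branches (nodes not extendible to an infinite branch), so it need not coincide with the initial-segment tree of the section, but this is immaterial: well-foundedness is unaffected, and that is all the reduction requires.
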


\begin{fact} If $\Gamma=\Pi_1^1$ or $\Pi_2^1$, $P,Y$ are Polish spaces,  $B\in \Gamma(P)$ is $\Gamma$-complete, $A\in \Gamma(Y)$, and there exists a Borel function $g:P\to Y$ such that $g^{-1}(A)=B$, then $A$ is $\Gamma$-complete. 
\label{f2}

\end{fact}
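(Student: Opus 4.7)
The plan is to observe that this is just the standard transitivity of $\Gamma$-completeness under Borel reductions, and to carry it out directly without appealing to any special structure of $\Pi_1^1$ or $\Pi_2^1$. Unpacking the definition, I need to show that for every Polish space $Z$ and every $C \in \Gamma(Z)$, there exists a Borel map $Z \to Y$ pulling $A$ back to $C$. The membership $A \in \Gamma(Y)$ is part of the hypothesis, so only the reduction statement requires work.

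The construction is the obvious composition. Fix an arbitrary Polish space $Z$ and $C \in \Gamma(Z)$. Because $B$ is $\Gamma$-complete in $P$, there is a Borel function $f : Z \to P$ such that $f^{-1}(B) = C$. The role of the map $g$ in the hypothesis is precisely to serve as a Borel reduction of $B$ to $A$, i.e., $g^{-1}(A) = B$. So I would set $h := g \circ f : Z \to Y$ and claim that $h$ is the desired reduction from $C$ to $A$.

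For verification, I would note first that $h$ is Borel: a map between Polish spaces is Borel iff preimages of Borel sets are Borel, and if $V \subset Y$ is Borel then $g^{-1}(V) \subset P$ is Borel (as $g$ is Borel) and hence $f^{-1}(g^{-1}(V)) \subset Z$ is Borel (as $f$ is Borel). Second, the reduction identity is a one-line substitution:
\[
h^{-1}(A) \;=\; f^{-1}\bigl(g^{-1}(A)\bigr) \;=\; f^{-1}(B) \;=\; C,
\]
using $g^{-1}(A) = B$ in the middle equality and the choice of $f$ at the end.

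I do not expect any serious obstacle: the statement is a purely formal manipulation, and neither hypothesis on $\Gamma$ nor any descriptive-set-theoretic machinery beyond the definitions is invoked. The only minor point worth flagging is the closure of Borel maps between Polish spaces under composition, but this is built into the definition and requires no separate argument.
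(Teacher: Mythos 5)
Your proof is correct and is the standard transitivity argument; the paper states this as a Fact without proof, and your composition $h = g \circ f$ argument is exactly what is implicitly being invoked.
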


  We are now ready to prove the following.

\begin{theorem} For $0<\xi<\omega_1$, the class $\mathfrak{V}_\xi\cap \mathcal{L}$ is $\Pi_1^1$-complete in $\mathcal{L}$ and the class $\textsf{\emph{V}}_\xi\cap \textbf{\emph{SB}}$ is $\Pi_1^1$-complete in $\mathcal{L}$.

\end{theorem}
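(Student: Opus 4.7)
The plan proceeds in two parts: the coanalytic upper bound, and $\Pi_1^1$-hardness via Borel reduction from $\textbf{WF}$.

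For the upper bound, I use Proposition \ref{and another thing}: fixing a positive null sequence $(\delta_n)$, for each countable $\zeta$, an operator $A:X\to Y$ fails to be $\zeta$-completely continuous iff there exist $M\in[\nn]$, $\ee>0$, and a bounded sequence $(x_i)\subset X$ with $\|\sum_i \mathbb{S}^\zeta_{N,n}(i)x_i\|\leqslant \delta_n$ for all $N\in[M]$ and $n\in\nn$, and $\inf_i\|Ax_i\|\geqslant \ee$. In the coding $\mathcal{L}$, $A$ is specified by $(X,Y,(y_n))$ with $Ad_n(X)=y_n$; any bounded sequence in $X$ is approximated by finitely-supported rational combinations of $(d_n(X))$, and the norm conditions become Borel functions of the triple and the witness. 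Hence ``$A$ is not $\zeta$-completely continuous'' is $\Sigma_1^1$ in $\mathcal{L}$. By Proposition \ref{easy prop}(iii), $\mathfrak{V}_\xi\cap \mathcal{L}=\bigcap_{\zeta<\xi}\{A\in \mathcal{L}:A\text{ is }\zeta\text{-completely continuous}\}$ is a countable intersection of $\Pi_1^1$ sets, hence $\Pi_1^1$. Specializing to identity operators yields the same bound for $\textsf{V}_\xi\cap \textbf{SB}$.

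For hardness, by Facts \ref{f1}--\ref{f2} it suffices to exhibit a Borel reduction $T\mapsto X_T$ of $\textbf{WF}$ to $\textsf{V}_\xi\cap \textbf{SB}$; the reduction for $\mathcal{L}$ follows by composing with $X\mapsto(X,X,(d_i(X)))$. Given $T\in \textbf{Tr}$, I first encode $T$ as a hereditary family $\mathcal{F}_T\subset [\nn]^{<\nn}$ via a Borel identification of $\nn^{<\nn}$ with strictly increasing finite subsets of $\nn$ (taking hereditary and spreading closure as needed), so that $T\in\textbf{WF}$ iff $\mathcal{F}_T$ is compact of countable CB index. I then construct a separable Banach space $X_T$ whose norming family is determined by $\mathcal{F}_T$ combined with an appropriate iterate of the $Z_\xi(\cdot)$ construction of Section 5, designed so that the presence of an $\ell_1^\xi$-spreading model in a weakly null sequence of $X_T$ is equivalent to the failure of compactness of $\mathcal{F}_T$. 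When $T\notin \textbf{WF}$, an infinite branch of $T$ yields an increasing sequence of integers whose finite initial segments populate $\mathcal{F}_T$ cofinally, so the corresponding basis subsequence in $X_T$ is weakly null but an $\ell_1^\xi$-spreading model; thus $\textsf{v}(X_T)<\xi$. When $T\in\textbf{WF}$, the countable CB index of $\mathcal{F}_T$ combined with the convex-combination control of Theorem \ref{strm} and the spreading-model preservation in Corollaries \ref{conn}--\ref{beans} rules out any $\ell_1^\xi$-spreading model in a weakly null sequence, forcing $\textsf{v}(X_T)\geqslant \xi$. The uniformity of the construction makes $T\mapsto X_T$ Borel.

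The main obstacle is calibrating $T\mapsto X_T$ so that the combinatorial dichotomy on $T$ matches the Banach-space dichotomy on $X_T$ uniformly across all $0<\xi<\omega_1$, especially since $\textsf{v}(X_T)$ only ever takes values of the form $\omega^\gamma$ and one must land in the correct interval determined by $\xi$ (so e.g.\ for $\xi=\omega^\eta+1$ one needs $\textsf{v}(X_T)\in\{\omega^\gamma:\gamma\leqslant\eta\}$ in the ill-founded case and $\textsf{v}(X_T)\geqslant\omega^{\eta+1}$ in the well-founded case). The repeated-averages hierarchy, the convolution regulatory property (Theorem \ref{weather chopper}), and the $Z_\xi$-quotient-lifting tools of Section 5 are calibrated precisely to enable this passage, but weaving them into a single Borel construction valid for every $\xi$ is the delicate technical step.
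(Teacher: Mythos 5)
Your upper bound argument is essentially the paper's: both rest on Proposition \ref{and another thing} and encode the witnessing data $(M, (x_i), \ee)$ into a Polish space so that ``$A$ fails to be $\xi$-completely continuous'' becomes the projection of a Borel set, hence $\Sigma_1^1$. (The paper does this with a single projection rather than your countable intersection over $\zeta<\xi$, but both are fine; the real work is the same, namely verifying that the conditions in Proposition \ref{and another thing}, once recast in terms of $(r_i), (m_i), p$, cut out a Borel subset of $\mathcal{L}\times\nn^\nn\times\nn^\nn\times\nn$, which requires the closedness-of-$C$ argument with a carefully chosen neighborhood in $\textbf{SB}$.)

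The hardness half has a genuine gap, and the calibration difficulty you flag at the end is a symptom of over-engineering the reduction. You are trying to build $X_T$ so that $\textsf{v}(X_T)$ lands in a prescribed interval depending on $\xi$, but no such fine control is needed. For $\Pi_1^1$-hardness of $\textsf{V}_\xi\cap\textbf{SB}$ you only need a single Borel map $\tau:\textbf{Tr}\to\textbf{SB}$ with $\tau^{-1}(\textsf{V}_\xi\cap\textbf{SB})=\textbf{WF}$, and the easiest way to arrange this is to make the two outcomes lie at the extreme ends of the $\textsf{v}$-scale: if $T$ is well-founded, $\tau(T)$ should have the Schur property (so it lies in $\textsf{V}_\xi$ for \emph{every} $\xi\leqslant\omega_1$), and if $T$ is ill-founded, $\tau(T)$ should contain a copy of $c_0$ (so it fails to be even $1$-Schur, hence is outside $\textsf{V}_\xi$ for every $\xi>0$). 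The paper uses precisely such a reduction: the James-tree-type space $J$ with norm $\|\sum a_t e_t\|_J=\sup\{\sum_{i=1}^n\max_{t\in\mathfrak{s}_i}|a_t|\}$ over incomparable segments $\mathfrak{s}_1,\ldots,\mathfrak{s}_n$, and $\tau(T)=J^T=\overline{\text{span}}\{e_t:t\in T\}$. When $T\in\textbf{WF}$ the space $J^T$ is Schur; when $T\notin\textbf{WF}$ an infinite branch produces a copy of $c_0$. This single map witnesses $\tau^{-1}(\textsf{V}_\xi)=\tau^{-1}(\textsf{V})=\textbf{WF}$ for all $0<\xi<\omega_1$ simultaneously, with no dependence on $\xi$ at all. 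None of the Section $5$ machinery ($Z_\xi$, Corollary \ref{beans}, etc.) is needed for the hardness direction --- it was built to show the \emph{distinctness} of the classes and the realizability of each $\textsf{v}$-value, a different problem. Your proposed construction remains unspecified at its crucial step (``construct a separable Banach space $X_T$ whose norming family is determined by $\mathcal{F}_T$ combined with an appropriate iterate of $Z_\xi(\cdot)$'') and, as you note yourself, you have not resolved how to make it work uniformly; the paper's route sidesteps the issue entirely.
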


\begin{proof}   Let $A,B,C$, respectively, be the subsets of $P:=\mathcal{L}\times \nn^\nn\times \nn^\nn\times \nn$ consisting of those $$((X,Y, (y_n)_{n=1}^\infty), (r_i)_{i=1}^\infty, (m_i)_{i=1}^\infty, p)\in P$$ such that \begin{equation} m_1<m_2<\ldots \tag{$A$}\end{equation} \begin{equation} \text{for all\ }r\in \{r_1, r_2, \ldots\}, \|y_r\|\geqslant 1/p \tag{$B$}\end{equation}  \begin{equation} \text{\ for all\ }N\in[\{m_1, m_2, \ldots\}]\text{\ and\ }t\in\nn, \|\sum_{i=1}^\infty \textbf{S}^\xi_{N,t}(i)d_{r_i}(X)\|\leqslant 1/t. \tag{$C$}\end{equation} 

Standard arguments yield that $A,B$ are closed sets.    We will prove that $C$ is Borel, and in fact closed with respect to our particular choice of topology.    Suppose $((X,Y, (y_n)_{n=1}^\infty), (r_i)_{i=1}^\infty, (m_i)_{i=1}^\infty, p)$ lies in the complement of $C$.   This means there exist $N\in [\{m_1, m_2, \ldots\}]$ and $t\in \nn$ such that $$a:=\|\sum_{i=1}^\infty \textbf{S}^\xi_{N,t}(i)d_{r_i}(X)\|>1/t.$$  Let $E=\cup_{i=1}^t \text{supp}(\mathbb{S}^\xi_{N,i})$ and note that this is a finite initial segment of $N$ such that if $L$ is any infinite subset of $\nn$ with $E<L$, then $\mathbb{S}^\xi_{N,i}=\mathbb{S}^\xi_{E\cup L, i}$ for each $1\leqslant i\leqslant t$.     Let $U$ be a neighborhood of $X$ in $\textbf{SB}$ such that for each $i\in \nn$ with $r_i\in E$, $\|d_{r_i}(X)-d_{r_i}(Z)\|<\|\sum_{i=1}^\infty \mathbb{S}^\xi_{N,t}(i)d_{r_i}(X)\|-1/t$ for any $Z\in U$.  We may choose such a set $U$, since there are only finitely many such $i$ and the selectors $d_{r_i}$ are continuous. Fix any $k\in\nn$ such that $\text{supp}(\mathbb{S}^\xi_{N, t})\subset \{m_1, \ldots, m_k\}$ and $E\subset \{1, \ldots, k\}$.   Let $$V=\{L\in [\nn]: L\cap \{1, \ldots, m_k\}= \{m_1, \ldots, m_k\}\}$$ and $$W=\{L\in[\nn]: L\cap \{1, \ldots, r_k\}= \{r_1, \ldots, r_k\}\}.$$  These are open neighborhoods of $M$ and $R$, respectively.   Finally, let $$H=\{((Z_1, Z_2, (z_n)_{n=1}^\infty), (s_i)_{i=1}^\infty, (j_i)_{i=1}^\infty, q)\in \mathcal{L}\times \nn^\nn\times \nn^\nn\times \nn: Z_1\in U, (s_i)_{i=1}^\infty\in W, (j_i)_{i=1}^\infty \in V\}.$$  Then $H$ is an open neighborhood of $((X,Y,(y_n)_{n=1}^\infty),(r_i)_{i=1}^\infty,  (m_i)_{i=1}^\infty,p)$ and $H\cap C=\varnothing$. This yields the openness of the complement of $C$, and the closedness of $C$.   Let us see that $H\cap C=\varnothing$.  If $$((Z_1, Z_2, (z_n)_{n=1}^\infty), (s_i)_{i=1}^\infty, (j_i)_{i=1}^\infty, p)\in H,$$ then $E$ is an initial segment of some infinite subset $L$ of $\{s_1, s_2, \ldots\}$.    Furthermore, $\mathbb{S}^\xi_{L,i}=\mathbb{S}^\xi_{N,i}$ for all $1\leqslant i\leqslant t$, $r_i=j_i$ for all $i\in \cup_{m=1}^t \text{supp}(\mathbb{S}^\xi_{N,m})=\cup_{m=1}^t \text{supp}(\mathbb{S}^\xi_{L,m})$, and $\|d_{r_i}(X)-d_{r_i}(Z_1)\|<a-1/t$ for all such $i$.  From this it follows that \begin{align*} \|\sum_{i=1}^\infty \mathbb{S}^\xi_{L,t}(i)d_{j_i}(Z_1)\| & \geqslant \|\sum_{i=1}^\infty \mathbb{S}^\xi_{N,t}(i)d_{j_i}(Z_1)\|- \sum_{i\in \text{supp}(\mathbb{S}^\xi_{N,t})} \mathbb{S}^\xi_{N,t}(i) \|d_{r_i}(Z_1)-d_{r_i}(X)\| \\ & > 1/t. \end{align*}

Now we let $\pi:\mathcal{L}\times\nn^\nn\times\nn^\nn\times \nn\to \mathcal{L}$ be the projection and note that by Proposition \ref{and another thing}, $\pi(A\cap B\cap C)$ is the complement of the $\xi$-completely continuous operators.  This yields that $\mathfrak{V}_\xi\cap \mathcal{L}$ is $\Pi_1^1$.   Now it is easy to see that the map $\Phi:\textbf{SB}\to \mathcal{L}$ given by $\Phi(X)=(X,X,(d_n(X))_{n=1}^\infty)$ is Borel and $\Phi^{-1}(\mathfrak{V}_\xi\cap \mathcal{L})= \textsf{V}_\xi\cap \textbf{SB}$, yielding that $\textsf{V}_\xi\cap \textbf{SB}$ is $\Pi_1^1$.

The usual examples (which are modifications of examples appearing in \cite{Boss}) serve to show that $\textsf{V}_\xi\cap \textbf{SB}$ is $\Pi_1^1$-complete.   We define the map $T\mapsto J^T$, where $J$ is any fixed subspace of $\textbf{SB}$ isometrically isomorphic to the completion of $c_{00}([\nn]^{<\nn}\setminus \{\varnothing\})$ endowed with the norm $$\|\sum_{t\in [\nn]^{<\nn}\setminus \{\varnothing\}} a_t e_t\|_J = \sup\Bigl\{\sum_{i=1}^n \max_{t\in \mathfrak{s}_i} |a_t|: n\in\nn, \mathfrak{s}_1, \ldots, \mathfrak{s}_n\text{\ are incomparable segments}\Bigr\},$$ and $J^T$ is the closed span in $J$ of $[e_t: t\in T]$.    Then if $T$ is well-founded, $J^T$ has the Schur property, and if $T$ is ill-founded, $J^T$ contains a copy of $c_0$ and is not $1$-Schur.   Furthermore, the map $\tau(T)=J^T$ satisfies $\tau^{-1}(\textsf{V}_\xi)=\tau^{-1}(\textsf{V})=\textbf{WF}$.  Now we note that since $\Phi:\textbf{SB}\to \mathcal{L}$ as given in the previous paragraph is Borel and $\Phi^{-1}(\mathfrak{V}_\xi\cap \mathcal{L})=\textsf{V}_\xi\cap \textbf{SB}$, Fact \ref{f2} yields that $\mathfrak{V}_\xi \cap \mathcal{L}$ is $\Pi_1^1$-complete.

\end{proof}

\begin{proposition} The class $\mathfrak{V}\cap \mathcal{L}$ is $\Pi_2^1$.   
\label{new prop}
\end{proposition}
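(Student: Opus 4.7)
The plan is to show the complement $\complement\mathfrak{V}\cap \mathcal{L}$ is $\Sigma_2^1$. Recall that $A\in \mathcal{L}(X,Y)$ fails to be completely continuous if and only if there exists a bounded sequence $(x_m)_{m=1}^\infty$ in $X$ which is weakly null and satisfies $\inf_m \|Ax_m\|>0$. By a routine density-and-perturbation argument I would parametrize candidate sequences by rational arrays $\alpha=(a_{m,k})_{m,k\in \nn}\in (\mathbb{Q}^{<\nn})^\nn$, a Polish space, setting $x_m(\alpha,X):=\sum_k a_{m,k}d_k(X)\in X\subset C(2^\nn)$ and $(Ax_m)(\alpha,(y_n)):=\sum_k a_{m,k}y_k\in Y$. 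The problem then reduces to showing that, in the joint variables $((X,Y,(y_n)),\alpha)$, the conjunction of \emph{boundedness}, \emph{bounded-below after $A$}, and \emph{weak nullity} is a $\Pi_1^1$ condition; existentially quantifying over $\alpha$ will then produce a $\Sigma_2^1$ description.

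The first two conditions are easily $F_\sigma$: they read $\exists K\in \nn\,\forall m\,\|x_m\|\leqslant K$ and $\exists c\in \mathbb{Q}^+\,\forall m\,\|Ax_m\|\geqslant c$, and the relevant norms depend in a Borel fashion on $((X,Y,(y_n)),\alpha)$ because the selectors $d_k:\textbf{SB}\to C(2^\nn)$ are continuous. The main obstacle is encoding weak nullity of $(x_m)$ in $X$, since $X^*$ need not be separable. My strategy is to exploit the embedding $X\subset C(2^\nn)$ together with Hahn--Banach: every $x^*\in B_{X^*}$ extends to some $\mu\in B_{M(2^\nn)}$, where $M(2^\nn)=C(2^\nn)^*$ in its weak$^*$ topology is compact metrizable, hence Polish. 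This yields
\[
(x_m)\text{ is weakly null in }X\iff \forall\mu\in B_{M(2^\nn)}\,\forall k\in \nn\,\exists N\in\nn\,\forall m\geqslant N: \bigl|\textstyle\int x_m\,d\mu\bigr|<1/k,
\]
uniformly over $X\in \textbf{SB}$.

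Since the map $(X,\alpha,\mu,m)\mapsto \int x_m(\alpha,X)\,d\mu$ is a finite sum of the terms $a_{m,k}\int d_k(X)\,d\mu$ and each of these is jointly Borel in $(X,\mu)$ by continuity of $d_k$ and weak$^*$ continuity of integration against a fixed continuous function, the inner matrix of integer quantifiers is Borel; the outer universal quantifier $\forall\mu\in B_{M(2^\nn)}$ over a Polish space then pushes weak nullity to $\Pi_1^1$ in $((X,Y,(y_n)),\alpha)$. Conjoining with the two Borel conditions and then applying $\exists\alpha\in (\mathbb{Q}^{<\nn})^\nn$ yields the desired $\Sigma_2^1$ description of $\complement\mathfrak{V}\cap \mathcal{L}$, and hence $\mathfrak{V}\cap \mathcal{L}\in \Pi_2^1$. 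The subtlety to verify carefully is the joint Borel measurability of the integration pairing across the coding, which rests on the standing continuity assumptions on the selectors $d_n$.
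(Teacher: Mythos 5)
Your proof is correct, and it takes a genuinely different route from the paper's. The paper parametrizes candidate witnessing sequences by a single strictly increasing index sequence $(n_i)$ (so the witness is always a literal subsequence of $(d_n(X))$, rescaled), and it attempts to encode weak nullity of $(d_{n_i}(X))$ without ever quantifying over a dual, instead using a Mazur/Hahn--Banach-style convex-hull condition: the set $D$ of parameters for which the convex hull of the $d_{n_i}(X)$ is bounded away from $0$ is obtained by projecting a closed set, and $P\setminus D$ is then taken as the weak-nullity clause. Your proof instead encodes candidate sequences by rational finite-support arrays and expresses weak nullity directly as a universal quantifier over the weak$^*$-compact, metrizable (hence Polish) ball $B_{M(2^\nn)}$, using the Hahn--Banach restriction/extension identification of $B_{X^*}$ with the image of $B_{M(2^\nn)}$. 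This lands the weak-nullity clause squarely in $\Pi^1_1$, and projecting over $\alpha$ yields $\Sigma_2^1$ for the complement, as required.

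A point worth flagging in favour of your route: the literal condition ``$0$ lies in the closed convex hull of $\{d_{n_i}(X):i\in\nn\}$'' used in the paper is strictly weaker than ``$(d_{n_i}(X))_{i=1}^\infty$ is weakly null'' (a simple one-dimensional example shows this), so the convex-hull encoding requires an additional quantification over tails or subsequences before it correctly captures weak nullity. Your dual-ball formulation sidesteps this subtlety entirely and gives the correct complexity bound with less room for error. The only detail in your sketch that merits a sentence of care in a full write-up is the joint Borelness of $(X,\mu)\mapsto\int d_k(X)\,d\mu$, but as you observe this follows from the chosen continuity of the selectors $d_k$ together with the joint continuity of the pairing $C(2^\nn)\times B_{M(2^\nn)}\to\mathbb{K}$ (norm topology on the first factor, weak$^*$ on the second).
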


\begin{proof} Let $A,B$, respectively, denote the subsets of $P:=\mathcal{L}\times \nn^\nn\times \nn $ consisting of those $$((X,Y, (y_i)_{i=1}^\infty), (n_i)_{i=1}^\infty, p)\in P$$ such that \begin{equation} n_1<n_1<\ldots,\tag{A} \end{equation} \begin{equation} \text{for all\ }i\in\nn, \|y_{n_i}\|\geqslant 1/p\text{\ and\ }\|d_{n_i}(X)\|\leqslant 1. \tag{B} \end{equation}

Let $C$ denote the set of all $((X,Y, (y_i)_{i=1}^\infty), (n_i)_{i=1}^\infty, k,p) \in \mathcal{L}\times \nn^\nn\times \nn\times \nn$ such that for all $l\in \nn$ and all positive scalars $(a_i)_{i=1}^l$ summing to $1$, $\|\sum_{i=1}^l a_i d_{n_i}(X)\|\geqslant 1/k$.   It is evident that $C$ is a closed set.   Now let $F:\mathcal{L}\times \nn^\nn\times \nn\times\nn\to \mathcal{L}\times \nn^\nn\times \nn$ denote the projection $$F((X,Y, (y_i)_{i=1}^\infty), (n_i)_{i=1}^\infty, k,p)=((X,Y, (y_i)_{i=1}^\infty), (n_i)_{i=1}^\infty, p)$$ and let $D=F(C)$.   Then $D$ is analytic.  Furthermore, $((X,Y, (y_i)_{i=1}^\infty), (n_i)_{i=1}^\infty, p)\in A\cap B\cap (P\setminus D)$ if and only if $n_1<n_2<\ldots$, $(d_{n_i}(X))_{i=1}^\infty$ is weakly null, and $\|y_{n_i}\|\geqslant 1/p$ for all $i\in\nn$.    Now if $\pi:P\to \mathcal{L}$ is the projection onto $\mathcal{L}$, then $\pi(A\cap B\cap (P\setminus D))= \mathcal{L}\setminus \mathfrak{V}$. Since $A,B$ are Borel and $D$ is analytic, $A\cap B\cap (P\setminus D)\in \Pi_1^1$, and $\mathcal{L}\setminus \mathfrak{V}=\pi(A\cap B\cap (P\setminus D))\in \Sigma_2^1$.

\end{proof}

\begin{rem}\upshape Kurka \cite{Kurka} showed that $\textsf{V}\cap \textbf{SB}$ is $\Pi_2^1$-complete.   Since $\textsf{V}\cap \textbf{SB}=\Phi^{-1}(\mathfrak{V}\cap \mathcal{L})$ and $\Phi$ is Borel, $\mathfrak{V}\cap \mathcal{L}$ is $\Pi_2^1$-complete by Fact \ref{f2}.    This yields that for each $0<\xi<\omega_1$, the classes $\mathfrak{V}_\xi\cap \mathcal{L}$ have strictly lower complexity than $\mathfrak{V}\cap \mathcal{L}$.   The same holds for the classes $\textsf{V}_\xi\cap \textbf{SB}$, $0<\xi<\omega_1$. 

\end{rem}

\end{document}